\DeclareMathOperator{\ann}{ann}
\DeclareMathOperator{\codim}{codim}
\DeclareMathOperator{\curv}{curv}
\DeclareMathOperator{\cx}{cx}
\DeclareMathOperator{\depth}{depth}
\DeclareMathOperator{\Der}{Der}
\DeclareMathOperator{\embdim}{embdim}
\DeclareMathOperator{\Ext}{Ext}
\DeclareMathOperator{\Hom}{Hom}
\DeclareMathOperator{\htt}{ht}
\DeclareMathOperator{\id}{id}
\DeclareMathOperator{\Image}{Image}
\DeclareMathOperator{\injcx}{inj\,cx}
\DeclareMathOperator{\injcurv}{inj\,curv}
\DeclareMathOperator{\Min}{Min}
\DeclareMathOperator{\pd}{pd}
\DeclareMathOperator{\rank}{rank}
\DeclareMathOperator{\Spec}{Spec}
\DeclareMathOperator{\Soc}{Soc}
\DeclareMathOperator{\tcurv}{tcurv}
\DeclareMathOperator{\tcx}{tcx}
\DeclareMathOperator{\Tor}{Tor}
\DeclareMathOperator{\Tot}{Tot}
\DeclareMathOperator{\type}{type}
\renewcommand{\ge}{\geqslant}
\renewcommand{\le}{\leqslant}
\newcommand{\fm}{\mathfrak{m}}
\newcommand{\fn}{\mathfrak{n}}
\newcommand{\fp}{\mathfrak{p}}
\newcommand{\fq}{\mathfrak{q}}
\renewcommand{\iff}{if and only if }
\theoremstyle{plain}
\newtheorem{theorem}{Theorem}[section]
\newtheorem{lemma}[theorem]{Lemma}
\newtheorem{proposition}[theorem]{Proposition}
\newtheorem{corollary}[theorem]{Corollary}
\newenvironment{customtheorem}[1]
{\innercustomtheorem}
{\endinnercustomtheorem}
\newenvironment{customcorollary}[1]
{\innercustomcorollary}
{\endinnercustomcorollary}
\theoremstyle{definition}
\newtheorem{definition}[theorem]{Definition}
\newtheorem{conjecture}[theorem]{Conjecture}
\newtheorem{example}[theorem]{Example}
\newtheorem{setup}[theorem]{Setup}
\theoremstyle{remark}
\newtheorem{remark}[theorem]{Remark}
\numberwithin{equation}{section}
\title[Complexity and curvature of (pairs of) Cohen-Macaulay modules]{Complexity and curvature of (pairs of) Cohen-Macaulay modules, and their applications}
\author[S.~Dey]{Souvik Dey}
\address{Department of Mathematical Sciences, 850 West Dickson Street, University of Arkansas, Fayetteville, Arkansas 72701}
\email{souvikd@uark.edu} 
\urladdr{\url{https://orcid.org/0000-0001-8265-3301}}
\author[D.~Ghosh]{Dipankar Ghosh}
\address{Department of Mathematics, Indian Institute of Technology Kharagpur, West Bengal - 721302, India}
\email{dipankar@maths.iitkgp.ac.in, dipug23@gmail.com}
\urladdr{\url{https://orcid.org/0000-0002-3773-4003}}
\author[A.~Saha]{Aniruddha Saha}%
\address{Department of Mathematics, Indian Institute of Technology Hyderabad, Kandi, Sangareddy - 502285, Telangana, India}
\email{ma20resch11001@iith.ac.in, sahaa43@gmail.com}
\subjclass[2020]{13D02, 13D07, 13C14, 13H15, 13H10, 13N05}
\keywords{Cohen-Macaulay modules; Multiplicity; Complexity and curvature; Ext and Tor; Homological dimensions; Criteria for local rings; Module of differentials; Berger's conjecture}
\begin{document}

\pagenumbering{arabic}
\thispagestyle{empty}

\begin{abstract}
    The complexity and curvature of a module, introduced by Avramov, measure the growth of Betti and Bass numbers of a module, and distinguish the modules of infinite homological dimension. The notion of complexity was extended by Avramov-Buchweitz to pairs of modules that measure the growth of Ext modules. The related notion of Tor complexity was first studied by Dao. Inspired by these notions, we define Ext and Tor curvature of pairs of modules. The aim of this article is to study (Ext and Tor) complexity and curvature of pairs of certain CM (Cohen-Macaulay) modules, and establish lower bounds of complexity and curvature of pairs of modules in terms of that of a single module. It is known that among all modules, the residue field has maximal complexity and curvature, moreover they characterize complete intersection local rings. As applications of our results, we provide some upper bounds of the curvature of the residue field in terms of curvature and multiplicity of any nonzero CM module. As a final upshot, these allow us to characterize complete intersection local rings (including hypersurfaces and regular rings) in terms of complexity and curvature of pairs of certain CM modules. In particular, under some additional hypotheses, we characterize complete intersection and regular local rings via injective curvature of the ring and that of the module of K\"{a}hler differentials respectively. Thus, we make partial progress towards a question of Christensen-Striuli-Veliche, as well as another by Vasconcelos.
\end{abstract}
\maketitle

\section{Introduction}

The notions of (projective and injective) complexity and curvature of a module were introduced by Avramov in \cite{Avr89a}, \cite{Avr89b} and \cite{Avr96}. These invariants measure the growth of Betti and Bass numbers of a module, and distinguish the modules of infinite projective and injective dimension. Later, in \cite{AB00}, the notion of complexity was extended by Avramov-Buchweitz to each pair of modules (\Cref{defn:cx-M-N}), which measures the polynomial growth rate of Ext of the pair of modules. The related notion of Tor complexity of a pair of modules (\Cref{defn:tcx-tcurv}), was first studied by Dao in \cite{Dao}. Inspired by these notions, we define Ext and Tor curvature of a pair of modules (Definitions~\ref{defn:curv-M-N} and \ref{defn:tcx-tcurv}). Complexity and curvature of a module can be described as that of a pair of modules by taking one of them as the residue field, see \Cref{defn:cx-curv}. The aim of this article is to study (Ext and Tor) complexity and curvature of pairs of certain CM (Cohen-Macaulay) modules, and establish lower bounds of complexity and curvature of pairs of modules in terms of that of a single module. It is well known that the residue field has maximal complexity and curvature, which characterizes complete intersection local rings. As applications of our results, we provide some upper bounds of the curvature of the residue field in terms of curvature and multiplicity of any nonzero CM module. Moreover, for a large class of CM modules, we show that their (injective) complexity and that of the residue field differ at most by $1$. Thus, we obtain characterizations of complete intersection local rings (including hypersurfaces and regular rings) in terms of complexity and curvature of pairs of certain CM modules. Along the way, we also establish a Tor version of a result \cite[Thm.~II.(3)]{AB00} of Avramov-Buchweitz, see \Cref{prop:AB-like-result-on-tcx}.(1). As some other applications, we provide partial results towards understanding rings whose injective curvature (as a module over itself) is at most $1$ or for which the injective curvature  of the module of K\"{a}hler differentials is at most $1$. In particular, \Cref{thm-C}.(2) provides partial progress towards \cite[Ques.~1.3]{CSV} and a question of Huneke (see \cite[Ques., p.~647]{JL07}), while \Cref{thm-D} shows that the conjectures of Vasconcelos \cite[Conj.~($\operatorname{C}_2$), p.~1807]{Va78} and Berger \cite{berg} are affirmative in certain cases.

\begin{setup}
	Throughout, unless otherwise specified, let $(R,\fm,k)$ be a commutative Noetherian local ring, and all $R$-modules are assumed to be finitely generated.
\end{setup}

As is often standard, we usually refer to projective complexity and curvature simply as complexity and curvature moving forward.

Our first main result compares the complexity (resp., curvature) of a pair of modules and (injective) complexity (resp., curvature) of a single module under some annihilation condition of Tor and Ext.

\begin{customtheorem}{A}[See \Cref{thm:main-cx-curv-CM-mod} and \Cref{cor:mcm-aprrox}]\label{thm-A}
	Let $M$ and $X$ be nonzero $R$-modules such that $M$ is CM. Then, the following hold.
\begin{enumerate}[\rm(1)]
    \item 
    Suppose there exists a positive integer $h$ satisfying $\fm^h \Tor_n^R(X,M)=0$ for all $n \gg 0$. Then
    \begin{enumerate}[\rm (i)]
        \item $\curv_R(X) \le \sup\left\{\frac{e(M)}{\mu(M)}-1, \tcurv_R(X,M)\right\}.$
        \item If $e(M)\le 2 \mu(M) $, then $\cx_R(X) \le 1+\tcx_R(X,M)$. 
        \item 
        If $e(M)< 2\mu(M) $, then $\cx_R(X) \le \tcx_R(X,M)$ and $\curv_R(X) \le \tcurv_R(X,M)$.
     \end{enumerate}
    \item 
    Suppose there exists a positive integer $h$ satisfying $\fm^h \Ext^n_R(X,M)=0$ for all $n \gg 0$. Then
    \begin{enumerate}[\rm (i)]
        \item $\curv_R(X)\le \sup\left\{\frac{e(M)}{\type(M)}-1, \curv_R(X,M)\right\}.$
        \item 
        If $e(M)\le 2\type(M) $, then $\cx_R(X) \le 1+\cx_R(X,M)$.
        \item 
        If $e(M)< 2\type(M) $, then $\cx_R(X) \le \cx_R(X,M)$ and $\curv_R(X) \le \curv_R(X,M)$.
    \end{enumerate}
    \item
    Suppose that $R$ or $X$ is also CM, and there exists an integer $h>0$ satisfying $\fm^h \Ext^n_R(M,X)=0$ for all $n \gg 0$. Then
    \begin{enumerate}[\rm (i)]
        \item $\injcurv_R(X) \le \sup\left\{ \frac{e(M)}{\mu(M)}-1, \curv_R(M,X)\right\}$.
        \item If $e(M)\le 2\mu(M) $, then $\injcx_R(X)\le 1+\cx_R(M,X)$.
        \item If $e(M)<2\mu(M) $, then $\injcx_R(X)\le \cx_R(M,X)$ and  $\injcurv_R(X)\le \curv_R(M,X)$.
    \end{enumerate}
\end{enumerate}
\end{customtheorem}

Among all $R$-modules, it is well known that the residue field (and hence a nonzero $R$-module that is annihilated by $\fm$) has the extremal (injective) complexity as well as curvature (cf.~Proposition~\ref{prop:cx-curv-facts}.\eqref{cx-curv-M-k}). Taking $X=k$ in Theorem~\ref{thm-A}, one obtains the following consequence.

\begin{customcorollary}{\ref{cor:cx-curv-k-CM-mod}}\label{cor:1st-intro}
Let $M$ be a nonzero CM $R$-module. Then, the following hold.
\begin{enumerate}[\rm(1)]
    \item $\curv_R(k) \le \sup\left\{\frac{e(M)}{\mu(M)}-1, \curv_R(M)\right\}.$
    \item $\curv_R(k) \le \sup\left\{\frac{e(M)}{\type(M)}-1, \injcurv_R(M)\right\}.$
    \item If $e(M)\le 2\mu(M) $, then $\cx_R(k)\le 1+\cx_R(M)$, i.e., $\cx_R(k)=\cx_R(M)$ or $1+\cx_R(M)$.
    \item If $e(M)<2\mu(M) $, then $\cx_R(k) = \cx_R(M)$ and  $\curv_R(k) = \curv_R(M)$.
    \item If $e(M)\le 2\type(M) $, then $\cx_R(k)\le 1+\injcx_R(M)$, i.e., $\cx_R(k)=\injcx_R(M)$ or $1+\injcx_R(M)$.
    \item If $e(M)<2\type(M) $, then $\cx_R(k)=\injcx_R(M)$ and  $\curv_R(k)=\injcurv_R(M)$.
\end{enumerate}
\end{customcorollary}




For modules of minimal multiplicity (cf.~\Cref{defn:min-mult}), we obtain similar results (namely, \Cref{thm:cx-curv-min-mult} and \Cref{Cor:cx-curv-min-CM}) that compliment \Cref{thm-A}. In view of Lemma~\ref{lem:mult-inequality}.(2) and (3),  when the residue field $k$ is infinite, a CM $R$-module $M$ has minimal multiplicity if and only if $\fm^2M = ({\bf x}) \fm M$ for some system of parameters ${\bf x}$ of $M$. In particular, every Ulrich $R$-module (cf.~\Cref{defn:Ulrich-mod}) has minimal multiplicity, and every MCM module over a CM ring of minimal multiplicity has minimal multiplicity; see \Cref{rmk:Ulrich-min-mult} and \Cref{rmk:min-mult-R-M}. Taking $X=k$ in \Cref{thm:cx-curv-min-mult}, one obtains \Cref{cor:main-cx-curv-CM-min-mult}, which is the counterpart of \Cref{cor:1st-intro} for modules of minimal multiplicity.

In \Cref{thm:e-mu-type}, we provide a new characterization of Ulrich modules in terms of type. This particularly ensures that the denominator $e(M)-\type(M)$ in \Cref{thm:cx-curv-min-mult}.(3).(i) and (4).(i), \Cref{Cor:cx-curv-min-CM}.(i) and \Cref{cor:main-cx-curv-CM-min-mult}.(2) is nonzero exactly when $M$ is not Ulrich.

In regards to \Cref{thm-A}, \Cref{thm:cx-curv-min-mult} and \Cref{Cor:cx-curv-min-CM}, we also draw the reader's attention to the fact that the annihilation condition on $\Tor$ and $\Ext$ are readily satisfied if $X$ or $M$ locally has finite projective dimension on the punctured spectrum, so in particular if $R$ has isolated singularity. We refer to \Cref{lem:Tor-m-zero} for more details, and Corollaries~\ref{cor:loc-fin-pd} and \ref{cor:loc-fin-pd-min} respectively for the precise statements on this. 


An interesting upshot of Corollaries~\ref{cor:1st-intro} and \ref{cor:main-cx-curv-CM-min-mult} is that every nonzero CM module $M$ with $ e(M) \le 2 \mu(M) $ or $e(M)\le 2\type(M) $ and every nonzero module $M$ of minimal multiplicity satisfying $e(M)\ge 2\mu(M)$ or $e(M)\ge 2\type(M)$ can be used to characterize complete intersection local rings; see Corollaries~\ref{cor:char-CI-rings} and \ref{cor:char-CI-minmult-rings} in this regard.

We construct Examples~\ref{exam-2-cx-curv-inequality}, \ref{exam-1-cx-curv-inequality}, \ref{exam:strict-ineq-cant-be-eq} and \ref{exam-3-cx-curv-inequality} regarding sharpness of the inequalities and necessity of the hypotheses in Corollaries~\ref{cor:cx-curv-k-CM-mod} and \ref{cor:main-cx-curv-CM-min-mult} (hence in \Cref{thm-A}, \Cref{thm:cx-curv-min-mult} and \Cref{Cor:cx-curv-min-CM}). See Remarks~\ref{rmk:3-examples} and \ref{rmk:exam-sec-6} for more details. 

Our next result shows, in particular, that modules of minimal multiplicity have trivial Ext vanishing. Moreover, for Gorenstein rings, the existence of such a nonzero module with eventual self-Ext vanishing forces the ring to be a hypersurface. We note that there are many results in the literature on trivial Ext vanishing, see for examples, \cite[Thm.~4.2]{AB00}, \cite[4.1, 5.3 and 5.4]{AINS}, \cite[Thm.~4.1.(2)]{HSV04}, \cite[Cor.~3.3]{JS04}, \cite[Thms.~3.2 and 4.2]{LM20} and \cite[Thm.~6.3]{DG23}.

\begin{customtheorem}{\ref{thm:trivial-Ext-van}}
    Let $M$ be a nonzero $R$-module of minimal multiplicity.
\begin{enumerate}[\rm (1)]
    \item If there exists an integer $h\ge 0$ such that $\fm^h\Ext_R^{\gg 0}(M,M)=0$ $($e.g., $M$ is locally of finite projective dimension on the punctured spectrum, {\rm cf.~\Cref{lem:Tor-m-zero}}$)$, then
    $$\inf\{\cx_R(M),\injcx_R(M)\}\le \cx_R(M,M) \text{ and } \inf\{\curv_R(M), \injcurv_R(M)\}\le  \curv_R(M,M)$$ 
    \item In particular, if $\Ext_R^{\gg 0}(M,M)=0$, then either $\pd_R(M)<\infty$ or $\id_R(M)<\infty$.
    \item If $R$ is Gorenstein and $\Ext_R^{\gg 0}(M,M)=0$, then $R$ is hypersurface.
\end{enumerate}
\end{customtheorem}

As applications of Theorem~\ref{thm-A} and \Cref{thm:cx-curv-min-mult}, in \Cref{sec:char}, we provide various criteria for a local ring to be complete intersection in terms of Ext and Tor complexity and curvature of certain pairs of CM modules. We just mention here two such criteria.

\begin{customtheorem}{B}[See Theorems~\ref{thm:cx-M-N-inequa} and \ref{thm:tcx-M-N-inequa} for more related results]\label{thm-B}{~}
\begin{enumerate}[\rm (1)]
    \item 
    Let $M$ and $N$ be nonzero CM $R$-modules such that $ e(M) \le 2 \mu(M) $, $ e(N) \le 2 \type(N) $, and $\fm^h\Ext_R^{\gg 0}(M,N)=0$ for some $h\ge 0$. Then
    \[
        \sup\big\{\cx_R(M),\injcx_R(M)\big\}+\sup\big\{\injcx_R(N),\cx_R(N)\big\} \le 2\big(1+\cx_R(M,N)\big).
    \]
    Moreover, the following statements are equivalent:
    \begin{enumerate}[\rm (a)]
        \item $R$ is complete intersection $($of codimension at most $2+\cx_R(M,N)$$)$.
        \item $\cx_R(M,N)<\infty$.
        \item $\curv_R(M,N)\le 1$.
    \end{enumerate}
    \item 
    Let $M$ and $N$ be nonzero CM $R$-modules such that $ e(M) \le 2 \mu(M) $, $ e(N) \le 2 \mu(N) $, and $\fm^h\Tor_{\gg 0}^R(M,N) = 0$ for some $h\ge 0$. Then
    \[
        \cx_R(M) + \cx_R(N) \le 2\big(1+\tcx_R(M,N)\big).
    \]
    Moreover, the following statements are equivalent:
    \begin{enumerate}[\rm (a)]
        \item $R$ is complete intersection $($of codimension at most $2+\tcx_R(M,N)$$)$.
        \item $\tcx_R(M,N)<\infty$.
        \item $\tcurv_R(M,N)\le 1$.
    \end{enumerate}
\end{enumerate}
\end{customtheorem}

When exactly one of the two inequalities (resp., both inequalities) in each of the hypotheses of \Cref{thm-B}.(1) and (2) is a strict inequality (resp., are strict inequalities), the conclusions of \Cref{thm-B}.(1) and (2) can be improved further. We refer to Theorems~\ref{thm:cx-M-N-inequa-2} and \ref{thm:tcx-M-N-inequa-2} (resp., \ref{thm:cx-M-N-inequa-3} and \ref{thm:tcx-M-N-inequa-3}) for the details. Note that $\Ext_R^{\gg 0}(M,N)=0$ \iff $\cx_R(M,N)=0$, and $\Tor_{\gg 0}^R(M,N)=0$ \iff $\tcx_R(M,N)=0$. Thus these theorems
particularly characterize complete intersection local rings of codimension at most $2$, hypersurfaces and regular local rings, see Corollaries~\ref{cor:cx-M-N-inequa}, \ref{cor:cx-M-N-inequa-2}, \ref{cor:cx-M-N-inequa-3}, \ref{cor:tcx-M-N-inequa}, \ref{cor:tcx-M-N-inequa-2} and \ref{cor:tcx-M-N-inequa-3}.

It is standard that $R$ is Gorenstein if and only if the Bass numbers of $R$ are eventually $0$ (equivalently, $\injcurv_R(R)=0$). More generally, it was shown by Foxby that if there exists a nonzero module of finite projective dimension whose Bass numbers are eventually $0$, then the base ring is Gorenstein (cf.~\cite[Cor.~4.4]{Fo77}). This raises a natural question of whether `mild growth rate' of the Bass numbers of the ring forces the ring to be Gorenstein or not. Interpreting `mild' as close to polynomial growth, this question was studied in \cite{JL07, CSV}. Our next result takes another step in this direction. 

\begin{customtheorem}{C}[See \Cref{prop:pd-injcurv-max} and \Cref{cor:exp-growth-JL}.(2), (3)]\label{thm-C}{~}
\begin{enumerate}[\rm (1)]
    \item 
    Let $M$ be a nonzero $R$-module. 
    \begin{enumerate}[\rm (a)]
        \item 
        If $\pd_R(M) < \infty$, then $\injcx_R(M)=\injcx_R(R)$ and $\injcurv_R(M)=\injcurv_R(R)$.
        \item 
        If $\id_R(M) < \infty$, then $\cx_R(M)=\injcx_R(R)$ and $\curv_R(M)=\injcurv_R(R)$.
    \end{enumerate}
    \item 
    If $R$ is CM such that $e(R)\le 2\type(R)$, then $R$ is complete intersection when at least one of the following holds.
    \begin{enumerate}[\rm (a)]
        \item There exists a nonzero $R$-module $M$ of finite projective dimension such that $\injcurv_R(M)\le 1$.
        \item
        There exists a nonzero $R$-module $M$ of finite injective dimension such that $\curv_R(M)\le 1$.
    \end{enumerate}
\end{enumerate}
\end{customtheorem}

The classical Jacobian criterion characterizes the regularity of an affine algebra $R$ over a field $k$ of characteristic zero in terms of the projective dimension of its module of K\"{a}hler differentials, denoted by $\Omega_{R/k}$. In this theme, Avramov-Herzog \cite[Thm.~2]{AH} showed that such an algebra is complete intersection if $\curv_R(\Omega_{R/k})\le 1$. Motivated by these results, as applications of Theorem~\ref{thm-A} and \Cref{thm:cx-curv-min-mult}, we prove the following. This in particular gives a partial progress to a conjecture of Berger (see \Cref{Conj-B63}).

\begin{customtheorem}{D}[See \Cref{thm:diff} and Corollary~\ref{cor:Berger}]\label{thm-D}{~}
\begin{enumerate}[\rm (1)]
    \item
    Suppose $R$ is an equidimensional reduced local ring, the residue field $k$ of $R$ has characteristic $0$, and $R$ is either essentially of finite type over $k$ or it is a homomorphic image of a power series ring
    over $k$. Let $\Omega_{R/k}$ be CM. Then the following hold.
    \begin{enumerate}[\rm (a)]
        \item If $e(R) \dim(R) \le 2 \embdim(R)$ and $\curv_R(\Omega_{R/k})\le 1$, then $R$ is regular.
        \item 
        If $R$ is not a field, $\Omega_{R/k}$ has minimal multiplicity, and $e(R)\dim(R)\ge 2\embdim(R)$, then $\injcurv_R(\Omega_{R/k})> 1$.
    \end{enumerate}
    \item
    {\rm \Cref{Conj-B63}} due to Berger holds true when $e(R) \le 2 \embdim(R)$ and $\curv_R(\Omega_{R/k})\le 1$.
\end{enumerate}
\end{customtheorem}

We now describe briefly the contents of the article. In \Cref{sec:cx-curv-seq}, we recall the notion of complexity and curvature of sequences of non-negative real numbers, and prove various preliminary properties of them. In \Cref{sec:cx-curv-mod}, we recall Ext and Tor complexity, and define Ext and Tor curvature of a pair of modules based on the notions from \Cref{sec:cx-curv-seq}, and we also establish a connection between Ext and Tor complexity (resp., curvature), see \Cref{cx-duality}. This section contains many preparatory results about these invariants. In \Cref{sec:mult}, we recall the notion of (CM) modules of minimal multiplicity, and relate them to Ulrich modules (\Cref{prop:cm-min-ul}). Along the way, we also establish a new characterization of Ulrich modules (\Cref{thm:e-mu-type}). \Cref{sec:cx-curv-cm-mod} is devoted to proving the first main results of this paper, namely \Cref{thm-A} and \Cref{cor:1st-intro}. \Cref{sec:cx-curv-cm-mod-min-mult} proves \Cref{thm:cx-curv-min-mult} and Corollaries~\ref{Cor:cx-curv-min-CM} and \ref{cor:main-cx-curv-CM-min-mult}, which compliment the results of \Cref{sec:cx-curv-cm-mod}. \Cref{sec:char} contains various criteria for a local ring to be complete intersection, hypersurface or regular in terms of Ext and Tor complexity and curvature of certain pairs of CM modules. Section~\ref{sec:diff} deals with module of differentials, where we mainly prove \Cref{thm-D}.

Finally, it is worth mentioning that in two related follow-up papers \cite{DGS1, DGS2}, many of the results of Sections~\ref{sec:cx-curv-cm-mod} and \ref{sec:cx-curv-cm-mod-min-mult} have found further applications. 

\section{Complexity and curvature of sequences}\label{sec:cx-curv-seq}

In this section, we recall the notions of complexity and curvature of sequences of non-negative real numbers, and prove some results on these. These are heavily used in the rest of the paper.

\begin{definition}\label{defn:cx-curv-x_n}
	Let $\{a_n\}$ be a sequence of non-negative real numbers.
    \begin{enumerate}[(1)]
        \item 
        The complexity of $\{a_n\}$, denoted $\cx\big( \{a_n\} \big)$, is defined to be the smallest integer $b\ge 0$ such that $a_n \le \alpha n^{b-1}$ for some real number $\alpha>0$ and for all $n \gg 0$. If no such $b$ exists, then $\cx\big( \{a_n\} \big) :=\infty$.
        \item 
        The curvature of $\{a_n\}$ is defined by
	    $$\curv\big( \{a_n\} \big) := \limsup_{n\to\infty} \sqrt[n]{a_n}.$$
    \end{enumerate}
\end{definition}

\begin{remark}\label{rmk:cx-curv-ineq}
    The following facts are obvious from the definition of complexity and curvature.
    \begin{enumerate}[\rm (1)]
        \item If $a_n \le b_n$ for all $n\gg 0$, then $\cx(\{a_n \}) \le \cx(\{b_n \})$ and $\curv(\{a_n\}) \le \curv(\{b_n \})$.
        \item If $v>0$ be a real number, then $\cx(\{va_n \}) = \cx(\{a_n \})$ and $\curv(\{va_n \}) = \curv(\{a_n \})$.
        \item If $\cx(\{a_n\})<\infty$, then $\curv(\{a_n\})\le 1$.
    \end{enumerate}
\end{remark}

\begin{remark}\label{rmk:curv-zero}
    Let $\{a_n\}$ be a sequence of non-negative integers. Then, the following are equivalent:\\
    (1) $a_n=0$ for all $n\gg 0$; (2) $\cx(\{a_n\})=0$; and (3) $\curv\big( \{a_n\} \big)<1$.
\end{remark}

\begin{proof}
    The implications (1) $\Rightarrow$ (2) and (1) $\Rightarrow$ (3) are trivial.

    (2) $\Rightarrow$  (1): Let $\cx(\{a_n\})=0$. Then there exists $\alpha>0$ such that $0\le a_n\le \frac{\alpha}{n}$ for all $n\gg 0$. Hence, $0\le a_n<1$ for $n\gg 0$. Since $a_n$ are all integers, it follows that $a_n=0$ for all $n\gg 0$. 
    
    (3) $\Rightarrow$ (1): Let $\curv\big( \{a_n\} \big) = a < 1$. Choose $\epsilon>0$ such that $a+\epsilon <1$. Then, by the definition of limit supremum, for this $\epsilon$, there exists an integer $n_0$ such that $\sqrt[n]{a_n} < a+\epsilon$ for all $n \ge n_0$. Thus $a_n<(a+\epsilon)^n<1$ for all $n \ge n_0$, since $a+\epsilon <1$. Since $a_n$ are all non-engative integers, it follows that $a_n=0$ for all $n\gg 0$.
\end{proof}

The following lemmas on calculus of sequences are useful to get the desired (in)equalities on complexities and curvatures.

\begin{lemma}\label{lem:cx-curv-seq}
    Let $\{ a_n\}_{n=1}^{\infty}$ and $\{ b_n\}_{n=1}^{\infty}$ be two sequences of non-negative integers such that $a_n \le w(b_n+b_{n-1})$ for all $n \gg 1$, where $w$ is some positive real number. Then
    \[ 
        \cx(\{a_n \}) \le \cx(\{b_n \}) \quad \mbox{and} \quad \curv(\{a_n\}) \le \curv(\{b_n \}).
    \]
\end{lemma}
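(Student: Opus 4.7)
The plan is to handle the two inequalities separately; both reduce to straightforward comparisons once we unwrap the definitions, and the hypothesis $a_n \le w(b_n+b_{n-1})$ is exactly tailored so that the ``tail shift'' $b_{n-1}$ contributes only a bounded multiplicative factor in each setting.

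For the complexity inequality, I would first dispose of the trivial case $\cx(\{b_n\}) = \infty$. Otherwise, let $b = \cx(\{b_n\})$; by \Cref{defn:cx-curv-x_n}, there is some $\alpha > 0$ with $b_n \le \alpha n^{b-1}$ for all $n \gg 0$. Substituting into the hypothesis, for $n \gg 0$ I get
\[
a_n \;\le\; w\bigl(b_n + b_{n-1}\bigr) \;\le\; w\bigl(\alpha n^{b-1} + \alpha (n-1)^{b-1}\bigr) \;\le\; 2w\alpha\, n^{b-1},
\]
which exhibits $\cx(\{a_n\}) \le b = \cx(\{b_n\})$. (A minor point: if $b = 0$ so $b-1 = -1$, the inequality $(n-1)^{b-1} \le n^{b-1}$ should be replaced by $(n-1)^{-1} \le 2 n^{-1}$ for $n \ge 2$, which is still absorbed into the constant.)

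For the curvature inequality, I would again discard the trivial case $\curv(\{b_n\}) = \infty$. Otherwise set $c = \curv(\{b_n\}) = \limsup_n \sqrt[n]{b_n}$. Fix any $\varepsilon > 0$; by definition of $\limsup$, there exists $n_0$ such that $b_n \le (c+\varepsilon)^n$ for all $n \ge n_0$. Then for $n > n_0$,
\[
a_n \;\le\; w\bigl(b_n + b_{n-1}\bigr) \;\le\; w\bigl((c+\varepsilon)^n + (c+\varepsilon)^{n-1}\bigr) \;=\; w(c+\varepsilon)^{n-1}\bigl(c+\varepsilon+1\bigr).
\]
Taking $n$th roots and passing to $\limsup$ gives $\curv(\{a_n\}) \le c+\varepsilon$, since $\sqrt[n]{w(c+\varepsilon+1)} \to 1$ and $(c+\varepsilon)^{(n-1)/n} \to c+\varepsilon$. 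Letting $\varepsilon \to 0^+$ yields $\curv(\{a_n\}) \le c = \curv(\{b_n\})$.

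There is no real obstacle: both arguments are direct applications of the definitions, with the single subtlety being the bookkeeping of the shift $b_{n-1}$, which in each case contributes only a bounded multiplicative constant that is absorbed into the definition of complexity (a new constant $2w\alpha$) or collapses in the $n$th root (factor $\sqrt[n]{w(c+\varepsilon+1)} \to 1$). Both tasks are essentially routine calculus on sequences.
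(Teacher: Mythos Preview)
Your proof is correct and follows essentially the same approach as the paper's: both argue directly from the definitions of complexity and curvature. Your handling is even slightly more streamlined---you avoid the paper's case split on whether $c+\varepsilon \ge 1$ or $c+\varepsilon < 1$ by factoring $w(c+\varepsilon)^{n-1}(c+\varepsilon+1)$, and you treat the $b=0$ complexity case by a direct bound rather than invoking integrality of the $b_n$.
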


\begin{proof}
    If $\cx(\{b_n \})=\infty$, then there is nothing to prove. So we may assume that $\cx(\{b_n \})=b < \infty$. By definition, there exists $\alpha >0$ such that $b_n \le \alpha n^{b-1}$ for all $n\gg 1$. If $b=0$, then $b_n = 0$ for all $n\gg 1$, and hence $a_n = 0$ for all $n\gg 1$, i.e., $\cx(\{a_n\}) = 0$. So we may assume that $b\ge 1$. In this case, $a_n \le w(b_n + b_{n-1}) \le w\alpha (n^{b-1}+(n-1)^{b-1}) \le 2w\alpha n^{b-1}$ for all $n\gg 1$. Therefore $\cx(\{a_n \})\le b=\cx(\{b_n \})$.
    
    Assume that $\curv(\{a_n\})=a$ and $\curv(\{b_n\})=c$.
    Then, by definition, for every $\epsilon > 0$, one has that $b_n \le (c+ \epsilon)^n$ for all $n \gg 1$. Hence $a_n\le w(b_n + b_{n-1}) \le w(c+ \epsilon)^n+w(c+ \epsilon)^{(n-1)}$ for all $n \gg 1$. We consider two subcases. First, assume that $c+ \epsilon\ge 1$. Then $a_n\le 2w(c+ \epsilon)^n$, i.e., $\sqrt[n]{a_n} \le (c+\epsilon)\sqrt[n]{2w}$ for all $n\gg  1$, which implies that $a\le c+\epsilon$. In the other case, when $c+ \epsilon < 1$, then $a_n\le 2w(c+ \epsilon)^{n-1}$, i.e., $\sqrt[n]{a_n} \le (c+\epsilon)\sqrt[n]{\frac{2w}{c+ \epsilon}}$ for all $n\gg  1$, which yields that $a\le c+\epsilon$. Thus, in any case, $a\le c+\epsilon$, and this holds true for every $\epsilon > 0$. So $a\le c$, i.e., $\curv(\{a_n\}) \le \curv(\{b_n \})$.
\end{proof}

\begin{lemma}\label{lem:curv-x-y}
Let $\{x_n\}$ and $\{y_n\}$ be sequences of non-negative real numbers. Let $a>0$ and $b$ be non-negative real numbers such that $x_{n+1}\le bx_n+ay_n$ for all $n\gg 1$. Then, $\curv(\{x_n\})\le \sup\big\{b, \curv(\{y_n\})\big\}$. 
\end{lemma}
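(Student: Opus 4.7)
The plan is to work directly from the definition of curvature. Set $L := \curv(\{y_n\})$ and $c := \sup\{b, L\}$; I want to show $\limsup_n \sqrt[n]{x_n} \le c$. For any fixed $\epsilon > 0$, put $d := c + \epsilon$, so that both $b < d$ and $L < d$ simultaneously. By the definition of $\limsup$ there is an integer $N$ beyond which $y_n \le d^n$, and (after enlarging $N$ if necessary) for which the hypothesis $x_{n+1} \le b x_n + a y_n$ holds. Thus on the tail $n \ge N$ we have $x_{n+1} \le b x_n + a d^n$.

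Next I would iterate this linear recursion. A routine induction on $k$ yields
\[ x_{N+k} \;\le\; b^k x_N + a d^N \sum_{j=0}^{k-1} b^{k-1-j} d^j. \]
The reason for choosing $d$ strictly larger than $b$ is that this sum telescopes to $(d^k - b^k)/(d-b)$ and is therefore dominated by $d^k/(d-b)$; combined with $b^k \le d^k$, this produces a bound of the form $x_{N+k} \le C \, d^k$, where the constant $C$ depends only on $N, a, b, d, x_N$ and not on $k$.

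Finally, setting $m = N + k$ and taking $m$-th roots, one has $\sqrt[m]{x_m} \le C^{1/m} \cdot d^{\,1 - N/m}$, and both $C^{1/m} \to 1$ and $d^{-N/m} \to 1$ as $m \to \infty$. Hence $\limsup_m \sqrt[m]{x_m} \le d = c + \epsilon$, and since $\epsilon > 0$ was arbitrary, $\curv(\{x_n\}) \le c$, as required.

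I do not anticipate any substantial obstacle; the argument is routine manipulation with a linear recurrence. The one mild subtlety is that one cannot work directly with $c$ on the right-hand side (if $b = c$ the denominator $d-b$ in the telescoping would collapse), which is why I introduce the buffer $\epsilon$ and pass to the limit at the end. The degenerate cases $b = 0$ or $L = 0$ need no separate treatment, since $d > 0$ whenever $\epsilon > 0$.
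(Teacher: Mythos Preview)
Your proof is correct and follows essentially the same approach as the paper: iterate the recurrence to express $x_{N+k}$ in terms of $x_N$ and a geometric-type sum, bound the sum, then take roots and let $\epsilon\to 0$. Your version is in fact slightly cleaner: by choosing $d=c+\epsilon$ strictly larger than both $b$ and $\curv(\{y_n\})$ from the outset and using the closed form $(d^k-b^k)/(d-b)$, you avoid the case split (on $b>y$ versus $b\le y$) that the paper carries out; the only trivial point you might add for completeness is that if $\curv(\{y_n\})=\infty$ there is nothing to prove.
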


\begin{proof}
Since $\curv(\{y_n\})=\curv(\{ay_n\})$, we may assume that $a=1$. Thus $x_{n+1}\le bx_n+y_n$ for all $n\gg 1$. Set $y:=\curv(\{y_n\})$. If $y=\infty$, then there is nothing to prove. Also, if $b=0$, then $x_{n+1}\le y_n$ for all $n\gg 1$, and hence $\curv(\{x_n\})\le \curv(\{y_n\})$. So we may assume that $y<\infty$ and $b>0$. Depending on $b$ and $y$, two cases may appear. 

{\bf Case 1}. Let $b>y$. Choose $0<\epsilon<b-y$ and fix it. Then, there exists $n_0>0$ such that $y_i\le (y+\epsilon)^i$ for all $i\ge n_0$ and  $x_{n+1}\le bx_n+y_n$ for all $n\ge n_0$. Consequently, for all $n\ge n_0$, one has that
\begin{align}
    x_{n+1} \le bx_n+y_n \le b^2 x_{n-1}+by_{n-1}+y_n &\le b^3 x_{n-2} + b^2y_{n-2}+by_{n-1}+y_n \label{eqn:xnplus1}\\
    & \le \cdots \le b^{n-n_0+1}x_{n_0}+\sum_{i=n_0}^n b^{n-i}y_i.\nonumber
\end{align}
Moreover, $\sum_{i=n_0}^n b^{n-i}y_i\le \sum_{i=n_0}^n b^{n-i}(y+\epsilon)^i=b^n\sum_{i=n_0}^n \left(\frac{y+\epsilon}{b}\right)^i\le b^n(n-n_0+1)$, since $y+\epsilon < b$. Hence, \eqref{eqn:xnplus1} yields that $x_{n+1}\le b^{n-n_0+1}x_{n_0}+b^n(n-n_0+1)=b^n(b^{1-n_0}x_{n_0}+(n-n_0+1))$ for all $n\ge n_0$. Therefore, since $\lim_{n\to \infty} (n+c)^{1/n}=1$ for any constant $c$, one obtains that $\lim\sup x_n^{1/n}\le b=\sup\{b,y\}$.

{\bf Case 2}. Assume that $b\le y$. Fix $\epsilon >0$. Then, $b<y+\epsilon$. There exists $n_0>0$ such that $y_i\le (y+\epsilon)^i$ for all $i\ge n_0$ and  $x_{n+1}\le bx_n+y_n$ for all $n\ge n_0$. As in Case 1, $x_{n+1}\le b^{n-n_0+1}x_{n_0}+\sum_{i=n_0}^n b^{n-i}y_i$ for all $n\ge n_0$. Furthermore, $\sum_{i=n_0}^n b^{n-i}y_i\le b^n \sum_{i=n_0}^n\left(\frac{y+\epsilon}{b}\right)^i \le b^n(n-n_0+1) \left(\frac{y+\epsilon}{b}\right)^n=(n-n_0+1)(y+\epsilon)^n$, where we have used that $y+\epsilon>b$, and hence the sequence $\{\left(\frac{y+\epsilon}{b}\right)^i\}_{i\ge 1}$ is increasing. Thus, $ x_{n+1} \le b^{n-n_0+1}x_{n_0}+(n-n_0+1)(y+\epsilon)^n \le (y+\epsilon)^n(b^{1-n_0}x_{n_0}+n-n_0+1)$ for all $n\ge n_0$, where the last inequality follows as $b< (y+\epsilon)$. Finally, since $\lim_{n\to \infty} (n+c)^{1/n}=1$ for any constant $c$, it follows that $\lim\sup x_n^{1/n}\le y+\epsilon$. Since $\epsilon >0$ was arbitrary, this proves that $\lim\sup x_n^{1/n}\le y=\sup\{b,y\}$.   
\end{proof}

\begin{lemma}\label{lem:curv-sum}
Let $\{x_n\}$ be a sequence of non-negative integers, and $a, b$ be positive real numbers. Then
$$\curv(\{ax_{n+1}+bx_n\})=\curv(\{x_n\}).$$
\end{lemma}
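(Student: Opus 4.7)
The plan is to show the two inequalities $\curv(\{ax_{n+1}+bx_n\}) \le \curv(\{x_n\})$ and $\curv(\{x_n\}) \le \curv(\{ax_{n+1}+bx_n\})$ separately. Both follow from elementary manipulations; the main technical observation is that shifting the index of a sequence does not affect its curvature, since $x_{n+1}^{1/n} = \bigl(x_{n+1}^{1/(n+1)}\bigr)^{(n+1)/n}$ and $(n+1)/n \to 1$, so $\limsup_n x_{n+1}^{1/n} = \limsup_n x_n^{1/n}$.

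For the inequality $\curv(\{ax_{n+1}+bx_n\}) \le \curv(\{x_n\})$, set $y_n := ax_{n+1}+bx_n$ and $w := \max\{a,b\}$, so that $y_n \le w(x_{n+1}+x_n)$ for all $n \ge 1$. Putting $a_n := y_n$ and $b_n := x_{n+1}$ (so that $b_{n-1} = x_n$), this is exactly the hypothesis of \Cref{lem:cx-curv-seq}, whence $\curv(\{y_n\}) \le \curv(\{x_{n+1}\}) = \curv(\{x_n\})$ by the shift-invariance observation above.

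For the reverse inequality, since $b > 0$ and $ax_{n+1} \ge 0$, one has $x_n \le \tfrac{1}{b}(ax_{n+1}+bx_n)$ for every $n$. Taking $n$th roots and passing to $\limsup$ gives $\curv(\{x_n\}) \le \curv(\{ax_{n+1}+bx_n\})$, since $\tfrac{1}{b^{1/n}} \to 1$.

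I do not anticipate any obstacle: each half is a one-line estimate, and the only subtlety is making sure the index-shift identity $\curv(\{x_{n+1}\}) = \curv(\{x_n\})$ is made explicit, which is why I would record it at the outset.
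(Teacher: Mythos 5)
Your proof is correct and, at bottom, follows the same two-inequality strategy as the paper: the lower bound $\curv(\{x_n\})\le\curv(\{ax_{n+1}+bx_n\})$ is obtained exactly as in the paper's proof (there it is phrased as $y_n\ge bx_n$ together with \Cref{rmk:cx-curv-ineq}, which is the same as your division by $b$ and $b^{-1/n}\to 1$). The difference is in the upper bound: the paper argues directly, fixing $\epsilon>0$, using $x_n\le(x+\epsilon)^n$ for $n\gg0$ to get $y_n\le\big(a(x+\epsilon)+b\big)(x+\epsilon)^n$ and concluding via $\alpha^{1/n}\to1$; you instead bound $y_n\le w(x_{n+1}+x_n)$ with $w=\max\{a,b\}$ and invoke \Cref{lem:cx-curv-seq} together with shift-invariance of curvature. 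Your route has the merit of reusing an existing lemma rather than repeating an $\epsilon$-estimate, at the cost of having to record the (true, and easily justified) shift-invariance fact explicitly.

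One small technical point to repair: \Cref{lem:cx-curv-seq} is stated for sequences of \emph{non-negative integers}, whereas your $a_n:=y_n=ax_{n+1}+bx_n$ is in general only a non-negative real number (the coefficients $a,b$ are arbitrary positive reals), so the lemma does not literally apply as you wrote. This is not a serious obstruction: the curvature half of the proof of \Cref{lem:cx-curv-seq} never uses integrality of either sequence, so you could either note that explicitly, or avoid the citation altogether by running the same $\epsilon$-argument directly on $y_n\le w(x_{n+1}+x_n)$ (or simply fall back on the paper's one-line estimate $y_n\le\big(a(x+\epsilon)+b\big)(x+\epsilon)^n$). With that adjustment, and keeping your explicit justification that $\limsup_n x_{n+1}^{1/n}=\limsup_n x_n^{1/n}$ (which is fine for integer sequences, including the cases $x_{n+1}=0$ and infinite curvature), the argument is complete.
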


\begin{proof} 
    Let $y_n=ax_{n+1}+bx_n$. Then $y_n\ge bx_n$ for all $n\ge 0$. Hence, by Remark~\ref{rmk:cx-curv-ineq}, $\curv(\{y_n\})\ge \curv(\{x_n\})$. Set $x:=\curv(\{x_n\})$. Fix $\epsilon>0$. Then there exists $n_0$ such that $x_n \le (x+\epsilon)^n$ for all $n\ge n_0$. So, by setting $\alpha := a(x+\epsilon)+b$, one obtains that $y_n=ax_{n+1}+bx_n\le \alpha (x+\epsilon)^n$ for all $n\ge n_0$. It follows that $\lim\sup y_n^{1/n}\le x+\epsilon$. This holds for every $\epsilon>0$. Therefore $\curv(\{y_n\}) \le x = \curv(\{x_n\})$. This completes the proof.
\end{proof}

\begin{lemma}\label{lem:sum-curv-ieq}
    Let $\{x_n\}$ and $\{y_n\}$ be non-negative integers. Then
    \begin{enumerate}[\rm (1)]
        \item $\cx(\{x_{n}+ y_n\}) \le \sup\big\{ \cx(\{x_n\}), \cx(\{y_n\})\big\}$.
        \item 
        $\curv(\{x_{n}+ y_n\}) \le \sup\big\{ \curv(\{x_n\}), \curv(\{y_n\})\big\}$.
    \end{enumerate}
\end{lemma}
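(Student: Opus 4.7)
The plan is to handle both parts directly from the definitions, treating the trivial infinite cases separately.

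For part (1), if $\sup\{\cx(\{x_n\}),\cx(\{y_n\})\}=\infty$ there is nothing to show. Otherwise write $c_1=\cx(\{x_n\})$ and $c_2=\cx(\{y_n\})$, both finite, and set $c=\sup\{c_1,c_2\}$. By \Cref{defn:cx-curv-x_n} there exist $\alpha_1,\alpha_2>0$ such that $x_n\le \alpha_1 n^{c_1-1}$ and $y_n\le \alpha_2 n^{c_2-1}$ for all $n\gg 0$. If $c=0$, then by \Cref{rmk:curv-zero} both $x_n$ and $y_n$ are eventually zero, hence so is $x_n+y_n$, giving $\cx(\{x_n+y_n\})=0=c$. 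If $c\ge 1$, then $n^{c_i-1}\le n^{c-1}$ for all $n\ge 1$, so
\[
x_n+y_n\le (\alpha_1+\alpha_2)\,n^{c-1}\quad\text{for all } n\gg 0,
\]
which by definition gives $\cx(\{x_n+y_n\})\le c$, as desired.

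For part (2), again assume the supremum is finite; call it $r$. Fix any $\epsilon>0$. By the definition of $\limsup$, there exists $n_0$ such that $x_n\le (r+\epsilon)^n$ and $y_n\le (r+\epsilon)^n$ for all $n\ge n_0$. Adding these,
\[
x_n+y_n\le 2(r+\epsilon)^n\quad\text{for all } n\ge n_0,
\]
so $\sqrt[n]{x_n+y_n}\le 2^{1/n}(r+\epsilon)$. Taking $\limsup$ as $n\to\infty$ and using $2^{1/n}\to 1$ gives $\curv(\{x_n+y_n\})\le r+\epsilon$. Since $\epsilon>0$ was arbitrary, we conclude $\curv(\{x_n+y_n\})\le r=\sup\{\curv(\{x_n\}),\curv(\{y_n\})\}$.

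There is no real obstacle here: the argument is a routine unpacking of the definitions, and the only mild care needed is the boundary case $c=0$ in part (1), which is dispatched using \Cref{rmk:curv-zero}. One could alternatively deduce (2) immediately from \Cref{lem:curv-x-y} by taking $x_n\leftarrow x_n+y_n$ shifted appropriately, but the direct $\limsup$ argument above is cleaner and parallels the style of \Cref{lem:cx-curv-seq}.
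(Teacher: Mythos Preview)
Your proof is correct and follows essentially the same approach as the paper's: both argue directly from the definitions, bounding $x_n+y_n$ by a constant times $n^{c-1}$ for complexity and by $2(r+\epsilon)^n$ for curvature. The only cosmetic difference is that you separate out the $c=0$ case via \Cref{rmk:curv-zero}, whereas the paper absorbs it into the same inequality $x_n+y_n\le 2\alpha n^{y-1}$ (which still gives $\cx\le 0$ when $y=0$).
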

\begin{proof}
   If $\cx(\{x_n\})= \infty$ or $\cx(\{y_n\})= \infty$, then there is nothing to prove. So we may assume that $\cx(\{x_n\})= x$, $\cx(\{y_n\})= y$, and both are finite. Without loss of generality, one may assume that $x\le y$. Since $\cx(\{x_n\})= x$ and $\cx(\{y_n\})= y$, by definition, one has that $x_n \le \alpha_1 n^{x-1}$ and $y_n \le \alpha_2 n^{y-1} $ for all $n \gg 1$, for some $\alpha_1,\alpha_2>0$. Hence $(x_n + y_n) \le \alpha_1 n^{x-1} + \alpha_2 n^{y-1} \le 2\alpha n^{y-1}$ for all $n \gg 1$, where $\alpha := \max\{\alpha_1,\alpha_2\}$. It follows that $\cx(\{x_n + y_n\}) \le y = \sup\{x,y\}$.

   Again, if $\curv(\{x_n\})= \infty$ or $\curv(\{y_n\})= \infty$, then there is nothing to prove. So we may assume that $\curv(\{x_n\})= x$, $\curv(\{y_n\})= y$, and both are finite. Without loss of generality, one may assume that $x\le y$. Consider $\epsilon>0$. Since $\curv(\{x_n\})= x$ and $\curv(\{y_n\})= y$, by definition, one has that $x_n \le (x+\epsilon)^n$ and $y_n \le (y+\epsilon)^n$ for all $n \gg 1$. Hence $(x_n + y_n) \le (x+\epsilon)^n + (y+\epsilon)^n \le 2 (y+\epsilon)^n$ for all $n \gg 1$. It follows that $\limsup (x_n + y_n)^{1/n} \le (y+\epsilon)$. This holds for every $\epsilon > 0$. Therefore $\curv(\{x_n + y_n\}) \le y = \sup\{x,y\}$. This completes the proof.
\end{proof}

\begin{lemma}\label{lem:curv-poly-ieq}
    Let $\{a_n\}$ and $\{b_n\}$ be sequences of non-negative integers and $P(t)$ be a nonzero polynomial with non-negative integer coefficients such that $\sum_{n=0}^{\infty} a_n t^n = (\sum_{n=0}^{\infty} b_n t^n) P(1/t)$ holds as a formal Laurent series. Then $ \cx\{(b_n)\} \le \cx\{(a_n)\}$ and $ \curv\{(b_n)\} \le \curv\{(a_n)\}$.
\end{lemma}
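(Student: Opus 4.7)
The plan is to extract from the given Laurent series identity a pointwise inequality between the two sequences (up to a shift), and then to invoke the elementary invariance properties of complexity and curvature from \Cref{rmk:cx-curv-ineq} together with the obvious shift-invariance.

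First I would write $P(t)=\sum_{i=0}^{d} c_i\,t^i$ with $c_i\ge 0$ integers and $d=\deg P$, so that $P(1/t)=\sum_{i=0}^{d} c_i\,t^{-i}$. Multiplying the Laurent series $\sum_{m\ge 0} b_m t^m$ by $P(1/t)$ gives only finitely many negative powers, so the product is well defined as a formal Laurent series, and comparing coefficients of $t^n$ in
\[
    \sum_{n\ge 0} a_n t^n \;=\; \Bigl(\sum_{m\ge 0} b_m t^m\Bigr) P(1/t)
\]
yields the identity $a_n = \sum_{i=0}^{d} c_i\,b_{n+i}$ valid for every $n\ge 0$.

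Since $P$ is nonzero, there exists some index $j\in\{0,\dots,d\}$ with $c_j>0$. Using that all $c_i$ and all $b_m$ are non-negative, I would drop every term other than $i=j$ on the right side to conclude
\[
    a_n \;\ge\; c_j\, b_{n+j} \qquad \text{for all } n\ge 0,
\]
equivalently $b_{n+j}\le (1/c_j)\,a_n$ for all $n$.

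Finally, since shifting the index by a constant does not alter complexity or curvature (it only reindexes the eventual-behaviour definitions), we have $\cx(\{b_{n+j}\})=\cx(\{b_n\})$ and $\curv(\{b_{n+j}\})=\curv(\{b_n\})$; by \Cref{rmk:cx-curv-ineq}(2), scaling the $a_n$ by the positive constant $1/c_j$ also leaves both invariants unchanged. Applying \Cref{rmk:cx-curv-ineq}(1) to the inequality $b_{n+j}\le (1/c_j)a_n$ then gives $\cx(\{b_n\})\le \cx(\{a_n\})$ and $\curv(\{b_n\})\le \curv(\{a_n\})$, as required. There is no real obstacle here beyond making sure that the Laurent-series manipulation is carried out with the correct index conventions and that a strictly positive coefficient of $P$ is actually used to extract the termwise inequality.
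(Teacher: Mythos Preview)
Your proof is correct and follows essentially the same route as the paper: expand $P(t)$, compare coefficients to get $a_n=\sum_i c_i b_{n+i}$, pick a positive coefficient $c_j$, and extract the pointwise bound $b_{n+j}\le a_n/c_j$. The paper uses the slightly simpler observation that since $c_j$ is a positive integer one has $c_j\ge 1$, so $b_{n+j}\le c_j b_{n+j}\le a_n$ directly, avoiding the appeal to \Cref{rmk:cx-curv-ineq}(2); but this is a cosmetic difference only.
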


\begin{proof}
    Consider $P(t)= \sum_{i=0}^r c_i t^i$. Comparing the coefficient of $t^n$ from both sides of the equality $\sum_{n=0}^{\infty} a_n t^n = (\sum_{n=0}^{\infty} b_n t^n) P(1/t)$, one obtains that $a_n=\sum_{i=0}^r b_{n+i} c_i$ for all $n \ge 0$. Therefore, since $\{a_n\}$, $\{b_n\}$ and $\{c_n\}$ are sequences of non-negative integers, it follows that $b_{n+i} c_i \le a_n$ for all $0\le i\le r$ and for all $n\ge 0$. Since $P(t)$ be a nonzero polynomial with non-negative integer coefficients, there exists $0 \le m \le r$ such that $c_m \ge 1$. Thus $b_{n+m}\le b_{n+m} c_m \le a_n$ for all $n\ge 0$. This implies that $ \cx\{(b_n)\} \le \cx\{(a_n)\}$ and $ \limsup (b_n)^{1/n} \le \limsup (a_n)^{1/n}$, i.e., $ \curv\{(b_n)\} \le \curv\{(a_n)\}$.
\end{proof}



\section{Complexity and curvature of (pair of) modules}\label{sec:cx-curv-mod}

In this section, we recall and introduce certain notions and properties of complexities and curvatures of pair of modules based on both Ext and Tor functors.

Let $M$ be an $R$-module. Let $\mu_R(M)$ and $\lambda_R(M)$ denote the minimal number of generators and the length of $M$ respectively. For every integer $n$, let $\beta_n^R(M) :=\mu(\Ext_R^n(M,k))$ and $\mu_R^n(M) := \mu(\Ext_R^n(k,M))$ be the $n$th Betti and Bass numbers of $M$ respectively. The type of $M$ is defined to be $\type_R(M):=\mu_R^t(M)$, where $t=\depth(M)$. When the base ring $R$ is clear from the context, we drop the subscript from $\mu_R(M)$, $\lambda_R(M)$ and $\type_R(M)$. Denote $(-)^\vee := \Hom_R(-,E)$, the Matlis dual, where $E$ is the injective hull of $k$.

The notion of complexity of a pair of modules was introduced by Avramov-Buchweitz.  


\begin{definition}\cite[pp.~286]{AB00}\label{defn:cx-M-N}
	The complexity of a pair of $R$-modules $M$ and $N$ is defined to be
	$$\cx_R(M,N):= \cx\big\{ \mu_R(\Ext_R^n(M,N)) \big\}_{n\ge 0} \quad \mbox{[see \Cref{defn:cx-curv-x_n}].}$$
\end{definition}

In a similar way, we define Ext-curvature of a pair of modules.

\begin{definition}\label{defn:curv-M-N}
	We define the Ext-curvature of a pair of $R$-modules $M$ and $N$ as
	$$\curv_R(M,N):= \curv\big\{ \mu_R(\Ext_R^n(M,N)) \big\}_{n\ge 0} \quad \mbox{[see \Cref{defn:cx-curv-x_n}].}$$
\end{definition}

One can also define the dual notions of (Ext-) complexity and curvature, which are Tor-complexity and Tor-curvature respectively. The Tor-complexity was first studied in \cite[pp.~4]{Dao}.

\begin{definition}\label{defn:tcx-tcurv}
	The Tor-complexity and Tor-curvature of a pair of $R$-modules $M$ and $N$ are defined as
	\begin{align*}
	\tcx_R(M,N) &:= \cx\big\{ \mu_R(\Tor^R_n(M,N)) \big\}_{n\ge 0} \quad \mbox{ and}\\
	\tcurv_R(M,N) &:= \curv\big\{ \mu_R(\Tor^R_n(M,N)) \big\}_{n\ge 0} \quad \mbox{[see \Cref{defn:cx-curv-x_n}].}
	\end{align*}
\end{definition}

In proving results on complexity and curvature, one often has to work with half-exact sequences of modules, where unfortunately $\mu(-)$ is incomparable along such sequences. Since length still behaves well along such sequences, it would be more desirable to have an expression of complexity and curvature in terms of $\lambda(-)$ in place of $\mu(-)$.

\begin{lemma}\label{cx-mu-l}
	Let $\{M_i\}_{i\ge 0}$ be a sequence of $R$-modules. Suppose there exists an integer $h\ge 0$ such that $\fm^h M_i =0$ for all $i \gg 0$. Then the following hold.
    \begin{enumerate}[\rm (1)]
        \item $\cx_R(\{ \mu(M_i)\})=\cx_R(\{ \lambda(M_i)\})=\cx_R(\{ \type(M_i)\})$.
        \item $\curv_R(\{ \mu(M_i)\})=\curv_R(\{\lambda(M_i)\})=\curv_R(\{\type(M_i)\})$.
    \end{enumerate}
\end{lemma}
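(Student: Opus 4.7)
The plan is to pass from the asymptotic annihilation hypothesis to working over the Artinian local quotient $R/\fm^h$, where the three invariants $\mu$, $\lambda$, and $\type$ are comparable up to a multiplicative constant depending only on $h$, and then to invoke the scale-invariance and monotonicity of $\cx$ and $\curv$ recorded in \Cref{rmk:cx-curv-ineq}.

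First, I would set $\ell := \lambda_R(R/\fm^h)$, which is finite since $R/\fm^h$ is Artinian. For every $i \gg 0$ the hypothesis $\fm^h M_i = 0$ forces $M_i$ to be a finitely generated, and hence finite-length, module over $R/\fm^h$. The bound $\mu(M_i) \le \lambda(M_i)$ is immediate from $\mu(M_i) = \lambda(M_i/\fm M_i)$; conversely, any surjection $(R/\fm^h)^{\mu(M_i)} \twoheadrightarrow M_i$ gives $\lambda(M_i) \le \ell \cdot \mu(M_i)$. For the type, $\type(M_i) = \dim_k \Soc(M_i) = \lambda(\Soc(M_i)) \le \lambda(M_i)$. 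The reverse comparison $\lambda(M_i) \le \ell \cdot \type(M_i)$ can be handled via Matlis duality over the Artinian local ring $R/\fm^h$: writing $N^{\vee} := \Hom_{R/\fm^h}(N, E_{R/\fm^h}(k))$ for an $R/\fm^h$-module $N$, one has $\lambda(N^{\vee}) = \lambda(N)$, $\mu(N^{\vee}) = \type(N)$, and $\type(N^{\vee}) = \mu(N)$, so applying the already proven bound $\lambda \le \ell \cdot \mu$ to $M_i^{\vee}$ yields $\lambda(M_i) = \lambda(M_i^{\vee}) \le \ell \cdot \mu(M_i^{\vee}) = \ell \cdot \type(M_i)$.

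Combining these observations, for all $i \gg 0$ one obtains the double chains
$$\mu(M_i) \le \lambda(M_i) \le \ell \cdot \mu(M_i) \quad \text{and} \quad \type(M_i) \le \lambda(M_i) \le \ell \cdot \type(M_i).$$
By parts (1) and (2) of \Cref{rmk:cx-curv-ineq}, both complexity and curvature of a sequence of non-negative reals are monotone under pointwise $\le$ and invariant under multiplication by a positive constant, so these chains instantly yield the asserted equalities of complexities in (1) and of curvatures in (2).

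I do not anticipate a serious obstacle: the only nontrivial input is the Matlis duality step, and the point there is merely to recognize that the injective hull of $k$ over $R/\fm^h$ is $(0 :_{E_R(k)} \fm^h)$, a module of length $\ell$, so the standard length/minimal-number-of-generators/type swap holds for any finite-length $R/\fm^h$-module.
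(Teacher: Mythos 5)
Your proof is correct and follows essentially the same route as the paper: compare $\mu(M_i)$ with $\lambda(M_i)$ for the eventually-annihilated tail, then transfer to $\type$ via Matlis duality using $\lambda(M^{\vee})=\lambda(M)$ and $\mu(M^{\vee})=\type(M)$. The only difference is cosmetic: the paper outsources the $\mu$--$\lambda$ comparison to \cite[Lem.~2.5]{DV09} and dualizes over $R$, whereas you prove the comparison directly via the bound $\mu(M_i)\le\lambda(M_i)\le\lambda(R/\fm^h)\,\mu(M_i)$ and work over $R/\fm^h$, which makes the argument self-contained but is not a genuinely different method.
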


\begin{proof}
    It is shown in \cite[Lem.~2.5]{DV09} that $\cx_R(\{ \mu(M_i)\})=\cx_R(\{ \lambda(M_i)\})$. By a similar argument, one obtains that $\curv_R(\{ \mu(M_i)\})=\curv_R(\{\lambda(M_i)\})$. For the other two equalities, set $N_i := (M_i)^{\vee}$ for all $i$. Then $\fm^h N_i =0$ for all $i \gg 0$. Hence $\cx_R(\{ \mu(N_i)\})=\cx_R(\{ \lambda(N_i)\})$ and $\curv_R(\{ \mu(N_i)\})=\curv_R(\{\lambda(N_i)\})$. For an $R$-module $M$, since $\mu(M^{\vee})=\type(M)$ and $\lambda(M^{\vee})=\lambda(M)$, the desired equalities follow.
\end{proof}

As an immediate consequence of Lemma~\ref{cx-mu-l}, one obtains  the following result. 

\begin{corollary}\label{cor:cx-curv-length}
	If $M$ or $N$ has finite length, then $\mu(-)$ can be replaced by $\lambda(-)$ in {\rm Definitions~\ref{defn:cx-M-N}, ~\ref{defn:curv-M-N} and \ref{defn:tcx-tcurv}.}
\end{corollary}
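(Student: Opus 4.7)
The plan is to reduce the corollary to a direct application of Lemma~\ref{cx-mu-l} to the sequences $\{\Ext_R^n(M,N)\}_{n\ge 0}$ and $\{\Tor_n^R(M,N)\}_{n\ge 0}$. Since that lemma requires a uniform power of $\fm$ annihilating all but finitely many terms, the first step is to produce such an annihilator from the finite-length hypothesis.

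First I would observe that if $M$ (or symmetrically $N$) has finite length, then $\Supp_R(M) = \{\fm\}$, so $\fm^h M = 0$ for some positive integer $h$. By functoriality of $\Ext$ and $\Tor$ in the first variable, scalar multiplication by any element $r \in \fm^h$ on $\Ext_R^n(M,N)$ or on $\Tor_n^R(M,N)$ coincides with the map induced by multiplication by $r$ on $M$, which is the zero map. Hence
\[
\fm^h \, \Ext_R^n(M,N) = 0 \quad \text{and} \quad \fm^h \, \Tor_n^R(M,N) = 0 \quad \text{for all } n \ge 0,
\]
not merely for $n \gg 0$. The case where $N$ has finite length is handled in the exact same way using functoriality in the second variable.

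With this uniform annihilation in hand, I would invoke Lemma~\ref{cx-mu-l} with $M_i := \Ext_R^i(M,N)$ (respectively $M_i := \Tor_i^R(M,N)$). The conclusion of that lemma yields
\[
\cx\bigl\{\mu(\Ext_R^n(M,N))\bigr\} = \cx\bigl\{\lambda(\Ext_R^n(M,N))\bigr\}, \quad \curv\bigl\{\mu(\Ext_R^n(M,N))\bigr\} = \curv\bigl\{\lambda(\Ext_R^n(M,N))\bigr\},
\]
and the analogous identities for $\Tor$. Comparing with Definitions~\ref{defn:cx-M-N}, \ref{defn:curv-M-N}, and \ref{defn:tcx-tcurv} gives exactly the statement that $\mu(-)$ may be replaced by $\lambda(-)$ in each of those definitions.

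There is no real obstacle here; the corollary is essentially a specialization of Lemma~\ref{cx-mu-l}, and the only verification needed is the uniform $\fm^h$-annihilation of the $\Ext$ and $\Tor$ sequences, which follows immediately from the finite-length hypothesis on $M$ or $N$ together with standard functoriality.
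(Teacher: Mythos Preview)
Your proposal is correct and matches the paper's approach exactly: the paper simply states that the corollary is an immediate consequence of Lemma~\ref{cx-mu-l}, and you have spelled out the one-line verification (finite length of $M$ or $N$ forces a uniform power of $\fm$ to annihilate all the Ext and Tor modules) needed to invoke that lemma.
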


The next lemma produces more classes of modules for which $\mu(-)$ can be replaced by $\lambda(-)$ in {\rm Definitions~\ref{defn:cx-M-N}, ~\ref{defn:curv-M-N} and \ref{defn:tcx-tcurv}.}

\begin{lemma}\label{lem:Tor-m-zero}
    Suppose $M$ is an $R$-module locally of finite projective dimension on $\Spec(R) \smallsetminus \{ \fm \}$. Then there exist positive integers $g$ and $h$ such that:
    \begin{enumerate}[\rm (1)]
        \item $\fm^g \Ext_R^n(M,N)=0$ for all $n>\dim(R)$ and for all $R$-modules $N$, and
        \item $\fm^h \Tor_n^R(M,N)=0$ for all $n>\dim(R)$ and for all $R$-modules $N$.
    \end{enumerate}

Consequently, $\mu(-)$ can be replaced by $\lambda(-)$ in {\rm Definitions~\ref{defn:cx-M-N}, ~\ref{defn:curv-M-N} and \ref{defn:tcx-tcurv}.}    
\end{lemma}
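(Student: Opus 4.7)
The plan is to reduce to a single ``high'' syzygy of $M$ that is locally free on the punctured spectrum, and then to show that a power of $\fm$ factors the identity map of that syzygy through a finite free module; this single factorization will simultaneously annihilate every $\Ext^j$ and $\Tor_j$ of the syzygy in positive degree, yielding a uniform bound in one stroke.

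Set $d := \dim(R)$ and let $L := \Omega^d M$ denote the $d$-th syzygy of $M$ in any free resolution. For each prime $\fp \neq \fm$, the chain $\fp \subsetneq \fm$ forces $\dim(R_\fp) \le d-1$, so by Auslander--Buchsbaum
\[
    \projdim_{R_\fp}(M_\fp) \le \depth(R_\fp) \le \dim(R_\fp) \le d-1,
\]
whence $L_\fp$ is $R_\fp$-free for every $\fp \neq \fm$; that is, $L$ is locally free on the punctured spectrum. By dimension shifting, $\Ext_R^n(M,N) \cong \Ext_R^{n-d}(L,N)$ and $\Tor_n^R(M,N) \cong \Tor_{n-d}^R(L,N)$ for every $n > d$ and every $N$, so it suffices to produce a single $g \ge 1$ with $\fm^g \Ext_R^j(L,N) = 0$ and $\fm^g \Tor_j^R(L,N) = 0$ for all $j \ge 1$ and all $N$.

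Next I would choose a surjection $\pi: F \twoheadrightarrow L$ with $F$ finite free, set $K := \ker \pi$, and apply $\Hom_R(L,-)$ to the short exact sequence $0 \to K \to F \to L \to 0$ to obtain an embedding $\coker(\pi_{*}) \hookrightarrow \Ext_R^1(L,K)$. Since $L$ is locally free on the punctured spectrum, $\Ext_R^1(L,K)_\fp = 0$ for all $\fp \neq \fm$, so $\Ext_R^1(L,K)$ is a finitely generated $\fm$-torsion module, hence of finite length; pick $g \ge 1$ with $\fm^g \Ext_R^1(L,K) = 0$. For every $r \in \fm^g$ the class of $r \cdot \mathrm{id}_L$ then vanishes in $\coker(\pi_{*})$, so there is $\alpha_r : L \to F$ with $\pi \alpha_r = r \cdot \mathrm{id}_L$; equivalently, $r \cdot \mathrm{id}_L$ factors through the projective $F$.

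The conclusion now follows from the standard observation that if $r \cdot \mathrm{id}_L$ factors through a projective $F$, then multiplication by $r$ on $\Ext_R^j(L,N)$ (resp.~$\Tor_j^R(L,N)$) is induced by $r \cdot \mathrm{id}_L$ and hence factors through $\Ext_R^j(F,N) = 0$ (resp.~$\Tor_j^R(F,N) = 0$) for every $j \ge 1$. Thus $\fm^g$ kills both $\Ext_R^j(L,N)$ and $\Tor_j^R(L,N)$ for all $j \ge 1$ and all $N$, giving (1) and (2) with $h := g$; the ``consequently'' clause is then immediate from \Cref{cx-mu-l}. The main subtlety is exactly this uniformity of $g$ across all $n$ simultaneously: a naive syzygy-by-syzygy approach using the universal class in $\Ext_R^1(\Omega^{n-1}M, \Omega^n M)$ yields a bound depending on $n$, and the device of passing once and for all to $L = \Omega^d M$ and factoring $r \cdot \mathrm{id}_L$ through a projective is what sidesteps this obstacle.
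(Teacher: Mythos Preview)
Your proof is correct and follows essentially the same strategy as the paper: reduce to $L = \Omega^d M$, observe it is locally free on the punctured spectrum, and use dimension-shifting. Where the paper cites \cite[Lem.~4.3]{DV09} for the uniform annihilation of $\Ext_R^{\ge 1}(L,-)$ (and remarks that the Tor case is analogous), you have spelled out the standard argument by factoring $r\cdot\id_L$ through a free module, which elegantly handles both Ext and Tor simultaneously.
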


\begin{proof}
Set $d:=\dim(R)$. Then $\dim(R_{\fp})\le d$ for all $\fp \in \Spec(R)$. Hence, $\depth(\Omega^d_R(M)_{\fp})\ge \depth(R_{\fp})$ for all $\fp \in \Spec(R)$ (cf.~\cite[1.3.7]{BH98}). Therefore, from the given hypothesis, by Auslander-Buchsbaum formula, one obtains that $\Omega^d_R(M)$ is locally free on $\Spec(R) \smallsetminus \{ \fm \}$. Note that $\Ext_R^n(M,N) \cong \Ext_R^{n-d}(\Omega^d_R(M),N)$ and $\Tor_n^R(M,N) \cong \Tor_{n-d}^R(\Omega^d_R(M),N)$ for all $n>d$. So (1) follows from \cite[Lem.~4.3]{DV09}. For (2), the proof is similar to that of \cite[Lem.~4.3]{DV09}. In fact, by using the functor $ (-) \otimes_R N $ instead of $\Hom_R(-,N)$ in the proof of \cite[Lem.~4.3]{DV09}, one obtains the desired result on Tor.

The last part follows from \Cref{cx-mu-l}.  
\end{proof}

The (projective and injective) complexity and curvature of a module were introduced by Avramov. See \cite[Sec.~1]{Avr96} for the exact references of these invariants.

\begin{definition}[Avramov]{\,}\label{defn:cx-curv}
	\begin{enumerate}[\rm (1)]
		\item The (projective) complexity and curvature of $M$ are defined to be $$\cx_R(M):=\cx_R(M,k) \quad\mbox{and}\quad \curv_R(M):=\curv_R(M,k).$$
		\item The injective complexity and curvature of $M$ are defined by
		$$\injcx_R(M):=\cx_R(k,M) \quad\mbox{and}\quad \injcurv_R(M):=\curv_R(k,M).$$
	\end{enumerate}
\end{definition}


The following remarks are well known, and these follow directly from the definitions of complexity and curvature. The statements (1) and (2) in \Cref{rmk:cx-curv} can be observed from \Cref{rmk:curv-zero}.

\begin{remark}\label{rmk:cx-curv}
	For $R$-modules $M$ and $N$, the following hold true.
	\begin{enumerate}[\rm (1)]
		\item \label{finite-pd-cx-curv}$\pd_R(M)<\infty$ $\Longleftrightarrow$ $\cx_R(M) = 0$ $\Longleftrightarrow$ $\curv_R(M) = 0$ $\Longleftrightarrow$ $\curv_R(M) < 1$.
		\item \label{finite-id-cx-curv}$\id_R(M)<\infty$ $\Longleftrightarrow$ $\injcx_R(M) = 0$ $\Longleftrightarrow$ $\injcurv_R(M) = 0$ $\Longleftrightarrow$ $\injcurv_R(M) < 1$.
		\item \label{finite-cx-implies-curv} It follows from \Cref{rmk:cx-curv-ineq}.(3) that
        \begin{enumerate}
            \item $\cx_R(M,N)<\infty \Longrightarrow\curv_R(M,N)\le 1$, and $\tcx_R(M,N)<\infty \Longrightarrow \tcurv_R(M,N)\le 1$.
            \item In particular, $\cx_R(M) < \infty $ $\Longrightarrow$ $\curv_R(M) \le 1$, and $\injcx_R(M) < \infty $ $\Longrightarrow$ $\injcurv_R(M) \le 1$.
        \end{enumerate}
		\item Since $\beta_n^R(k) = \mu_R^n(k)$ for all $n$, it follows that
		\begin{center}
			$\cx_R(k) = \injcx_R(k)$ \ and \ $\curv_R(k) = \injcurv_R(k)$.
		\end{center}
\item Since the minimal number of generators of a module is zero if and only if the module itself is zero, it follows from \Cref{rmk:curv-zero} and Definitions~\ref{defn:cx-M-N}, ~\ref{defn:curv-M-N} and \ref{defn:tcx-tcurv} that 
\begin{enumerate}
    \item $\cx_R(M,N)=0\Longleftrightarrow \curv_R(M,N)<1\Longleftrightarrow \Ext^n_R(M,N)=0 \text{ for all } n\gg 0 $.
    \item $\tcx_R(M,N)=0\Longleftrightarrow \tcurv_R(M,N)<1\Longleftrightarrow \Tor_n^R(M,N)=0 \text{ for all } n\gg 0$.
\end{enumerate}  
\end{enumerate}
\end{remark}

The residue field has extremal complexity and curvature. Moreover, complexity and curvature of the residue field characterize complete intersection local rings.

\begin{proposition}\label{prop:cx-curv-facts}{\ }
	\begin{enumerate}[\rm (1)]
		\item \label{cx-curv-M-k}\cite[Prop.~2]{Avr96} For every $R$-module $M$,
		\begin{enumerate}[\rm (a)]
			\item $\cx_R(M) \le \cx_R(k)$ and $\injcx_R(M) \le \injcx_R(k) =\cx_R(k)$.
			\item $\curv_R(M) \le \curv_R(k)$ and $\injcurv_R(M) \le \injcurv_R(k) = \curv_R(k) $.
		\end{enumerate}
		\item \label{char-CI-via-cx-curv}\cite[Thm.~3]{Avr96} The following statements are equivalent:
		\begin{enumerate}[\rm (a)]
			\item $R$ is complete intersection $($resp., of codimension $c$$)$.
			\item $\cx_R(k)<\infty$ $($resp., $\cx_R(k)=c<\infty$$)$.
			\item $\curv_R(k)\le 1$.
		\end{enumerate}
		\item \label{CI-ring-finite-cx-curv}It follows from $(1)$ and $(2)$ that if $R$ is complete intersection, then for every $R$-module $M$, $\cx_R(M) < \infty$, $\injcx_R(M) < \infty$, $\curv_R(M) \le 1$ and $\injcurv_R(M) \le 1$.
        \item \label{CI-ring-finite-cx-pair}
       In view of \cite[Thm.~II.(3)]{AB00}, if $R$ is complete intersection, then for every pair of $R$-modules $M$ and $N$, $\cx_R(M,N) = \cx_R(N,M) \le \min\{\cx_R(M),\cx_R(N)\}< \infty$ by \eqref{CI-ring-finite-cx-curv}.
	\end{enumerate}
\end{proposition}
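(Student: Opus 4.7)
The statement bundles four facts of varying depth: items (1) and (3) reduce to a basic Betti/Bass-number comparison combined with (2), item (4) is a corollary of the Avramov-Buchweitz structure theorem for $\Ext^*_R(M,N)$ over complete intersections, and (2) is the genuinely deep Avramov characterization. Accordingly, my plan is to prove (1) directly, deduce (3) from (1) together with (2), and treat (2) and (4) by citing the substantive external results.

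For (1), the plan is to bound $\beta_n^R(M)$ by a multiple of $\beta_n^R(k)$. Take a minimal free resolution $F_\bullet$ of $k$, so that $F_n = R^{\beta_n^R(k)}$. Then $\Tor_n^R(M,k)$ is a subquotient of $F_n\otimes_R M\otimes_R k \cong k^{\beta_n^R(k)\mu(M)}$, which gives $\beta_n^R(M) \le \mu(M)\cdot\beta_n^R(k)$ for every $n$. Parts (1) and (2) of \Cref{rmk:cx-curv-ineq} then yield $\cx_R(M)\le \cx_R(k)$ and $\curv_R(M)\le \curv_R(k)$. For the Bass-number statements, the dual argument (using a minimal injective resolution of $M$, or equivalently Matlis duality over $\widehat{R}$, which swaps Betti and Bass numbers up to multiplication by $\type$ or $\mu$) gives $\mu_R^n(M)\le \type(M)\cdot \mu_R^n(k)$; together with the equalities $\cx_R(k)=\injcx_R(k)$ and $\curv_R(k)=\injcurv_R(k)$ from \Cref{rmk:cx-curv}(4), this completes (1). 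Item (3) is then immediate: assuming $R$ is CI, (2) gives $\cx_R(k)<\infty$ and $\curv_R(k)\le 1$, and (1) transports these bounds to an arbitrary $M$.

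For (2), the easy direction $R$ CI of codimension $c \Rightarrow \cx_R(k)\le c$ is a consequence of Tate's construction: when $R=Q/(\mathbf{f})$ with $Q$ regular local and $\mathbf{f}$ a regular sequence of length $c$, adjoining divided-power variables dual to $\mathbf{f}$ to a (finite) $Q$-free resolution of $k$ produces a minimal DG-algebra $R$-resolution of $k$ whose $n$th rank is a polynomial in $n$ of degree $c-1$. The hard converse, $\curv_R(k)\le 1\Rightarrow R$ is CI, is Gulliksen's theorem; its proof analyzes cohomology operators acting on $\Ext^*_R(k,k)$ and extracts from polynomially bounded growth a regular sequence of the correct length inside a minimal Cohen presentation of $\widehat{R}$. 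For (4), I would invoke \cite[Thm.~II]{AB00}: over a CI ring, $\Ext^*_R(M,N)$ is finitely generated as a module over a polynomial ring in the cohomology operators, symmetric in $M$ and $N$, which forces both the symmetry $\cx_R(M,N)=\cx_R(N,M)$ and the bound by $\min\{\cx_R(M),\cx_R(N)\}$. The main obstacle is clearly the converse in (2) — Gulliksen's theorem — which lies well outside the elementary sequence calculus developed in \Cref{sec:cx-curv-seq}; once this one deep input is granted, the rest of the proposition reduces to formal bookkeeping.
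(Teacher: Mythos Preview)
The paper does not prove this proposition: parts (1) and (2) carry citations to \cite{Avr96}, part (3) is explicitly noted as following from (1) and (2), and part (4) is attributed to \cite{AB00}. Your overall plan---attempt (1) directly, derive (3), and cite the substantive external inputs for (2) and (4)---is more ambitious than what the paper does, and your treatment of (2), (3), and (4) is correct.

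Your argument for (1), however, contains an unjustified step. You assert that $\Tor_n^R(M,k) = H_n(F_\bullet \otimes_R M)$ is a subquotient of $F_n \otimes_R M \otimes_R k$. Certainly it is a subquotient of $F_n \otimes_R M$, but the passage to $(F_n \otimes_R M)\otimes_R k$ does not follow: a $k$-vector-space subquotient of a module $N$ need not be realizable inside $N/\fm N$. Concretely, for $R = k[x]/(x^3)$ and $M = R/(x^2)$, writing $C = F \otimes_R M$ one finds $Z_1 = \fm C_1 = k\bar{x}$ and $B_1 = 0$, so the natural map $H_1(C) \to C_1 \otimes_R k$ is zero even though $H_1(C) \cong k$. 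If the subquotient claim is meant only numerically (i.e.\ $\dim_k H_n \le \dim_k(C_n \otimes_R k)$), then it is precisely the inequality $\beta_n^R(M) \le \mu(M)\,\beta_n^R(k)$ you are trying to establish, and the argument is circular. Note also that, applied to $M=\Omega_R^j(k)$, your claimed inequality would force $\beta_{n+j}^R(k)\le\beta_j^R(k)\,\beta_n^R(k)$, a submultiplicativity of the Betti sequence of $k$ that is not available in general. The proofs in \cite{Avr96} and \cite[\S4.2]{Avr98} of $\cx_R(M)\le\cx_R(k)$ and $\curv_R(M)\le\curv_R(k)$ do not go through such a termwise inequality; they rely on coefficientwise comparisons of Poincar\'e series obtained from a minimal Cohen presentation of $\widehat{R}$. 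The dual Bass-number claim $\mu_R^n(M) \le \type(M)\,\mu_R^n(k)$ has the same issue.
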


\begin{remark}\label{rmk:flat-ext-CM}
Let $(R,\fm)\to (S,\fn)$ be a flat homomorphism such that $\fn=\fm S$. Let $M$ and $N$ be $R$-modules. Then, the following hold.
\begin{enumerate}[\rm(1)]
    \item $\depth_R(M)=\depth_S(M\otimes_R S)$ (see, e.g., \cite[1.2.16.(a)]{BH98}).
    \item $\dim_R(M)=\dim_S(M\otimes_R S)$ (cf.~\cite[A.11.(b)]{BH98}).
    \item \label{mh-tensor-S}$\fm^h M \otimes_R S \cong (\fm^h S)(M\otimes_R S)=\fn^h (M\otimes_R S)$ (see \cite[Lem.~5.2.5.(1)]{ddd}).
    \item $e_R(M)=e_S(M\otimes_R S)$ (see, e.g., \cite[4.3]{ddd}).
    \item $\mu_R(M)=\mu_S(M\otimes_R S)$ and $\type_R(N)=\type_S(N\otimes_R S)$.
    \item 
    $\cx_R(M,N)=\cx_S(M\otimes_R S, N\otimes_R S)$, and $\curv_R(M,N)=\curv_S(M\otimes_R S, N\otimes_R S)$.
    \item 
    $\tcx_R(M,N)=\tcx_S(M\otimes_R S, N\otimes_R S)$, and $\tcurv_R(M,N)=\tcurv_S(M\otimes_R S, N\otimes_R S)$.
    \item 
    For each fixed $i$ and $h$, if $\fm^h\Ext^i_R(M,N)=0$, then $\fn^h\Ext^i_S(M\otimes_R S, N\otimes_R S)=0$.
    \item 
    For each fixed $i$ and $h$, if $\fm^h\Tor_i^R(M,N)=0$, then $\fn^h\Tor_i^S(M\otimes_R S, N\otimes_R S)=0$.
\end{enumerate}
\end{remark}

\begin{proof}
    Since $R\to S$ is faithfully flat, for each $i$, note that
    \begin{equation}\label{Ext-Tor-R-S}
        \Ext^i_R(M,N)\otimes_R S\cong \Ext^i_S(M\otimes_R S, N\otimes_R S) \mbox{ and } \Tor_i^R(M,N)\otimes_R S\cong \Tor_i^S(M\otimes_R S, N\otimes_R S)
    \end{equation}    
    It follows from tensoring a minimal $R$-free resolution of $M$ by $S$ that $\mu_R(M)=\mu_S(M\otimes_R S) $. Set $t:=\depth_R(N)$. Then $t=\depth_S(N\otimes_R S)$. Tensoring $\Ext^t_R(R/\fm,N)$ by $S$, and using \eqref{Ext-Tor-R-S}, one gets that $\type_R(N)=\type_S(N\otimes_R S)$. This proves (5).
    The desired equalities in (6) and (7) follow from \eqref{Ext-Tor-R-S} and the fact that $\mu_R(X)=\mu_S(X\otimes_R S) $ for any $R$-module $X$. The statements in (8) and (9) can be derived from \eqref{Ext-Tor-R-S} and \eqref{mh-tensor-S}.
\end{proof}

In the next three lemmas, we record how complexity and curvature of (pair of) modules behave with respect to going modulo elements regular on the module.

\begin{lemma}\label{lem:cx-curv-M-M/xM}
    Let $x$ be an $M$-regular element. Then:
    \begin{enumerate}[\rm (1)]
        \item $\cx_R(M) = \cx_R(M/xM)$ and $\curv_R(M) = \curv_R(M/xM)$.
        \item $\injcx_R(M)=\injcx_R(M/xM)$ and $\injcurv_R(M)=\injcurv_R(M/xM)$.
    \end{enumerate}
\end{lemma}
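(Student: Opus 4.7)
The plan is to exploit the short exact sequence $0 \to M \xrightarrow{x} M \to M/xM \to 0$, which is available because $x$ is $M$-regular. As is standard in the local setting, I may assume $x \in \mathfrak{m}$, since otherwise $x$ is a unit and $M/xM = 0$ makes the statement vacuous. Under this assumption, multiplication by $x$ acts as zero on each $\Tor_n^R(M,k)$ and on each $\Ext_R^n(k,M)$, because in both cases the action factors through $k \xrightarrow{\cdot x} k$, which is the zero map.

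For part (1), I would apply $- \otimes_R k$ to the displayed short exact sequence. The vanishing of $x$-multiplication on each $\Tor_n^R(M,k)$ causes the resulting long exact Tor sequence to break into the short exact pieces
\[
0 \to \Tor_n^R(M,k) \to \Tor_n^R(M/xM, k) \to \Tor_{n-1}^R(M, k) \to 0,
\]
yielding $\beta_n^R(M/xM) = \beta_n^R(M) + \beta_{n-1}^R(M)$. The inequality $\beta_n^R(M) \le \beta_n^R(M/xM)$ combined with Remark~\ref{rmk:cx-curv-ineq} gives $\cx_R(M) \le \cx_R(M/xM)$ and $\curv_R(M) \le \curv_R(M/xM)$. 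The reverse inequalities follow by applying Lemma~\ref{lem:cx-curv-seq} (with $w = 1$) to the bound $\beta_n^R(M/xM) \le \beta_n^R(M) + \beta_{n-1}^R(M)$.

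For part (2), the dual strategy applies: applying $\Hom_R(k, -)$ to the same short exact sequence, the vanishing of $x$-multiplication on each $\Ext_R^n(k,M)$ yields analogous short exact sequences
\[
0 \to \Ext_R^n(k, M) \to \Ext_R^n(k, M/xM) \to \Ext_R^{n+1}(k, M) \to 0,
\]
so $\mu_R^n(M/xM) = \mu_R^n(M) + \mu_R^{n+1}(M)$. As in (1), the inequality $\mu_R^n(M) \le \mu_R^n(M/xM)$ gives $\injcx_R(M) \le \injcx_R(M/xM)$ and $\injcurv_R(M) \le \injcurv_R(M/xM)$, while the reverse inequalities follow from $\mu_R^n(M/xM) \le \mu_R^n(M) + \mu_R^{n+1}(M)$ via Lemma~\ref{lem:cx-curv-seq} after a reindexing step: the shift is forward here rather than backward, but since complexity and curvature of a sequence are invariant under index shifts, this is harmless.

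No serious obstacle is anticipated; the proof is essentially routine once the two short exact sequences are extracted, with the only mild care being in matching the direction of the index shift (backward in the Tor case, forward in the Ext case) to the form required by Lemma~\ref{lem:cx-curv-seq}.
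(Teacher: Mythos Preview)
Your proposal is correct and follows essentially the same route as the paper: both extract the identities $\beta_n^R(M/xM)=\beta_n^R(M)+\beta_{n-1}^R(M)$ and $\mu_R^n(M/xM)=\mu_R^n(M)+\mu_R^{n+1}(M)$ from the long exact sequences induced by $0\to M\xrightarrow{x}M\to M/xM\to 0$, and then reduce to elementary lemmas on complexity and curvature of sequences. The only cosmetic difference is that the paper invokes \cite[Prop.~2.2.(3)]{DV09} and Lemma~\ref{lem:curv-sum} to get the equalities in one stroke, whereas you obtain each equality as a pair of inequalities via Remark~\ref{rmk:cx-curv-ineq} and Lemma~\ref{lem:cx-curv-seq}; your handling of the forward index shift in part~(2) by reindexing is valid.
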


\begin{proof}
(1) Considering the long exact sequence
\begin{equation*}
	\cdots \to \Ext_R^{n-1}(M,k) \stackrel{x} \longrightarrow \Ext_R^{n-1}(M,k) \to \Ext_R^{n}(M/xM,k) \to \Ext_R^{n}(M,k)\stackrel{x} \longrightarrow \cdots,
\end{equation*}
 and remembering that multiplication by $x$ on $\Ext^i_R(M,k)$ is the zero map, one obtains that $\beta_n^R(M/xM) = \beta_n^R(M)+\beta_{n-1}^R(M)$ for all $n$. Now we are done by \cite[Prop.~2.2.(3)]{DV09} and Lemma~\ref{lem:curv-sum} respectively. 

(2) In this case, considering the long exact sequence
\begin{equation*}
	\cdots \to \Ext_R^{n}(k,M) \stackrel{x} \longrightarrow \Ext_R^{n}(k,M) \longrightarrow \Ext_R^{n}(k,M/xM) \longrightarrow \Ext_R^{n+1}(k,M)\stackrel{x} \longrightarrow \cdots,
\end{equation*}
one gets that $ \mu_R^n(M/xM) = \mu_R^n(M) + \mu_R^{n-1}(M) $ for all $n$. Hence the proof follows as (1).
\end{proof}

Let $N\subseteq M$ be $R$-modules such that $IN=0$ and $I(M/N)=0$ for some ideal $I$ of $R$. Then $I^2M\subseteq IN=0$, i.e., $I^2M=0$. We use this observation in the following two lemmas.

\begin{lemma}\label{lem:cx-curv-M-M/xM-Tor}
    Let $X$ and $M$ be two $R$-modules such that there exists a positive integer $h$ satisfying $\fm^h \big(\Tor_n^R(X,M) \big)=0$ for all $n \gg 0$. Let $x$ be an $M$-regular element. Then
    \begin{enumerate}[\rm (1)]
        \item $\fm^{2h}\Tor_n^R(X,M/xM) \big)=0$ for all $n \gg 0$.
        \item $\tcx_R(X,M/xM)\le \tcx_R(X,M)$ and $\tcurv_R(X,M/xM) \le \tcurv_R(X,M)$.
    \end{enumerate}
    In general, for an $M$-regular sequence $\mathbf x$, one has that  $$\tcx_R(X,M/\mathbf x M)\le \tcx_R(X,M) \; \mbox{ and } \; \tcurv_R(X,M/{\mathbf x}M) \le \tcurv_R(X,M).$$
\end{lemma}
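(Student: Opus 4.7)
The plan is to apply $-\otimes_R X$ to the short exact sequence $0 \to M \xrightarrow{x} M \to M/xM \to 0$ and read off from the resulting long exact Tor sequence the natural short exact sequence
\[
0 \to \Tor_n^R(X,M)/x\Tor_n^R(X,M) \to \Tor_n^R(X,M/xM) \to \bigl(0 :_{\Tor_{n-1}^R(X,M)} x\bigr) \to 0
\]
for every $n \ge 1$. Both outer terms are subquotients of $\Tor_{\bullet}^R(X,M)$, so whatever the hypothesis grants about $\Tor_{\bullet}^R(X,M)$ will pass to them.

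For part (1), for $n \gg 0$ the hypothesis gives $\fm^h \Tor_n^R(X,M) = 0 = \fm^h \Tor_{n-1}^R(X,M)$, so $\fm^h$ annihilates both the submodule and the quotient in the displayed short exact sequence. The observation recorded immediately before the lemma (if $\fm^h$ kills a submodule and the corresponding quotient, then $\fm^{2h}$ kills the middle term) then yields $\fm^{2h} \Tor_n^R(X,M/xM) = 0$, as desired.

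For part (2), by part (1) together with the given hypothesis, all three Tor modules appearing above are annihilated by some fixed power of $\fm$ for $n \gg 0$, so Lemma~\ref{cx-mu-l} permits me to compute the Tor-complexity and Tor-curvature equally well using $\lambda(-)$ instead of $\mu(-)$. Additivity of length in the short exact sequence then yields
\[
\lambda\bigl(\Tor_n^R(X,M/xM)\bigr) \le \lambda\bigl(\Tor_n^R(X,M)\bigr) + \lambda\bigl(\Tor_{n-1}^R(X,M)\bigr) \quad \text{for } n \gg 0,
\]
and an application of Lemma~\ref{lem:cx-curv-seq} (with $w=1$) converts this pointwise bound into the inequalities $\tcx_R(X,M/xM) \le \tcx_R(X,M)$ and $\tcurv_R(X,M/xM) \le \tcurv_R(X,M)$.

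The extension to an $M$-regular sequence $\mathbf{x} = x_1, \ldots, x_r$ follows by an easy induction on $r$: at the inductive step, $x_{i+1}$ is regular on $M/(x_1,\ldots,x_i)M$, and part (1) propagates the annihilation hypothesis (the $\fm$-exponent merely doubles at each step, which is harmless for $\tcx$ and $\tcurv$), while the inequalities in part (2) compose. There is no real obstacle here; the only mild bookkeeping concerns the annihilator exponents through the induction, which grow only geometrically and so never destroy the hypothesis needed to re-apply Lemma~\ref{cx-mu-l} at the next step.
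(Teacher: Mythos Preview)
Your proof is correct and follows essentially the same approach as the paper: both arguments use the long exact Tor sequence arising from $0 \to M \xrightarrow{x} M \to M/xM \to 0$, invoke the observation preceding the lemma to obtain the $\fm^{2h}$-annihilation, then appeal to Lemma~\ref{cx-mu-l} and Lemma~\ref{lem:cx-curv-seq} for the complexity and curvature bounds, and finish by induction on the length of the regular sequence. The only cosmetic difference is that you extract the full short exact sequence with explicit outer terms, whereas the paper works directly with the half-exact three-term sequence $\Tor_n^R(X,M) \to \Tor_n^R(X,M/xM) \to \Tor_{n-1}^R(X,M)$.
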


\begin{proof}
    The short exact sequence $0 \to M \stackrel{x} \longrightarrow M \to M/x M \to 0$ induces an exact sequence $\Tor_{n}^R(X,M) \to \Tor_n^R(X,M/xM) \to \Tor_{n-1}^R(X,M)$, which yields, for all $n\gg 0$, that $\fm^{2h}\Tor_n^R(X,M/xM) \big)=0$  and $\lambda \big(\Tor_n^R(X,M/xM) \big)\le \lambda \big(\Tor_n^R(X,M) \big)+\lambda \big(\Tor_{n-1}^R(X,M) \big)$. So, by Lemmas~\ref{cx-mu-l} and \ref{lem:cx-curv-seq}, one gets the desired inequalities. This proves (1) and (2).
    
    The general case follows by repeatedly using (1) and (2).
\end{proof}

The counterpart of \Cref{lem:cx-curv-M-M/xM-Tor} for Ext modules is the following.

\begin{lemma}\label{lem:cx-curv-M-M/xM-Ext}
    Let $X$ and $M$ be two $R$-modules such that there exists a positive integer $h$ satisfying $\fm^h \big(\Ext_R^n(X,M)\big)=0$ for all $n \gg 0$. 
    \begin{enumerate}[\rm (1)]
        \item Let $x$ be an $M$-regular element. Then $\fm^{2h}\Ext^n_R(X,M/xM) \big)=0$ for all $n \gg 0$. Moreover,
        \[
            \cx_R(X,M/xM)\le \cx_R(X,M) \; \mbox{ and } \; \curv_R(X,M/xM)\le \curv_R(X,M).
        \]
        \item 
        Let $y$ be an $X$-regular element. Then $\fm^{2h}\Ext^n_R(X/yX,M) \big)=0$ for all $n \gg 0$. Moreover
        \[
            \cx_R(X/yX,M)\le \cx_R(X,M) \; \mbox{ and } \; \curv_R(X/yX,M)\le \curv_R(X,M).
        \]
        \item 
        If $\mathbf y$ is an $X$-regular sequence and $\mathbf x$ is an $M$-regular sequence, then
        \[
        \cx_R(X/{\mathbf y} X, M/{\mathbf x}M)\le \cx_R(X,M) \; \mbox{ and } \; \curv_R(X/{\mathbf y} X, M/{\mathbf x}M)\le \curv_R(X,M).
        \]
    \end{enumerate}
\end{lemma}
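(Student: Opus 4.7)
The plan is to mirror the proof of \Cref{lem:cx-curv-M-M/xM-Tor}, using the appropriate long exact Ext sequences in place of the Tor ones. In each case, the annihilator conclusion follows from the submodule/quotient observation recorded just before the lemma (namely, that a three-term exact sequence with both ends annihilated by $\fm^h$ has its middle term annihilated by $\fm^{2h}$), and the length inequality is then fed into \Cref{cx-mu-l} and \Cref{lem:cx-curv-seq}.

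For part (1), I would apply $\Hom_R(X,-)$ to the short exact sequence $0 \to M \xrightarrow{x} M \to M/xM \to 0$ to obtain, for every $n$, the exact piece
\[
\Ext^n_R(X,M) \longrightarrow \Ext^n_R(X,M/xM) \longrightarrow \Ext^{n+1}_R(X,M).
\]
For $n \gg 0$ both outer terms are annihilated by $\fm^h$, so $\fm^{2h}$ kills the middle term. Taking lengths yields $\lambda(\Ext^n_R(X,M/xM)) \le \lambda(\Ext^n_R(X,M)) + \lambda(\Ext^{n+1}_R(X,M))$ for all $n \gg 0$. Setting $a_n := \lambda(\Ext^n_R(X,M/xM))$ and $b_n := \lambda(\Ext^{n+1}_R(X,M))$, one has $a_n \le b_{n-1} + b_n$, so \Cref{lem:cx-curv-seq} gives $\cx(\{a_n\}) \le \cx(\{b_n\})$ and $\curv(\{a_n\}) \le \curv(\{b_n\})$. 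Since an index shift affects neither complexity nor curvature, and \Cref{cx-mu-l} lets one replace $\mu$ by $\lambda$ on both sides, the desired inequalities follow.

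Part (2) is handled analogously: applying $\Hom_R(-,M)$ to $0 \to X \xrightarrow{y} X \to X/yX \to 0$ delivers the exact piece
\[
\Ext^{n-1}_R(X,M) \longrightarrow \Ext^n_R(X/yX,M) \longrightarrow \Ext^n_R(X,M),
\]
and the identical bookkeeping yields both the $\fm^{2h}$-annihilator conclusion and the length bound $\lambda(\Ext^n_R(X/yX,M)) \le \lambda(\Ext^{n-1}_R(X,M)) + \lambda(\Ext^n_R(X,M))$, to which \Cref{cx-mu-l} and \Cref{lem:cx-curv-seq} again apply directly. Part (3) then follows by iteration: repeat (1) along the entries of $\mathbf x$ to pass from $M$ to $M/\mathbf x M$, and then repeat (2) along the entries of $\mathbf y$ to pass from $X$ to $X/\mathbf y X$ (the two orderings are interchangeable). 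The main technical subtlety — and the only place that demands care — is that each successive quotient doubles the annihilator exponent; however, after finitely many iterations this exponent remains a finite power of $\fm$, so the inductive hypothesis is preserved at every step, and the complexity and curvature inequalities compose by monotonicity.
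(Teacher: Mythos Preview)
Your proposal is correct and follows essentially the same argument as the paper: the paper likewise extracts the three-term exact pieces from the long exact Ext sequences (in (2) written as $\Ext^n_R(X,M) \to \Ext^{n+1}_R(X/yX,M) \to \Ext^{n+1}_R(X,M)$, which is your sequence after an index shift), invokes the $\fm^{2h}$-annihilation observation, and applies \Cref{cx-mu-l} and \Cref{lem:cx-curv-seq}; part (3) is obtained by iteration. Your explicit remark about the annihilator exponent growing but staying finite under iteration is a nice clarification that the paper leaves implicit.
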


\begin{proof}
    (1) The short exact sequence $0 \to M \stackrel{x} \longrightarrow M \to M/x M \to 0$ induces an exact sequence $\Ext^n_R(X,M) \to \Ext^n_R(X,M/xM) \to \Ext^{n+1}_R(X,M)$, which yields, for all $n\gg 0$, that
    $\fm^{2h}\Ext^n_R(X,M/xM) \big)=0$ and $\lambda \big(\Ext^n_R(X,M/xM) \big)\le \lambda \big(\Ext^n_R(X,M) \big)+\lambda \big(\Ext^{n+1}_R(X,M) \big)$. Hence, by Lemmas~\ref{cx-mu-l} and \ref{lem:cx-curv-seq}, one obtains the desired inequalities.

    (2) The short exact sequence $0 \to X \stackrel{y} \longrightarrow X \to X/y X \to 0$ induces an exact sequence $\Ext^n_R(X,M) \to \Ext^{n+1}_R(X/yX,M) \to \Ext^{n+1}_R(X,M)$, which yields, for all $n\gg 0$, that $\fm^{2h}\Ext^n_R(X/yX,M) \big)=0$  and $\lambda \big(\Ext^{n+1}_R(X/yX,M) \big)\le \lambda \big(\Ext^n_R(X,M) \big)+\lambda \big(\Ext^{n+1}_R(X,M) \big)$. Hence the desired inequalities follow from Lemmas~\ref{cx-mu-l} and \ref{lem:cx-curv-seq}.

    (3) By using (1) and (2) one by one repeatedly, one gets the desired inequalities.    
\end{proof}

Our next lemma is a generalization of \cite[2.4]{ACST}.  

\begin{lemma}\label{lem:Ext-Tor-duality}
    Let $R$ be a CM local ring of dimension $d$ with a canonical module $\omega$. Let $L$ and $M$ be $R$-modules such that $M$ is CM of dimension $t$, and $\Tor_i^R(L,M)$ has finite length for all $i\ge 1$.
    Then
    \[
    \Ext_R^{t+i}(L,M^{\dagger}) \cong \Ext_R^d(\Tor_i^R(L,M),\omega) \;\mbox{ for all }i\ge 1,
    \]
    where $M^{\dagger} := \Ext_R^{d-t}(M,\omega)$.
\end{lemma}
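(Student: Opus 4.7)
The plan is to extract the claimed isomorphism from the pair of hyperderived-functor spectral sequences coming from the adjunction $R\Hom_R(L\otimes_R^L M,\omega)\simeq R\Hom_R(L,R\Hom_R(M,\omega))$, using the Cohen--Macaulay hypothesis on $M$ on one side, and local duality together with the finite-length hypothesis on $\Tor$ on the other. Concretely, I pick a projective resolution $P_\bullet\to L$ and an injective resolution $\omega\to I^\bullet$, and form the first-quadrant double complex
\[
K^{p,q}\;:=\;\Hom_R(P_p\otimes_R M,\,I^q)\;\cong\;\Hom_R\bigl(P_p,\,\Hom_R(M,I^q)\bigr),
\]
whose total complex represents $R\Hom_R(L\otimes_R^L M,\omega)$.

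Filtering by columns and taking vertical cohomology first gives $\Hom_R(P_p,\Ext_R^q(M,\omega))$, which by the CM hypothesis on $M$ vanishes unless $q=d-t$, where it equals $\Hom_R(P_p,M^\dagger)$. This collapses the spectral sequence to a single row, and computing horizontal cohomology next yields
\[
H^n(\mathrm{Tot}(K))\;\cong\;\Ext_R^{\,n-(d-t)}(L,M^\dagger).
\]
Filtering by rows and taking horizontal cohomology first (which is exact since each $I^q$ is injective) gives instead
\[
E_2^{p,q}\;=\;\Ext_R^q\bigl(\Tor_p^R(L,M),\,\omega\bigr)\;\Longrightarrow\;H^{p+q}(\mathrm{Tot}(K)),
\]
with differentials $d_r$ of bidegree $(-r+1,r)$.

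Two vanishing inputs now control this $E_2$-page. First, for $p\ge 1$ the module $\Tor_p^R(L,M)$ has finite length, and local duality for CM local rings with canonical module gives $\Ext_R^q(N,\omega)\cong H_\fm^{d-q}(N)^\vee=0$ for $q\ne d$ whenever $N$ has finite length. Second, $\id_R(\omega)=d$, so $\Ext_R^{>d}(-,\omega)=0$. Hence the $E_2$-page is supported only in column $p=0$ (for $q\le d$) and in row $q=d$ (for $p\ge 0$). At a position $(i,d)$ with $i\ge 1$, the differential $d_r$ lands in row $d+r>d$ (zero), and receives from $(i+r-1,d-r)$, which lies in column $\ge 1$ and row $<d$ (also zero). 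Therefore $E_\infty^{i,d}=E_2^{i,d}=\Ext_R^d(\Tor_i^R(L,M),\omega)$. The only other $E_\infty$-entry on the total-degree line $p+q=d+i$ that could contribute is $(0,d+i)$, but $E_2^{0,d+i}=\Ext_R^{d+i}(L\otimes_R M,\omega)=0$ for $i\ge 1$, again by $\id_R(\omega)=d$. Comparing with the first spectral sequence then identifies $\Ext_R^{t+i}(L,M^\dagger)\cong H^{d+i}(\mathrm{Tot}(K))\cong\Ext_R^d(\Tor_i^R(L,M),\omega)$.

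The main obstacle I expect is keeping the spectral-sequence bookkeeping straight --- specifically, verifying the support of the $E_2$-page of the second spectral sequence and then checking that no incoming or outgoing differential touches $(i,d)$ for $i\ge 1$. Once the two vanishing inputs (finite length plus local duality, and $\id_R(\omega)=d$) are on the table, however, everything is forced by the degrees, so the identification drops out essentially for free.
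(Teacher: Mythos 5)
Your proposal is correct and follows essentially the same route as the paper: the same double complex $\Hom_R(P_\bullet\otimes_R M, I^\bullet)\cong\Hom_R(P_\bullet,\Hom_R(M,I^\bullet))$, collapse of the column-filtration spectral sequence via $\Ext_R^q(M,\omega)=0$ for $q\neq d-t$, and degeneration of the row-filtration one using finite length of $\Tor_{\ge 1}^R(L,M)$ together with $\id_R(\omega)=d$. Your explicit check of the differentials at $(i,d)$ and of the vanishing of $E_2^{0,d+i}$ is exactly the bookkeeping the paper carries out in \eqref{hE2-collapses} and the surrounding argument.
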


\begin{proof}
    Consider an $R$-module $N$. We use the spectral sequences induced by tensor-hom adjunction
    \begin{equation*}
	   \mathbb{X} := \Hom_R(\mathbb{P}_L, \Hom_R(M,\mathbb{I}_N)) \cong \Hom_R(\mathbb{P}_L \otimes_R M, \mathbb{I}_N) =: \mathbb{Y}
	\end{equation*}
    as discussed in \cite[3.6]{GP24}. These double complexes induce the following spectral sequences:
    \begin{align}
        {}^vE_2^{p,q}(\mathbb{X}) &= \Ext_R^p\left(L, \Ext_R^q(M,N)\right) \mbox{ for all } p,q, \mbox{ and}\label{spec-seq-X}\\
        {}^hE_2^{p,q}(\mathbb{Y}) &= \Ext_R^q\left( \Tor_p^R(L,M), N \right) \mbox{ for all } p,q.\label{spec-seq-Y}
    \end{align}
    Since $\mathbb{X}\cong \mathbb{Y}$, the cohomologies of their total complexes are also isomorphic, i.e.,
    \begin{equation}\label{coh-Tot-X-Y-iso}
	   H^n(\Tot(\mathbb{X})) \cong H^n(\Tot(\mathbb{Y})) \; \mbox{ for all } n.
	\end{equation}
    Set $N:=\omega$. Then, since $M$ is CM of dimension $t$, one has that $\Ext_R^q(M,\omega) = 0$ for all $q\neq d-t$. 
    So the spectral sequence ${}^vE_2^{p,q}(\mathbb{X})$	collapses on the line $q=d-t$. Hence
    \begin{equation}\label{coh-Tot-X}
	H^n(\Tot(\mathbb{X})) \cong {}^vE_2^{n-d+t,d-t}(\mathbb{X}) = \Ext_R^{n-d+t}(L, \Ext_R^{d-t}(M,\omega)) = \Ext_R^{n-d+t}(L, M^{\dagger}) \; \mbox{ for all } n,
	\end{equation}
    see, e.g., \cite[5.2.7, p.~124]{We94}. On the other hand, since $\lambda(\Tor_i^R(L,M))<\infty$ for all $i\ge 1$,
    \begin{equation}\label{hE2-collapses}
        {}^hE_2^{p,q}(\mathbb{Y}) = \Ext_R^q( \Tor_p^R(L,M), \omega)=0 \mbox{ for all } p\ge 1 \mbox{ and } q\neq d, \mbox{ or for } p=0 \mbox{ and } q>d.
    \end{equation}
    Note that the differentials in ${}^hE_2^{p,q}$ are given by ${}^h d_2^{p,q} :{}^hE_2^{p,q} \longrightarrow {}^hE_2^{p-1,q+2}$. So \eqref{hE2-collapses} yields that ${}^hE_{\infty}^{p,q}(\mathbb{Y}) = {}^hE_2^{p,q}(\mathbb{Y})$ for all $p\ge 1$ or $q>d$. Moreover, ${}^hE_{\infty}^{p,q}(\mathbb{Y}) = 0$ for all $p\ge 1$ and $q\neq d$. Also ${}^hE_{\infty}^{p,q}(\mathbb{Y}) = 0$ for all $q>d$. Thus, for all $n\ge d+1$, ${}^hE_{\infty}^{n-d,d}={}^hE_2^{n-d,d}$ is the only possible nonzero term on the line $p+q=n$ in the abutment of the spectral sequence ${}^hE_r^{p,q}(\mathbb{Y})$. Therefore 
    \begin{equation}\label{coh-Tot-Y}
	H^n(\Tot(\mathbb{Y})) \cong {}^hE_2^{n-d,d}(\mathbb{Y}) = \Ext_R^d(\Tor_{n-d}^R(L,M), \omega) \; \mbox{ for all } n-d \ge 1.
	\end{equation}
    Hence the desired isomorphisms can be observed from \eqref{coh-Tot-X-Y-iso}, \eqref{coh-Tot-X} and \eqref{coh-Tot-Y}.
\end{proof}

Finally, we establish a duality between Ext and Tor complexities and curvatures of pairs of modules. 

\begin{proposition}\label{cx-duality}
Let $R$ be CM of dimension $d$ admitting a canonical module $\omega$.
\begin{enumerate}[\rm (1)]
    \item 
    Let $\{X_i\}$ and $\{Y_i\}$ be  sequences of $R$-modules such that $\fm^h Y_i=0$ for some $h\ge 0$ and $X_i\cong \Ext_R^d(Y_i, \omega)$ for all $i\gg 0$.  Then, $\cx(\{\mu(X_i)\})=\cx(\{\mu(Y_i)\})$ and $\curv(\{\mu(X_i)\})=\curv(\{\mu(Y_i)\})$.
    \item 
    Let $M$ be a CM $R$-module. Set $M^{\dagger} := \Ext_R^{d-\dim(M)}(M,\omega)$. Then, the following hold.
    \begin{enumerate}[\rm (a)]
        \item If $L$ is an $R$-module such that $\fm^h\Tor^R_{\gg 0}(L,M)=0$ for some $h\ge 0$, then $\fm^h \Ext_R^{\gg 0}(L, M^{\dagger})=0$, $\cx_R(L,M^{\dagger})=\tcx_R(L,M)$, and $\curv_R(L,M^{\dagger})=\tcurv_R(L,M)$.
        \item In particular, $\injcx_R(M^{\dagger})=\cx_R(M)$ and $\injcx_R(M)=\cx_R(M^{\dagger})$.
        \item $\injcurv_R(M^{\dagger})=\curv_R(M)$ and $\injcurv_R(M)=\curv_R(M^{\dagger})$.
    \end{enumerate} 
\end{enumerate}
\end{proposition}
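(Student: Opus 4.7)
The plan for part (1) is to reduce the problem via Matlis/local duality. Since $\fm^h Y_i=0$, each $Y_i$ has finite length, so local duality over the CM local ring $(R,\fm,\omega)$ yields $\Ext_R^d(Y_i,\omega)\cong Y_i^\vee$. Hence $X_i\cong Y_i^\vee$ for $i\gg 0$, giving both $\fm^h X_i=0$ and $\mu(X_i)=\mu(Y_i^\vee)=\type(Y_i)$. With both sequences annihilated by a power of $\fm$, Lemma~\ref{cx-mu-l} then yields
\[
\cx(\{\mu(X_i)\})=\cx(\{\type(Y_i)\})=\cx(\{\mu(Y_i)\}),
\]
and the analogous identity for curvature, completing (1).

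For part (2)(a), I would invoke Lemma~\ref{lem:Ext-Tor-duality} to obtain
\[
\Ext_R^{t+i}(L,M^\dagger)\cong \Ext_R^d(\Tor_i^R(L,M),\omega)\quad\text{for all } i\ge 1, \text{ where } t=\dim(M).
\]
The one technicality is that the cited lemma demands $\Tor_i^R(L,M)$ to have finite length for \emph{every} $i\ge 1$, whereas the hypothesis only gives this for $i\gg 0$. I would circumvent this by replacing $L$ with a sufficiently high syzygy $L'=\Omega_R^s L$: the dimension-shifting isomorphisms $\Tor_i^R(L',M)\cong\Tor_{i+s}^R(L,M)$ and $\Ext_R^n(L',M^\dagger)\cong\Ext_R^{n+s}(L,M^\dagger)$ for $i,n\ge 1$ preserve all asymptotic complexity, curvature, and $\fm^h$-annihilation data. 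Applying part (1) to $Y_i=\Tor_i^R(L',M)$ and $X_i=\Ext_R^{t+i}(L',M^\dagger)$ then delivers all three conclusions of (2)(a) simultaneously.

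Parts (2)(b) and (2)(c) follow by specializing (2)(a) to $L=k$ (where $\fm\cdot\Tor_i^R(k,M)=0$ automatically) and using $\mu(\Tor_i^R(k,M))=\beta_i^R(M)$: this produces $\injcx_R(M^\dagger)=\cx_R(M)$ and $\injcurv_R(M^\dagger)=\curv_R(M)$. Swapping the roles of $M$ and $M^\dagger$ via the standard canonical-dual involution $(M^\dagger)^\dagger\cong M$ (valid for CM $M$ over a CM ring with canonical module) then yields the remaining two equalities. The main obstacle, as flagged above, is reconciling the global finite-length hypothesis of Lemma~\ref{lem:Ext-Tor-duality} with the merely asymptotic $\fm^h$-annihilation we are given; the syzygy reduction cleanly resolves this without affecting any asymptotic invariant.
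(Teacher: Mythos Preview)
Your proposal is correct and essentially identical to the paper's proof: part~(1) uses local duality to identify $X_i$ with $Y_i^\vee$ and then invokes Lemma~\ref{cx-mu-l}, and part~(2) passes to a high syzygy of $L$ to meet the finite-length hypothesis of Lemma~\ref{lem:Ext-Tor-duality}, applies part~(1), and then specializes $L=k$ together with the involution $M^{\dagger\dagger}\cong M$. The only cosmetic difference is that in~(1) you route through $\mu(X_i)=\type(Y_i)$ whereas the paper routes through $\lambda(X_i)=\lambda(Y_i)$, but both are covered by the same lemma.
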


\begin{proof}
(1) Since $\fm^h Y_i=0$ for all $i\gg 0$, it follows that $\fm^h X_i=0$ for all $i\gg 0$. Moreover, $X_i^{\vee} \cong \Ext^d_R(Y_i,\omega)^{\vee} \cong H^0_{\fm}(Y_i)\cong Y_i$ for all $i\gg 0$ (cf.~\cite[3.5.9]{BH98} for the second isomorphism).  Thus, in view of \Cref{cx-mu-l}.(1), one gets that $\cx(\{\mu(X_i)\})=\cx(\{\lambda(X_i)\})=\cx(\{\lambda(X_i^{\vee})\}) = \cx(\{\lambda(Y_i)\})=\cx(\{\mu(Y_i)\})$. Similarly, \Cref{cx-mu-l}.(2) yields that $\curv(\{\mu(X_i)\})=\curv(\{\mu(Y_i)\})$.

(2) Since $\cx_R(\Omega_R(-), X)=\cx_R((-),X)$ and $\tcx_R(\Omega_R(-), X)=\tcx_R((-),X)$, we may pass to a high enough syzygy of $L$, and assume that $\fm^h\Tor^R_{\ge 1}(L,M)=0$. Then, by \Cref{lem:Ext-Tor-duality}, $\Ext_R^{t+i}(L, M^{\dagger}) \cong \Ext^d_R(\Tor^R_i(L,M), \omega)$ for all $i\ge 1$, where $t=\dim M$. It follows that $\fm^h\Ext_R^{i\gg 0}(L, M^{\dagger})=0$. Moreover, (1) yields that $\cx_R(L,M^{\dagger}) = \tcx_R(L,M)$ and $\curv_R(L,M^{\dagger})=\tcurv_R(L,M)$. Putting $L=k$ in (a), one obtains that $\injcx_R(M^{\dagger})=\cx_R(M)$ and $\injcurv_R(M^{\dagger})=\curv_R(M)$. As $M^{\dagger}$ is also CM, and $M^{\dagger\dagger}\cong M$ (see, e.g., \cite[3.3.10]{BH98}), replacing $M$ by $M^{\dagger}$, the other two equalities in (b) and (c) follow.
\end{proof}

\section{On multiplicities of Cohen-Macaulay modules}\label{sec:mult}

Here, we establish two inequalities involving multiplicity of a CM module, see \Cref{lem:mult-inequality}.(1) and \Cref{thm:e-mu-type}. The first one can be derived from \cite[Thm.~14]{Pu03}. However, we give a different and short proof of this result by mimicking the argument of Abhyankar's inequality. Along with the second inequality, \Cref{thm:e-mu-type} provides a new characterization of Ulrich modules (\Cref{defn:Ulrich-mod}) in terms of type of a module. Furthermore, in this section, we discuss some properties of Ulrich modules and CM modules of minimal multiplicity. A connection between these two notions is given in \Cref{prop:cm-min-ul}.

For the definition of (Hilbert-Samuel) multiplicity of a module $M$ with respect to an $\fm$-primary ideal $I$, denoted by $e(I,M)$, we refer the reader to \cite[4.6.1 and 4.6.2]{BH98}. We simply denote $e(\fm,M)$ by $e(M)$.

\begin{lemma}\label{lem:mult-inequality}
    Let $M$ be a CM $R$-module of dimension $r$. Then the following hold.
    \begin{enumerate}[\rm (1)]
        \item $e(M) \ge \mu(\fm M) + (1-r) \mu(M)$.
        \item If $\fm^2M = ({\bf x}) \fm M$ for some system of parameters ${\bf x}$ of $M$, then
        \[
            e(M) = \lambda(M/({\bf x})M) = \mu(\fm M) + (1-r) \mu(M).
        \]
        \item If the residue field $k$ is infinite, then there exists a system of parameters ${\bf x}$ of $M$ which is a reduction of $\fm$ with respect to $M$. Moreover, if $e(M) = \mu(\fm M) + (1-r) \mu(M)$ holds, then $\fm^2M = ({\bf x}) \fm M$ holds for every such reduction ${\bf x}$.
    \end{enumerate}
\end{lemma}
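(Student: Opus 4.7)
My plan is to prove all three parts via one common length computation for $\lambda(M/({\bf x})M)$ with ${\bf x} = x_1,\ldots,x_r$ a system of parameters of $M$. Since $M$ is CM and ${\bf x}$ is a system of parameters, ${\bf x}$ is an $M$-regular sequence, which provides the essential input: a Koszul-based isomorphism of $k$-vector spaces $({\bf x})M/({\bf x})\fm M \cong (M/\fm M)^r$. The map $(\bar m_1,\ldots,\bar m_r)\mapsto \sum x_i m_i$ is clearly well-defined and surjective; injectivity follows from exactness of the Koszul complex $K_\bullet({\bf x};M)$ at the $M^r$-term, forcing any relation $\sum x_i(m_i-a_i)=0$ with $a_i\in\fm M$ to have $m_i-a_i\in({\bf x})M\subseteq\fm M$. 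This is the main technical ingredient; the rest is essentially bookkeeping.

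For part (2), the hypothesis $\fm^2M=({\bf x})\fm M$ iterates to $\fm^{n+1}M=({\bf x})\fm^n M$ for all $n\ge 1$, so ${\bf x}$ is a reduction of $\fm$ with respect to $M$, and combined with regularity of ${\bf x}$ one has $e(M)=e({\bf x},M)=\lambda(M/({\bf x})M)$. Writing $\lambda(M/({\bf x})M)=\mu(M)+\lambda(\fm M/({\bf x})M)$, the inclusion $\fm^2M\subseteq({\bf x})M$ makes $\fm M/({\bf x})M$ a $k$-vector space fitting in the short exact sequence
\[
0\to ({\bf x})M/\fm^2M\to \fm M/\fm^2M\to \fm M/({\bf x})M\to 0,
\]
whose left term has $k$-dimension $r\mu(M)$ by the Koszul isomorphism. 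This yields $\lambda(\fm M/({\bf x})M)=\mu(\fm M)-r\mu(M)$ and the desired equalities.

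For part (1), I first reduce to the case of infinite $k$ by passing to the faithfully flat extension $R\to R[t]_{\fm R[t]}$, which preserves $e(M)$, $\mu(M)$, $\mu(\fm M)$, dimension, and CMness. When $k$ is infinite, part (3) provides a system of parameters ${\bf x}$ that is a minimal reduction of $\fm$ with respect to $M$ (Northcott--Rees), so again $e(M)=\lambda(M/({\bf x})M)$. Without the assumption $\fm^2M\subseteq ({\bf x})M$, I only obtain an inequality: $\lambda(\fm M/({\bf x})M)\ge \lambda(\fm M/(\fm^2M+({\bf x})M))$, and the latter quotient, being killed by $\fm$, equals $\mu(\fm M)-\dim_k(({\bf x})M/(({\bf x})M\cap \fm^2M))$, which is at least $\mu(\fm M)-r\mu(M)$ since $({\bf x})\fm M\subseteq ({\bf x})M\cap \fm^2M$ and the Koszul isomorphism bounds $\dim_k(({\bf x})M/({\bf x})\fm M)=r\mu(M)$.

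For the moreover part of (3), the existence of a system of parameters that is a minimal reduction of $\fm$ with respect to $M$ is classical (Northcott--Rees, since the analytic spread of $\fm$ with respect to $M$ equals $r$). If equality $e(M)=\mu(\fm M)+(1-r)\mu(M)$ holds for such an ${\bf x}$, then both inequalities in the chain above must collapse: the first forces $\fm^2M\subseteq ({\bf x})M$, and the second forces $({\bf x})M\cap \fm^2M=({\bf x})\fm M$. Combining these gives $\fm^2M=\fm^2M\cap ({\bf x})M=({\bf x})\fm M$, as required.
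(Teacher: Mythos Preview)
Your proof is correct and follows essentially the same approach as the paper: both reduce to infinite residue field, choose a reduction ${\bf x}$, use that $M$ is CM to get ${\bf x}$ $M$-regular and hence $e(M)=\lambda(M/({\bf x})M)$, and then compute this length via the filtration pieces $\fm M$, $\fm^2 M$, $({\bf x})M$, $({\bf x})\fm M$, with the key input $\lambda(({\bf x})M/({\bf x})\fm M)=r\mu(M)$. The only cosmetic differences are that the paper obtains this last equality via $\Tor_1^R(R/({\bf x}),M)=0$ (giving $({\bf x})M\cong ({\bf x})\otimes_R M$) rather than your equivalent Koszul argument, and the paper's filtration packages the defect as the single term $\lambda(\fm^2 M/({\bf x})\fm M)$, making the equality cases in (2) and (3) slightly more immediate than your two-inequality version.
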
  

\begin{proof}
	Let ${\bf x} := x_1,\ldots,x_r $ be a system of parameters of $M$. Since $M$ is CM, then ${\bf x}$ is $M$-regular, and $e(({\bf x}),M)=\lambda(M/({\bf x})M)$ (see, e.g., \cite[Rmk.~2.4]{DG23}). Consider the chains
    \[
    	({\bf x})\fm M \subseteq \fm^2 M \subseteq \fm M \subseteq M \quad \mbox{and} \quad ({\bf x})\fm M \subseteq ({\bf x})M \subseteq \fm M \subseteq M
    \]
    of submodules of $M$. From these chains, one obtains that    \begin{align}\label{lambda-equals}
    	e(({\bf x}),M)&=\lambda(M/({\bf x})M) = \lambda(M/\fm M) + \lambda(\fm M/({\bf x})M) \\ 
        &= \mu(M)+ \lambda\big(\fm M/({\bf x})\fm M\big) -\lambda \big( ({\bf x})M/({\bf x})\fm M\big) \nonumber\\
    	&= \mu(M)+\lambda\big(\fm M/\fm^2 M\big) + \lambda\big(\fm^2 M/({\bf x})\fm M\big) - \lambda\big( ({\bf x})M/({\bf x})\fm M \big) \nonumber\\
        &=\mu(M)+ \mu(\fm M) + \lambda\big(\fm^2 M/({\bf x})\fm M\big) - \mu\big(({\bf x})M\big).\nonumber
    \end{align}
    Note that ${\bf x}$ is $M$-regular. So $ \Tor_1^R(R/({\bf x}), M) = 0 $, cf.~\cite[1.1.12.(a)]{BH98}. This implies that $({\bf x})M \cong ({\bf x})\otimes_R M$, and hence $\mu\big(({\bf x})M\big) = \mu\big(({\bf x})\big) \mu(M) = r \mu(M)$. Therefore \eqref{lambda-equals} yields that    \begin{align}\label{lambda-inequality}
        e(({\bf x}),M) =\lambda(M/({\bf x})M) &= \mu(M)+ \mu(\fm M) + \lambda\big(\fm^2 M/({\bf x})\fm M\big) - r \mu(M) \\
        &\ge \mu(M)+ \mu(\fm M) - r \mu(M).\nonumber
    \end{align}
    
    (1) In order to prove the inequality, due to Remark \ref{rmk:flat-ext-CM}, we may pass to the faithfully flat extension $R[X]_{\fm[X]}$, and assume that the residue field $k$ is infinite. By \cite[Cor.~4.6.10]{BH98}, there exists a system of parameters $ {\bf x} $ of $M$ such that $e(M)=e(({\bf x}),M)$. Hence the inequality follows from \eqref{lambda-inequality}.

    (2) Let $\fm^2M = ({\bf x}) \fm M$ for some system of parameters ${\bf x}$ of $M$. Then, for the same system of parameters, the equalities in \eqref{lambda-inequality} yield that   \begin{equation}\label{lambda-equality}
        e(({\bf x}),M) =\lambda(M/({\bf x})M) = \mu(M)+ \mu(\fm M) - r \mu(M) \le e(M),
    \end{equation}
    where the last inequality is established in (1). Since $({\bf x}) \subseteq \fm $, by the definition of multiplicity, one derives that $e(M) = e(\fm,M) \le e(({\bf x}),M)$. Therefore the desired equalities in (2) follow from \eqref{lambda-equality}.
    
    (3) Since $k$ is infinite, by \cite[Cor.~4.6.8]{BH98}, there exists a system of parameters of $M$ which is a reduction of $\fm$ with respect to $M$. Let ${\bf x}$ be such a system of parameters of $M$. Then, since ${\bf x}$ is a reduction of $\fm$ with respect to $M$, in view of \cite[Cor.~4.6.5]{BH98}, $e(M)=e(({\bf x}),M)$. Therefore, since $e(M) = \mu(\fm M) + (1-r) \mu(M)$, it follows from the equalities in \eqref{lambda-inequality} that $\fm^2M = ({\bf x}) \fm M$.
\end{proof}


\begin{definition}\cite[Defn.~15]{Pu03}\label{defn:min-mult}
	An $R$-module $M$ is said to have minimal multiplicity if it is CM and $e(M)=\mu(\fm M)+(1-r) \mu(M)$, where $r= \dim(M)$.
\end{definition}

The following gives a recipe for constructing new modules of minimal multiplicity from old ones.

\begin{proposition}\label{prop:direct-sum-min-mult}
Let $M$ and $N$ be $R$-modules, and $X = M\oplus N$. Then, the following hold.
\begin{enumerate}[\rm(1)]
    \item
    If $M$ and $N$ have minimal multiplicity and the same dimension, then $X$ also has minimal multiplicity.
    \item
    If $X$ has minimal multiplicity, then so does $M$ and $N$. 
\end{enumerate}
\end{proposition}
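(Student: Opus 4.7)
\smallskip

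The plan is to reduce both parts to the identity
\[
e(M\oplus N)=e(M)+e(N),\qquad \mu(\fm(M\oplus N))=\mu(\fm M)+\mu(\fm N),\qquad \mu(M\oplus N)=\mu(M)+\mu(N),
\]
together with the correct behaviour of dimension and depth under direct sums. The additivity of $\mu$ and $\mu(\fm(-))$ is immediate since $\fm(M\oplus N)=\fm M\oplus \fm N$ and tensoring the sequence $0\to\fm\to R\to k\to 0$ with $M\oplus N$ respects the decomposition. The additivity of multiplicity relies on the observation that the Hilbert--Samuel function of $M\oplus N$ with respect to $\fm$ is the sum of the two Hilbert--Samuel functions, so when $\dim M=\dim N=r$ the leading coefficients add and $e(M\oplus N)=e(M)+e(N)$.

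For part (1), suppose $M$ and $N$ are CM of the same dimension $r$. Since $\dim(M\oplus N)=\max\{\dim M,\dim N\}=r$ and $\depth(M\oplus N)=\min\{\depth M,\depth N\}=r$, the module $X=M\oplus N$ is CM of dimension $r$. A direct computation using the three additivity identities above gives
\[
e(X)=e(M)+e(N)=\bigl[\mu(\fm M)+(1-r)\mu(M)\bigr]+\bigl[\mu(\fm N)+(1-r)\mu(N)\bigr]=\mu(\fm X)+(1-r)\mu(X),
\]
so $X$ has minimal multiplicity by \Cref{defn:min-mult}.

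For part (2), the first step is to check that $M$ and $N$ must be CM of the same dimension. Since $X$ is CM with $r:=\dim X=\dim M\oplus N$, the relations $\depth(M\oplus N)=\min\{\depth M,\depth N\}$ and $\dim(M\oplus N)=\max\{\dim M,\dim N\}$ combined with $\depth M\le \dim M$, $\depth N\le \dim N$ force $\depth M=\dim M=\depth N=\dim N=r$. Now the three additivity identities apply, and \Cref{lem:mult-inequality}.(1) gives
\[
e(M)\ge \mu(\fm M)+(1-r)\mu(M)\qquad\text{and}\qquad e(N)\ge \mu(\fm N)+(1-r)\mu(N).
\]
Adding these two inequalities, the right-hand side equals $\mu(\fm X)+(1-r)\mu(X)=e(X)=e(M)+e(N)$, so both inequalities must be equalities; hence $M$ and $N$ have minimal multiplicity.

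I do not anticipate any serious obstacle: the only nontrivial ingredient beyond bookkeeping is the additivity of $e(-)$ for direct sums of modules of the same dimension, which is a standard Hilbert-function argument and may deserve a one-line justification (or a reference to \cite[4.7]{BH98}) in the written proof. The use of \Cref{lem:mult-inequality}.(1) in part (2) is exactly what converts the ``$\ge$'' coming from each summand into the required equality once we sum them.
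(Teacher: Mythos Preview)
Your proposal is correct and follows essentially the same approach as the paper: both use the additivity of $e(-)$, $\mu(-)$, and $\mu(\fm(-))$ on direct sums of modules of the same dimension, and for part~(2) both apply \Cref{lem:mult-inequality}.(1) to each summand and force equality by summing. Your version is slightly more explicit about why $M$ and $N$ are CM of the same dimension in part~(2), but the argument is otherwise identical.
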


\begin{proof}
(1) Note that $\dim(X)=\dim(M)=\dim(N)$. Thus $e(X)=e(M)+e(N)$. Hence, since $\fm (M\oplus N)\cong \fm M\oplus \fm N$, and the function $\mu(-)$ is additive, the assertion follows from \Cref{defn:min-mult}.

(2) Since $X$ is CM by assumption, and $X = M\oplus N$, it follows that $M$ and $N$ are CM of the same dimension as that of $X$. Let $r$ be this common dimension. Then
\begin{align}\label{min-mult-ineq}
    e(X) = e(M)+e(N) &\ge \{\mu(\fm M)+(1-r) \mu(M)\} + \{\mu(\fm N)+(1-r) \mu(N)\} \quad \mbox{[by \ref{lem:mult-inequality}.(1)]}\\
    &= \mu(\fm M \oplus \fm N) + (1-r) \ \mu(M\oplus N) = e(X).\nonumber
\end{align}
Clearly, the inequality in \eqref{min-mult-ineq} must be an equality, and one concludes that both $M$ and $N$ have minimal multiplicity.
\end{proof}

\begin{remark}\label{rmk:defn-min-mult}
    When the residue field $k$ is infinite, in view of Lemma~\ref{lem:mult-inequality}(2) and (3), a CM $R$-module $M$ has minimal multiplicity if and only if $\fm^2M = ({\bf x}) \fm M$ for some system of parameters ${\bf x}$ of $M$.
\end{remark}

Now, for a CM module $M$, we mainly prove that $e(M)\ge \type(M)$. Moreover, inequality becomes equality exactly when the module is Ulrich (in the sense of \cite[Defn.~2.1]{GTT15} and \cite{BHU}).

\begin{theorem}\label{thm:e-mu-type}
Let $M$ be a CM $R$-module. Then $e(M)\ge \max\{\mu(M),\type(M)\}$. Moreover, $M$ is Ulrich $($i.e., $e(M)=\mu(M)$$)$ if and only if $e(M)=\type(M)$.
\end{theorem}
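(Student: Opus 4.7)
The plan is to reduce everything to a socle comparison inside the Artinian module $M/\mathbf{x}M$, where $\mathbf{x}$ is a minimal reduction of $\mathfrak{m}$ with respect to $M$. As a harmless preliminary, I would pass to the faithfully flat extension $R[X]_{\mathfrak{m}[X]}$ so as to assume that the residue field $k$ is infinite; by Remark~\ref{rmk:flat-ext-CM} all of $e$, $\mu$, $\type$, $\dim$, $\depth$, and the CM property of $M$ are preserved.

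With $k$ infinite, Lemma~\ref{lem:mult-inequality}(3) supplies a system of parameters $\mathbf{x} = x_1,\ldots,x_r$ of $M$ that is a reduction of $\mathfrak{m}$ with respect to $M$, whence $e(M) = e((\mathbf{x}),M)$. Since $M$ is CM, $\mathbf{x}$ is $M$-regular, and as recalled in the proof of Lemma~\ref{lem:mult-inequality} one has $e((\mathbf{x}),M) = \lambda(M/\mathbf{x}M)$. From this, the bound $\mu(M) \le e(M)$ is immediate because $M/\mathfrak{m}M$ is a quotient of $M/\mathbf{x}M$. For $\type(M) \le e(M)$, I would iterate the length-one argument appearing in Lemma~\ref{lem:cx-curv-M-M/xM}(2): because each $x_i \in \mathfrak{m}$ acts as zero on $\Ext_R^\bullet(k,-)$, the short exact sequences obtained by killing one regular element at a time yield the depth-shift isomorphism $\Ext^r_R(k,M) \cong \Hom_R(k, M/\mathbf{x}M) = \Soc(M/\mathbf{x}M)$. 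Hence $\type(M) = \dim_k \Soc(M/\mathbf{x}M) \le \lambda(M/\mathbf{x}M) = e(M)$.

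The equivalence in the \emph{moreover} clause then becomes a pure socle comparison inside the Artinian module $M/\mathbf{x}M$. If $e(M) = \mu(M)$, then $\lambda(M/\mathbf{x}M) = \lambda(M/\mathfrak{m}M)$ together with $\mathbf{x}M \subseteq \mathfrak{m}M$ forces $\mathbf{x}M = \mathfrak{m}M$, so $M/\mathbf{x}M$ is killed by $\mathfrak{m}$, equals its own socle, and $\type(M) = \lambda(M/\mathbf{x}M) = e(M)$. Conversely, $e(M) = \type(M)$ says $\dim_k \Soc(M/\mathbf{x}M) = \lambda(M/\mathbf{x}M)$, so $\Soc(M/\mathbf{x}M) = M/\mathbf{x}M$, i.e., $\mathfrak{m}(M/\mathbf{x}M) = 0$; hence again $\mathbf{x}M = \mathfrak{m}M$ and $\mu(M) = \lambda(M/\mathbf{x}M) = e(M)$. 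The only step demanding any genuine care is the $\Soc$-description of $\type(M)$ via a maximal $M$-regular sequence, but this is a routine consequence of the CM hypothesis and the same syzygy identities already used in Lemma~\ref{lem:cx-curv-M-M/xM}, so I do not anticipate a real obstacle.
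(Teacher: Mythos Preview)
Your proof is correct and follows essentially the same route as the paper's: reduce to the Artinian quotient $M/\mathbf{x}M$ for a suitable parameter sequence, identify $e(M)=\lambda(M/\mathbf{x}M)$ and $\type(M)=\dim_k\Soc(M/\mathbf{x}M)$ via the depth-shift isomorphism (the paper cites \cite[1.2.4]{BH98} for this), and then compare socle with total length. The only cosmetic differences are that the paper splits off the $\dim M=0$ case via Matlis duality and defers the Ulrich equivalences to \cite[Prop.~2.2]{GTT15}, whereas you handle everything uniformly and directly.
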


\begin{proof}
    Set $r:=\dim(M)$. The inequality $e(M)\ge \mu(M)$ was shown in \cite[Rmk.~2.4]{DG23}. For the other inequality, first notice that if $r=0$, then $e(M)=\lambda(M)=\lambda(M^{\vee})\ge \mu(M^{\vee})=\lambda(k\otimes_R M^{\vee})=\lambda((k\otimes_R M^{\vee})^{\vee})=\lambda (\Hom_R(k,M))=\type(M)$. Now assume that $r>0$. By \Cref{rmk:flat-ext-CM}, we may pass to a faithfully flat extension, and assume that $R$ has infinite residue field. As in the proof of \cite[Rmk~2.4]{DG23}, one has that $e(M)=\lambda(M/\mathbf x M)$ for some $M$-regular sequence $\mathbf x=x_1,\dots,x_r$. Since $\dim(M/\mathbf x M)=0$, it follows that $\lambda(M/\mathbf x M)\ge \lambda(\Hom_R(k, M/\mathbf x M))=\lambda(\Ext^r_R(k,M))=\type(M)$, where the second last equality is obtained by \cite[1.2.4]{BH98}. Thus $e(M)\ge \type(M)$, which finishes the proof of the inequality.
    
    For the second part, assume that $e(M)=\type(M)$. Then, by the proof of the first part, there exists an $M$-regular sequence $\mathbf x$ such that $\lambda(M/\mathbf xM)=\lambda(\Hom_R(k,M/\mathbf x M))$. Hence, since $\Hom_R(k, M/\mathbf x M)$ embeds in $M/\mathbf x M$, it follows that $M/\mathbf x M\cong \Hom_R(k, M/\mathbf x M)$, which is a $k$-vector space. So $M/\mathbf x M$ is Ulrich (\cite[Prop.~2.2.(1)]{GTT15}). Thus $M$ is Ulrich by \cite[Prop.~2.2.(5)]{GTT15}. Conversely, assume that $M$ is Ulrich. Then there exists an $M$-regular sequence $\mathbf x=x_1,\dots,x_r$ such that $M/\mathbf x M$ is a $k$-vector space by \cite[Prop.~2.2.(1) and (4)]{GTT15}. Thus $M/\mathbf x M \cong k^{\oplus \mu(M/\mathbf x M)}\cong k^{\oplus \mu(M)}$. So $\Hom_R(k, M/\mathbf x M)\cong k^{\oplus \mu(M)}$. Therefore  $e(M)=\mu(M)=\lambda(\Hom_R(k,M/\mathbf x M))=\lambda(\Ext^r_R(k,M))=\type(M)$.
\end{proof}

\begin{definition}\cite[Defn.~2.1]{GTT15}, \cite{BHU}\label{defn:Ulrich-mod}
    A nonzero $R$-module $M$ is called Ulrich if it is CM and $e(M) = \mu(M)$.
\end{definition}

\begin{remark}\label{rmk:flat-ext-Ulrich}
    Let $(R,\fm)\to (S,\fn)$ be a local flat homomorphism such that $\fn=\fm S$. Then, as consequences of Remark~\ref{rmk:flat-ext-CM}, the following hold.
    \begin{enumerate}[\rm(1)]
        \item
        $M$ is an $R$-module of minimal multiplicity if and only if $M\otimes_R S$ is an $S$-module of minimal multiplicity.
        \item
        $M$ is an Ulrich $R$-module if and only if $M\otimes_R S$ is an Ulrich $S$-module. 
    \end{enumerate}
\end{remark}

\begin{remark}\label{rmk:Ulrich-min-mult}
Any Ulrich $R$-module has minimal multiplicity. Indeed, by Remark~\ref{rmk:flat-ext-Ulrich}, we may pass to the faithfully flat extension $R[X]_{\fm[X]}$, and assume that the residue field is infinite. So, if $M$ is an Ulrich $R$-module, then in view of \cite[Prop.~2.2.(2)]{GTT15}, $\fm M=({\bf x})M$ for some system of parameters ${\bf x}$ of $M$, and hence $M$ has minimal multiplicity by Lemma~\ref{lem:mult-inequality}.(2).
\end{remark}


\begin{definition}\label{defn:ring-min-mult}
    The ring $R$ is said to have minimal multiplicity if $R$ has minimal multiplicity as an $R$-module, i.e., $R$ is CM and $e(R)=\mu(\fm)-d+1$, where $d=\dim(R)$.
\end{definition}

\begin{remark}\label{rmk:min-mult-R-M}
    If $R$ has minimal multiplicity, then any MCM $R$-module $M$ has minimal multiplicity. Indeed, we may assume that the residue field is infinite. Then, there exists a maximal $R$-regular sequence $\mathbf x$ such that $\fm^2=(\mathbf x)\fm$, and hence $\fm^2M=(\mathbf x)\fm M$. Since $M$ is MCM, $\mathbf x$ is also a maximal $M$-regular sequence. Thus $M$ has minimal multiplicity by Lemma~\ref{lem:mult-inequality}.(2). 
\end{remark} 

If $R$ has minimal multiplicity, then the syzygy in a minimal free resolution of any given MCM $R$-module is Ulrich, \cite[Prop.~3.6]{KT18}. This fact is immensely useful and one of the driving forces behind many results on Ulrich modules, see, e.g., \cite{KT18}, \cite[Sec.~4]{DG23} and \cite[Sec.~5]{DK22}. We close the present section by a proposition which, in view of \Cref{rmk:min-mult-R-M}, highly generalizes this fact \cite[Prop.~3.6]{KT18}. 

\begin{proposition}\label{prop:cm-min-ul}
Let $L$, $M$ and $N$ be $R$-modules such that $M$ and $N$ are CM of same dimension, $M$ has minimal multiplicity, $\mu(M)=\mu(N)$, and there exists an exact sequence $0\to L \to M \to N \to 0$. Then, either $L=0$, or $L$ is Ulrich.
\end{proposition}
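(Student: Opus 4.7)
The plan is to combine the depth lemma, the minimal multiplicity equation for $M$, the general lower bound of Lemma~\ref{lem:mult-inequality}.(1) for $N$, and the structural containment $L\subseteq \fm M$ to squeeze $e(L)$ between $\mu(L)$ and itself. I would first dispose of the trivial case and assume $L\ne 0$. Let $r$ denote the common dimension of $M$ and $N$.

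As a preliminary step, I would apply the depth lemma to $0 \to L \to M \to N \to 0$: since $\depth_R(M)=\depth_R(N)=r$, one gets $\depth_R(L)\ge \min\{r,r+1\}=r$, and since $L\hookrightarrow M$ forces $\dim L\le r$, this shows $L$ is itself CM of dimension $r$. Consequently, additivity of multiplicity on this short exact sequence of modules of dimension $r$ gives the key equation $e(M)=e(L)+e(N)$.

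Next comes the structural input. Tensoring the short exact sequence with $R/\fm$ yields a surjection $M/\fm M \twoheadrightarrow N/\fm N$ of $k$-vector spaces; because $\mu(M)=\mu(N)$, they have the same finite dimension, so this surjection is an isomorphism. Hence $L$ maps to zero in $M/\fm M$, i.e., $L\subseteq \fm M$. This allows me to form a new short exact sequence $0 \to L \to \fm M \to \fm N \to 0$ (exactness on the right is standard, and the kernel in the middle equals $L\cap \fm M=L$). By counting generators, $\mu(\fm M)\le \mu(L)+\mu(\fm N)$.

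For the final calculation, I would invoke the minimal multiplicity equation $e(M)=\mu(\fm M)+(1-r)\mu(M)$ for $M$ together with the inequality $e(N)\ge \mu(\fm N)+(1-r)\mu(N)$ from Lemma~\ref{lem:mult-inequality}.(1). Subtracting and using $\mu(M)=\mu(N)$ to cancel the $(1-r)\mu(-)$ terms gives
\[
e(L)=e(M)-e(N)\le \mu(\fm M)-\mu(\fm N)\le \mu(L).
\]
Combined with the universal inequality $e(L)\ge \mu(L)$ of Theorem~\ref{thm:e-mu-type}, this forces $e(L)=\mu(L)$, so $L$ is Ulrich by Definition~\ref{defn:Ulrich-mod}. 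No step here is a serious obstacle; the mildly subtle observation is that the hypothesis $\mu(M)=\mu(N)$ plays a double role, both producing the containment $L\subseteq \fm M$ needed to form the intermediate short exact sequence and canceling the lower-order terms in the final arithmetic.
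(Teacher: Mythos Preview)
Your proof is correct and takes a genuinely different route from the paper's. The paper first passes to an infinite residue field, chooses a system of parameters $\mathbf{x}$ that is a reduction of $\fm$ with respect to $M\oplus N$, reduces modulo $(\mathbf{x})$, and then uses the explicit equation $\fm^2 M=(\mathbf{x})\fm M$ to conclude $\fm L=(\mathbf{x})L$, invoking the characterization of Ulrich modules via \cite[Prop.~2.2.(2)]{GTT15}. Your argument is purely numerical: you combine additivity of multiplicity along the short exact sequence, the containment $L\subseteq \fm M$ (hence the auxiliary sequence $0\to L\to \fm M\to \fm N\to 0$ and the bound $\mu(\fm M)\le \mu(L)+\mu(\fm N)$), the minimal-multiplicity equation for $M$, and the Abhyankar-type lower bound of Lemma~\ref{lem:mult-inequality}.(1) for $N$, to squeeze $e(L)\le \mu(L)$. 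This avoids the residue-field extension and any explicit choice of parameters. The paper's approach, on the other hand, yields the slightly sharper structural conclusion $\fm L=(\mathbf{x})L$ for a specific system of parameters, rather than just the numerical equality $e(L)=\mu(L)$.
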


\begin{proof}
Due to Remarks~\ref{rmk:flat-ext-CM} and \ref{rmk:flat-ext-Ulrich}, we may assume, without loss of generality, that $R$ has infinite residue field. Denote $r:=\dim(M)$. We first prove that $L$ is CM of dimension $r$. Indeed, by depth lemma, $\depth(L)\ge r$. But also, $\dim(L)\le \dim(M)=r$. Thus, $L$ is CM of dimension $r$. Since $R$ has infinite residue field, by \cite[4.6.10]{BH98}, there exists  $\mathbf x:=x_1,\ldots,x_r$ in $\fm$ which is a reduction of $\fm$ with respect to $M\oplus N$. Then, $\mathbf x$ is a reduction of $\fm$ with respect to $M$ and $N$ individually. In particular, $\mathbf x$ is a system of parameters on both $M$ and $N$, and hence regular on both of them (cf.~\cite[2.1.2.(d)]{BH98}).  Thus, $\Tor^R_1(N, R/(\mathbf x))=0$ by \cite[1.1.12.(a)]{BH98}. So, tensoring the exact sequence $0\to L \to M \to N \to 0$ by $R/(\mathbf x)$, one obtains another exact sequence $0\to L/(\mathbf x)L \xrightarrow{g} M/(\mathbf x)M \to N/(\mathbf x)N \to 0.$  Note that $\mu(M/(\mathbf x)M)=\mu(M)=\mu(N)=\mu(N/(\mathbf x)N)$. We now claim that this implies $\Image(g) \subseteq \fm(M/(\mathbf x)M)$. Indeed, set $A:=\Image(g)$, $B :=M/(\mathbf x)M$ and $C :=N/(\mathbf x)N$. Then, $B/A\cong C$. Also, $\mu(B)=\mu(C)$ implies $B/\fm B\cong C/\fm C$. But $C/\fm C\cong \dfrac{(B/A)}{\fm(B/A)}\cong \dfrac{(B/A)}{(\fm B+A)/A}\cong B/(\fm B +A)$. Thus, $B/\fm B\cong B/(\fm B +A)$. Hence, $\fm B=\fm B+A$, i.e., $A\subseteq \fm B$. Since $\mathbf x$ is a reduction of $\fm$ with respect to $M$, \Cref{lem:mult-inequality}.(3) yields that $\fm^2M=(\mathbf x)\fm M$. So $\fm A \subseteq \fm^2B = \fm^2(M/(\mathbf x)M)=0$. Since $A\cong L/(\mathbf x)L$, it follows that $\fm L=(\mathbf x)L$. Thus, $L$ is Ulrich by \cite[Prop.~2.2.(2)]{GTT15}.  
\end{proof}




\section{On complexity and curvature of (pair of) some CM modules}\label{sec:cx-curv-cm-mod}

In this section, we show that CM modules $M$ are not far from having maximal projective or injective complexity and curvature, provided $e(M)\le 2 \mu(M)$ or $ e(M) \le 2 \type(M)$ holds respectively. Our main results of this section, \Cref{thm:main-cx-curv-CM-mod} and \Cref{cor:mcm-aprrox}, are far more general, and establish upper bounds for complexity and curvature of arbitrary modules $X$ in terms of complexities and curvatures of pair of modules $X$ and $M$. We start with the following lemma.


\begin{lemma}\label{lem:M-fin-len-ineq}
	Let $M$ be an $R$-module of finite length, and $X$ be an $R$-module. Then
	\begin{enumerate}[\rm (1)]
        \item 
		$\beta_{n+1}^R(X) \mu(M) \le \big(\lambda(M)-\mu(M)\big) \beta_n^R(X) + \lambda\big(\Tor_{n+1}^R(X,M)\big)$ for all $n\ge 0$.
		\item 
		$\beta_{n+1}^R(X) \type(M) \le \big(\lambda(M)-\type(M)\big) \beta_n^R(X) + \lambda\big(\Ext_R^{n+1}(X,M)\big)$ for all $n\ge 0$.
        \item 
        If $\lambda(X)<\infty$, then $\mu^{n+1}_R(X)\mu(M)\le \big(\lambda(M)-\mu(M)\big) \mu^n_R(X) + \lambda\big(\Ext^{n+1}_R(M,X)\big)$ for all $n\ge 0$.
	\end{enumerate}
\end{lemma}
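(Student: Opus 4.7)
The plan is to prove (1) by a direct length computation on a minimal free resolution of $X$, and then deduce (2) and (3) from (1) by passing to suitable Matlis duals.

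For (1), I would fix a minimal free resolution $F_\bullet \to X$ with $F_n \cong R^{\beta_n^R(X)}$ and tensor with $M$. Minimality forces every boundary matrix to have entries in $\fm$, hence each $\Image(\partial_n \otimes_R M)$ lies in $\fm M^{\beta_{n-1}^R(X)}$, whose length is $\beta_{n-1}^R(X)\bigl(\lambda(M) - \mu(M)\bigr)$. Combining this bound with the short exact sequence $0 \to \Ker(\partial_{n+1}\otimes M) \to M^{\beta_{n+1}^R(X)} \to \Image(\partial_{n+1}\otimes M) \to 0$ and the presentation $\Tor_{n+1}^R(X,M) = \Ker(\partial_{n+1}\otimes M)/\Image(\partial_{n+2}\otimes M)$ gives, after a single rearrangement using $\lambda(M) = \mu(M) + \lambda(\fm M)$, the desired inequality in (1).

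Once (1) is in hand, I would derive (2) by applying it with $M$ replaced by its Matlis dual $M^\vee$. Since $M$ has finite length, $\lambda(M^\vee) = \lambda(M)$ and $\mu(M^\vee) = \type(M)$ (the latter follows from $\Hom_R(k, M^{\vee\vee}) \cong \Hom_R(k,M)$ under the duality), while the adjunction isomorphism $\bigl(\Tor_n^R(X,M^\vee)\bigr)^\vee \cong \Ext_R^n(X, M^{\vee\vee}) \cong \Ext_R^n(X,M)$ preserves length; so the right-hand side transforms exactly as required. For (3), I apply (1) to the pair $(X^\vee, M)$, using $\beta_n^R(X^\vee) = \mu_R^n(X)$ (a consequence of $\Ext_R^n(k,X) \cong \bigl(\Tor_n^R(k,X^\vee)\bigr)^\vee$ together with $X^{\vee\vee} \cong X$, which uses $\lambda(X) < \infty$) and the same adjunction to identify $\lambda(\Tor_n^R(X^\vee,M)) = \lambda(\Ext_R^n(M,X))$.

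The substantive step is the length computation in (1); the main obstacle I anticipate is purely bookkeeping, namely simultaneously tracking kernel and image lengths in degrees $n{+}1$ and $n{+}2$, and verifying the two duality identifications used in (2) and (3) (that $(-)^\vee$ swaps $\mu$ with $\type$ for (2), and Betti with Bass numbers for (3)) on finite length modules. No input beyond the standard adjunction $\bigl(\Tor_n^R(A,B)\bigr)^\vee \cong \Ext_R^n(A,B^\vee)$ is needed.
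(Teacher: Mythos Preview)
Your proposal is correct. For parts (2) and (3) your reductions via Matlis duality coincide with the paper's.

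For (1) you take a mildly different route than the paper. The paper uses the short exact sequence $0 \to \fm M \to M \to k^{\oplus \mu(M)} \to 0$, reads off from the induced Tor sequence the bound $\beta_1^R(X)\mu(M) \le \lambda(X\otimes_R \fm M) + \lambda(\Tor_1^R(X,M))$, estimates $\lambda(X\otimes_R \fm M) \le \mu(X)\lambda(\fm M)$ via the surjection $R^{\mu(X)}\otimes \fm M \twoheadrightarrow X\otimes \fm M$, and then shifts by syzygies to reach arbitrary $n$. Your argument instead works directly in degree $n$ of the tensored minimal resolution and needs to bound both $\lambda(\Image(\partial_{n+1}\otimes M))$ and $\lambda(\Image(\partial_{n+2}\otimes M))$ by $\beta_n\lambda(\fm M)$ and $\beta_{n+1}\lambda(\fm M)$ respectively; the second bound cancels against the $\beta_{n+1}\lambda(M)$ term to leave $\beta_{n+1}\mu(M)$. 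Both approaches are equally elementary; the paper's organization buys you a clean base case plus a one-line syzygy shift, while yours avoids the auxiliary short exact sequence at the cost of tracking one extra image term.
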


\begin{proof}
     (1) Consider a short exact sequence $0 \to \fm M \to M \to k^{\oplus \mu (M)} \to 0$.	It induces an exact sequence $\Tor_1^R(X,M) \to \Tor_1^R(X,k^{\oplus \mu(M)}) \to X \otimes_R \fm M $, which yields, by length consideration, that 
 	\begin{align}\label{beta-Tor}
 		\beta_{1}^R(X) \mu(M) \le  \lambda (X \otimes_R \fm M) +\lambda (\Tor_1^R(X,M)).
 	\end{align}
    Applying $(-) \otimes_R \fm M$ to the surjection $R^{\oplus \mu(X)} \to X$, one obtains another surjective homomorphism $R^{\oplus \mu(X)} \otimes_R \fm M \to X \otimes_R \fm M$, and hence $\lambda(X \otimes_R \fm M) \le \lambda( \fm M) \mu(X) =\big(\lambda(M)-\mu(M)\big) \beta_{0}^R(X)$.  Combining these inequalities with \eqref{beta-Tor}, one gets that
 \begin{align}\label{N Tor-N}
 	\beta_{1}^R(X) \mu(M) \le  \big(\lambda(M )-\mu(M)\big) \beta_{0}^R(X) +\lambda (\Tor_1^R(X,M)).
 \end{align}
 Note that $\beta_n^R(X)=\beta_0^R(\Omega_R^n(X))$, $\beta_{n+1}^R(X)=\beta_1^R(\Omega_R^n(X))$ and $\Tor^R_1(\Omega_R^n(X),M) \cong \Tor^R_{n+1}(X,M)$. So, considering $\Omega_R^n(X)$ in place of $X$, the inequality in (1) follows from \eqref{N Tor-N}.

 (2) Since $\lambda(M) < \infty$, one has that $\lambda(M^\vee)=\lambda(M) < \infty$, $\Ext_R^n(X,M)^\vee \cong \Tor^R_n(X,M^\vee)$ for all $n \ge 0$ (see \cite[3.6]{Hu07}), and $\type(M)=\mu_R^0(M)=\beta_0^R(M^\vee) = \mu(M^\vee)$. Using these results, the inequality in (1) for $X$ and $M^\vee$ yields the inequality in (2).
 
 (3) Since $X$ has finite length, so does $X^{\vee}$. Therefore, writing $X^{\vee}$ in place of $X$, the inequality in (1) gives that $\beta^R_{n+1}(X^{\vee})\mu(M)\le \big(\lambda(M)-\mu(M)\big) \beta_n^R(X^{\vee}) + \lambda\big(\Tor_{n+1}^R(X^{\vee},M)\big)$. Note that $\Tor_{n+1}^R(X^{\vee},M) \cong \Tor_{n+1}^R(M,X^{\vee}) \cong \Ext^{n+1}_R(M,X)^{\vee}$ and $\beta_n^R(X^{\vee}) = \mu^n_R(X)$. Hence the inequality in (3) follows.
\end{proof}

Using \Cref{lem:curv-x-y} and the inequalities in \Cref{lem:M-fin-len-ineq}, we prove the following proposition, which serve as the base case of \Cref{thm:main-cx-curv-CM-mod}.

\begin{proposition}\label{prop:ecx-ecurv-lambda-mu}
    Let $M$ and $X$ be nonzero $R$-modules such that $M$ has finite length.
    Then the following inequalities hold true.
    \begin{enumerate}[\rm (1)]
     \item 
     \begin{enumerate}[\rm (i)]
        \item $\curv_R(X) \le \sup\left\{\frac{\lambda(M)}{\mu(M)}-1, \tcurv_R(X,M)\right\}$.
        \item 
        If $\lambda(M)\le 2\mu(M) $, then $\cx_R(X) \le 1+\tcx_R(X,M)$.
        \item 
        If $\lambda(M)< 2\mu(M) $, then $\cx_R(X) \le \tcx_R(X,M)$ and $\curv_R(X) \le \tcurv_R(X,M)$.
     \end{enumerate}
    \item 
    \begin{enumerate}[\rm (i)]
        \item $\curv_R(X)\le \sup\left\{\frac{\lambda(M)}{\type(M)}-1, \curv_R(X,M)\right\}$.
        \item 
        If $\lambda(M)\le 2\type(M) $, then $\cx_R(X) \le 1+\cx_R(X,M)$.
        \item 
        If $\lambda(M)< 2\type(M) $, then $\cx_R(X) \le \cx_R(X,M)$ and $\curv_R(X) \le \curv_R(X,M)$.
    \end{enumerate}
    \item 
    Suppose that $X$ also has finite length. Then 
    \begin{enumerate}[\rm (i)]
        \item $\injcurv_R(X) \le \sup\left\{ \frac{\lambda(M)}{\mu(M)}-1, \curv_R(M,X)\right\}$.
        \item If $\lambda(M)\le 2\mu(M) $, then $\injcx_R(X)\le 1+\cx_R(M,X)$.
        \item If $\lambda(M)<2\mu(M) $, then $\injcx_R(X)\le \cx_R(M,X)$ and  $\injcurv_R(X)\le \curv_R(M,X)$.
    \end{enumerate}    
    \end{enumerate}
\end{proposition}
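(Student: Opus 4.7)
The plan is to reduce everything to the three numerical recursions supplied by \Cref{lem:M-fin-len-ineq} and then feed each into the abstract machinery of \Cref{sec:cx-curv-seq}. For part (1), dividing inequality (1) of \Cref{lem:M-fin-len-ineq} by $\mu(M)>0$ gives
\[
\beta_{n+1}^R(X) \;\le\; b\,\beta_n^R(X) + c_{n+1},
\qquad
b := \tfrac{\lambda(M)}{\mu(M)}-1,\quad c_n := \tfrac{1}{\mu(M)}\,\lambda\bigl(\Tor_n^R(X,M)\bigr),
\]
valid for all $n\ge 0$. Because $M$ has finite length, \Cref{cor:cx-curv-length} tells us that $\curv(\{c_n\})=\tcurv_R(X,M)$ and $\cx(\{c_n\})=\tcx_R(X,M)$. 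Parts (2) and (3) are entirely parallel: use inequality (2) of \Cref{lem:M-fin-len-ineq} with the same $X$ but with $\type(M)$ and $\lambda(\Ext^n_R(X,M))$ in place of $\mu(M)$ and $\lambda(\Tor^R_n(X,M))$; use inequality (3) with $X$ of finite length, $\mu^n_R(X)$ in place of $\beta_n^R(X)$, and $\lambda(\Ext^n_R(M,X))$ in place of $\lambda(\Tor^R_n(X,M))$. In this last case one invokes \Cref{cor:cx-curv-length} applied to $M$ (which has finite length) to identify $\cx(\{\lambda(\Ext^n_R(M,X))\})$ with $\cx_R(M,X)$, and similarly for curvature.

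The curvature inequality (i) in each part is then an immediate application of \Cref{lem:curv-x-y} to the recursion $x_{n+1}\le b\,x_n + c_{n+1}$, since $b\ge 0$ (because $\lambda(M)\ge\mu(M)$ and $\lambda(M)\ge\type(M)$ trivially).

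For the complexity inequality (ii), the hypothesis $\lambda(M)\le 2\mu(M)$ (respectively $\le 2\type(M)$) translates to $b\le 1$. Iterating the recursion gives
\[
x_{n+1} \;\le\; b^{\,n+1} x_0 + \sum_{j=1}^{n+1} b^{\,n+1-j} c_j \;\le\; x_0 + \sum_{j=1}^{n+1} c_j.
\]
If $\tcx_R(X,M) = t < \infty$, then $c_n \le \alpha n^{t-1}$ for $n\gg 0$, so the partial sum is $O(n^{t})$ (if $t\ge 1$) or bounded (if $t=0$), yielding $\cx_R(X)\le 1+t$. The $t=\infty$ case is vacuous. The strict inequality case (iii), namely $b<1$, is more delicate: I will split the iterated sum $\sum_{j=1}^{n+1} b^{n+1-j}c_j$ into $j\le n_0$ (contributing $O(b^{n+1})$, which is negligible) and $j>n_0$, where $c_j\le \alpha j^{t-1}$. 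In the latter range, substituting $m=n+1-j$ and using $(n+1-m)^{t-1}\le (n+1)^{t-1}$ together with $\sum_{m\ge 0} b^m = \tfrac{1}{1-b}$ (a convergent geometric series, since $b<1$) shows that the whole sum is $O(n^{t-1})$. Hence $\cx_R(X)\le t = \tcx_R(X,M)$.

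For the curvature half of (iii), I split on $\tcurv_R(X,M)$. If $\tcurv_R(X,M)\ge 1$, then $\sup\{b,\tcurv_R(X,M)\}=\tcurv_R(X,M)$ (since $b<1$), and the claim follows from (i). If $\tcurv_R(X,M)<1$, then \Cref{rmk:cx-curv}.(5) forces $\Tor_n^R(X,M)=0$ for $n\gg 0$, so $c_n=0$ eventually; the iterated recursion then gives $\beta_n^R(X)\le b^{n-n_0}\beta_{n_0}^R(X)\to 0$, forcing $\beta_n^R(X)=0$ for $n\gg 0$ (they are non-negative integers), i.e., $\pd_R(X)<\infty$ and thus $\curv_R(X)=0\le \tcurv_R(X,M)$. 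Parts (2) and (3) repeat this argument verbatim with the appropriate substitutions. I expect the main technical point to be the careful bookkeeping of the mixed geometric-polynomial sum in step (iii), but this is routine once the recursion is set up.
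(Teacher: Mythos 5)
Your proposal is correct, and its skeleton coincides with the paper's: both reduce each part to the corresponding recursion from \Cref{lem:M-fin-len-ineq}, identify the $\lambda$-complexity and $\lambda$-curvature of the Ext/Tor sequences with $\cx_R(X,M)$, $\tcx_R(X,M)$, etc.\ via \Cref{cor:cx-curv-length} (plus invariance under positive scaling), and deduce each inequality (i) from \Cref{lem:curv-x-y}. Where you genuinely diverge is in the complexity statements (ii) and (iii): the paper disposes of these by citing the sequence lemmas of Dao--Veliche \cite{DV09}, namely $\cx(\{a_n\})\le 1+\cx(\{a_{n+1}-a_n\})$ and $\cx(\{c\,a_{n+1}-d\,a_n\})=\cx(\{a_n\})$ when $c>d$, whereas you unroll the recursion $x_{n+1}\le b x_n+c_{n+1}$ and estimate the resulting geometric--polynomial sums directly; this makes the argument self-contained at the cost of some bookkeeping, while the paper's route is shorter but leans on an external result. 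For the curvature half of (iii) the two arguments are essentially the same case-split on top of (i) (the paper compares $\tcurv_R(X,M)$ with $\frac{\lambda(M)}{\mu(M)}-1$, you compare it with $1$), both exploiting that curvature strictly below $1$ forces eventual vanishing. One small point to patch in your estimates: when $t:=\tcx_R(X,M)=0$ the bound $(n+1-m)^{t-1}\le (n+1)^{t-1}$ fails (the exponent is negative), and likewise the partial sums of $c_j\le \alpha/j$ need not be bounded a priori; in that case one should first note that the integer-valued sequence $\lambda(\Tor_n^R(X,M))$ having complexity $0$ is eventually zero (as in \Cref{rmk:curv-zero}), after which $x_{n+1}\le b x_n$ with $b\le 1$ (resp.\ $b<1$) immediately gives boundedness (resp.\ eventual vanishing) of the Betti numbers, exactly as in your own curvature argument. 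With that observation inserted, all cases of (1), and the verbatim substitutions for (2) and (3), go through.
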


\begin{proof}
Note that $\lambda(M)\ge \max\{\mu(M),\type(M)\}$ (\Cref{thm:e-mu-type}).

(1) (i) In view of Lemma~\ref{lem:M-fin-len-ineq}.(1), $\beta_{n+1}^R(X) \le \big(\frac{\lambda(M)}{\mu(M)}-1\big) \beta_n^R(X) + \lambda\big(\Tor_{n+1}^R(X,M)\big)$ for all $n\ge 0$. Hence the first inequality of (i) follows from Corollary~\ref{cor:cx-curv-length} and Lemma~\ref{lem:curv-x-y}. Considering $X=k$ in the first inequality, one obtains the second inequality.

(ii)  Let $\lambda(M)\le 2\mu(M) $. Then $\frac{\lambda(M)}{\mu(M)}-1\le 1$. Hence, in view of Lemma~\ref{lem:M-fin-len-ineq}.(1), $\beta_{n+1}(X)-\beta_n(X)\le \lambda\big(\Tor_{n+1}^R(X,M)\big)$ for all $n\ge 0$. Therefore, by \cite[Prop.~2.2.(2)]{DV09}, one obtains that
\[
\cx_R(X) = \cx(\{\beta_n(X)\}) \le 1+\cx(\{\beta_{n+1}(X)-\beta_n(X)\})\le 1+\tcx_R(X,M).
\]
where the last inequality follows from Corollary~\ref{cor:cx-curv-length} and Remark~\ref{rmk:cx-curv-ineq}.(1).

(iii) Let $\lambda(M)< 2\mu(M) $. Then $\mu(M)>\lambda(M)-\mu(M)$. So, by \cite[Prop.~2.2.(4)]{DV09},
$$\cx\big(\big\{\mu(M)\beta_{n+1}(X)-(\lambda(M)-\mu(M))\beta_n(X)\big\}\big) = \cx(\{\beta_n(X)\}).$$
Hence, in view of Lemma~\ref{lem:M-fin-len-ineq}.(1), Corollary~\ref{cor:cx-curv-length} and Remark~\ref{rmk:cx-curv-ineq}, one concludes that
\[
    \cx_R(X) = \cx(\{\beta_n(X)\}) = \cx\big(\big\{\mu(M)\beta_{n+1}(X)-(\lambda(M)-\mu(M))\beta_n(X)\big\}\big) \le \tcx_R(X,M).
    \]
    To show $\curv_R(X) \le \tcurv_R(X,M)$, first notice that $\alpha:=\frac{\lambda(M)}{\mu(M)}-1 \in [0,1)$ as $\mu(M)\le \lambda(M)< 2\mu(M) $. If $\tcurv_R(X,M)\le \alpha$, then from (i), one gets that $\curv_R(X)\le \alpha <1$, and hence $\curv_R(X)=0\le \tcurv_R(X,M)$. If $\tcurv_R(X,M) > \alpha$, then $\curv_R(X)\le \tcurv_R(X,M)$ clearly follows from (i). 
    
    (2) and (3): The proofs are similar to that of (1) by using Lemma~\ref{lem:M-fin-len-ineq}.(2) and (3) respectively in place of Lemma~\ref{lem:M-fin-len-ineq}.(1).
\end{proof}

Now we are in a position to prove our main result of this section.

\begin{theorem}\label{thm:main-cx-curv-CM-mod}
Let $M$ and $X$ be nonzero $R$-modules such that $M$ is CM. Then, the following hold.
\begin{enumerate}[\rm(1)]
    \item 
    Suppose there exists a positive integer $h$ satisfying $\fm^h \Tor_n^R(X,M)=0$ for all $n \gg 0$. Then
    \begin{enumerate}[\rm (i)]
        \item $\curv_R(X) \le \sup\left\{\frac{e(M)}{\mu(M)}-1, \tcurv_R(X,M)\right\}.$
        \item If $e(M)\le 2 \mu(M) $, then $\cx_R(X) \le 1+\tcx_R(X,M)$. 
        \item 
        If $e(M)< 2\mu(M) $, then $\cx_R(X) \le \tcx_R(X,M)$ and $\curv_R(X) \le \tcurv_R(X,M)$.
     \end{enumerate}
    \item 
    Suppose there exists a positive integer $h$ satisfying $\fm^h \Ext^n_R(X,M)=0$ for all $n \gg 0$. Then
    \begin{enumerate}[\rm (i)]
        \item $\curv_R(X)\le \sup\left\{\frac{e(M)}{\type(M)}-1, \curv_R(X,M)\right\}.$
        \item 
        If $e(M)\le 2\type(M) $, then $\cx_R(X) \le 1+\cx_R(X,M)$.
        \item 
        If $e(M)< 2\type(M) $, then $\cx_R(X) \le \cx_R(X,M)$ and $\curv_R(X) \le \curv_R(X,M)$.
    \end{enumerate}
    \item
    Suppose that $X$ is also CM, and there exists an integer $h>0$ satisfying $\fm^h \Ext^n_R(M,X)=0$ for all $n \gg 0$. Then
    \begin{enumerate}[\rm (i)]
        \item $\injcurv_R(X) \le \sup\left\{ \frac{e(M)}{\mu(M)}-1, \curv_R(M,X)\right\}$.
        \item If $e(M)\le 2\mu(M) $, then $\injcx_R(X)\le 1+\cx_R(M,X)$.
        \item If $e(M)<2\mu(M) $, then $\injcx_R(X)\le \cx_R(M,X)$ and  $\injcurv_R(X)\le \curv_R(M,X)$.
    \end{enumerate}
\end{enumerate}
\end{theorem}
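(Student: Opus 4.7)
My plan is to reduce the theorem to its finite-length counterpart \Cref{prop:ecx-ecurv-lambda-mu} by choosing a reduction $\mathbf{x}$ of $\fm$ with respect to $M$ (and, for part~(3), additionally modding $X$ out by a maximal regular sequence). Cutting $M$ down to finite length via such a system of parameters converts $e(M)$ into the length $\lambda(M/\mathbf{x}M)$ that appears in \Cref{prop:ecx-ecurv-lambda-mu}, while keeping $\mu$, $\type$, and the Ext/Tor data under control.

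First, by \Cref{rmk:flat-ext-CM}, every hypothesis and every conclusion of the theorem is preserved under the faithfully flat base change $R \to R[T]_{\fm R[T]}$, so I may assume without loss that the residue field $k$ is infinite. Then \Cref{lem:mult-inequality}.(3) supplies a system of parameters $\mathbf{x}$ of $M$ that is a reduction of $\fm$ with respect to $M$; since $M$ is CM, $\mathbf{x}$ is $M$-regular, and one has $\lambda(M/\mathbf{x}M) = e((\mathbf{x}), M) = e(M)$, together with $\mu(M/\mathbf{x}M) = \mu(M)$ (as $\mathbf{x} \subseteq \fm$) and $\type(M/\mathbf{x}M) = \type(M)$ (a standard consequence of the long exact sequence of $\Ext_R(k,-)$, where $\mathbf{x}$ acts trivially on all $\Ext^i_R(k, M)$).

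For part~(1), iterated application of \Cref{lem:cx-curv-M-M/xM-Tor} along the elements of $\mathbf{x}$ yields both the annihilation $\fm^{h'}\Tor^R_n(X, M/\mathbf{x}M) = 0$ for $n \gg 0$ and the bounds $\tcx_R(X, M/\mathbf{x}M) \le \tcx_R(X, M)$ and $\tcurv_R(X, M/\mathbf{x}M) \le \tcurv_R(X, M)$. Feeding these into \Cref{prop:ecx-ecurv-lambda-mu}.(1), applied to the finite-length module $M/\mathbf{x}M$, and substituting $\lambda(M/\mathbf{x}M) = e(M)$ throughout, gives exactly the three inequalities (i)--(iii) of part~(1). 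Part~(2) is identical, using \Cref{lem:cx-curv-M-M/xM-Ext}.(1) and \Cref{prop:ecx-ecurv-lambda-mu}.(2) with $\type$ in place of $\mu$. For part~(3), since $X$ is CM, I additionally pick a maximal $X$-regular sequence $\mathbf{y}$; by \Cref{lem:cx-curv-M-M/xM}.(2) this preserves both $\injcx_R(X)$ and $\injcurv_R(X)$, and \Cref{lem:cx-curv-M-M/xM-Ext}.(3) applied jointly to $\mathbf{x}$ on $M$ and $\mathbf{y}$ on $X$ delivers $\fm^{h'}\Ext^n_R(M/\mathbf{x}M, X/\mathbf{y}X) = 0$ for $n \gg 0$ together with $\cx_R(M/\mathbf{x}M, X/\mathbf{y}X) \le \cx_R(M, X)$ and $\curv_R(M/\mathbf{x}M, X/\mathbf{y}X) \le \curv_R(M, X)$. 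Plugging these into \Cref{prop:ecx-ecurv-lambda-mu}.(3) applied to the finite-length pair $(M/\mathbf{x}M, X/\mathbf{y}X)$ then finishes the proof.

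The main obstacle I foresee is purely bookkeeping: each pass through \Cref{lem:cx-curv-M-M/xM-Tor} or \Cref{lem:cx-curv-M-M/xM-Ext} roughly doubles the exponent $h$ annihilating the relevant Ext or Tor, but since $\dim M$ and $\dim X$ are finite this exponent remains finite throughout, and the complexity/curvature inequalities composed along the regular sequence all point in the direction needed to match \Cref{prop:ecx-ecurv-lambda-mu}.
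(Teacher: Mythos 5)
Your proposal is correct and follows essentially the same route as the paper: pass to $R[T]_{\fm R[T]}$ to assume $k$ infinite, choose a system of parameters $\mathbf{x}$ of $M$ with $e(M)=\lambda(M/\mathbf{x}M)$ while $\mu$ and $\type$ are preserved, transfer the Ext/Tor annihilation and the (t)cx/(t)curv bounds down to $M/\mathbf{x}M$ (and, for part (3), to $X/\mathbf{y}X$ using \Cref{lem:cx-curv-M-M/xM}), and conclude by the finite-length case \Cref{prop:ecx-ecurv-lambda-mu}. The only cosmetic difference is that you justify $e(M)=\lambda(M/\mathbf{x}M)$ via the reduction statement in \Cref{lem:mult-inequality}.(3), while the paper cites the argument of \cite[Rmk.~2.4]{DG23}; these are the same fact.
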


\begin{proof}
Due to \Cref{rmk:flat-ext-CM}, we may pass to the faithfully flat extension $R[X]_{\fm[X]}$, and assume that $R$ has infinite residue field. By the same argument as in \cite[Rmk.~2.4]{DG23}, $e(M)=\lambda(M/\mathbf x M)$ for some system of parameters ${\bf x}$ of $M$ (and hence for some $M$-regular sequence ${\bf x}$). Clearly, $M/\mathbf x M$ has finite length, $\mu(M)=\mu(M/\mathbf x M)$ and $\type(M)=\type(M/\mathbf x M)$ (cf.~\cite[1.2.4]{BH98}).

(1) By Lemma~\ref{lem:cx-curv-M-M/xM-Tor}, $ \tcx_R(X,M/\mathbf x M)\le \tcx_R(X,M) $ and $ \tcurv_R(X,M/{\mathbf x}M) \le \tcurv_R(X,M) $. Hence the desired inequalities follow by using Proposition~\ref{prop:ecx-ecurv-lambda-mu}.(1) for $X$ and $M/{\bf x}M$.

(2) By Lemma~\ref{lem:cx-curv-M-M/xM-Ext}.(3), $\cx_R(X,M/{\mathbf x}M)\le \cx_R(X,M)$ and $\curv_R(X,M/{\mathbf x}M)\le \curv_R(X,M)$. Hence  the desired inequalities follow by using Proposition~\ref{prop:ecx-ecurv-lambda-mu}.(2) for $X$ and $M/{\bf x}M$.  

(3) Since $X$ is CM, there is an $X$-regular sequence $\mathbf y$ such that $X/\mathbf y X$ has finite length. Then, a repeated use of \Cref{lem:cx-curv-M-M/xM}.(2) yields that $\injcx_R(X)=\injcx_R(X/\mathbf y X)$ and $\injcurv_R(X)=\injcurv_R(X/\mathbf y X)$. Moreover, by \Cref{lem:cx-curv-M-M/xM-Ext}.(3), $\cx_R(M/{\mathbf x}M,X/\mathbf y X)\le \cx_R(M,X)$ and $\curv_R(M/{\mathbf x}M,X/\mathbf y X)\le \curv_R(M,X)$. Hence, using Proposition~\ref{prop:ecx-ecurv-lambda-mu}.(3) for $M/{\bf x}M$ and $X/{\bf y}X$, the claim follows.
\end{proof}
When $R$ is CM, the CM assumption on $X$ can be removed from Theorem~\ref{thm:main-cx-curv-CM-mod}.(3) 

\begin{corollary}\label{cor:mcm-aprrox}
Let $R$ be CM, and $M$ be a nonzero CM $R$-module. Let $X$ be an $R$-module such that there exists an integer $h>0$ satisfying $\fm^h \Ext^n_R(M,X)=0$ for all $n\gg 0$. Then, the following hold.
\begin{enumerate}[\rm (1)]
    \item $\injcurv_R(X) \le \sup\left\{ \frac{e(M)}{\mu(M)}-1, \curv_R(M,X)\right\}$.
    \item If $e(M)\le 2\mu(M) $, then $\injcx_R(X)\le 1+\cx_R(M,X)$.
    \item If $e(M)<2\mu(M) $, then $\injcx_R(X)\le \cx_R(M,X)$ and  $\injcurv_R(X)\le \curv_R(M,X)$.
\end{enumerate}    
\end{corollary}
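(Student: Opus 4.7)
The plan is to reduce to the situation where $X$ is also CM, so that \Cref{thm:main-cx-curv-CM-mod}.(3) applies directly. The natural device is a maximal Cohen-Macaulay (MCM) approximation of $X$ in the sense of Auslander-Buchweitz; since such approximations require the existence of a canonical module, I would first pass to the $\fm$-adic completion of $R$.

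The passage to $\hat R$ is justified by \Cref{rmk:flat-ext-CM} for the faithfully flat local extension $R\to \hat R$ with $\fm\hat R=\hat\fm$, supplemented by the observation that Bass numbers are also preserved: the flat base-change isomorphism $\Ext_R^n(k,X)\otimes_R \hat R\cong \Ext_{\hat R}^n(k,\hat X)$ (valid because $k$ is finitely presented), combined with the preservation of length for $\fm$-torsion modules, gives $\mu_R^n(X)=\mu_{\hat R}^n(\hat X)$ for every $n$, and hence $\injcx_R(X)=\injcx_{\hat R}(\hat X)$ and $\injcurv_R(X)=\injcurv_{\hat R}(\hat X)$. Every other quantity appearing in the statement is handled directly by \Cref{rmk:flat-ext-CM}, so I may assume $R$ is complete, in which case it admits a canonical module.

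Next, I would invoke the Auslander-Buchweitz MCM approximation to obtain a short exact sequence
\[
0\to Y\to \tilde X\to X\to 0
\]
with $\tilde X$ a nonzero MCM (hence CM) $R$-module and $\id_R(Y)<\infty$. Since $\Ext_R^n(-,Y)=0$ for all $n>\id_R(Y)$, applying $\Hom_R(M,-)$ and $\Hom_R(k,-)$ to this sequence yields isomorphisms $\Ext_R^n(M,\tilde X)\cong\Ext_R^n(M,X)$ and $\Ext_R^n(k,\tilde X)\cong\Ext_R^n(k,X)$ for all $n\gg 0$. Consequently, the annihilation hypothesis $\fm^h\Ext_R^n(M,X)=0$ transfers to $\tilde X$, and one obtains the equalities $\cx_R(M,\tilde X)=\cx_R(M,X)$, $\curv_R(M,\tilde X)=\curv_R(M,X)$, $\injcx_R(X)=\injcx_R(\tilde X)$, and $\injcurv_R(X)=\injcurv_R(\tilde X)$.

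Finally, I would apply \Cref{thm:main-cx-curv-CM-mod}.(3) to the pair $(M,\tilde X)$ of nonzero CM modules; the three inequalities produced there translate, via the equalities just established, into precisely (1), (2), and (3) of this corollary. The only non-routine step is the invocation of the Auslander-Buchweitz approximation, which is exactly why the reduction to the complete case is essential; this is where I expect the main (if minor) obstacle to lie, namely in verifying that all the relevant invariants transfer cleanly under completion.
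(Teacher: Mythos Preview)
Your proof is correct and follows essentially the same approach as the paper: pass to the completion to ensure a canonical module exists, take an MCM approximation $0\to Y\to \tilde X\to X\to 0$ with $\id_R(Y)<\infty$, observe that $\Ext_R^n(-,\tilde X)\cong\Ext_R^n(-,X)$ for $n\gg 0$, and then apply \Cref{thm:main-cx-curv-CM-mod}.(3) to the pair $(M,\tilde X)$. The only cosmetic difference is that you spell out the preservation of Bass numbers under completion in more detail than the paper does.
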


\begin{proof}
We may pass to completion, and assume that $R$ is complete. Hence $R$ admits a canonical module. Thus, by \cite[Thm.~11.17]{LW}, there exists an MCM $R$-module $X'$ and an $R$-module $Y$ of finite injective dimension which fit into an exact sequence $0\to Y\to X'\to X\to 0$. It follows that
\begin{equation}\label{iso-X-X'}
    \Ext^n_R(N,X')\cong \Ext^n_R(N,X) \mbox{ for every $R$-module $N$ and for all $n>\dim R$.}
\end{equation}
Considering $N=k$ in \eqref{iso-X-X'}, one gets that $\injcx_R(X')=\injcx_R(X)$ and $\injcurv_R(X')=\injcurv_R(X)$. Moreover, the isomorphisms in \eqref{iso-X-X'} for $N=M$ yields that $\cx_R(M,X')=\cx_R(M,X)$, $\curv_R(M,X')=\curv_R(M,X)$ and $\fm^h \Ext^n_R(M,X')=0$ for all $n\gg 0$. Thus, we may replace $X$ by $X'$, and assume that $X$ is MCM. Now we are done by Theorem~\ref{thm:main-cx-curv-CM-mod}.(3).   
\end{proof}

It is classically known that complexity and curvature of any module is bounded above by that of the residue field $k$, see \Cref{prop:cx-curv-facts}.(1). As a consequence of \Cref{thm:main-cx-curv-CM-mod}, we can bound the curvature of $k$ in terms of that of any nonzero CM module. Note that curvature of a module is always finite (\cite[4.2.3.(5)]{Avr98}), which is not the case in general for complexity.   

\begin{corollary}\label{cor:cx-curv-k-CM-mod}
Let $M$ be a nonzero CM $R$-module. Then, the following hold.
\begin{enumerate}[\rm(1)]
    \item $\curv_R(k) \le \sup\left\{\frac{e(M)}{\mu(M)}-1, \curv_R(M)\right\}.$
    \item $\curv_R(k) \le \sup\left\{\frac{e(M)}{\type(M)}-1, \injcurv_R(M)\right\}.$
    \item If $e(M)\le 2\mu(M) $, then $\cx_R(k)\le 1+\cx_R(M)$, i.e., $\cx_R(k)=\cx_R(M)$ or $1+\cx_R(M)$.
    \item If $e(M)<2\mu(M) $, then $\cx_R(k) = \cx_R(M)$ and  $\curv_R(k) = \curv_R(M)$.
    \item If $e(M)\le 2\type(M) $, then $\cx_R(k)\le 1+\injcx_R(M)$, i.e., $\cx_R(k)=\injcx_R(M)$ or $1+\injcx_R(M)$.
    \item If $e(M)<2\type(M) $, then $\cx_R(k)=\injcx_R(M)$ and  $\curv_R(k)=\injcurv_R(M)$.
\end{enumerate}
\end{corollary}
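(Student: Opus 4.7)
The plan is to apply \Cref{thm:main-cx-curv-CM-mod} (i.e., \Cref{thm-A}) directly with the choice $X = k$. Since $\fm$ annihilates $k$, the hypotheses $\fm^h \Tor_n^R(k,M) = 0$ and $\fm^h \Ext_R^n(k,M) = 0$ are satisfied for every $n$ with $h = 1$, so parts (1) and (2) of \Cref{thm:main-cx-curv-CM-mod} apply without any extra assumption on $M$.

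Next, I would identify the Tor/Ext invariants of the pair $(k,M)$ with the standard invariants of $M$. Since $\Tor_n^R(k,M)$ is a $k$-vector space of dimension $\beta_n^R(M)$, one has $\mu_R(\Tor_n^R(k,M)) = \beta_n^R(M)$, and hence
\[
\tcx_R(k,M) = \cx_R(M), \qquad \tcurv_R(k,M) = \curv_R(M).
\]
Likewise $\Ext_R^n(k,M)$ is a $k$-vector space of dimension $\mu_R^n(M)$, so by \Cref{defn:cx-curv},
\[
\cx_R(k,M) = \injcx_R(M), \qquad \curv_R(k,M) = \injcurv_R(M).
\]

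Substituting $X = k$ together with these identifications into \Cref{thm:main-cx-curv-CM-mod}.(1).(i), (ii), (iii) immediately gives (1), the upper bound in (3), and the upper bounds $\cx_R(k) \le \cx_R(M)$ and $\curv_R(k) \le \curv_R(M)$ needed for (4). Analogously, \Cref{thm:main-cx-curv-CM-mod}.(2).(i), (ii), (iii) yield (2), the upper bound in (5), and the upper bounds $\cx_R(k) \le \injcx_R(M)$ and $\curv_R(k) \le \injcurv_R(M)$ needed for (6). To upgrade the upper bounds in (4) and (6) to equalities, I would invoke the reverse inequalities $\cx_R(M) \le \cx_R(k)$, $\curv_R(M) \le \curv_R(k)$, $\injcx_R(M) \le \injcx_R(k) = \cx_R(k)$, and $\injcurv_R(M) \le \injcurv_R(k) = \curv_R(k)$ supplied by \Cref{prop:cx-curv-facts}.\eqref{cx-curv-M-k}, together with the identities $\cx_R(k) = \injcx_R(k)$ and $\curv_R(k) = \injcurv_R(k)$ noted in \Cref{rmk:cx-curv}.(4).

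Since the corollary is a direct specialization of \Cref{thm:main-cx-curv-CM-mod} to $X = k$, no substantial obstacle is anticipated; the only point requiring attention is the identification of the pair invariants $\tcx_R(k,M)$, $\tcurv_R(k,M)$, $\cx_R(k,M)$, $\curv_R(k,M)$ with the single-module invariants, and this is immediate from the fact that $\Tor$ and $\Ext$ against $k$ compute Betti and Bass numbers.
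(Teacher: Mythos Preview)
Your proposal is correct and follows essentially the same route as the paper: specialize \Cref{thm:main-cx-curv-CM-mod}.(1) and (2) to $X=k$, identify the pair invariants with $\cx_R(M)$, $\curv_R(M)$, $\injcx_R(M)$, $\injcurv_R(M)$, and then use \Cref{prop:cx-curv-facts}.\eqref{cx-curv-M-k} for the reverse inequalities. The paper's proof is slightly terser but identical in substance.
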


\begin{proof}
    Note that
    $\tcurv_R(k,M) = \curv_R(M)$ and $\tcx_R(k,M) = \cx_R(M)$. Moreover, it is always true that $\cx_R(M)\le \cx_R(k)$ and $\curv_R(M)\le \curv_R(k)$ (cf.~\Cref{prop:cx-curv-facts}.(1)). Therefore (1), (3) and (4) can be obtained from Theorem~\ref{thm:main-cx-curv-CM-mod}.(1) by considering $X=k$. For (2), (5) and (6), notice that $\curv_R(k,M) = \injcurv_R(M)$ and $\cx_R(k,M) = \injcx_R(M)$. Also note that $\injcx_R(M)\le \cx_R(k)$ and $\injcurv_R(M)\le \curv_R(k)$. Hence (2), (5) and (6) follow from Theorem~\ref{thm:main-cx-curv-CM-mod}.(2) by considering $X=k$.
\end{proof}

\begin{remark}\label{rmk:3-examples}
    Examples~\ref{exam-2-cx-curv-inequality} and \ref{exam-1-cx-curv-inequality} respectively ensure that the inequalities in \Cref{cor:cx-curv-k-CM-mod}.(1) and (2) (hence in \Cref{thm:main-cx-curv-CM-mod}) are sharp. Moreover, these examples show that the conditions `$e(M) \le 2 \mu(M)$' and `$e(M) \le 2 \type(M)$' cannot be omitted from \Cref{cor:cx-curv-k-CM-mod}.(3) and (5) respectively. The conditions `$e(M) < 2 \mu(M)$' and `$e(M) < 2 \type(M)$' in \Cref{cor:cx-curv-k-CM-mod}.(4) and (6) (hence in \Cref{thm:main-cx-curv-CM-mod}) cannot be replaced by `$e(M) \le 2 \mu(M)$' and `$e(M) \le 2 \type(M)$' respectively as shown in \Cref{exam:strict-ineq-cant-be-eq}.
\end{remark}

\begin{example}\label{exam-2-cx-curv-inequality}
    Let $R = k[x_1,\ldots,x_b,y]/((x_1,\ldots,x_b)^2,y^2)$ over a field $k$, where $b\ge 2$. Then $R$ is an Artinian local ring, which is not Gorenstein as $\Soc(R) = (x_1y,\ldots,x_by) \cong k^{\oplus b}$. Here $\fm = (x_1,\ldots,x_b,y)$. Set $M:=(y)$. Then $\fm^2 M = 0$, i.e., $M$ has minimal multiplicity. Note that $e(M) = \lambda(M) = 1+b > 2 = 2 \mu(M)$. Since $Ry=\ann_R(y)$, one has that $\beta_n^R(M)=1$ for all $n\ge 0$, $\cx_R(M) = 1 = \curv_R(M)$. Next we compute $\cx_R(k)$ and $\curv_R(k)$. Let $\mathbb{F}_U$ and $\mathbb{F}_V$ be minimal $R$-free resolutions of $U:=R/(x_1,\ldots,x_b)$ and $V:=R/(y)$ respectively. Notice that $\beta_n^R(V) = \beta_n^R(M) = 1$. Also $\Omega^1_R(U) = (x_1,\ldots,x_b) = (x_1)\oplus\cdots\oplus (x_b)\cong U^{\oplus b}$, which yields that $\beta_n^R(U) = b^n$ for all $n\ge 0$. Since $k \cong U\otimes_R V$ and $\Tor_i^R\big(U,V\big)=0$ for all $i\ge 1$, the tensor product of $\mathbb{F}_U$ and $\mathbb{F}_V$ yields a minimal $R$-free resolution $\mathbb{F}_k$ of $k$, i.e., $\mathbb{F}_k \cong \mathbb{F}_U \otimes_R \mathbb{F}_V $. It follows that $\beta_n^R(k)=\sum_{i=0}^n\beta_i^R(U)\beta_{n-i}^R(V) = \sum_{i=0}^n b^i=b^{n+1}-1$ for all $n\ge 0$. This implies that $\cx_R(k) = \infty$ and $ \curv_R(k) = b$. Thus $\cx_R(k) > 1+\cx_R(M)$ and
    \begin{center}
        $ \sup\left\{\frac{e(M)}{\mu(M)}-1, \curv_R(M)\right\} = \frac{e(M)}{\mu(M)}-1 = b = \curv_R(k) $.
    \end{center}
\end{example}

\begin{example}\label{exam-1-cx-curv-inequality}
    Let $(R,\fm,k)$ be an Artinian local ring such that $\fm^2=0$ and $\mu(\fm)=b\ge 2$ (e.g., $R$ can be $k[x_1,\ldots,x_b]/\langle x_ix_j : 1\le i\le j\le b\rangle$, where $b\ge 2$). Then $R$ is not Gorenstein as $\type(R) = \mu(\fm) =b \neq 1$. Set $E:=E_R(k)$, the injective hull of $k$. Clearly, $\fm^2 E = 0$, i.e., $E$ has minimal multiplicity. By Matlis duality, $e(E)=\lambda(E) = \lambda(R) = 1 + \mu(\fm)=1+b$, $\mu(E) = \type(R) = \mu(\fm)=b$ and $\type(E)=1$. Therefore, since $b\ge 2$, one has that $e(E) = 1+b > 2 = 2\type(E)$ and $e(E) = 1+b < 2b = 2 \mu(E)$. Note that $\injcx_R(E) = 0 = \injcurv_R(E)$ as $E$ is injective. Also $\Omega_R^1(k) = \fm \cong k^{\oplus b}$ as $\fm^2=0$. Hence, inductively, $\Omega_R^n(k) \cong k^{\oplus b^n}$ for all $n\ge 1$. So $\beta_n^R(k) = b^n$ for all $n\ge 0$. This yields that $\cx_R(k) = \infty$ and $ \curv_R(k) = b$. Thus $\cx_R(k) > 1+\injcx_R(E)$. Moreover, $\frac{e(E)}{\type(E)}-1 = b = \frac{\mu(E)}{e(E)-\mu(E)}$, and hence
    \begin{center}
        $\sup\left\{\frac{e(E)}{\type(E)}-1, \injcurv_R(E)\right\} = b=\curv_R(k) = \sup\left\{\frac{\mu(E)}{e(E)-\mu(E)}, \injcurv_R(E)\right\}.$
    \end{center}
    Note that $e(R)=\lambda(R)=1+b<2b=2\type(R)$, $\cx_R(k) > 1 + \cx_R(R)$ and
    \begin{center}
        $\curv_R(k) =b=\sup\left\{\frac{\type(R)}{e(R)-\type(R)}, \curv_R(R)\right\}.$
    \end{center}
\end{example}


\begin{example}\label{exam:strict-ineq-cant-be-eq}
    Let $R$ be an abstract hypersurface of multiplicity $2$ (e.g., $R=k[x]/(x^2)$). Then $M:=R$ is a CM $R$-module of minimal multiplicity such that $e(M)=2=2\mu(M)=2\type(M)$. Note that $\cx_R(k)=1\neq 0 = \cx_R(M)= \injcx_R(M)$ and $\curv_R(k)=1\neq 0 = \curv_R(M)= \injcurv_R(M)$.
\end{example}

The next corollary shows that for a large class of modules, the condition on annihilation of Tor or Ext  modules by some power of the maximal ideal as in \Cref{thm:main-cx-curv-CM-mod} and \Cref{cor:mcm-aprrox} can be dropped.

\begin{corollary}\label{cor:loc-fin-pd}
Let $M$ and $X$ be nonzero $R$-modules such that $M$ is CM.
\begin{enumerate}[\rm(1)]
    \item 
    Suppose either $M$ or $X$ is locally of finite projective dimension on $\Spec(R) \smallsetminus \{ \fm \}$. Then
    \begin{enumerate}[\rm (i)]
        \item $\curv_R(X) \le \sup\left\{\frac{e(M)}{\mu(M)}-1, \tcurv_R(X,M)\right\}.$
        \item If $e(M)\le 2 \mu(M) $, then $\cx_R(X) \le 1+\tcx_R(X,M)$. 
        \item 
        If $e(M)< 2\mu(M) $, then $\cx_R(X) \le \tcx_R(X,M)$ and $\curv_R(X) \le \tcurv_R(X,M)$.
     \end{enumerate}
    \item 
    Suppose $X$ is locally of finite projective dimension on $\Spec(R) \smallsetminus \{ \fm \}$. Then
    \begin{enumerate}[\rm (i)]
        \item $\curv_R(X)\le \sup\left\{\frac{e(M)}{\type(M)}-1, \curv_R(X,M)\right\}.$
        \item 
        If $e(M)\le 2\type(M) $, then $\cx_R(X) \le 1+\cx_R(X,M)$.
        \item 
        If $e(M)< 2\type(M) $, then $\cx_R(X) \le \cx_R(X,M)$ and $\curv_R(X) \le \curv_R(X,M)$.
    \end{enumerate}
    \item 
    Suppose either $X$ is CM, or $R$ is CM. Also assume that $M$ is locally of finite projective dimension on $\Spec(R) \smallsetminus \{ \fm \}$. Then
    \begin{enumerate}[\rm (i)]
        \item $\injcurv_R(X) \le \sup\left\{ \frac{e(M)}{\mu(M)}-1, \curv_R(M,X)\right\}$.
        \item If $e(M)\le 2\mu(M) $, then $\injcx_R(X)\le 1+\cx_R(M,X)$.
        \item If $e(M)<2\mu(M) $, then $\injcx_R(X)\le \cx_R(M,X)$ and  $\injcurv_R(X)\le \curv_R(M,X)$.
    \end{enumerate}
\end{enumerate}    
\end{corollary}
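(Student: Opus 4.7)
The plan is to deduce \Cref{cor:loc-fin-pd} directly from \Cref{thm:main-cx-curv-CM-mod} and \Cref{cor:mcm-aprrox} by using \Cref{lem:Tor-m-zero} to supply the required annihilation of Tor or Ext by a uniform power of $\fm$. In each of the three cases, once we produce the integer $h>0$ with the appropriate annihilation property, the stated inequalities on complexity and curvature follow immediately from the corresponding part of the relevant theorem; so the work reduces to verifying that the local-finite-projective-dimension hypothesis produces exactly this annihilation.

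For part (1) we need $\fm^h \Tor_n^R(X,M)=0$ for all $n \gg 0$. Since $\Tor$ is symmetric in its arguments, it suffices that either $M$ or $X$ is locally of finite projective dimension on $\Spec(R)\setminus\{\fm\}$; applying \Cref{lem:Tor-m-zero}.(2) to whichever of the two has this property, with the other module playing the role of $N$, produces such an $h$. Then the three inequalities (i)--(iii) follow verbatim from \Cref{thm:main-cx-curv-CM-mod}.(1).

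For part (2) the required condition is $\fm^h \Ext_R^n(X,M)=0$ for $n\gg 0$. Because Ext is not symmetric, only the hypothesis on $X$ produces this: \Cref{lem:Tor-m-zero}.(1) applied to $X$ yields $\fm^g \Ext_R^n(X,N)=0$ for every $N$ and all $n>\dim(R)$, and we specialize to $N:=M$. The three conclusions then come from \Cref{thm:main-cx-curv-CM-mod}.(2). For part (3) the symmetric situation is needed: with $M$ locally of finite projective dimension on $\Spec(R)\setminus\{\fm\}$, \Cref{lem:Tor-m-zero}.(1) applied to $M$, with $N:=X$, delivers $\fm^g \Ext_R^n(M,X)=0$ for $n\gg 0$. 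If $X$ is CM, one reads off the conclusions from \Cref{thm:main-cx-curv-CM-mod}.(3); if instead $R$ is assumed CM (but $X$ may not be), one invokes \Cref{cor:mcm-aprrox}, which is crafted precisely for this variant.

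There is no substantive obstacle here: all the hard content is already encoded in \Cref{lem:Tor-m-zero}, \Cref{thm:main-cx-curv-CM-mod}, and \Cref{cor:mcm-aprrox}. The only points requiring a bit of care are to match the argument orientation of $\Ext$ to the correct hypothesis in parts (2) and (3) (the roles of $X$ and $M$ are not interchangeable in the Ext statements), and, in part (3), to choose between \Cref{thm:main-cx-curv-CM-mod}.(3) and \Cref{cor:mcm-aprrox} according to whether $X$ or $R$ is assumed to be CM.
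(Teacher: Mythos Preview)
Your proposal is correct and follows essentially the same approach as the paper: apply \Cref{lem:Tor-m-zero} to produce the uniform annihilation of Tor or Ext, then invoke the appropriate part of \Cref{thm:main-cx-curv-CM-mod} or \Cref{cor:mcm-aprrox}. Your added remarks about the symmetry of Tor versus the asymmetry of Ext, and about selecting between \Cref{thm:main-cx-curv-CM-mod}.(3) and \Cref{cor:mcm-aprrox} in part (3), are exactly the right observations.
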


\begin{proof}
(1) In view of Lemma~\ref{lem:Tor-m-zero}.(2), there exists an integer $h>0$ such that $\fm^h \Tor_n^R(X,M)=0$ for all $n>\dim(R)$. Hence we are done by \Cref{thm:main-cx-curv-CM-mod}.(1).

(2) In view of Lemma~\ref{lem:Tor-m-zero}.(1), there exists an integer $g>0$ such that $\fm^g \Ext^n_R(X,M)=0$ for all $n>\dim(R)$. Hence the desired inequalities follow from Theorem~\ref{thm:main-cx-curv-CM-mod}.(2).

(3) In view of Lemma~\ref{lem:Tor-m-zero}.(1), there exists an integer $f>0$ such that $\fm^f \Ext^n_R(M,X)=0$ for all $n>\dim(R)$. Therefore, when $X$ is CM, we are done by Theorem~\ref{thm:main-cx-curv-CM-mod}.(3), and when $R$ is CM, the desired inequalities follow from Corollary~\ref{cor:mcm-aprrox}.
\end{proof}

Our next result shows that every nonzero CM module $M$ with $ e(M) \le 2 \mu(M) $ or $e(M)\le 2\type(M) $ can be used to characterize complete intersection rings. Note that \Cref{cor:char-CI-rings}.(1) considerably strengthens \cite[Cor.~3.9]{DG23}, which was proved for Ulrich modules (\Cref{defn:Ulrich-mod}) over a CM local ring.

\begin{corollary}\label{cor:char-CI-rings}
    Let $M$ be a nonzero CM $R$-module.
    \begin{enumerate}[\rm (1)]
        \item Let $ e(M) \le 2 \mu(M) $. Then the following are equivalent:\\
        $\rm (a)$~$R$ is complete intersection; $\rm (b)$~$\cx_R(M)<\infty$; and $\rm (c)$~$\curv_R(M)\le 1$.
        \item Let $e(M)\le 2\type(M) $. Then the following are equivalent:\\
        $\rm (a)$~$R$ is complete intersection; $\rm (b)$~$\injcx_R(M)<\infty$; and $\rm (c)$~$\injcurv_R(M)\le 1$.
    \end{enumerate}
\end{corollary}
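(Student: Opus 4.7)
The plan is to derive this corollary as a direct consequence of \Cref{cor:cx-curv-k-CM-mod} together with the classical characterization of complete intersection local rings via complexity and curvature of the residue field, recorded in \Cref{prop:cx-curv-facts}\eqref{char-CI-via-cx-curv}. Both parts follow the same pattern, so I would handle them in parallel.

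For part (1), I would prove the cycle (a) $\Rightarrow$ (b) $\Rightarrow$ (c) $\Rightarrow$ (a). The implication (a) $\Rightarrow$ (b) is immediate from \Cref{prop:cx-curv-facts}\eqref{CI-ring-finite-cx-curv}, since over a complete intersection every finitely generated module has finite complexity. The implication (b) $\Rightarrow$ (c) is \Cref{rmk:cx-curv}\eqref{finite-cx-implies-curv}. The crucial implication is (c) $\Rightarrow$ (a): assume $\curv_R(M) \le 1$ and $e(M) \le 2\mu(M)$. Then $e(M)/\mu(M) - 1 \le 1$, so \Cref{cor:cx-curv-k-CM-mod}(1) yields
\[
\curv_R(k) \le \sup\!\left\{\tfrac{e(M)}{\mu(M)}-1,\ \curv_R(M)\right\} \le 1.
\]
By \Cref{prop:cx-curv-facts}\eqref{char-CI-via-cx-curv}, this forces $R$ to be complete intersection.

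For part (2), the argument is completely analogous, using \Cref{cor:cx-curv-k-CM-mod}(2) in place of (1). Specifically, (a) $\Rightarrow$ (b) follows from \Cref{prop:cx-curv-facts}\eqref{CI-ring-finite-cx-curv}, (b) $\Rightarrow$ (c) from \Cref{rmk:cx-curv}\eqref{finite-cx-implies-curv}, and for (c) $\Rightarrow$ (a), the hypothesis $e(M) \le 2\type(M)$ gives $e(M)/\type(M) - 1 \le 1$, whence
\[
\curv_R(k) \le \sup\!\left\{\tfrac{e(M)}{\type(M)}-1,\ \injcurv_R(M)\right\} \le 1,
\]
and again \Cref{prop:cx-curv-facts}\eqref{char-CI-via-cx-curv} applies.

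There is no serious obstacle here; the whole work has been carried out in establishing \Cref{cor:cx-curv-k-CM-mod}, and this corollary is merely the packaging of its content into a ring-theoretic characterization. The only point worth stating explicitly is that the numerical hypothesis on $e(M)$ relative to $\mu(M)$ or $\type(M)$ is exactly what is needed to absorb the first term in the supremum from \Cref{cor:cx-curv-k-CM-mod} into the bound $1$, ensuring $\curv_R(k) \le 1$.
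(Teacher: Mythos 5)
Your proposal is correct and follows exactly the paper's route: (a) $\Rightarrow$ (b) via \Cref{prop:cx-curv-facts}.(3), (b) $\Rightarrow$ (c) via \Cref{rmk:cx-curv}.(3), and (c) $\Rightarrow$ (a) by feeding the numerical hypothesis into \Cref{cor:cx-curv-k-CM-mod}.(1) or (2) to get $\curv_R(k)\le 1$ and then invoking \Cref{prop:cx-curv-facts}.(2). No gaps; this is essentially the paper's proof with the details of the supremum bound spelled out.
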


\begin{proof}
    The forward implications in (1) and (2) are clear due to \Cref{prop:cx-curv-facts}.(3) and \Cref{rmk:cx-curv}.\eqref{finite-cx-implies-curv}. It remains to prove (c) $\Rightarrow$ (a) in (1), and (c) $\Rightarrow$ (a) in (2). These implications follow from \Cref{cor:cx-curv-k-CM-mod}.(1) and (2) respectively, and \Cref{prop:cx-curv-facts}.(2). 
\end{proof}


When $R$ is CM, taking $M=R$ in Theorem~\ref{thm:main-cx-curv-CM-mod}.(1).(i) and Corollary~\ref{cor:mcm-aprrox}.(1), we obtain the following result which contains \cite[Prop.~4.2.6]{Avr98}.  

\begin{corollary}
If $R$ is CM, then $\max\{\curv(X),\injcurv(X)\} \le e(R)-1$ for every $R$-module $X$.
\end{corollary}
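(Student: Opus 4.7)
The plan is to specialize Theorem~\ref{thm:main-cx-curv-CM-mod}.(1).(i) and Corollary~\ref{cor:mcm-aprrox}.(1) to the case $M = R$, exploiting the fact that $R$ is MCM over itself and that all higher Tor and Ext against $R$ vanish. I may assume $X \ne 0$, since otherwise all four quantities are $0$ and the inequality is trivial.

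First I would collect the numerical data for $M = R$. Since $R$ is CM, $M = R$ is a nonzero CM $R$-module with $\mu(R) = 1$, so $\tfrac{e(R)}{\mu(R)} - 1 = e(R) - 1$. Moreover $\Tor_n^R(X,R) = 0$ and $\Ext_R^n(R,X) = 0$ for every $n \ge 1$, which has two consequences: the annihilation hypotheses of the two results (namely $\fm^h \Tor_n^R(X,R) = 0$ and $\fm^h \Ext_R^n(R,X) = 0$ for $n \gg 0$) are satisfied for free, and the sequences $\{\mu(\Tor_n^R(X,R))\}$ and $\{\mu(\Ext_R^n(R,X))\}$ are eventually zero. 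By Remark~\ref{rmk:curv-zero}, this gives $\tcurv_R(X,R) = 0$ and $\curv_R(R,X) = 0$.

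Next I would plug these values in. Applying Theorem~\ref{thm:main-cx-curv-CM-mod}.(1).(i) with $M = R$ yields
\[
\curv_R(X) \le \sup\Bigl\{\tfrac{e(R)}{\mu(R)} - 1,\; \tcurv_R(X,R)\Bigr\} = \sup\{e(R) - 1,\, 0\} = e(R) - 1,
\]
where the last equality uses $e(R) \ge 1$. Applying Corollary~\ref{cor:mcm-aprrox}.(1) with $M = R$ (which is legitimate because $R$ itself is CM, so the hypothesis ``$R$ is CM'' of that corollary is met) gives, in exactly the same way,
\[
\injcurv_R(X) \le \sup\Bigl\{\tfrac{e(R)}{\mu(R)} - 1,\; \curv_R(R,X)\Bigr\} = e(R) - 1.
\]
Taking the maximum of the two bounds produces the claimed inequality.

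There is no real obstacle here: both pieces of input have already been established in the paper, and the argument is essentially a substitution check. The only thing to be careful about is verifying that the hypotheses of the two invoked results are correctly met when $M = R$, in particular that the hypothesis ``$X$ is CM'' of Theorem~\ref{thm:main-cx-curv-CM-mod}.(3) is not needed because we use the CM-ring version (Corollary~\ref{cor:mcm-aprrox}) for the injective-curvature bound.
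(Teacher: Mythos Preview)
Your proof is correct and follows essentially the same approach as the paper, which simply states that the result is obtained by taking $M=R$ in Theorem~\ref{thm:main-cx-curv-CM-mod}.(1).(i) and Corollary~\ref{cor:mcm-aprrox}.(1). You have merely spelled out the substitution in detail, including the verification that the annihilation hypotheses are vacuously satisfied and that $\tcurv_R(X,R)=\curv_R(R,X)=0$.
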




\section{On complexity and curvature of (pair of) CM modules of minimal multiplicity} \label{sec:cx-curv-cm-mod-min-mult}

In this section, we complement  the results of \Cref{sec:cx-curv-cm-mod} by proving that CM modules of minimal multiplicity are not far from having maximal (projective or injective) complexity and curvature, provided $ e(M) \ge 2 \type(M)$ or $e(M)\ge 2 \mu(M)$ holds respectively. As in the previous section, our main results in this section, \Cref{thm:cx-curv-min-mult} and \Cref{Cor:cx-curv-min-CM}, are also stated more generally, in terms of complexity and curvature of pair of modules. 

The second part of the following lemma should be compared with Lemma~\ref{lem:M-fin-len-ineq}.(3).

\begin{lemma}\label{lem:Bass-k-M-inequality}
    Let $M$ be a nonzero $R$-module such that $\fm^2 M=0$. Let $X$ be an $R$-module. Then
    \begin{enumerate}[\rm (1)]
        \item $\big(\lambda(M)-\mu(M)\big)\beta_{n+1}^R(X)-\mu(M)\beta_{n}^R(X)\le \lambda\big(\Ext^{n+1}_R(X,M)\big)$ for all $n\ge 0$.
        \item $\mu(M)\mu_R^{n+1}(X)-\big(\lambda(M)-\mu(M)\big)\mu_R^n(X)\le \lambda\big(\Ext^{n+1}_R(M,X)\big)$ for all $n\ge 0$.
        \item $\big(\lambda(M)-\type(M)\big)\beta_{n+1}^R(X)-\type(M)\beta_{n}^R(X)\le \lambda\big(\Tor_{n+1}^R(X,M)\big)$ for all $n\ge 0$.
        \item If $\lambda(X)<\infty$, then $\big(\lambda(M)-\type(M)\big)\mu^{n+1}_R(X)-\type(M)\mu^{n}_R(X)\le \lambda\big(\Ext^{n+1}_R(M,X)\big)$ for all $n\ge 0$.
    \end{enumerate}
\end{lemma}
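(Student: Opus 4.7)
The plan hinges on the observation that $\fm^2M=0$ forces both $\fm M$ and $M/\fm M$ to be annihilated by $\fm$, hence to be $k$-vector spaces. Explicitly, $M/\fm M\cong k^{\oplus \mu(M)}$, and additivity of length along $0\to \fm M\to M\to M/\fm M\to 0$ gives $\fm M\cong k^{\oplus(\lambda(M)-\mu(M))}$. The strategy is then to apply the appropriate $\Hom$ functor to this short exact sequence and extract a three-term portion of the long exact sequence, in which both outer terms are direct sums of $\Ext$'s against $k$ whose lengths are computable in terms of Betti or Bass numbers.

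For (1), applying $\Hom_R(X,-)$ yields the exact segment
\[
\Ext_R^n(X,k^{\oplus \mu(M)})\longrightarrow \Ext_R^{n+1}(X,\fm M)\longrightarrow \Ext_R^{n+1}(X,M).
\]
Since $\Ext_R^i(X,k^{\oplus s})$ has length $s\cdot \beta_i^R(X)$, exactness at the middle term gives
\[
(\lambda(M)-\mu(M))\,\beta_{n+1}^R(X)\;\le\;\mu(M)\,\beta_n^R(X)+\lambda\!\left(\Ext_R^{n+1}(X,M)\right),
\]
which rearranges to the stated inequality (the case where the left-hand side is negative is vacuous). Part (2) is entirely parallel: apply $\Hom_R(-,X)$ to the same short exact sequence and read off the segment $\Ext_R^n(\fm M,X)\to \Ext_R^{n+1}(M/\fm M,X)\to \Ext_R^{n+1}(M,X)$, now using $\lambda(\Ext_R^i(k^{\oplus s},X))=s\cdot \mu_R^i(X)$.

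Parts (3) and (4) I would deduce from (1) by Matlis duality. The key inputs are that $\fm^2 M^\vee=0$, $\lambda(M^\vee)=\lambda(M)$, $\mu(M^\vee)=\type(M)$, and the natural isomorphism $\Ext_R^n(X,M^\vee)\cong \Tor_n^R(X,M)^\vee$ (valid since $X$ is finitely generated), which preserves length. Substituting $M^\vee$ for $M$ in (1) then yields (3) directly. For (4), I would apply (3) with $X^\vee$ in place of $X$: the assumption $\lambda(X)<\infty$ ensures $X^\vee$ is finitely generated of finite length with $X^{\vee\vee}\cong X$; moreover $\beta_n^R(X^\vee)=\mu_R^n(X)$, and the Matlis duality identity $\Ext_R^{n+1}(M,X)^\vee\cong \Tor_{n+1}^R(M,X^\vee)\cong \Tor_{n+1}^R(X^\vee,M)$ converts the $\Tor$ on the right-hand side into $\lambda(\Ext_R^{n+1}(M,X))$.

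I do not anticipate any real obstacle: the argument is a bookkeeping exercise once one observes that $\fm M$ and $M/\fm M$ are $k$-vector spaces of known dimensions. The only point requiring a moment of care is verifying that all $\Ext$ and $\Tor$ modules appearing in sight have finite length (so that additivity of length and Matlis duality behave as expected); this is immediate since each is annihilated by a power of $\fm$ and is finitely generated.
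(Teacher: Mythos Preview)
Your proposal is correct and essentially identical to the paper's own proof: the paper also uses the short exact sequence $0\to \fm M\cong k^{\oplus(\lambda(M)-\mu(M))}\to M\to k^{\oplus\mu(M)}\to 0$, extracts the same three-term segments of the long exact sequences for (1) and (2), and deduces (3) and (4) from (1) and (3) respectively via Matlis duality exactly as you describe.
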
 

\begin{proof}
    Set $u :=\mu(\fm M)$ and $v:=\mu(M)$. Since $\fm^2 M=0$, it follows that $u=\lambda(\fm M)=\lambda(M)-\mu(M)$. Moreover, there is a short exact sequence $0 \to k^{\oplus u} \to M \to k^{\oplus v} \to 0$, which yields the exact sequences $\Ext_R^n(X,k^{\oplus v}) \rightarrow \Ext_R^{n+1}(X,k^{\oplus u}) \rightarrow \Ext_R^{n+1}(X,M)$ and $\Ext_R^n(k^{\oplus u},X) \rightarrow \Ext_R^{n+1}(k^{\oplus v},X) \rightarrow \Ext_R^{n+1}(M,X)$ for all $n\ge 0$. Hence, by length consideration, $u \beta_{n+1}^R(X) \le v \beta_n^R(X) + \lambda(\Ext_R^{n+1}(X,M))$ and $v\mu_R^{n+1}(X)\le u\mu_R^n(X)+\lambda(\Ext^{n+1}_R(M,X))$ for all $n\ge 0$. Thus the desired inequalities (1) and (2) follow. The inequalities in (3) follow from (1) by remembering $\Tor^R_i(X,M)^{\vee}\cong \Ext_R^{i}(X,M^{\vee})$ for all $i\ge 0$, $\fm^2M=0$ if and only if $\fm^2(M^{\vee})=0$, $\lambda(M)=\lambda(M^{\vee})$ and $\type(M)=\mu(M^{\vee})$. For (4), since $\lambda(X)<\infty$, one has that $\mu^i_R(X)=\beta^R_i(X^{\vee})$ and $\Ext^i_R(M,X)^{\vee}\cong \Tor^R_i(M,X^{\vee})\cong \Tor^R_i(X^{\vee},M)$ for all $i\ge 0$. Hence we are done by applying (3) on $M$ and $X^{\vee}$.
\end{proof}

The following proposition is the base case of \Cref{thm:cx-curv-min-mult}.

\begin{proposition}\label{prop:main-base-case}
Let $M$ and $X$ be nonzero $R$-modules such that $\fm^2 M=0$. Then, the following hold.
\begin{enumerate}[\rm(1)] 
    \item 
    \begin{enumerate}[\rm(i)]
        \item $\curv_R(X)\le \sup\left\{\frac{\mu(M)}{\lambda(M)-\mu(M)}, \curv_R(X,M)\right\}$ provided $\fm M\neq 0$.
        \item If $\lambda(M)\ge 2\mu(M)$, then $\cx_R(X)\le 1+\cx_R(X,M)$.
        \item If $\lambda(M)>2\mu(M)$, then $\cx_R(X)\le \cx_R(X,M)$ and $\curv_R(X)\le \curv_R(X,M)$. 
    \end{enumerate}
\item 
\begin{enumerate}[\rm (i)]
        \item $\injcurv_R(X) \le \sup\left\{ \frac{\lambda(M)}{\mu(M)}-1, \curv_R(M,X)\right\}$.
        \item If $\lambda(M)\le 2\mu(M) $, then $\injcx_R(X)\le 1+\cx_R(M,X)$.
        \item If $\lambda(M)<2\mu(M) $, then $\injcx_R(X)\le \cx_R(M,X)$ and  $\injcurv_R(X)\le \curv_R(M,X)$.
\end{enumerate}
    \item 
    \begin{enumerate}[\rm(i)]
        \item $\curv_R(X)\le \sup\left\{\frac{\type(M)}{\lambda(M)-\type(M)}, \tcurv_R(X,M)\right\}$ provided $\fm M\neq 0$.
        \item If $\lambda(M)\ge 2\type(M)$, then $\cx(X)_R\le 1+\tcx_R(X,M)$.
        \item If $\lambda(M)>2\type(M)$, then $\cx_R(X)\le \tcx_R(X,M)$ and $\curv_R(X)\le \tcurv_R(X,M)$. 
    \end{enumerate}    
    \item Let $\lambda (X) < \infty$. Then
    \begin{enumerate}[\rm(i)]
        \item $\injcurv_R(X)\le \sup\left\{\frac{\type(M)}{\lambda(M)-\type(M)}, \curv_R(M,X)\right\}$ provided $\fm M\neq 0$.
        \item If $\lambda(M)\ge 2\type(M)$, then $\injcx_R(X)\le 1+\cx_R(M,X)$.
        \item If $\lambda(M)>2\type(M)$, then $\injcx_R(X)\le \cx_R(M,X)$ and $\injcurv_R(X)\le \curv_R(M,X)$. 
    \end{enumerate}
\end{enumerate}    
\end{proposition}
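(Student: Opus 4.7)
The plan is a uniform application of Lemma~\ref{lem:Bass-k-M-inequality} combined with the sequence calculus of Section~\ref{sec:cx-curv-seq}, treating the four parts by a single template. Before invoking any of those four inequalities, I would record two preliminary observations. First, since $\fm^2 M=0$, the ideal $\fm^2$ annihilates $\Ext_R^n(X,M)$, $\Ext_R^n(M,X)$ and $\Tor_n^R(X,M)$ for every $n$, so \Cref{cx-mu-l} allows $\mu(-)$ to be replaced by $\lambda(-)$ in each of $\cx_R(X,M)$, $\curv_R(X,M)$, $\cx_R(M,X)$, $\curv_R(M,X)$, $\tcx_R(X,M)$ and $\tcurv_R(X,M)$. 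Second, $\fm^2M=0$ gives $\lambda(M)-\mu(M)=\lambda(\fm M)$, so $\fm M\neq 0$ is equivalent to $\lambda(M)>\mu(M)$; passing to the Matlis dual (which also has square-zero maximal ideal action) yields $\fm M\neq 0$ iff $\lambda(M)>\type(M)$. Thus the denominators appearing in (1)(i), (2)(i), (3)(i) and (4)(i) are nonzero precisely when $\fm M\neq 0$.

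For Part (1), I would start from Lemma~\ref{lem:Bass-k-M-inequality}(1) and rearrange it as
\[
\beta_{n+1}^R(X)\le \frac{\mu(M)}{\lambda(M)-\mu(M)}\,\beta_n^R(X)+\frac{1}{\lambda(M)-\mu(M)}\,\lambda\!\left(\Ext_R^{n+1}(X,M)\right).
\]
Item (i) then follows from \Cref{lem:curv-x-y}. For (ii), the hypothesis $\lambda(M)\ge 2\mu(M)$ gives $\lambda(M)-\mu(M)\ge \mu(M)$, so Lemma~\ref{lem:Bass-k-M-inequality}(1) forces $\mu(M)\bigl(\beta_{n+1}^R(X)-\beta_n^R(X)\bigr)\le \lambda(\Ext_R^{n+1}(X,M))$, and \cite[Prop.~2.2.(2)]{DV09} together with \Cref{cx-mu-l} yields $\cx_R(X)\le 1+\cx_R(X,M)$. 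For (iii), the strict inequality $\lambda(M)>2\mu(M)$ makes $\lambda(M)-\mu(M)>\mu(M)$; then \cite[Prop.~2.2.(4)]{DV09} shows
\[
\cx\bigl(\{(\lambda(M)-\mu(M))\beta_{n+1}^R(X)-\mu(M)\beta_n^R(X)\}\bigr)=\cx(\{\beta_n^R(X)\})=\cx_R(X),
\]
and Lemma~\ref{lem:Bass-k-M-inequality}(1) bounds this by $\cx_R(X,M)$. The curvature half of (iii) is deduced from (i) by a case split: if $\curv_R(X,M)\le \mu(M)/(\lambda(M)-\mu(M))<1$ then (i) gives $\curv_R(X)<1$, whence $\curv_R(X)=0$ by \Cref{rmk:curv-zero}; otherwise (i) directly yields $\curv_R(X)\le \curv_R(X,M)$.

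Parts (2), (3), (4) are handled by exactly the same template, starting from Lemma~\ref{lem:Bass-k-M-inequality}(2), (3), (4) respectively. The one structural difference appears in Part (2): because the recursion there has leading coefficient $\mu(M)$ and lower coefficient $\lambda(M)-\mu(M)$ (the two coefficients swap relative to Part (1)), the regime conditions $\lambda(M)\ge 2\mu(M)$ and $\lambda(M)>2\mu(M)$ get reversed to $\lambda(M)\le 2\mu(M)$ and $\lambda(M)<2\mu(M)$. Parts (3) and (4) replace $\mu(M)$ by $\type(M)$ throughout, and use Tor in place of Ext (for (3)) or impose $\lambda(X)<\infty$ (for (4), which is exactly the hypothesis under which Lemma~\ref{lem:Bass-k-M-inequality}(4) was derived).

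The proof is essentially bookkeeping; the one piece of real care needed is tracking which of the two coefficients in each two-term recursion dominates, and verifying each time that \Cref{cx-mu-l} can be invoked to translate between $\mu(-)$ and $\lambda(-)$. I do not anticipate any genuine obstacle.
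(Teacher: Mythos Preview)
Your proposal is correct and follows essentially the same approach as the paper: both argue by feeding the four inequalities of Lemma~\ref{lem:Bass-k-M-inequality} into \Cref{lem:curv-x-y} and \cite[Prop.~2.2.(2),(4)]{DV09}, using the $\mu\leftrightarrow\lambda$ replacement (the paper via \Cref{cor:cx-curv-length}, you via \Cref{cx-mu-l} and $\fm^2$-annihilation, which amounts to the same thing), and handling the curvature half of (iii) by the same case split on whether $\curv_R(X,M)$ lies above or below the threshold. One minor slip: item (2)(i) carries no ``$\fm M\neq 0$'' proviso and the relevant denominator there is $\mu(M)$, so it should be omitted from your list of items whose denominators vanish exactly when $\fm M=0$; this does not affect the argument.
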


\begin{proof}
Note that $\lambda(M)\ge \lambda(M/\fm M) = \mu(M)$. Moreover, $\fm M\neq 0$ \iff $\lambda(M)>\mu(M)$, which is equivalent to that $\lambda(M)>\type(M)$, see \Cref{thm:e-mu-type}.
    
(1) (i) Let $\fm M\neq 0$. Then, by Lemma~\ref{lem:Bass-k-M-inequality}.(1), $\beta_{n+1}^R(X)\le \frac{\mu(M)}{\lambda(M)-\mu(M)}\beta_{n}^R(X) + \lambda\big(\Ext^{n+1}_R(X,M)\big)$ for all $n\ge 0$. Hence the desired inequality follows from Corollary~\ref{cor:cx-curv-length} and Lemma~\ref{lem:curv-x-y}. 

(ii) Let $\lambda(M)\ge 2\mu(M) $. Then $(\lambda(M)-\mu(M))\ge \mu(M)$. So, in view of Lemma~\ref{lem:Bass-k-M-inequality}.(1), $\beta_{n+1}^R(X)-\beta_n^R(X) \le \lambda(\Ext_R^{n+1}(X,M))$ for all $n\ge 0$. Hence \cite[Prop.~2.2.(2)]{DV09} yields that
\[
\cx_R(X) = \cx(\{\beta_n^R(X)\}) \le 1+\cx(\{\beta_{n+1}^R(X)-\beta_n^R(X)\})\le 1+\cx_R(X,M),
\]
where the last inequality follows from Corollary~\ref{cor:cx-curv-length} and Remark~\ref{rmk:cx-curv-ineq}.(1).

(iii) Let $\lambda(M)> 2\mu(M) $. Then $(\lambda(M)-\mu(M))>\mu(M)$. So, by \cite[Prop.~2.2.(4)]{DV09},
$$\cx\big(\big\{(\lambda(M)-\mu(M))\beta_{n+1}^R(X)-\mu(M)\beta_n^R(X)\big\}\big) = \cx(\{\beta_n^R(X)\}).$$
Hence, in view of Lemma~\ref{lem:Bass-k-M-inequality}.(1), Corollary~\ref{cor:cx-curv-length} and Remark~\ref{rmk:cx-curv-ineq}.(1), one concludes that
\[
  \cx_R(X) = \cx(\{\beta_n^R(X)\}) = \cx\big(\big\{(\lambda(M)-\mu(M))\beta_{n+1}^R(X)-\mu(M)\beta_n^R(X)\big\}\big) \le \cx_R(X,M).
\]
To show $\curv_R(X)\le \curv_R(X,M)$, first notice that $\alpha:=\frac{\mu(M)}{\lambda(M)-\mu(M)} \in (0,1)$ as $\lambda(M)>2\mu(M)$. If $\curv_R(X,M)\le \alpha$, then from (i), one gets that $\curv_R(X)\le \alpha <1$, and hence $\curv_R(X)=0\le \curv_R(X,M)$. If $\curv_R(X,M)> \alpha$, then  $\curv_R(X)\le \curv_R(X,M)$ clearly follows from (i).
%
%
%

    (2), (3) and (4): The proofs are similar to that of (1) by using Lemma~\ref{lem:Bass-k-M-inequality}.(2), (3) and (4) respectively in place of Lemma~\ref{lem:Bass-k-M-inequality}.(1).
\end{proof}

Now we prove the main result of this section.

\begin{theorem}\label{thm:cx-curv-min-mult}
Let $M$ and $X$ be nonzero $R$-modules such that $M$ has minimal multiplicity.
\begin{enumerate}[\rm(1)] 
    \item Suppose there exists an integer $h>0$ such that $\fm^h \Ext^n_R(X,M)=0$ for all $n\gg 0$. Then
    \begin{enumerate}[\rm(i)]
        \item $\curv_R(X)\le \sup\left\{\frac{\mu(M)}{e(M)-\mu(M)}, \curv_R(X,M)\right\}$ provided $M$ is not Ulrich.
        \item If $e(M)\ge 2\mu(M)$, then $\cx_R(X)\le 1+\cx_R(X,M)$.
        \item If $e(M)>2\mu(M)$, then $\cx_R(X)\le \cx_R(X,M)$ and $\curv_R(X)\le \curv_R(X,M)$. 
    \end{enumerate}
    \item Suppose there exists an integer $h>0$ such that $\fm^h \Ext^n_R(M,X)=0$ for all $n\gg 0$. Then 
    \begin{enumerate}[\rm (i)]
        \item $\injcurv_R(X) \le \sup\left\{ \frac{e(M)}{\mu(M)}-1, \curv_R(M,X)\right\}$.
        \item If $e(M)\le 2\mu(M) $, then $\injcx_R(X)\le 1+\cx_R(M,X)$.
        \item If $e(M)<2\mu(M) $, then $\injcx_R(X)\le \cx_R(M,X)$ and  $\injcurv_R(X)\le \curv_R(M,X)$.
    \end{enumerate}
    \item Suppose there exists an integer $h>0$ such that $\fm^h \Tor_n^R(X,M)=0$ for all $n\gg 0$. Then
    \begin{enumerate}[\rm(i)]
        \item $\curv_R(X)\le \sup\left\{\frac{\type(M)}{e(M)-\type(M)}, \tcurv_R(X,M)\right\}$ provided $M$ is not Ulrich.
        \item If $e(M)\ge 2\type(M)$, then $\cx_R(X)\le 1+\tcx_R(X,M)$.
        \item If $e(M)>2\type(M)$, then $\cx_R(X)\le \tcx_R(X,M)$ and $\curv_R(X)\le \tcurv_R(X,M)$. 
    \end{enumerate}
    \item Suppose that $X$ is CM, and there exists an integer $h>0$ such that $\fm^h \Ext^n_R(M,X)=0$ for all $n\gg 0$. Then
    \begin{enumerate}[\rm(i)]
        \item $\injcurv_R(X)\le \sup\left\{\frac{\type(M)}{e(M)-\type(M)}, \curv_R(M,X)\right\}$ provided $M$ is not Ulrich.
        \item If $e(M)\ge 2\type(M)$, then $\injcx_R(X)\le 1+\cx_R(M,X)$.
        \item If $e(M)>2\type(M)$, then $\injcx_R(X)\le \cx_R(M,X)$ and $\injcurv_R(X)\le \curv_R(M,X)$. 
    \end{enumerate}  
\end{enumerate}    
\end{theorem}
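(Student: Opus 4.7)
My plan is to mimic the argument used to deduce Theorem~\ref{thm:main-cx-curv-CM-mod} from Proposition~\ref{prop:ecx-ecurv-lambda-mu}: reduce the minimal-multiplicity hypothesis on $M$ to the finite-length condition ``$\fm^{2}M=0$'' that appears in the base case Proposition~\ref{prop:main-base-case}, by killing an appropriately chosen $M$-regular system of parameters. First, using Remark~\ref{rmk:flat-ext-CM} (together with Remark~\ref{rmk:flat-ext-Ulrich}), I would pass to the faithfully flat extension $R[T]_{\fm R[T]}$ and assume the residue field is infinite, since all the invariants appearing in the statement (multiplicity, minimal multiplicity, Ulrich property, $\mu$, $\type$, Ext/Tor complexity and curvature, and annihilation of Ext/Tor by powers of $\fm$) are preserved under such an extension.

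Now, by Remark~\ref{rmk:defn-min-mult}, there exists a system of parameters $\mathbf{x}=x_{1},\dots,x_{r}$ on $M$ with $\fm^{2}M=(\mathbf{x})\fm M$. Since $M$ is CM, $\mathbf{x}$ is an $M$-regular sequence, and since $\fm\cdot\fm M=(\mathbf{x})\fm M$, the sequence $\mathbf{x}$ is a reduction of $\fm$ with respect to $M$, so $e(M)=e((\mathbf{x}),M)=\lambda(M/\mathbf{x}M)$. Set $\oM:=M/\mathbf{x}M$. Then $\fm^{2}\oM=0$, and by standard properties of regular sequences (cf.~\cite[1.2.4]{BH98}), $\mu(\oM)=\mu(M)$ and $\type(\oM)=\type(M)$, while $\lambda(\oM)=e(M)$. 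Moreover, $M$ is Ulrich iff $\fm M=(\mathbf{x})M$ iff $\fm\oM=0$, which in view of Proposition~\ref{prop:main-base-case} is exactly the condition that makes the denominators $\lambda(\oM)-\mu(\oM)=e(M)-\mu(M)$ and $\lambda(\oM)-\type(\oM)=e(M)-\type(M)$ nonzero.

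With this setup, each of (1)--(4) is obtained by transferring the hypotheses from $M$ to $\oM$ and applying the corresponding part of Proposition~\ref{prop:main-base-case} to the pair $(X,\oM)$ (or $(\oM,X)$). Explicitly: for (1), iterate Lemma~\ref{lem:cx-curv-M-M/xM-Ext}.(1) along $\mathbf{x}$ to get $\fm^{h'}\Ext^{n}_{R}(X,\oM)=0$ for $n\gg 0$, together with $\cx_{R}(X,\oM)\le\cx_{R}(X,M)$ and $\curv_{R}(X,\oM)\le\curv_{R}(X,M)$, then invoke Proposition~\ref{prop:main-base-case}.(1). For (2), iterate Lemma~\ref{lem:cx-curv-M-M/xM-Ext}.(2) along the $M$-regular sequence $\mathbf{x}$ to obtain $\cx_{R}(\oM,X)\le\cx_{R}(M,X)$, $\curv_{R}(\oM,X)\le\curv_{R}(M,X)$ and the analogous annihilation statement, and apply Proposition~\ref{prop:main-base-case}.(2). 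For (3), use Lemma~\ref{lem:cx-curv-M-M/xM-Tor} in place of Lemma~\ref{lem:cx-curv-M-M/xM-Ext}.(1), and then Proposition~\ref{prop:main-base-case}.(3).

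Part (4) requires one extra reduction step, exactly as in the proof of Theorem~\ref{thm:main-cx-curv-CM-mod}.(3). Since $X$ is CM, pick an $X$-regular sequence $\mathbf{y}$ such that $X/\mathbf{y}X$ has finite length; Lemma~\ref{lem:cx-curv-M-M/xM}.(2) applied repeatedly gives $\injcx_{R}(X)=\injcx_{R}(X/\mathbf{y}X)$ and $\injcurv_{R}(X)=\injcurv_{R}(X/\mathbf{y}X)$. Combining this with Lemma~\ref{lem:cx-curv-M-M/xM-Ext}.(3) applied to the pairs $(\mathbf{y},\mathbf{x})$ yields $\cx_{R}(\oM,X/\mathbf{y}X)\le\cx_{R}(M,X)$, the analogous curvature inequality, and $\fm^{h'}\Ext^{n}_{R}(\oM,X/\mathbf{y}X)=0$ for $n\gg 0$. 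Now Proposition~\ref{prop:main-base-case}.(4) applied to $\oM$ and $X/\mathbf{y}X$ delivers the desired bounds. The main obstacle is really only bookkeeping: keeping track of which lemma supplies the transfer of annihilation plus the monotonicity of $\cx/\curv$ at each step, and verifying that the numerical hypothesis ``$M$ is not Ulrich'' translates correctly into the hypothesis ``$\fm\oM\neq 0$'' needed for Proposition~\ref{prop:main-base-case}.(i)-type inequalities.
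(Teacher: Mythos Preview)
Your proposal is correct and follows essentially the same approach as the paper: reduce to an infinite residue field, pass from $M$ to $\overline{M}=M/\mathbf{x}M$ with $\fm^{2}\overline{M}=0$ and $\lambda(\overline{M})=e(M)$, transfer the annihilation hypotheses and the complexity/curvature bounds via Lemmas~\ref{lem:cx-curv-M-M/xM-Tor} and \ref{lem:cx-curv-M-M/xM-Ext}, and then invoke the corresponding part of Proposition~\ref{prop:main-base-case}; the extra reduction of $X$ to $X/\mathbf{y}X$ in part~(4) via Lemma~\ref{lem:cx-curv-M-M/xM}.(2) is exactly what the paper does as well.
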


\begin{proof}
Due to Remarks~\ref{rmk:flat-ext-CM} and \ref{rmk:flat-ext-Ulrich}, we may pass to the faithfully flat extension $R[X]_{\fm[X]}$, and assume that $R$ has infinite residue field. Therefore, by Lemma~\ref{lem:mult-inequality}, there exists a system of parameters ${\bf x}$ of $M$ such that $e(M)=\lambda(M/\mathbf x M)$ and $\fm^2  (M/\mathbf x M)=0$. Since $M$ is CM, ${\bf x}$ is also a (maximal) $M$-regular sequence. Then, $M/\mathbf x M$ has finite length, $\mu(M)=\mu(M/\mathbf x M)$ and $\type(M)=\type(M/\mathbf x M)$ (cf.~\cite[1.2.4]{BH98}).  Moreover, if $M$ is not Ulrich, then $\fm(M/\mathbf xM)\neq 0$ by \cite[Prop.~2.2.(2)]{GTT15}. The inequalities given in (1), (2), and (3) now follow from Lemmas~\ref{lem:cx-curv-M-M/xM-Tor} and \ref{lem:cx-curv-M-M/xM-Ext} by using Proposition~\ref{prop:main-base-case}.(1), (2), and (3) respectively for $M/\mathbf xM$ and $X$. For (4), consider a maximal $X$-regular sequence $\mathbf{y}$. Then $\lambda(X/\mathbf{y}X)< \infty$. Hence (4) follows from Lemmas~\ref{lem:cx-curv-M-M/xM}  and \ref{lem:cx-curv-M-M/xM-Ext} by using Proposition~\ref{prop:main-base-case}.(4) for $M/{\mathbf x}M$ and $X/\mathbf{y}X$.
\end{proof}

\begin{remark}
    Comparing Theorems~\ref{thm:main-cx-curv-CM-mod}.(3) and \ref{thm:cx-curv-min-mult}.(2), note that under the assumption of minimal multiplicity of $M$ (as in \ref{thm:cx-curv-min-mult}.(2)), we can remove the CM condition on $X$ from \Cref{thm:main-cx-curv-CM-mod}.(3).
\end{remark}
When $R$ is CM, the CM assumption on $X$ can be removed from \Cref{thm:cx-curv-min-mult}.(4)

\begin{corollary}\label{Cor:cx-curv-min-CM}
Assume that $R$ is CM. Let $M$ and $X$ be nonzero $R$-modules such that $M$ has minimal multiplicity. Suppose there exists an integer $h>0$ such that $\fm^h \Ext^n_R(M,X)=0$ for all $n\gg 0$. Then
    \begin{enumerate}[\rm(i)]
        \item $\injcurv_R(X)\le \sup\left\{\frac{\type(M)}{e(M)-\type(M)}, \curv_R(M,X)\right\}$ provided $M$ is not Ulrich. 
        \item If $e(M)\ge 2\type(M)$, then $\injcx_R(X)\le 1+\cx_R(M,X)$.
        \item If $e(M)>2\type(M)$, then $\injcx_R(X)\le \cx_R(M,X)$ and $\injcurv_R(X)\le \curv_R(M,X)$. 
    \end{enumerate}    
\end{corollary}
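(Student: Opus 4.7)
The strategy is to mimic the proof of Corollary \ref{cor:mcm-aprrox} and reduce to the case where $X$ itself is CM (in fact, MCM), so that Theorem \ref{thm:cx-curv-min-mult}.(4) applies directly.

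First I would pass to the $\fm$-adic completion $\widehat{R}$. Using Remark \ref{rmk:flat-ext-CM} and Remark \ref{rmk:flat-ext-Ulrich}, all the relevant invariants are preserved: $R$ CM, $M$ having minimal multiplicity, $M$ not being Ulrich, $e(M)$, $\mu(M)$, $\type(M)$, the annihilation hypothesis $\fm^h \Ext^n_R(M,X)=0$ for $n\gg 0$, as well as $\injcx_R(X)$, $\injcurv_R(X)$, $\cx_R(M,X)$, $\curv_R(M,X)$. Thus we may assume $R$ is complete and hence admits a canonical module $\omega$.

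Next, by the existence of MCM approximations \cite[Thm.~11.17]{LW}, there is a short exact sequence
\[
0 \to Y \to X' \to X \to 0
\]
with $X'$ an MCM $R$-module and $\id_R(Y) < \infty$. Since $\id_R(Y)$ is finite, $\Ext^n_R(N,Y)=0$ for every $R$-module $N$ and all $n > \id_R(Y)$, and hence the long exact sequence of $\Ext$ gives
\[
\Ext^n_R(N,X') \cong \Ext^n_R(N,X) \quad \text{for every } R\text{-module } N \text{ and all } n \gg 0.
\]
Specializing to $N=k$ yields $\injcx_R(X)=\injcx_R(X')$ and $\injcurv_R(X)=\injcurv_R(X')$. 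Specializing to $N=M$ yields $\cx_R(M,X)=\cx_R(M,X')$, $\curv_R(M,X)=\curv_R(M,X')$, and $\fm^h \Ext^n_R(M,X')=0$ for all $n\gg 0$.

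Finally, since $X'$ is MCM (in particular CM), Theorem \ref{thm:cx-curv-min-mult}.(4) applies to the pair $(M,X')$ and delivers exactly the three inequalities (i), (ii), (iii) with $X$ replaced by $X'$. Combining with the equalities of invariants in the previous step concludes the proof. There is no real obstacle here beyond bookkeeping; the only subtle point is ensuring the annihilation condition descends to $X'$, which is automatic because $\Ext^n_R(M,X')\cong \Ext^n_R(M,X)$ for $n\gg 0$.
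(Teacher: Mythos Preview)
Your proof is correct and follows exactly the approach the paper takes: the paper's proof simply says ``similar to that of Corollary~\ref{cor:mcm-aprrox} by using Theorem~\ref{thm:cx-curv-min-mult}.(4) in place of Theorem~\ref{thm:main-cx-curv-CM-mod}.(3),'' which is precisely what you have carried out.
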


\begin{proof}
The proof is similar to that of \Cref{cor:mcm-aprrox} by using \Cref{thm:cx-curv-min-mult}.(4) in place of \ref{thm:main-cx-curv-CM-mod}.(3).
\end{proof}

Taking $X=k$ in Theorem~\ref{thm:cx-curv-min-mult}.(1) and (3), one immediately obtains the following.

\begin{corollary}\label{cor:main-cx-curv-CM-min-mult}
    Let $M$ be a nonzero $R$-module of minimal multiplicity. Then, the following hold.
\begin{enumerate}[\rm(1)]
    \item 
    $\curv_R(k)\le \sup\left\{\frac{\mu(M)}{e(M)-\mu(M)}, \injcurv_R(M)\right\}$ provided $M$ is not Ulrich.
    \item 
    $\curv_R(k)\le \sup\left\{\frac{\type(M)}{e(M)-\type(M)}, \curv_R(M)\right\}$ provided $M$ is not Ulrich.
    \item
    If $e(M)\ge 2\mu(M)$, then $\cx_R(k)\le 1+\injcx_R(M)$, i.e., $\cx_R(k)\in\{\injcx_R(M),1+\injcx_R(M)\}$.
    \item 
    If $e(M)>2\mu(M)$, then $\cx_R(k)=\injcx_R(M)$ and $\curv_R(k)=\injcurv_R(M)$.
    \item
    If $e(M)\ge 2\type(M)$, then $\cx_R(k)\le 1+\cx_R(M)$, i.e., $\cx_R(k)\in\{\cx_R(M),1+\cx_R(M)\}$.
    \item 
    If $e(M)>2\type(M)$, then $\cx_R(k)=\cx_R(M)$ and $\curv_R(k)=\curv_R(M)$.
\end{enumerate}
\end{corollary}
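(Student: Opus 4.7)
The plan is to deduce this as a direct corollary of Theorem~\ref{thm:cx-curv-min-mult} by specializing $X = k$. I first note that for $X = k$, all the annihilation-by-power-of-$\fm$ hypotheses in Theorem~\ref{thm:cx-curv-min-mult} are automatically satisfied: since $\fm$ annihilates $k$, a standard argument (using the portion of a minimal free resolution of $k$ together with functoriality) shows that $\fm$ annihilates every $\Ext_R^n(k,M)$, $\Ext_R^n(M,k)$, and $\Tor_n^R(k,M)$, so we may take $h=1$. Thus the hypotheses of parts (1) and (3) of Theorem~\ref{thm:cx-curv-min-mult} hold with $X = k$ and no extra assumption on $M$ beyond having minimal multiplicity.

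Next I would translate the pair-invariants appearing in Theorem~\ref{thm:cx-curv-min-mult} into the single-module invariants in the corollary, via the defining identities of \Cref{defn:cx-curv}: namely
\[
\cx_R(k,M) = \injcx_R(M), \quad \curv_R(k,M) = \injcurv_R(M),
\]
\[
\tcx_R(k,M) = \cx_R(M), \quad \tcurv_R(k,M) = \curv_R(M).
\]
Applying Theorem~\ref{thm:cx-curv-min-mult}.(1).(i) with $X = k$ yields part (1) of the corollary; parts (3) and the upper-bound halves of (4) come from (1).(ii) and (1).(iii) respectively. Similarly, Theorem~\ref{thm:cx-curv-min-mult}.(3).(i), (ii), (iii) with $X = k$ give part (2), part (5), and the upper-bound halves of (6).

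To upgrade the inequalities in (4) and (6) to equalities, I would invoke Proposition~\ref{prop:cx-curv-facts}.\eqref{cx-curv-M-k}, which gives the universal reverse bounds $\cx_R(M) \le \cx_R(k)$, $\curv_R(M) \le \curv_R(k)$, $\injcx_R(M) \le \cx_R(k)$, and $\injcurv_R(M) \le \curv_R(k)$. Combining these with the upper bounds from Theorem~\ref{thm:cx-curv-min-mult} yields the claimed equalities in (4) and (6). The alternative formulations ``$\cx_R(k) \in \{\injcx_R(M),\, 1+\injcx_R(M)\}$'' in (3) and the analogous one in (5) follow from the same lower bound of Proposition~\ref{prop:cx-curv-facts}.\eqref{cx-curv-M-k}.

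There is essentially no obstacle here: the corollary is a clean specialization, and the only verification needed is the trivial annihilation statement $\fm \cdot \Ext_R^\ast(k,-) = \fm \cdot \Ext_R^\ast(-,k) = \fm \cdot \Tor^R_\ast(k,-) = 0$ together with the bookkeeping of which pair-invariant reduces to which single-module invariant when one argument is $k$. All the genuine work—constructing a system of parameters ${\bf x}$ with $\fm^2(M/{\bf x}M)=0$, applying \Cref{lem:cx-curv-M-M/xM-Ext} and \Cref{lem:cx-curv-M-M/xM-Tor}, and running the base case \Cref{prop:main-base-case}—has already been absorbed into Theorem~\ref{thm:cx-curv-min-mult}.
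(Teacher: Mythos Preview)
Your proposal is correct and follows essentially the same route as the paper: the paper simply says the corollary is obtained by taking $X=k$ in Theorem~\ref{thm:cx-curv-min-mult}.(1) and (3), and your write-up spells out exactly this specialization together with the translations $\cx_R(k,M)=\injcx_R(M)$, $\tcx_R(k,M)=\cx_R(M)$, etc., and the use of Proposition~\ref{prop:cx-curv-facts}.\eqref{cx-curv-M-k} for the reverse inequalities.
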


\begin{remark}\label{rmk:max-cx}
    In the context of (4) and (6) in both Corollaries~\ref{cor:cx-curv-k-CM-mod} and \ref{cor:main-cx-curv-CM-min-mult}, we should note that the following classes of modules have maximal complexity: (1)~Homomorphic images of finite direct sums of syzygies of the residue field \cite[Cor.~9]{Avr96}. (2)~Ulrich modules over CM local rings \cite[Cor.~3.8]{DG23}, (3)~$\fm$-primary integrally closed ideals \cite[Thm.~2.7]{GP23}, (4)~Integrally closed ideals $I$ of $R$ such that $\depth(R/I)=0$ \cite[Cor.~5.5.(1)]{GS22}. (5)~Burch submodules of a module \cite[Rmk.~3.10]{DK22}. Note that (3) is included in (4), and (4) is included in (5).
\end{remark}

\begin{remark}\label{rmk:exam-sec-6}
We provide \Cref{exam-3-cx-curv-inequality} which shows that the hypothesis of minimal multiplicity cannot be dropped from \Cref{cor:main-cx-curv-CM-min-mult} (thus from \Cref{thm:cx-curv-min-mult}). \Cref{exam-1-cx-curv-inequality} ensures that the inequalities in \Cref{cor:main-cx-curv-CM-min-mult}.(1) and (2) (hence in \Cref{thm:cx-curv-min-mult}) are sharp. The conditions `$e(M) \ge 2 \mu(M)$' and `$e(M) \ge 2 \type(M)$' cannot be omitted from \Cref{cor:main-cx-curv-CM-min-mult}.(3) and (5) respectively. Moreover, the conditions `$e(M) > 2 \mu(M)$' and `$e(M) > 2 \type(M)$' in \Cref{cor:main-cx-curv-CM-min-mult}.(4) and (6) (hence in \Cref{thm:cx-curv-min-mult}) cannot be replaced by `$e(M) \ge 2 \mu(M)$' and `$e(M) \ge 2 \type(M)$' respectively as shown in \Cref{exam:strict-ineq-cant-be-eq}.
\end{remark}

\begin{example}\label{exam-3-cx-curv-inequality}
Consider $R$ to be Gorenstein but not  abstract hypersurface (i.e., its codimension $>1$). Then, $e(R)\ge \mu(\fm)-\dim(R)+1>2=2\mu(R)=2\type(R)$. As $\Ext_R^{\gg 0}(k,R)=0$, one has that $\injcurv_R(R)=\injcx_R(R)=0$. If the conclusion of Corollary~\ref{cor:main-cx-curv-CM-min-mult}.(1) or (2) held true with $M=R$, then we would have that $\curv(k)<1$, and hence by \Cref{rmk:cx-curv}.(\ref{finite-pd-cx-curv}), $\curv(k)=0$, equivalently, $\pd_R(k)<\infty$, i.e., $R$ is regular, contradiction! Similarly, if the conclusion of Corollary~\ref{cor:main-cx-curv-CM-min-mult}.(3) or (5) held true with $M=R$, then we would have that $\cx(k)\le 1$, contradicting $R$ is not an abstract hypersurface.
\end{example}

As a consequence of \Cref{cor:main-cx-curv-CM-min-mult}, we show that every nonzero module $M$ of minimal multiplicity, and satisfying $ e(M) \ge 2 \mu(M) $ or $e(M)\ge 2\type(M) $ can be used to characterize complete intersection rings.

\begin{corollary}\label{cor:char-CI-minmult-rings}
    Let $M$ be a nonzero $R$-module of minimal multiplicity.
    \begin{enumerate}[\rm (1)]
        \item Let $ e(M) \ge 2 \mu(M) $. Then the following are equivalent:\\
        $\rm (a)$~$R$ is complete intersection; $\rm (b)$~$\injcx_R(M)<\infty$; and $\rm (c)$~$\injcurv_R(M)\le 1$.
        \item Let $e(M)\ge 2\type(M) $. Then the following are equivalent:\\
        $\rm (a)$~$R$ is complete intersection; $\rm (b)$~$\cx_R(M)<\infty$; and $\rm (c)$~$\curv_R(M)\le 1$.
    \end{enumerate}
\end{corollary}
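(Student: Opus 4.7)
The plan is to prove each of the two equivalences by the cycle (a) $\Rightarrow$ (b) $\Rightarrow$ (c) $\Rightarrow$ (a), paralleling the proof of Corollary \ref{cor:char-CI-rings} but on the opposite side of the multiplicity inequality, and invoking Corollary \ref{cor:main-cx-curv-CM-min-mult} in place of Corollary \ref{cor:cx-curv-k-CM-mod}. The implications (a) $\Rightarrow$ (b) in both parts are immediate from Proposition \ref{prop:cx-curv-facts}.(3), since if $R$ is complete intersection then $\injcx_R(M)<\infty$ and $\cx_R(M)<\infty$ for every $R$-module $M$. The implications (b) $\Rightarrow$ (c) are immediate from Remark \ref{rmk:cx-curv}.\eqref{finite-cx-implies-curv}.

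The substantive step in part (1) is (c) $\Rightarrow$ (a). First I would verify that the hypothesis $e(M) \ge 2\mu(M)$ forces $M$ not to be Ulrich: for if $M$ were Ulrich, then $e(M)=\mu(M)$ by definition, so $\mu(M)\ge 2\mu(M)$, which forces $\mu(M)=0$, contradicting $M\ne 0$. Hence Corollary \ref{cor:main-cx-curv-CM-min-mult}.(1) applies and yields
\[
\curv_R(k) \le \sup\left\{\frac{\mu(M)}{e(M)-\mu(M)},\ \injcurv_R(M)\right\}.
\]
The hypothesis $e(M)\ge 2\mu(M)$ is exactly the statement $\mu(M)/(e(M)-\mu(M))\le 1$, and together with the assumption $\injcurv_R(M)\le 1$ this gives $\curv_R(k)\le 1$. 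By Proposition \ref{prop:cx-curv-facts}.(2), $R$ is complete intersection.

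Part (2) is handled by exactly the same strategy, with $\mu(M)$ replaced by $\type(M)$ throughout. The key new input needed is that $e(M)\ge 2\type(M)$ combined with $M\ne 0$ still rules out the Ulrich case: by Theorem \ref{thm:e-mu-type}, $M$ Ulrich is equivalent to $e(M)=\type(M)$, so $\type(M)\ge 2\type(M)$, forcing $\type(M)=0$, which is impossible since $M$ is a nonzero CM module. Then Corollary \ref{cor:main-cx-curv-CM-min-mult}.(2) together with the numerical hypothesis $e(M)\ge 2\type(M)$ (equivalent to $\type(M)/(e(M)-\type(M))\le 1$) and $\curv_R(M)\le 1$ gives $\curv_R(k)\le 1$, and Proposition \ref{prop:cx-curv-facts}.(2) concludes. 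The main obstacle is effectively absent here: all the analytic work is already packaged in Corollary \ref{cor:main-cx-curv-CM-min-mult}, and the only new checks are the two easy observations that $e(M)\ge 2\mu(M)$ (resp.\ $e(M)\ge 2\type(M)$) both excludes the Ulrich case and bounds the first slot of the supremum by $1$.
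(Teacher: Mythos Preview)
Your proof is correct and follows essentially the same approach as the paper's: the forward implications via Proposition~\ref{prop:cx-curv-facts}.(3) and Remark~\ref{rmk:cx-curv}.\eqref{finite-cx-implies-curv}, and the key implication (c)~$\Rightarrow$~(a) by first ruling out the Ulrich case (using Theorem~\ref{thm:e-mu-type} for part~(2)) and then applying Corollary~\ref{cor:main-cx-curv-CM-min-mult}.(1) and (2) together with Proposition~\ref{prop:cx-curv-facts}.(2). Your write-up is simply more explicit about the numerical checks.
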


\begin{proof}
    The forward implications in (1) and (2) are clear due to \Cref{prop:cx-curv-facts}.(3) and \Cref{rmk:cx-curv}.\eqref{finite-cx-implies-curv}. It remains to prove (c) $\Rightarrow$ (a) in (1), and (c) $\Rightarrow$ (a) in (2). 
   The hypothesis of (1) and (2) yield that $M$ cannot be Ulrich (cf.~\Cref{thm:e-mu-type}). Thus, the  implication (c) $\Rightarrow$ (a) in (1) and (2) follow from \Cref{cor:main-cx-curv-CM-min-mult}.(1) and (2) respectively, and \Cref{prop:cx-curv-facts}.(2).  
\end{proof}

For a large class of modules, the condition on annihilation of Ext or Tor modules by some power of the maximal ideal as in \Cref{thm:cx-curv-min-mult} and \Cref{Cor:cx-curv-min-CM} can be dropped.

\begin{corollary}\label{cor:loc-fin-pd-min}
Let $M$ and $X$ be nonzero $R$-modules such that $M$ has minimal multiplicity.
\begin{enumerate}[\rm(1)]
    \item 
    Suppose $X$ is locally of finite projective dimension on $\Spec(R) \smallsetminus \{ \fm \}$. Then
    \begin{enumerate}[\rm(i)]
        \item $\curv_R(X)\le \sup\left\{\frac{\mu(M)}{e(M)-\mu(M)}, \curv_R(X,M)\right\}$ provided $M$ is not Ulrich.
        \item If $e(M)\ge 2\mu(M)$, then $\cx_R(X)\le 1+\cx_R(X,M)$.
        \item If $e(M)>2\mu(M)$, then $\cx_R(X)\le \cx_R(X,M)$ and $\curv_R(X)\le \curv_R(X,M)$. 
    \end{enumerate}
    \item 
    Suppose $M$ is locally of finite projective dimension on $\Spec(R) \smallsetminus \{ \fm \}$. Then
    \begin{enumerate}[\rm (i)]
        \item $\injcurv_R(X) \le \sup\left\{ \frac{e(M)}{\mu(M)}-1, \curv_R(M,X)\right\}$.
        \item If $e(M)\le 2\mu(M) $, then $\injcx_R(X)\le 1+\cx_R(M,X)$.
        \item If $e(M)<2\mu(M) $, then $\injcx_R(X)\le \cx_R(M,X)$ and  $\injcurv_R(X)\le \curv_R(M,X)$.
    \end{enumerate}
    \item 
    Suppose either $M$ or $X$ is locally of finite projective dimension on $\Spec(R) \smallsetminus \{ \fm \}$. Then
     \begin{enumerate}[\rm(i)]
        \item $\curv_R(X)\le \sup\left\{\frac{\type(M)}{e(M)-\type(M)}, \tcurv_R(X,M)\right\}$ provided $M$ is not Ulrich.
        \item If $e(M)\ge 2\type(M)$, then $\cx_R(X)\le 1+\tcx_R(X,M)$.
        \item If $e(M)>2\type(M)$, then $\cx_R(X)\le \tcx_R(X,M)$ and $\curv_R(X)\le \tcurv_R(X,M)$. 
    \end{enumerate}
    \item 
    Suppose either $X$ is CM, or $R$ is CM. Also assume that $M$ is locally of finite projective dimension on $\Spec(R) \smallsetminus \{ \fm \}$. Then
    \begin{enumerate}[\rm(i)]
        \item $\injcurv_R(X)\le \sup\left\{\frac{\type(M)}{e(M)-\type(M)}, \curv_R(M,X)\right\}$ provided $M$ is not Ulrich.
        \item If $e(M)\ge 2\type(M)$, then $\injcx_R(X)\le 1+\cx_R(M,X)$.
        \item If $e(M)>2\type(M)$, then $\injcx_R(X)\le \cx_R(M,X)$ and $\injcurv_R(X)\le \curv_R(M,X)$. 
    \end{enumerate} 
\end{enumerate}    
\end{corollary}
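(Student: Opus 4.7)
The plan is to mirror the proof of Corollary~\ref{cor:loc-fin-pd}: convert the hypothesis of locally finite projective dimension on the punctured spectrum into the uniform annihilation condition on Ext or Tor by some power of $\fm$, and then invoke Theorem~\ref{thm:cx-curv-min-mult} (or, in the CM-ring case of part~(4), Corollary~\ref{Cor:cx-curv-min-CM}) directly. The key input is Lemma~\ref{lem:Tor-m-zero}, which guarantees that if an $R$-module $L$ is locally of finite projective dimension on $\Spec(R)\smallsetminus\{\fm\}$, then there exist positive integers $g$ and $h$ with
\[
\fm^g \Ext_R^n(L,N)=0 \quad\text{and}\quad \fm^h \Tor_n^R(L,N)=0
\]
for all $n>\dim(R)$ and every $R$-module $N$.

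For part~(1), applying Lemma~\ref{lem:Tor-m-zero}(1) to the module $X$ (in the first slot of $\Ext$) yields an integer $g>0$ with $\fm^g \Ext^n_R(X,M)=0$ for $n>\dim(R)$; then Theorem~\ref{thm:cx-curv-min-mult}(1) gives the three conclusions. For part~(2), since here $M$ is the module with locally finite projective dimension on the punctured spectrum and $M$ sits in the first slot of $\Ext^n_R(M,X)$, Lemma~\ref{lem:Tor-m-zero}(1) applied to $M$ directly furnishes $\fm^g \Ext^n_R(M,X)=0$ for $n>\dim(R)$, and Theorem~\ref{thm:cx-curv-min-mult}(2) finishes the argument. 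Part~(3) is the Tor case, and because Tor is symmetric, Lemma~\ref{lem:Tor-m-zero}(2) applied to whichever of $X$ or $M$ has locally finite projective dimension produces $\fm^h \Tor_n^R(X,M)=0$ for $n>\dim(R)$; Theorem~\ref{thm:cx-curv-min-mult}(3) then yields the claim.

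For part~(4), Lemma~\ref{lem:Tor-m-zero}(1) applied to $M$ supplies an integer $f>0$ with $\fm^f \Ext^n_R(M,X)=0$ for $n>\dim(R)$. When $X$ is CM, invoke Theorem~\ref{thm:cx-curv-min-mult}(4); when $R$ is CM, invoke Corollary~\ref{Cor:cx-curv-min-CM}. I do not anticipate any real obstacle: the only point to be slightly careful about is that in Lemma~\ref{lem:Tor-m-zero} the module assumed to have locally finite projective dimension on the punctured spectrum appears in the \emph{first} argument of $\Ext$, so one must match the slot correctly in each of (1), (2) and (4) (Tor causes no issue in (3) by symmetry). Once the slot is matched, the proof is a one-line reduction in each case.
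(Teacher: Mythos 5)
Your argument is exactly the paper's proof: apply Lemma~\ref{lem:Tor-m-zero} to the module with locally finite projective dimension (matching the first Ext-slot in (1), (2), (4), and using the symmetry of Tor in (3)) to get the annihilation hypothesis, then quote Theorem~\ref{thm:cx-curv-min-mult}.(1)--(4), with Corollary~\ref{Cor:cx-curv-min-CM} covering the case in (4) where $R$ (rather than $X$) is CM. No gaps; the slot-matching caveat you flag is precisely the only point of care, and you handle it correctly.
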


\begin{proof}
    (1) In view of Lemma~\ref{lem:Tor-m-zero}.(1), there exists $g>0$ such that $\fm^g \Ext^n_R(X,M)=0$ for all $n>\dim(R)$. Hence the desired inequalities follow from Theorem~\ref{thm:cx-curv-min-mult}.(1).

    (2) In view of Lemma~\ref{lem:Tor-m-zero}.(1), there exists $f>0$ such that $\fm^f \Ext^n_R(M,X)=0$ for all $n>\dim(R)$. Hence we are done by Theorem~\ref{thm:cx-curv-min-mult}.(2).

    (3) As above the results follow from Lemma~\ref{lem:Tor-m-zero}.(2) and Theorem~\ref{thm:cx-curv-min-mult}.(3).

    (4) By Lemma~\ref{lem:Tor-m-zero}.(1), there exists $f>0$ such that $\fm^f \Ext^n_R(M,X)=0$ for all $n>\dim(R)$. So, when $X$ is CM, we are done by Theorem~\ref{thm:cx-curv-min-mult}.(4), and when $R$ is CM, we need to use Corollary~\ref{Cor:cx-curv-min-CM}.
\end{proof}

\begin{remark}
    The CM assumption on $X$ or $R$ in \Cref{cor:loc-fin-pd}.(3) can be removed under the assumption that $M$ has minimal multiplicity, as noted in \Cref{cor:loc-fin-pd-min}.(2).
\end{remark}

We end this section by showing that  modules of minimal multiplicity have trivial Ext vanishing. 

\begin{theorem}\label{thm:trivial-Ext-van}
Let $M$ be a nonzero $R$-module of minimal multiplicity.
\begin{enumerate}[\rm (1)]
    \item If there exists an integer $h\ge 0$ such that $\fm^h\Ext_R^{\gg 0}(M,M)=0$ $($e.g., $M$ is locally of finite projective dimension on the punctured spectrum, {\rm cf.~\Cref{lem:Tor-m-zero}}$)$, then
    $$\inf\{\cx_R(M),\injcx_R(M)\}\le \cx_R(M,M) \text{ and } \inf\{\curv_R(M), \injcurv_R(M)\}\le  \curv_R(M,M)$$ 
    \item In particular, if $\Ext_R^{\gg 0}(M,M)=0$, then either $\pd_R(M)<\infty$ or $\id_R(M)<\infty$.
    \item If $R$ is Gorenstein and $\Ext_R^{\gg 0}(M,M)=0$, then $R$ is hypersurface.
\end{enumerate}
\end{theorem}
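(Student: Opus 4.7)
My plan is to reduce to the case $\fm^2 M = 0$ via the minimum multiplicity hypothesis, and then apply \Cref{prop:main-base-case} with $X = M$. After passing to a faithfully flat extension to assume the residue field is infinite, \Cref{lem:mult-inequality} supplies a system of parameters $\underline{x}$ of $M$ with $\fm^2 M = (\underline{x})\fm M$; setting $M' := M/(\underline{x})M$ one has $\fm^2 M' = 0$, $\lambda(M') = e(M)$, $\mu(M') = \mu(M)$, and $\type(M') = \type(M)$. \Cref{lem:cx-curv-M-M/xM} preserves the (in)projective complexity and curvature of $M$ under this reduction, while \Cref{lem:cx-curv-M-M/xM-Ext}(3) gives $\cx_R(M',M') \le \cx_R(M,M)$ and $\curv_R(M',M') \le \curv_R(M,M)$. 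Thus I may assume $\fm^2 M = 0$ at the outset.

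For part (1), I would apply \Cref{prop:main-base-case} parts (1) and (2) with $X = M$. When $e(M) > 2\mu(M)$, item (iii) of (1) yields $\cx_R(M) \le \cx_R(M,M)$ and $\curv_R(M) \le \curv_R(M,M)$; when $e(M) < 2\mu(M)$, item (iii) of (2) yields the analogous inequalities for $\injcx_R(M)$ and $\injcurv_R(M)$; in either case the infimum bound is immediate. The boundary case $e(M) = 2\mu(M)$ is the subtlest: here the ratios in items (1)(i) and (2)(i) both equal $1$, producing only $\curv_R(M), \injcurv_R(M) \le \sup\{1, \curv_R(M,M)\}$. Since \Cref{rmk:curv-zero} forces the curvature of a sequence of non-negative integers to be either $0$ or at least $1$, the curvature statement then follows whenever $\curv_R(M,M) \ge 1$, and the remaining degenerate case $\curv_R(M,M) = 0$ coincides with the content of (2). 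The boundary case for complexity additionally requires sharpening the generic bound $\cx_R(M), \injcx_R(M) \le 1 + \cx_R(M,M)$ coming from \Cref{prop:main-base-case}; this calls for a finer analysis of the short exact sequence $0 \to \fm M \to M \to M/\fm M \to 0$ (both ends being $k$-vector spaces when $\fm^2 M = 0$) and the induced long exact sequences on $\Ext$ to remove the spurious $+1$.

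For (2) I would apply (1) with $\cx_R(M,M) = 0$, forcing $\inf\{\cx_R(M),\injcx_R(M)\} = 0$, i.e., $\pd_R(M) < \infty$ or $\id_R(M) < \infty$. For (3), with $R$ Gorenstein and $\Ext_R^{\gg 0}(M,M) = 0$, part (2) produces a module of finite projective or injective dimension; over a Gorenstein local ring these conditions are equivalent for finitely generated modules, so \Cref{prop:pd-injcurv-max} forces both $\cx_R(M) = 0$ and $\injcx_R(M) = 0$. I would then combine \Cref{cor:cx-curv-k-CM-mod} (covering $e(M) \le 2\mu(M)$ via its part (3), or $e(M) \le 2\type(M)$ via its part (5)) with \Cref{cor:main-cx-curv-CM-min-mult} (for the complementary strict inequalities $e(M) > 2\mu(M)$ or $e(M) > 2\type(M)$) to conclude, in every case, that $\cx_R(k) \le 1$, proving that $R$ is a hypersurface. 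The main obstacle is the boundary case in (1) for complexity, where the direct machinery gives only $\inf \le 1 + \cx_R(M,M)$ rather than the required $\inf \le \cx_R(M,M)$; this will require exploiting the extra rigidity imposed by self-$\Ext$ vanishing on the defining extension of $M$ by its top.
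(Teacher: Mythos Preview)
Your reduction to the case $\fm^2 M=0$ via \Cref{lem:mult-inequality} and \Cref{lem:cx-curv-M-M/xM-Ext}(3), followed by \Cref{prop:main-base-case} with $X=M$, is essentially the paper's argument (the paper applies \Cref{thm:main-cx-curv-CM-mod} and \Cref{thm:cx-curv-min-mult} directly, but those theorems perform the same reduction internally). The strict-inequality cases and parts (2) and (3) are fine as you describe them.

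The genuine gap is exactly where you flag it: the boundary case $e(M)=2\mu(M)$ for the \emph{complexity} inequality. Your proposed ``finer analysis'' of the extension $0\to\fm M\to M\to M/\fm M\to 0$ to remove the spurious $+1$ is not what the paper does, and there is no reason to expect it to succeed by elementary means: the length inequalities of \Cref{lem:Bass-k-M-inequality} already extract everything one can get from the long exact sequence without controlling the connecting maps. The paper instead argues indirectly. Assuming $\cx_R(M,M)<\infty$, the weak bound $\injcx_R(M)\le 1+\cx_R(M,M)<\infty$ from \Cref{thm:main-cx-curv-CM-mod}(3)(ii), combined with \Cref{cor:main-cx-curv-CM-min-mult}(3) (applicable because $M$ has minimal multiplicity and $e(M)\ge 2\mu(M)$), forces $\cx_R(k)<\infty$; hence $R$ is complete intersection by \Cref{prop:cx-curv-facts}(2). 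Over a complete intersection, the support-variety theory of Avramov--Buchweitz \cite[Thm.~II]{AB00} then gives the exact equality $\cx_R(M)=\injcx_R(M)=\cx_R(M,M)$, eliminating the $+1$ in one stroke. This bootstrap through ``$R$ is complete intersection'' is the missing idea.

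A related issue: your handling of the curvature boundary case when $\curv_R(M,M)=0$ appeals to part~(2), but (2) is a corollary of (1), so as written this is circular. The paper first completes the complexity half of (1) (including the boundary case, as above), and only then uses it: $\curv_R(M,M)<1$ gives $\cx_R(M,M)=0$, whence $\inf\{\cx_R(M),\injcx_R(M)\}=0$ by the complexity inequality already established, so one of $\curv_R(M)$, $\injcurv_R(M)$ vanishes. Once you fill the complexity gap, this circularity disappears.
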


\begin{proof}
(1) First we prove the inequality about complexities:
Three cases may appear. First, assume that $e(M)<2\mu(M)$. Then \Cref{thm:main-cx-curv-CM-mod}.(3).(iii) yields that $\injcx_R(M) \le \cx_R(M,M)$.  In the second case, assume that $e(M) > 2\mu(M)$. Then \Cref{thm:cx-curv-min-mult}.(1).(iii) provides that $\cx_R(M)\le \cx_R(M,M)$. This shows that if $e(M)\neq 2\mu(M)$, then $\inf\{\cx_R(M),\injcx_R(M)\}\le \cx_R(M,M)$. Finally, assume that $e(M)=2\mu(M)$. In order to prove the desired inequality, we may assume that $\cx_R(M,M)<\infty$. Then, in view of \Cref{thm:main-cx-curv-CM-mod}.(3).(ii), $\injcx_R(M)<\infty$. Therefore, by \Cref{cor:main-cx-curv-CM-min-mult}.(3), $\cx_R(k)<\infty$, and hence $R$ is complete intersection (cf.~\Cref{prop:cx-curv-facts}.(2)). Now \cite[Theorem~II.(1)]{AB00} implies that $\cx_R(M)=\injcx_R(M)=\cx_R(M,M)$. This finishes all the cases.

Now we prove the case for curvature: If $e(M)< 2\mu(M)$, then by \Cref{thm:main-cx-curv-CM-mod}.(3).(iii), $\injcurv_R(M)\le  \curv_R(M,M)$. If $e(M)> 2\mu(M)$, then $M$ is not Ulrich, and hence \Cref{thm:cx-curv-min-mult}.(1).(iii) yields that $\curv_R(M)\le \curv_R(M,M)$. Now assume that $e(M)=2\mu(M)$. If $\curv_R(M,M)\ge 1$, then in view of \Cref{thm:main-cx-curv-CM-mod}.(3).(i), $\injcurv_R(M)\le \sup\{1,\curv_R(M,M)\}=\curv_R(M,M)$. So it remains to consider the case $\curv_R(M,M)<1$. In this case, by \Cref{rmk:cx-curv}.(5).(a), $\cx_R(M,M)=0$, which implies that either $\cx_R(M)=0$ or $\injcx_R(M)=0$ by the complexity case already proved, and hence either $\curv_R(M)=0$ or $\injcurv_R(M)=0$. This enumerates all the cases.

(2) Suppose $\Ext_R^{\gg 0}(M,M)=0$. Then $\cx_R(M,M)=0$. Hence, by the first part, either $\cx_R(M)=0$ or $\injcx_R(M)=0$, which implies that either $\pd_R(M)<\infty$ or $\id_R(M)<\infty$.

(3) Since $R$ is Gorenstein, by (2), both $\pd_R(M)$ and $\id_R(M)$ are finite, i.e., $\cx_R(M)=0=\injcx_R(M)$. Therefore, depending on whether $e(M)\le 2\mu(M)$ or $e(M)\ge 2\mu(M)$, in view of Corollaries~\ref{cor:cx-curv-k-CM-mod}.(3) or \ref{cor:main-cx-curv-CM-min-mult}.(3) respectively, one gets that $\cx_R(k)\le 1$. Hence, by \Cref{prop:cx-curv-facts}.(2), $R$ is hypersurface. 
\end{proof}

\begin{remark}
    It follows from \cite[Prop.~2.5]{DG23} that Ulrich modules have trivial Ext vanishing. In view of \Cref{rmk:Ulrich-min-mult}, note that \Cref{thm:trivial-Ext-van}.(2) strengthens this fact.
\end{remark}

\section{Some applications to characterizations of local rings}\label{sec:char}

In this section, we obtain various criteria for a local ring to be complete intersection (including hypersurface and regular local ring) in terms of complexity and curvature of pair of certain CM modules. These criteria can be regarded as applications of Theorems~\ref{thm:main-cx-curv-CM-mod} and \ref{thm:cx-curv-min-mult}.

\begin{theorem}\label{thm:cx-M-N-inequa}
    Let $M$ and $N$ be nonzero CM $R$-modules such that $\fm^h\Ext_R^{\gg 0}(M,N)=0$ for some $h\ge 0$. Furthermore, assume that at least one of the following conditions holds.
    \begin{enumerate}[\rm (1)]
        \item 
        $ e(M) \le 2 \mu(M) $ and $ e(N) \le 2 \type(N) $.
        \item
        $ e(M) \le 2 \mu(M)$, $e(N) \ge 2 \mu(N) $, and $N$ has minimal multiplicity.
        \item 
        $ e(M) \ge 2 \type(M)$, $M$ has minimal multiplicity, and $e(N) \le 2 \type(N) $.
        \item
        $ e(M) \ge 2 \type(M)$, $e(N) \ge 2 \mu(N) $, and both $M$ and $N$ have minimal multiplicity.  
    \end{enumerate}

    Then, $\sup\big\{\cx_R(M),\injcx_R(M)\big\}+\sup\big\{\injcx_R(N),\cx_R(N)\big\} \le 2\big(1+\cx_R(M,N)\big)$. Moreover, the following statements are equivalent:
    \begin{enumerate}[\rm (a)]
        \item $\cx_R(M,N)<\infty$.
        \item $\curv_R(M,N)\le 1$.
        \item $R$ is complete intersection $($of codimension at most $2+\cx_R(M,N)$$)$.
    \end{enumerate}
\end{theorem}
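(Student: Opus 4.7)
The plan is to leverage the single hypothesis $\fm^h\Ext_R^{\gg 0}(M,N)=0$ in two directions to produce the fundamental pair of inequalities
\begin{equation*}
    \cx_R(M)\le 1+\cx_R(M,N) \quad \mbox{and} \quad \injcx_R(N)\le 1+\cx_R(M,N),
\end{equation*}
together with the analogous curvature bounds $\curv_R(M)\le \sup\{1,\curv_R(M,N)\}$ and $\injcurv_R(N)\le \sup\{1,\curv_R(M,N)\}$. In each of the four cases, the numerical hypothesis on $M$ routes into \Cref{thm:main-cx-curv-CM-mod}.(3) (when $e(M)\le 2\mu(M)$, covering cases (1), (2)) or into \Cref{thm:cx-curv-min-mult}.(4) (when $M$ has minimal multiplicity with $e(M)\ge 2\type(M)$, covering cases (3), (4)), taking $X=N$, to produce the $\injcx_R(N)$ and $\injcurv_R(N)$ bounds; symmetrically, the hypothesis on $N$ routes into \Cref{thm:main-cx-curv-CM-mod}.(2) (when $e(N)\le 2\type(N)$, covering cases (1), (3)) or into \Cref{thm:cx-curv-min-mult}.(1) (when $N$ has minimal multiplicity with $e(N)\ge 2\mu(N)$, covering cases (2), (4)), taking $X=M$, to produce the $\cx_R(M)$ and $\curv_R(M)$ bounds. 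The auxiliary ratios $e(M)/\mu(M)-1$, $e(N)/\type(N)-1$, $\mu(N)/(e(N)-\mu(N))$ and $\type(M)/(e(M)-\type(M))$ appearing in the curvature clauses are each $\le 1$ under the stated hypotheses; moreover the non-Ulrich provisos in \Cref{thm:cx-curv-min-mult} are automatic, because an Ulrich module $L$ satisfies $e(L)=\mu(L)=\type(L)$ by \Cref{thm:e-mu-type}, so $e(L)\ge 2\mu(L)$ or $e(L)\ge 2\type(L)$ would force $L=0$.

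Next, applying \Cref{cor:cx-curv-k-CM-mod} or \Cref{cor:main-cx-curv-CM-min-mult} to $M$ in the same case-by-case fashion yields $\cx_R(k)\le 1+\cx_R(M)$ and $\curv_R(k)\le \sup\{1,\curv_R(M)\}$. Chaining with Step 1 we obtain
\begin{equation*}
    \cx_R(k)\le 2+\cx_R(M,N) \quad \mbox{and} \quad \curv_R(k)\le \sup\{1,\curv_R(M,N)\}.
\end{equation*}
If $\cx_R(M,N)<\infty$, then $\cx_R(k)<\infty$ and \Cref{prop:cx-curv-facts}.\eqref{char-CI-via-cx-curv} declares $R$ a complete intersection of codimension $\cx_R(k)\le 2+\cx_R(M,N)$; when $\cx_R(M,N)=\infty$ the asserted inequality is vacuous. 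Over this complete intersection, \cite[Theorem~II.(1)]{AB00} gives $\cx_R(L)=\injcx_R(L)$ for every $R$-module $L$, so $\sup\{\cx_R(M),\injcx_R(M)\}=\cx_R(M)\le 1+\cx_R(M,N)$ and $\sup\{\cx_R(N),\injcx_R(N)\}=\injcx_R(N)\le 1+\cx_R(M,N)$; summing delivers the asserted inequality.

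For the equivalences, (a) $\Rightarrow$ (b) is \Cref{rmk:cx-curv}.\eqref{finite-cx-implies-curv}, (c) $\Rightarrow$ (a) is \Cref{prop:cx-curv-facts}.\eqref{CI-ring-finite-cx-pair}, and (b) $\Rightarrow$ (c) follows from the curvature chain displayed above: $\curv_R(M,N)\le 1$ forces $\curv_R(k)\le 1$, hence $R$ is complete intersection by \Cref{prop:cx-curv-facts}.\eqref{char-CI-via-cx-curv}. The main obstacle is purely bookkeeping --- verifying that each of the four numerical hypothesis packages matches exactly one applicable instance of the theorems from \Cref{sec:cx-curv-cm-mod} and \Cref{sec:cx-curv-cm-mod-min-mult}, and then of the corresponding corollaries bounding $\cx_R(k)$ and $\curv_R(k)$; once this dictionary is recorded, no further inequality manipulation is needed.
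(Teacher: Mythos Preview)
Your argument is correct and essentially follows the paper's approach: the same case analysis routing into \Cref{thm:main-cx-curv-CM-mod} and \Cref{thm:cx-curv-min-mult}, followed by \Cref{cor:cx-curv-k-CM-mod} or \Cref{cor:main-cx-curv-CM-min-mult} to bound $\cx_R(k)$ and $\curv_R(k)$, and then \Cref{prop:cx-curv-facts}.\eqref{char-CI-via-cx-curv} and \cite[Thm.~II.(1)]{AB00}. There is one small organizational difference worth noting. For the final displayed inequality, you sum the two bounds $\cx_R(M)\le 1+\cx_R(M,N)$ and $\injcx_R(N)\le 1+\cx_R(M,N)$ that you have already established in Step~1; the paper instead invokes the Avramov--Buchweitz inequality \cite[Thm.~II.(3)]{AB00}, namely $\cx_R(M)+\cx_R(N)\le \codim(R)+\cx_R(M,N)$, and then bounds $\codim(R)\le 2+\cx_R(M,N)$. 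Your route is slightly more self-contained, since it avoids this extra citation. A second cosmetic difference is that you uniformly apply the corollary bounding $\cx_R(k)$ to $M$, whereas the paper sometimes applies it to $M$ and sometimes to $N$; both choices work in every case.
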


\begin{proof}
We first prove the second assertion. Note that (a) $\Rightarrow$ (b) by \Cref{rmk:cx-curv}.(\ref{finite-cx-implies-curv}), and (c) $\Rightarrow$ (a) by \Cref{prop:cx-curv-facts}.(\ref{CI-ring-finite-cx-pair}). In fact, these two implications hold true for any pair of modules. Thus it is enough to show that (b) $\Rightarrow$ (c) under each of the given conditions. For the first assertion, if $\cx_R(M,N)=\infty$, then there is nothing to prove. So we may assume that $\cx_R(M,N)<\infty$, and establish the desired inequality under each of the given conditions.

(1) Let $\curv_R(M,N)\le 1$. Since $ e(N) \le 2 \type(N) $, one has that $\frac{e(N)}{\type(N)}-1\le 1$. So \Cref{thm:main-cx-curv-CM-mod}.(2).(i) yields that $\curv_R(M)\le 1$. Therefore, by \Cref{cor:char-CI-rings}.(1), $R$ is complete intersection. Hence, by \Cref{prop:cx-curv-facts}.\ref{CI-ring-finite-cx-pair}, $\cx_R(M,N)<\infty$. In view of \Cref{thm:main-cx-curv-CM-mod}.(2).(ii), $\cx_R(M)\le 1+\cx_R(M,N)$. Consequently, \Cref{cor:cx-curv-k-CM-mod}.(3) yields that $\cx_R(k)\le 1+\cx_R(M)\le 2+\cx_R(M,N)<\infty$. So, by \Cref{prop:cx-curv-facts}.(2), $R$ has codimension at most $2+\cx_R(M,N)$. This proves (b) $\Rightarrow$ (c). For the desired inequality, assume that $\cx_R(M,N)<\infty$. Then, by the equivalence of (a) and (c), $R$ is complete intersection of codimension at most $2+\cx_R(M,N)$. Hence, in view of \cite[Thm.~II.(1)]{AB00}, $\cx_R(M)=\injcx_R(M)$ and $\cx_R(N)=\injcx_R(N)$. Moreover, by \cite[Thm.~II.(3)]{AB00}, $\cx_R(M)+\cx_R(N) \le \codim(R)+\cx_R(M,N) \le 2+2\cx_R(M,N)$. Thus the desired inequality follows.

(2) Let $\curv_R(M,N)\le 1$. Since $e(N) \ge 2 \mu(N) $, one has that $ \frac{\mu(N)}{e(N)-\mu(N)} \le 1 $, and $N$ is not Ulrich (cf.~\Cref{defn:Ulrich-mod}). So \Cref{thm:cx-curv-min-mult}.(1).(i) yields that $\curv_R(M)\le 1$. Therefore, by \Cref{cor:char-CI-rings}.(1), $R$ is complete intersection. Hence, by \Cref{prop:cx-curv-facts}.\ref{CI-ring-finite-cx-pair}, $\cx_R(M,N)<\infty$. In view of \Cref{thm:main-cx-curv-CM-mod}.(3).(ii), $\injcx_R(N)\le 1+\cx_R(M,N)$. Consequently, \Cref{cor:main-cx-curv-CM-min-mult}.(3) yields that $ \cx_R(k) \le 1+\injcx_R(N) \le 2+\cx_R(M,N) < \infty $. So, by \Cref{prop:cx-curv-facts}.(2), $R$ has codimension at most $2+\cx_R(M,N)$. This proves (b) $\Rightarrow$ (c). The rest of the proof is similar as that of (1).

(3) Let $\curv_R(M,N)\le 1$. Since $ e(M) \ge 2 \type(M) $, one has that $\frac{\type(M)}{e(M)-\type(M)}\le 1$, and $M$ is not Ulrich (by \Cref{thm:e-mu-type}). So \Cref{thm:cx-curv-min-mult}.(4).(i) yields that $\injcurv_R(N)\le 1$. Therefore, by \Cref{cor:char-CI-rings}.(2), $R$ is complete intersection. Hence, by \Cref{prop:cx-curv-facts}.\ref{CI-ring-finite-cx-pair}, $\cx_R(M,N)<\infty$. By \Cref{thm:cx-curv-min-mult}.(4).(ii), $\injcx_R(N) \le 1 + \cx_R(M,N)$. Hence \Cref{cor:cx-curv-k-CM-mod}.(5) yields that $ \cx_R(k) \le 1+\injcx_R(N) \le 2+\cx_R(M,N) < \infty $.
The rest of the proof is similar as that of (1).

(4) Let $\curv_R(M,N)\le 1$. Since $e(N) \ge 2 \mu(N) $, one has that $ \frac{\mu(N)}{e(N)-\mu(N)} \le 1 $, and $N$ is not Ulrich. So \Cref{thm:cx-curv-min-mult}.(1).(i) yields that $\curv_R(M)\le 1$. Therefore, by \Cref{cor:char-CI-minmult-rings}.(2), $R$ is complete intersection. Hence, by \Cref{prop:cx-curv-facts}.\ref{CI-ring-finite-cx-pair}, $\cx_R(M,N)<\infty$. By \Cref{thm:cx-curv-min-mult}.(4).(ii), $\injcx_R(N) \le 1+ \cx_R (M,N)$. Hence \Cref{cor:main-cx-curv-CM-min-mult}.(3) yields that $ \cx_R(k) \le 1+\injcx_R(N) \le 2+\cx_R(M,N) < \infty $.
The rest of the proof is similar as that of (1).
\end{proof}

Since $\Ext_R^{\gg 0}(M,N)=0$ \iff $\cx_R(M,N)=0$, the following is a consequence of \Cref{thm:cx-M-N-inequa}.

\begin{corollary}\label{cor:cx-M-N-inequa}
    Let $M$ and $N$ be nonzero CM $R$-modules such that $\Ext_R^{\gg 0}(M,N)=0$, and at least one of the four conditions {\rm (1)-(4)} in {\rm \Cref{thm:cx-M-N-inequa}} holds.
    Then, $R$ is complete intersection, and $\codim(R) \le 2$.
\end{corollary}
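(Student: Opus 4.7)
The plan is to derive this corollary as a direct specialization of \Cref{thm:cx-M-N-inequa}. The key observation is that the hypothesis $\Ext_R^{\gg 0}(M,N)=0$ is precisely the condition $\cx_R(M,N)=0$ (as recorded in \Cref{rmk:cx-curv}.(5).(a)), which is a strong form of the annihilation hypothesis $\fm^h\Ext_R^{\gg 0}(M,N)=0$ of \Cref{thm:cx-M-N-inequa} (the zero module is annihilated by any power of $\fm$). Moreover $\cx_R(M,N) = 0 < \infty$, so condition (a) of the equivalence in that theorem holds.

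First I would observe that the hypothesis $\Ext_R^{\gg 0}(M,N)=0$ implies $\cx_R(M,N) = 0$, and therefore trivially $\fm^h\Ext_R^n(M,N)=0$ for all $n \gg 0$ (for any $h \ge 0$). Combined with whichever of the four conditions (1)--(4) holds by assumption, this puts us in the setting of \Cref{thm:cx-M-N-inequa}. Since statement (a) of that theorem's equivalence holds, so does (c): the ring $R$ is complete intersection of codimension at most $2+\cx_R(M,N) = 2+0 = 2$. This yields both conclusions of the corollary simultaneously.

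There is no significant obstacle here since the work has already been done in \Cref{thm:cx-M-N-inequa}; the proof is a one-line invocation together with the reduction $\cx_R(M,N)=0$. The only minor point worth flagging is ensuring that the trivial Ext-vanishing case genuinely fits the annihilation hypothesis of \Cref{thm:cx-M-N-inequa}, but this is immediate from $\fm^h \cdot 0 = 0$.
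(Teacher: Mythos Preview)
Your proposal is correct and follows exactly the paper's approach: the paper states this corollary as an immediate consequence of \Cref{thm:cx-M-N-inequa}, noting just before it that $\Ext_R^{\gg 0}(M,N)=0$ if and only if $\cx_R(M,N)=0$, so that condition (a) holds and (c) gives codimension at most $2$.
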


When exactly one of the two inequalities in each condition of \Cref{thm:cx-M-N-inequa} is a strict inequality, then the conclusion of \Cref{thm:cx-M-N-inequa} can be improved as follows.

\begin{theorem}\label{thm:cx-M-N-inequa-2}
    Let $M$ and $N$ be nonzero CM $R$-modules such that $\fm^h\Ext_R^{\gg 0}(M,N)=0$ for some $h\ge 0$. Furthermore, assume that at least one of the following conditions holds.
    \begin{enumerate}[\rm (4A)]
        \item[\rm (1A)]
        $ e(M) \le 2 \mu(M) $ and $ e(N) < 2 \type(N) $.
        \item[\rm (1B)]
        $ e(M) < 2 \mu(M) $ and $ e(N) \le 2 \type(N) $.
        \item[\rm (2A)]
        $ e(M) < 2 \mu(M)$, $e(N) \ge 2 \mu(N) $, and $N$ has minimal multiplicity.
        \item[\rm (2B)]
        $ e(M) \le 2 \mu(M)$, $e(N) > 2 \mu(N) $, and $N$ has minimal multiplicity.
        \item[\rm (3A)]
        $ e(M) > 2 \type(M)$, $M$ has minimal multiplicity, and $e(N) \le 2 \type(N) $.
        \item[\rm (3B)]
        $ e(M) \ge 2 \type(M)$, $M$ has minimal multiplicity, and $e(N) < 2 \type(N) $.
        \item[\rm (4A)]
        $ e(M) > 2 \type(M)$, $e(N) \ge 2 \mu(N) $, and both $M$ and $N$ have minimal multiplicity.
        \item[\rm (4B)]
        $ e(M) \ge 2 \type(M)$, $e(N) > 2 \mu(N) $, and both $M$ and $N$ have minimal multiplicity.
    \end{enumerate}

    Then, $\sup\big\{\cx_R(M),\injcx_R(M)\big\}+\sup\big\{\injcx_R(N),\cx_R(N)\big\} \le \big(1+2 \cx_R(M,N)\big)$. Moreover, the following statements are equivalent:
    \begin{enumerate}[\rm (a)]
        \item $\cx_R(M,N)<\infty$.
        \item $\curv_R(M,N)\le 1$.
        \item $R$ is complete intersection $($of codimension at most $1+\cx_R(M,N)$$)$.
    \end{enumerate}
\end{theorem}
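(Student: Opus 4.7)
The plan is to emulate the case-by-case argument of \Cref{thm:cx-M-N-inequa}, but upgrade from the ``(ii)''-clauses of Theorems \ref{thm:main-cx-curv-CM-mod}--\ref{thm:cx-curv-min-mult} and Corollaries \ref{cor:cx-curv-k-CM-mod}--\ref{cor:main-cx-curv-CM-min-mult} to the sharper ``(iii)''-clauses wherever a strict inequality among $e(-),\mu(-),\type(-)$ is now available. Each of the eight conditions (1A)--(4B) consists of exactly one strict inequality paired with one non-strict inequality. Hence to prove (b) $\Rightarrow$ (c), one chains a ``(ii)''-step with a ``(iii)''-step to bound $\cx_R(k)$ by $1+\cx_R(M,N)$ and $\curv_R(k)$ by $\max\{1,\curv_R(M,N)\}\le 1$, in place of the bound $\cx_R(k)\le 2+\cx_R(M,N)$ obtained in \Cref{thm:cx-M-N-inequa}. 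The implications (a) $\Rightarrow$ (b) and (c) $\Rightarrow$ (a) are universal via \Cref{rmk:cx-curv}.\eqref{finite-cx-implies-curv} and \Cref{prop:cx-curv-facts}.\eqref{CI-ring-finite-cx-pair}.

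As a representative sample, consider Case (1A): $e(M)\le 2\mu(M)$ and $e(N)<2\type(N)$. The strict inequality on $N$ allows \Cref{thm:main-cx-curv-CM-mod}(2).(iii) with $X:=M$ to give $\cx_R(M)\le\cx_R(M,N)$ and $\curv_R(M)\le\curv_R(M,N)$; then the non-strict inequality on $M$ and \Cref{cor:cx-curv-k-CM-mod}(3) yield $\cx_R(k)\le 1+\cx_R(M)\le 1+\cx_R(M,N)$, and likewise $\curv_R(k)\le 1$ using \Cref{cor:cx-curv-k-CM-mod}(1). Case (1B) is symmetric: the strict inequality on $M$ activates \Cref{cor:cx-curv-k-CM-mod}(4), so $\cx_R(k)=\cx_R(M)$, while \Cref{thm:main-cx-curv-CM-mod}(2).(ii) supplies $\cx_R(M)\le 1+\cx_R(M,N)$. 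Cases (2A), (2B), (4A), (4B) use \Cref{thm:main-cx-curv-CM-mod}(3) (or \Cref{thm:cx-curv-min-mult}(2) when $M$ has minimal multiplicity) combined with \Cref{cor:main-cx-curv-CM-min-mult}(3)--(4); Cases (3A), (3B) use \Cref{thm:cx-curv-min-mult}(1) combined with \Cref{cor:main-cx-curv-CM-min-mult}(5)--(6). In every case the strict inequality always falls on the factor whose ``(iii)''-clause is invoked, so the two ``+1''-shifts present in the proof of \Cref{thm:cx-M-N-inequa} collapse to a single ``+1''.

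Once (a)--(c) are established, the remaining inequality is immediate: assuming $\cx_R(M,N)<\infty$, (c) ensures $R$ is complete intersection of codimension $c\le 1+\cx_R(M,N)$, so \cite[Thm.~II.(1)]{AB00} gives $\cx_R(M)=\injcx_R(M)$ and $\cx_R(N)=\injcx_R(N)$, while \cite[Thm.~II.(3)]{AB00} combined with the improved codimension bound yields
\[
    \cx_R(M)+\cx_R(N)\le c+\cx_R(M,N)\le 1+2\cx_R(M,N),
\]
as required. The only genuine bookkeeping issue is matching each of the eight conditions with the correct pair of (ii)/(iii)-clauses from the preceding results; this is straightforward once one observes that both the $\mu$-based and $\type$-based variants of the main theorems, together with their residue-field corollaries in both the ``Cohen--Macaulay'' and ``minimal multiplicity'' flavors, are jointly available. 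I do not anticipate any genuinely new technical obstacle beyond this verification, since the proof is purely an exercise in propagating the saved ``+1'' through the chain of inequalities already developed in Sections \ref{sec:cx-curv-cm-mod} and \ref{sec:cx-curv-cm-mod-min-mult}.
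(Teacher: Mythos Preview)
Your proposal is correct and follows the same approach as the paper: the paper's proof is literally ``same as \Cref{thm:cx-M-N-inequa} with the following substitutions,'' listing which (ii)-clause to upgrade to a (iii)-clause in each of the eight cases. Your detailed treatment of (1A) and (1B) is correct, and the final inequality via \cite[Thm.~II.(1),(3)]{AB00} is exactly what the paper does.

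However, your brief sketch for the remaining cases contains citation errors that you should fix before writing this up. For (3A), (3B) you invoke \Cref{thm:cx-curv-min-mult}(1), but that part concerns the module in the \emph{second} slot of $\Ext$ with a condition on $e$ versus $2\mu$; here $M$ sits in the first slot with a condition on $e(M)$ versus $2\type(M)$. The correct tool is \Cref{thm:main-cx-curv-CM-mod}(2) (applied with $X=M$ and $N$ in the second slot) paired with \Cref{cor:main-cx-curv-CM-min-mult}(5)--(6), or equivalently \Cref{thm:cx-curv-min-mult}(4) paired with \Cref{cor:cx-curv-k-CM-mod}(5)--(6); the paper uses the latter route for (3A) and the former for (3B). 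Similarly, for (4A), (4B) you cite \Cref{thm:cx-curv-min-mult}(2), which again involves $e(M)$ versus $2\mu(M)$ rather than $2\type(M)$; the correct reference is \Cref{thm:cx-curv-min-mult}(4). These are exactly the ``bookkeeping'' matches you flagged as the only remaining issue, so the repair is routine, but as written those four cases do not go through.
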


\begin{proof}
    The proof goes along the same lines as that of \Cref{thm:cx-M-N-inequa} except the following changes.
    
    (1A) Use \Cref{thm:main-cx-curv-CM-mod}.(2).(iii) instead of \Cref{thm:main-cx-curv-CM-mod}.(2).(ii) in the proof of \Cref{thm:cx-M-N-inequa}.(1).

    (1B) Use \Cref{thm:main-cx-curv-CM-mod}.(3).(iii) and \Cref{cor:cx-curv-k-CM-mod}.(5)  instead of \Cref{thm:main-cx-curv-CM-mod}.(2).(ii) and \Cref{cor:cx-curv-k-CM-mod}.(3) respectively in the proof of \Cref{thm:cx-M-N-inequa}.(1).

    (2A) Use \Cref{thm:main-cx-curv-CM-mod}.(3).(iii) instead of \Cref{thm:main-cx-curv-CM-mod}.(3).(ii) in the proof of \Cref{thm:cx-M-N-inequa}.(2).

    (2B) Use \Cref{thm:cx-curv-min-mult}.(1).(iii) and \Cref{cor:cx-curv-k-CM-mod}.(3) instead of \Cref{thm:main-cx-curv-CM-mod}.(3).(ii) and \Cref{cor:main-cx-curv-CM-min-mult}.(3) respectively in the proof of \Cref{thm:cx-M-N-inequa}.(2).

    (3A) Use \Cref{thm:cx-curv-min-mult}.(4).(iii) instead of \Cref{thm:cx-curv-min-mult}.(4).(ii) in the proof of \Cref{thm:cx-M-N-inequa}.(3).

    (3B)  Use \Cref{thm:main-cx-curv-CM-mod}.(2).(iii) and \Cref{cor:main-cx-curv-CM-min-mult}.(5) instead of \Cref{thm:cx-curv-min-mult}.(4).(ii) and \Cref{cor:main-cx-curv-CM-min-mult}.(3) in the proof of \Cref{thm:cx-M-N-inequa}.(3).

    (4A) Use \Cref{thm:cx-curv-min-mult}.(4).(iii) instead of \Cref{thm:cx-curv-min-mult}.(4).(ii) in the proof of \Cref{thm:cx-M-N-inequa}.(4).

    (4B) Use \Cref{thm:cx-curv-min-mult}.(1).(iii) and \Cref{cor:main-cx-curv-CM-min-mult}.(5) instead of \Cref{thm:cx-curv-min-mult}.(4).(ii) and \Cref{cor:main-cx-curv-CM-min-mult}.(3) in the proof of \Cref{thm:cx-M-N-inequa}.(4).
\end{proof}

As a consequence of \Cref{thm:cx-M-N-inequa-2}, one obtains certain criteria for hypersurface rings.

\begin{corollary}\label{cor:cx-M-N-inequa-2}
    Let $M$ and $N$ be nonzero CM $R$-modules such that $\Ext_R^{\gg 0}(M,N)=0$, and at least one of the eight conditions in {\rm \Cref{thm:cx-M-N-inequa-2}} holds.
    Then, $R$ is a hypersurface ring.
\end{corollary}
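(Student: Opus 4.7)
The plan is to derive this immediately from \Cref{thm:cx-M-N-inequa-2}. First, I would observe that the assumption $\Ext_R^{\gg 0}(M,N)=0$ is equivalent to $\cx_R(M,N)=0$ by \Cref{rmk:cx-curv}.(5).(a). In particular, the annihilation hypothesis $\fm^h\Ext_R^{\gg 0}(M,N)=0$ of \Cref{thm:cx-M-N-inequa-2} is trivially satisfied (taking $h$ to be any non-negative integer, since the Ext modules themselves vanish eventually).

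Next, under the assumption that one of the eight conditions (1A)--(4B) of \Cref{thm:cx-M-N-inequa-2} holds, the theorem asserts the equivalence of (a) $\cx_R(M,N)<\infty$, (b) $\curv_R(M,N)\le 1$, and (c) $R$ is complete intersection of codimension at most $1+\cx_R(M,N)$. Since we already have $\cx_R(M,N)=0<\infty$, condition (a) holds, and thus (c) yields that $R$ is complete intersection of codimension at most $1+0=1$.

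Finally, a complete intersection local ring of codimension at most $1$ is a hypersurface by definition, which establishes the conclusion. There is no real obstacle here; this corollary is a direct specialization of \Cref{thm:cx-M-N-inequa-2} to the case $\cx_R(M,N)=0$, exactly analogous to how \Cref{cor:cx-M-N-inequa} specializes \Cref{thm:cx-M-N-inequa}.
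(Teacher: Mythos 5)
Your proposal is correct and matches the paper's (implicit) argument exactly: the paper derives this corollary from \Cref{thm:cx-M-N-inequa-2} by noting $\Ext_R^{\gg 0}(M,N)=0$ gives $\cx_R(M,N)=0$, so the annihilation hypothesis holds trivially and the theorem yields a complete intersection of codimension at most $1$, i.e., a hypersurface. Nothing is missing.
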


When both the inequalities in each of the four conditions of \Cref{thm:cx-M-N-inequa} are strict inequalities, then the conclusion of \Cref{thm:cx-M-N-inequa} can be improved further.

\begin{theorem}\label{thm:cx-M-N-inequa-3}
    Let $M$ and $N$ be nonzero CM $R$-modules such that $\fm^h\Ext_R^{\gg 0}(M,N)=0$ for some $h\ge 0$. Furthermore, assume that at least one of the following conditions holds.
    \begin{enumerate}[\rm (1)]
        \item 
        $ e(M) < 2 \mu(M) $ and $ e(N) < 2 \type(N) $.
        \item
        $ e(M) < 2 \mu(M)$, $e(N) > 2 \mu(N) $, and $N$ has minimal multiplicity.
        \item 
        $ e(M) > 2 \type(M)$, $M$ has minimal multiplicity, and $e(N) < 2 \type(N) $.
        \item
        $ e(M) > 2 \type(M)$, $e(N) > 2 \mu(N) $, and both $M$ and $N$ have minimal multiplicity.
    \end{enumerate}

    Then, $\sup\big\{\cx_R(M),\injcx_R(M)\big\}+\sup\big\{\injcx_R(N),\cx_R(N)\big\} \le \big(2 \cx_R(M,N)\big)$. Moreover, the following statements are equivalent:
    \begin{enumerate}[\rm (a)]
        \item $\cx_R(M,N)<\infty$.
        \item $\curv_R(M,N)\le 1$.
        \item $R$ is complete intersection $($of codimension at most $\cx_R(M,N)$$)$.
    \end{enumerate}
\end{theorem}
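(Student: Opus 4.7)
The plan is to mirror the argument of \Cref{thm:cx-M-N-inequa}, but systematically replace every invocation of parts (ii) of \Cref{thm:main-cx-curv-CM-mod} and \Cref{thm:cx-curv-min-mult} (which are tied to the non-strict bounds and yield a $1+\cx_R(M,N)$ term) by their strict-inequality counterparts, namely parts (iii). The implications (a)$\Rightarrow$(b) and (c)$\Rightarrow$(a) are general (see \Cref{rmk:cx-curv}.\eqref{finite-cx-implies-curv} and \Cref{prop:cx-curv-facts}.\eqref{CI-ring-finite-cx-pair}), so only (b)$\Rightarrow$(c) and the displayed inequality require the strict hypotheses.

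To prove (b)$\Rightarrow$(c) under condition (1), I would apply \Cref{thm:main-cx-curv-CM-mod}.(2).(iii) with the pair $(X,M)\rightsquigarrow (M,N)$ (allowed since $e(N)<2\type(N)$) to deduce $\cx_R(M)\le \cx_R(M,N)$ and $\curv_R(M)\le \curv_R(M,N)$; assuming (b), \Cref{cor:char-CI-rings}.(1) (available since $e(M)\le 2\mu(M)$) then forces $R$ to be complete intersection. For conditions (2), (3), (4) the same pattern applies, using respectively: \Cref{thm:cx-curv-min-mult}.(1).(iii) together with \Cref{cor:char-CI-rings}.(1); \Cref{thm:cx-curv-min-mult}.(4).(iii) together with \Cref{cor:char-CI-rings}.(2); and \Cref{thm:cx-curv-min-mult}.(1).(iii) together with \Cref{cor:char-CI-minmult-rings}.(2). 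In each case the relevant ``not Ulrich'' hypothesis is automatic from the strict inequality combined with \Cref{thm:e-mu-type}.

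For the codimension bound and the final inequality, observe that once $R$ is complete intersection the sharp equalities of \Cref{cor:cx-curv-k-CM-mod}.(4),(6) and \Cref{cor:main-cx-curv-CM-min-mult}.(4),(6) apply, because in every case at least one of $e(M)<2\mu(M)$, $e(M)<2\type(M)$, $e(N)<2\mu(N)$, $e(N)<2\type(N)$ (combined with minimal multiplicity where needed) holds. In condition (1) we get $\cx_R(k)=\cx_R(M)\le \cx_R(M,N)$ from $e(M)<2\mu(M)$; in (2), the same equality $\cx_R(k)=\cx_R(M)\le \cx_R(M,N)$; in (3), $\cx_R(k)=\injcx_R(N)\le \cx_R(M,N)$ from $e(N)<2\type(N)$; and in (4), $\cx_R(k)=\injcx_R(N)\le \cx_R(M,N)$ from \Cref{cor:main-cx-curv-CM-min-mult}.(4). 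Consequently $\codim(R)=\cx_R(k)\le \cx_R(M,N)$ by \Cref{prop:cx-curv-facts}.(2), proving the codimension statement.

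Finally, since $R$ is complete intersection, \cite[Thm.~II.(1)]{AB00} gives $\cx_R(M)=\injcx_R(M)$ and $\cx_R(N)=\injcx_R(N)$, while \cite[Thm.~II.(3)]{AB00} yields $\cx_R(M)+\cx_R(N)\le \codim(R)+\cx_R(M,N)\le 2\cx_R(M,N)$, which is precisely the asserted inequality. The main delicacy will be the bookkeeping that verifies, for each of the four conditions, that the right ``strict'' part is indeed applicable (in particular that the requisite ``$M$ not Ulrich'' or ``$N$ not Ulrich'' hypothesis holds), and that the correct codimension corollary among \Cref{cor:cx-curv-k-CM-mod} and \Cref{cor:main-cx-curv-CM-min-mult} is invoked; beyond that, the proof is entirely parallel to that of \Cref{thm:cx-M-N-inequa}.
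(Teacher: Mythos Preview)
Your proposal is correct and follows essentially the same approach as the paper: mirror the proof of \Cref{thm:cx-M-N-inequa}, replacing each use of a part (ii) by the corresponding part (iii) and each ``$\le 1+$'' corollary by its strict-equality counterpart. One small bookkeeping point: in condition~(4) you write $\cx_R(k)=\injcx_R(N)\le \cx_R(M,N)$ citing only \Cref{cor:main-cx-curv-CM-min-mult}.(4), but that corollary supplies only the equality; the inequality $\injcx_R(N)\le \cx_R(M,N)$ still needs \Cref{thm:cx-curv-min-mult}.(4).(iii) (available since $e(M)>2\type(M)$ and $M$ has minimal multiplicity), or alternatively you can finish via $\cx_R(k)=\cx_R(M)\le \cx_R(M,N)$ using \Cref{cor:main-cx-curv-CM-min-mult}.(6) and the bound you already extracted from \Cref{thm:cx-curv-min-mult}.(1).(iii).
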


\begin{proof}
The proof goes along the same lines as that of \Cref{thm:cx-M-N-inequa} except the following changes.

    (1) Use \Cref{thm:main-cx-curv-CM-mod}.(2).(iii) and \Cref{cor:cx-curv-k-CM-mod}.(4) instead of \Cref{thm:main-cx-curv-CM-mod}.(2).(ii) and \Cref{cor:cx-curv-k-CM-mod}.(3) respectively in the  proof of \Cref{thm:cx-M-N-inequa}.(1).

    (2) Use \Cref{thm:main-cx-curv-CM-mod}.(3).(iii) and \Cref{cor:main-cx-curv-CM-min-mult}.(4) instead of \Cref{thm:main-cx-curv-CM-mod}.(3).(ii) and \Cref{cor:main-cx-curv-CM-min-mult}.(3) respectively in the  proof of \Cref{thm:cx-M-N-inequa}.(2).

    (3) Use \Cref{thm:cx-curv-min-mult}.(4).(iii) and \Cref{cor:cx-curv-k-CM-mod}.(6) instead of \Cref{thm:cx-curv-min-mult}.(4).(ii) and \Cref{cor:cx-curv-k-CM-mod}.(5) respectively in the  proof of \Cref{thm:cx-M-N-inequa}.(3).

    (4) Use \Cref{thm:cx-curv-min-mult}.(4).(iii) and \Cref{cor:main-cx-curv-CM-min-mult}.(4) instead of \Cref{thm:cx-curv-min-mult}.(4).(ii) and \Cref{cor:main-cx-curv-CM-min-mult}.(3) respectively in the  proof of \Cref{thm:cx-M-N-inequa}.(4).
\end{proof}

\Cref{thm:cx-M-N-inequa-3} particularly provides certain criteria for a local ring to be regular.

\begin{corollary}\label{cor:cx-M-N-inequa-3}
    Let $M$ and $N$ be nonzero CM $R$-modules such that $\Ext_R^{\gg 0}(M,N)=0$, and at least one of the four conditions {\rm (1)-(4)} in {\rm \Cref{thm:cx-M-N-inequa-3}} holds.
    Then, $R$ is regular.
\end{corollary}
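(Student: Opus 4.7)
The plan is to derive this corollary as an immediate specialization of Theorem~\ref{thm:cx-M-N-inequa-3} to the case $\cx_R(M,N) = 0$. The entire content of the corollary amounts to translating the vanishing hypothesis $\Ext_R^{\gg 0}(M,N) = 0$ into a statement about Ext-complexity, and then invoking the equivalence (a) $\Leftrightarrow$ (c) of the theorem.

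First I would record, via Remark~\ref{rmk:cx-curv}.(5).(a), that the hypothesis $\Ext_R^n(M,N) = 0$ for all $n \gg 0$ is exactly the statement $\cx_R(M,N) = 0$. In particular, the annihilation condition $\fm^h \Ext_R^n(M,N) = 0$ for all $n \gg 0$ required to apply Theorem~\ref{thm:cx-M-N-inequa-3} is satisfied trivially (any $h \ge 0$ works, since the Ext modules themselves vanish eventually), so the hypotheses of Theorem~\ref{thm:cx-M-N-inequa-3} are verified irrespective of which of the four conditions (1)--(4) is assumed.

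Next, since at least one of the conditions (1)--(4) is given by hypothesis, I would invoke the equivalence (a) $\Leftrightarrow$ (c) of Theorem~\ref{thm:cx-M-N-inequa-3} to conclude that $R$ is complete intersection with $\codim(R) \le \cx_R(M,N) = 0$. A complete intersection ring of codimension $0$ is regular, finishing the proof. There is no real obstacle here, as the substantive work has already been carried out in Theorem~\ref{thm:cx-M-N-inequa-3}; the corollary is purely the specialization of that theorem to the extremal case $\cx_R(M,N) = 0$.
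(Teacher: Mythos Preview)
Your proposal is correct and matches the paper's approach exactly: the paper treats this corollary as an immediate specialization of \Cref{thm:cx-M-N-inequa-3} to the case $\cx_R(M,N)=0$, without even writing out a separate proof. Your explicit verification that the annihilation hypothesis is trivially satisfied and that codimension $\le 0$ forces regularity fills in the obvious details.
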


We next observe that the injective complexity (resp., injective curvature) of any nonzero module of finite projective dimension and the complexity (resp., curvature) of any nonzero module of finite injective dimension are all same, and equals the injective complexity (resp., injective curvature) of the ring. 

\begin{proposition}\label{prop:pd-injcurv-max}
    Let $M$ and $N$ be nonzero $R$-modules.
    \begin{enumerate}[\rm (1)]
        \item 
        Let $\pd_R(M) < \infty$. Then $\injcx_R(M)=\injcx_R(R)$ and $\injcurv_R(M)=\injcurv_R(R)$.
        \item 
        Let $\id_R(N) < \infty$. Then $\cx_R(N)=\injcx_R(R)$ and $\curv_R(N)=\injcurv_R(R)$.
    \end{enumerate}
\end{proposition}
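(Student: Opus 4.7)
The plan rests on establishing, for any nonzero $R$-module $W$ with $p:=\pd_R(W)<\infty$, the identity
\begin{equation*}
\mu^n_R(W)\;=\;\sum_{i=0}^{p}\beta_i^R(W)\,\mu^{n+i}_R(R)\qquad\text{for all } n\ge 0, \tag{$\ast$}
\end{equation*}
equivalently $I^W(t)=I^R(t)\cdot P^W(t^{-1})$ as formal Laurent series, where $I^{(-)}$ and $P^{(-)}$ denote the Bass and Poincar\'e series. To prove $(\ast)$, take a minimal free resolution $F_\bullet\to W$ of length $p$ and an injective resolution $I_R^\bullet$ of $R$. Then the double complex $J^{\bullet,\bullet}:=I_R^\bullet\otimes_R F_\bullet$ has total complex equal to an injective resolution of $W$ (its columns resolve $F_i\cong R^{\beta_i^R(W)}$ injectively, and $F_\bullet\to W$ is a quasi-isomorphism). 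Apply $\Hom_R(k,-)$: the horizontal differentials of $\Hom_R(k,J^{\bullet,\bullet})$ are induced from those of $F_\bullet$, whose matrix entries lie in $\fm$ by minimality, and hence act as zero on the $k$-vector space $\Hom_R(k,-)$. Thus the resulting total complex collapses to a direct sum of columns, and taking vertical cohomology yields $(\ast)$.

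Part (1) now follows quickly. The upper bounds $\injcx_R(M)\le\injcx_R(R)$ and $\injcurv_R(M)\le\injcurv_R(R)$ are immediate from $(\ast)$ and \Cref{lem:sum-curv-ieq}, since each summand $\beta_i^R(M)\mu^{n+i}_R(R)$ has (up to an index shift) the same complexity and curvature as $\{\mu_R^n(R)\}_n$. For the reverse inequalities, use the crude bound $\mu^n_R(M)\ge\beta_p^R(M)\mu^{n+p}_R(R)\ge\mu^{n+p}_R(R)$ together with the shift invariance of complexity and curvature; equivalently, one may apply \Cref{lem:curv-poly-ieq} to the identity $(\ast)$ with the polynomial $P^M(t)$.

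For part (2), I would mirror the above strategy using the minimal injective resolution $0\to N\to I^0\to\cdots\to I^q\to 0$ of $N$, aiming at the dual identity
\begin{equation*}
\beta_n^R(N)\;=\;\sum_{j=0}^{q}\mu^j_R(N)\,\mu^{n+j}_R(R)\qquad\text{for all } n\ge 0.
\end{equation*}
After passing to the completion of $R$ (permissible by \Cref{rmk:flat-ext-CM}), so that Matlis duality $(-)^\vee$ is available, I would take a Cartan--Eilenberg projective resolution of $I^\bullet$ and tensor with $k$. The components $E(R/\fp)$ appearing in $I^j$ for $\fp\ne\fm$ contribute nothing to $\Tor_\bullet^R(k,-)$ (since $E(R/\fp)$ is an $R_\fp$-module and $k\otimes_R R_\fp=0$ forces $\Tor_\bullet^R(k,E(R/\fp))=0$), while for $\fp=\fm$ one has $\dim_k\Tor_n^R(k,E_R(k))=\dim_k\Ext_R^n(k,R)^\vee=\mu^n_R(R)$. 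Minimality of $I^\bullet$ again ensures that the relevant horizontal differentials lie in $\fm$ and therefore vanish on these $k$-vector spaces, yielding the dual identity; part (2) then follows exactly as in part (1).

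The main obstacle will be the careful bookkeeping in part (2): one must verify simultaneously that the $\fp\ne\fm$ components of $I^\bullet$ contribute nothing and that the Matlis dual identification $\Tor_\bullet^R(k,E_R(k))\cong\Ext_R^\bullet(k,R)^\vee$ intertwines properly with the Cartan--Eilenberg projective replacement chosen so that horizontal differentials genuinely annihilate the relevant $k$-vector spaces. This is where the technical heart of the argument lies, since minimality of an injective resolution is somewhat more delicate to exploit than that of a free resolution.
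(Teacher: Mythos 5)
Your part (1) is essentially correct, and it takes a mildly different route from the paper: you prove the Laurent-series identity $I^M(t)=I^R(t)P^M(t^{-1})$ directly via the double complex $I_R^\bullet\otimes_R F_\bullet$, whereas the paper simply cites Foxby for this identity (and gets the inequality $\injcx_R(M)\le\injcx_R(R)$, $\injcurv_R(M)\le\injcurv_R(R)$ by induction on $\pd_R(M)$ using the syzygy exact sequence and \Cref{lem:sum-curv-ieq}). Your chain-level argument works precisely because the horizontal differentials come from a \emph{minimal free} resolution: their matrix entries lie in $\fm$, so they literally vanish after applying $\Hom_R(k,-)$, and the total complex splits into its columns; one only has to note that $\Tot(I_R^\bullet\otimes F_\bullet)$ is a bounded-below complex of injectives quasi-isomorphic to $M$ (so it computes $\Ext_R^\bullet(k,M)$ even though it has terms in negative degrees), and that complexity and curvature are invariant under index shifts and positive scalars. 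The conclusion of (1) then follows as you say, or, for the lower bound, exactly as in the paper via \Cref{lem:curv-poly-ieq}.

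Part (2), however, has a genuine gap. Your argument with a Cartan--Eilenberg projective resolution of the finite injective resolution $I^\bullet$ of $N$ does \emph{not} mirror part (1) at the chain level: after tensoring the CE resolution with $k$, the horizontal differentials do not vanish as maps of complexes; what your minimality argument (entries of the $E_R(k)$-components of $d_{I}$ lying in $\fm\widehat{R}$, plus $\Tor_\bullet^R(k,E(R/\fp))=0$ for $\fp\ne\fm$) actually shows is only that the induced maps on vertical homology, i.e.\ the $d_1$ differentials of the hyperhomology spectral sequence $E_1^{p,q}=\Tor_q^R(k,I^p)\Rightarrow\Tor_{q-p}^R(k,N)$, are zero. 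That gives $E_2=E_1$, but says nothing about $d_r$ for $r\ge 2$, and without degeneration you only obtain the inequality $\beta_n^R(N)\le\sum_{j}\mu^j_R(N)\mu^{n+j}_R(R)$ (the abutment is a subquotient of $E_2$). This yields $\cx_R(N)\le\injcx_R(R)$ and $\curv_R(N)\le\injcurv_R(R)$, but not the reverse inequalities, which are exactly what is needed to get equality; the "bookkeeping" issues you flag ($\fp\ne\fm$ components, Matlis duality) are in fact the easy part. The paper closes this by a different route: since $\id_R(N)<\infty$, Bass's conjecture gives that $R$ is CM; after completing, $\cx_R(\omega)=\injcx_R(R)$ and $\curv_R(\omega)=\injcurv_R(R)$ by \Cref{cx-duality}, and a finite $\omega$-resolution of $N$ gives $\cx_R(N)\le\cx_R(\omega)$, $\curv_R(N)\le\curv_R(\omega)$; the reverse inequalities come from citing Foxby's identity $\sum_n\beta_n^R(N)t^n=\bigl(\sum_n\mu^n_R(R)t^n\bigr)\bigl(\sum_{i=0}^{s}\mu^i_R(N)t^{-i}\bigr)$ together with \Cref{lem:curv-poly-ieq}. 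To repair your version you would either have to prove the degeneration (essentially reproving Foxby's theorem, e.g.\ by a duality argument reducing to the finite projective dimension case), or simply quote Foxby as the paper does.
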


\begin{proof}
    (1) First we show that $\injcx_R(M) \le \injcx_R(R)$ and $\injcurv_R(M) \le \injcurv_R(R)$. We use induction on $\pd_R(M)$. If $\pd_R(M)=0$, then it is trivial. Since $\pd_R(\Omega_R^1(M))\le n-1$, by the induction hypothesis, $\injcx_R(\Omega_R^1(M)) \le \injcx_R(R)$ and $\injcurv_R(\Omega_R^1(M)) \le \injcurv_R(R)$. From the short exact sequence $0 \to \Omega_R^1(M) \to F_0 \to M \to 0$, it is clear that $\mu_R^i(M) \le \mu_R^i(F_0) + \mu_R^{i+1}(\Omega_R^1(M))$ for all $i \ge 1$. Therefore, by \Cref{rmk:cx-curv-ineq}.(1) and \Cref{lem:sum-curv-ieq}, $\injcx_R(M) \le \injcx_R(R)$ and $\injcurv_R(M) \le \injcurv_R(R)$.

    Next we show that $\injcx_R(R) \le \injcx_R(M)$ and $\injcurv_R(R)\le\injcurv_R(M)$. Notice from \cite[Cor.~4.3.(2)]{Fo77} that there is an equality of formal Laurent series $\sum_{n=0}^{\infty} \mu^n_R(M) t^n=\big(\sum_{n=0}^\infty \mu^n_R(R)t^n\big)\big(\sum_{i=0}^r \beta^R_i(M)t^{-i}\big)$, where $r:=\pd_R(M)$. Hence the desired inequalities can be obtained from \Cref{lem:curv-poly-ieq}.

    (2) Since $\id_R(N) < \infty$, by Bass's conjecture, $R$ is CM. Without loss of generality, we may pass to completion, and assume that $R$ has a canonical module $\omega$. Since $\Hom_R(\omega,\omega)\cong R$, by \Cref{cx-duality}.(2).(b) and (c), one has that $\cx_R(\omega)= \injcx_R(R)$ and $\curv_R(\omega)= \injcurv_R(R)$ respectively. Since $\id_R(N) < \infty$, by \cite[3.3.28.(b)]{BH98}, there exists a finite $\omega$-resolution of $N$, say $ 0 \to \omega^{n_r} \to \ldots \to \omega^{n_2}  \to \omega^{n_1} \to N \to 0$. We break this into short exact sequences. Suppose the first one is $0 \to \omega^{n_r} \to \omega^{n_{r-1}} \to N_{r-1} \to 0$. It induces that $\beta_i^R(N_{r-1}) \le \beta_{i-1}^R(\omega^{n_r})+ \beta_i^R(\omega^{n_{r-1}})$ for all $i \ge 1$. Hence, by \Cref{rmk:cx-curv-ineq} and \Cref{lem:sum-curv-ieq}, one obtains that $\cx_R(N_{r-1}) \le \cx_R(\omega)$ and $\curv_R(N_{r-1}) \le \curv_R(\omega)$. Consider the second short exact sequence $0 \to N_{r-1} \to \omega^{n_{r-2}} \to N_{r-2} \to 0$. By similar arguments, one gets that $\cx_R(N_{r-2}) \le \cx_R(\omega)$ and $\curv_R(N_{r-2}) \le \curv_R(\omega)$. Repeating the process, and using induction on $r$,
    we have $\cx_R(N) \le \cx_R(\omega)= \injcx_R(R)$ and $\curv_R(N) \le \curv_R(\omega)= \injcurv_R(R)$. For the other inequalities, in view of \cite[Cor.~4.3.(1)]{Fo77}, there is an equality of formal Laurent series $\sum_{n=0}^{\infty} \beta_n^R(N) t^n=\big(\sum_{n=0}^\infty \mu^n_R(R)t^n\big)\big(\sum_{i=0}^s \mu_R^i(N)t^{-i}\big)$, where $s:=\id_R(N)$. Therefore the desired inequalities follow from \Cref{lem:curv-poly-ieq}.
\end{proof}

The following is a direct consequence of \Cref{thm:cx-M-N-inequa}.(1) and \Cref{prop:pd-injcurv-max}, and can be regarded as a partial progress towards the exponential growth part of \cite[Ques.~1.3]{CSV} and \cite[Ques., p.~647]{JL07}.

\begin{corollary}\label{cor:exp-growth-JL}
\begin{enumerate}[\rm (1)]
\item
Let $R$ be a CM ring satisfying $e(R)\le 2\type(R)$. If there exists a nonzero CM $R$-module $M$ such that $e(M)\le 2\mu(M)$, $\fm^h\Ext^{\gg 0}_R(M,R)=0$ for some $h\ge 0$, and $\curv_R(M,R)\le 1$ $($in particular, if $\cx_R(M,R)<\infty$$)$, then $R$ is complete intersection. 
\item     
Suppose there exists nonzero $R$-modules $M_1$ and $M_2$, both of finite projective dimension, such that $M_1$ is CM, $e(M_1)\le 2\type(M_1)$, and $\injcurv_R(M_2)\le 1$. Then, $R$ is complete intersection.
   
Specifically, if $R$ is CM, $e(R)\le 2\type(R)$, and there exists a nonzero $R$-module $M$ of finite projective dimension such that $\injcurv_R(M)\le 1$,  then $R$ is complete intersection.
\item 
Suppose there exist nonzero $R$-modules $N_1$ and $N_2$, both of finite injective dimension, such that $N_1$ is CM, $e(N_1)\le 2\mu(N_1)$, and $\curv_R(N_2)\le 1$. Then, $R$ is complete intersection.
   
Specifically, if $e(R)\le 2\type(R)$ and there exists a nonzero $R$-module $N$ of finite injective dimension such that $\curv_R(N)\le 1$, then $R$ is complete intersection.
\end{enumerate}
\end{corollary}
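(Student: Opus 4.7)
The approach is to deduce each of the three parts from Theorem \ref{thm:cx-M-N-inequa}(1), using Proposition \ref{prop:pd-injcurv-max} as the bridge whenever a finite-homological-dimension hypothesis appears. For part (1), I would apply Theorem \ref{thm:cx-M-N-inequa}(1) directly to the pair $(M, R)$: the hypothesis that $R$ is CM makes $R$ a nonzero CM $R$-module, condition (1) of that theorem matches $e(M) \le 2\mu(M)$ and $e(R) \le 2\type(R)$ exactly, and the Ext annihilation and curvature hypotheses transfer verbatim. The equivalence $(b) \Leftrightarrow (c)$ of that theorem then yields that $R$ is complete intersection.

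For part (2), the first step is to invoke Proposition \ref{prop:pd-injcurv-max}(1) on both $M_1$ and $M_2$ to conclude
\[
\injcurv_R(M_1) \;=\; \injcurv_R(R) \;=\; \injcurv_R(M_2) \;\le\; 1.
\]
Rewriting $\injcurv_R(M_1) = \curv_R(k, M_1)$, I would then apply Theorem \ref{thm:cx-M-N-inequa}(1) to the pair $(k, M_1)$: the residue field $k$ is CM of dimension zero, $M_1$ is CM by assumption, every $\Ext^n_R(k, M_1)$ is $\fm$-annihilated, and condition (1) of that theorem holds since $e(k) = \mu(k) = 1$ while $e(M_1) \le 2\type(M_1)$ is given. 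The specialization follows by taking $M_1 = R$ (which has projective dimension zero) and $M_2 = M$.

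Part (3) is the dual argument: Proposition \ref{prop:pd-injcurv-max}(2) now gives $\curv_R(N_1) = \injcurv_R(R) = \curv_R(N_2) \le 1$, and Theorem \ref{thm:cx-M-N-inequa}(1) applied to the pair $(N_1, k)$ (using $e(N_1) \le 2\mu(N_1)$ together with $e(k) = \type(k) = 1$, and noting that $\Ext^n_R(N_1, k)$ is $\fm$-annihilated) yields that $R$ is complete intersection. For the specialization, the existence of a nonzero module $N$ of finite injective dimension forces $R$ to be CM by Bass's conjecture; Proposition \ref{prop:pd-injcurv-max}(2) then gives $\injcurv_R(R) = \curv_R(N) \le 1$, and Theorem \ref{thm:cx-M-N-inequa}(1) applied to the pair $(k, R)$ (now using $e(R) \le 2\type(R)$) completes the argument.

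I anticipate no substantive obstacle: each part is a direct application of Theorem \ref{thm:cx-M-N-inequa}(1) once Proposition \ref{prop:pd-injcurv-max} has been used to convert the finite-dimension hypothesis into information about $\injcurv_R(R)$. The only point requiring a little care is the specialization in part (3), which does not arise by literally setting $N_1 = R$ in the general statement (as that would demand $R$ have finite injective dimension over itself, i.e., be Gorenstein), but instead follows by invoking Theorem \ref{thm:cx-M-N-inequa}(1) with the pair $(k, R)$ in place of $(N_1, k)$.
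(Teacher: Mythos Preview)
Your proposal is correct and matches the paper's proof essentially line for line: part (1) applies \Cref{thm:cx-M-N-inequa}(1) to $(M,R)$, part (2) uses \Cref{prop:pd-injcurv-max}(1) followed by \Cref{thm:cx-M-N-inequa}(1) on $(k,M_1)$, and part (3) uses \Cref{prop:pd-injcurv-max}(2) followed by \Cref{thm:cx-M-N-inequa}(1) on $(N_1,k)$. The only cosmetic difference is in the specialization of (3): the paper routes back through part (2) rather than re-applying \Cref{thm:cx-M-N-inequa}(1) to $(k,R)$ directly, but this amounts to the same thing, and your observation that one cannot simply set $N_1=R$ in the general statement is exactly right.
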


\begin{proof}
    (1) It follows from \Cref{thm:cx-M-N-inequa}.(1) by taking $N:=R$.

    (2) By \Cref{prop:pd-injcurv-max}(1), $\injcurv_R(M_1)=\injcurv_R(R)=\injcurv_R(M_2)\le 1$. Therefore \Cref{thm:cx-M-N-inequa}.(1) applied to $M=k$ and $N=M_1$ implies that $R$ is complete intersection. The last part of (2) follows by taking $M_1:=R$ and $M_2:=M$.

    (3)  By \Cref{prop:pd-injcurv-max}.(2), $\curv_R(N_1)= \injcurv_R(R)= \curv_R(N_2) \le 1$. Therefore \Cref{thm:cx-M-N-inequa}.(1) applied to $M=N_1$ and $N=k$ implies that $R$ is complete intersection.
    
    For the last part, note that $R$ is CM by Bass's conjecture. Since $\id_R(N)<\infty$ and $\curv_R(N)\le 1$, in view of \Cref{prop:pd-injcurv-max}, one gets that $\injcurv_R(R)\le 1$. Hence, $R$ is complete intersection by (2).
\end{proof}

The first part of the following result can be regarded as a Tor version of \cite[Thm.~II.(3)]{AB00}, whereas parts (2) and (3) were recorded as Theorem 1.1 and Corollary 2.3 in \cite{Mi98} respectively. It is to be noted that a stronger version of part (2) of the following was obtained in \cite[Thm.~1.9]{HW97} simultaneously and independently of \cite{Mi98}. 

\begin{proposition}\label{prop:AB-like-result-on-tcx}
    Let $R$ be complete intersection. Let $M$ and $N$ be $R$-modules.
    \begin{enumerate}[\rm (1)]
        \item 
        If $\fm^h\Tor^R_{\gg 0}(M,N)=0$ for some $h\ge 0$, then
        $$\cx_R(M)+\cx_R(N)- \codim(R)\le \tcx_R(M,N)\le \min\{\cx_R(M),\cx_R(N)\}<\infty.$$
        \item 
        If $R$ is a hypersurface and $\Tor^R_{\gg 0}(M,N)=0$, then $M$ or $N$ has finite projective dimension.
        \item 
        If $\codim(R)=2$, $\Tor^R_{\gg 0}(M,N)=0$, and $M$ is not eventually periodic, then $\pd_R(N)<\infty$.
    \end{enumerate}
\end{proposition}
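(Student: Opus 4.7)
The plan is to establish part (1) directly and deduce parts (2) and (3) from it as straightforward consequences. Part (1) splits into an easy upper bound and a nontrivial lower bound; the upper bound comes from a direct free resolution argument, while the lower bound hinges on converting the Tor statement to an Ext statement via the duality in \Cref{cx-duality} and then invoking the classical Avramov-Buchweitz bound over a complete intersection.

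For the upper bound in (1), I would note that if $F_\bullet \to M$ is a minimal free resolution, then $\Tor_n^R(M,N)$ is a subquotient of $F_n \otimes_R N \cong N^{\beta_n^R(M)}$; consequently $\mu(\Tor_n^R(M,N)) \le \beta_n^R(M) \mu(N)$, which gives $\tcx_R(M,N) \le \cx_R(M)$, and by the symmetry $\Tor_n^R(M,N) \cong \Tor_n^R(N,M)$, also $\tcx_R(M,N) \le \cx_R(N)$. Finiteness of both complexities is guaranteed by \Cref{prop:cx-curv-facts}.(3). For the lower bound, I would first pass to the $\fm$-adic completion; by \Cref{rmk:flat-ext-CM} this preserves the CI property, the Tor-annihilation hypothesis, and every complexity involved, so one may assume $R$ admits a canonical module $\omega$. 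Next, replace $N$ by $N' := \Omega^d_R(N)$, where $d = \dim R$; since $R$ is CM, $N'$ is MCM, and because $\Tor^R_n(M,N') \cong \Tor^R_{n+d}(M,N)$ for all $n \ge 1$, both the Tor-complexity and the annihilation condition transfer while $\cx_R(N') = \cx_R(N)$. Now \Cref{cx-duality}.(2).(a) applied to $L = M$ and the CM module $N'$ yields $\cx_R(M,(N')^\dagger) = \tcx_R(M,N')$, and \Cref{cx-duality}.(2).(b) gives $\cx_R((N')^\dagger) = \injcx_R(N')$. Since $R$ is CI, $\injcx_R(N') = \cx_R(N') = \cx_R(N)$ by \cite[Thm.~II.(1)]{AB00}, and the classical Avramov-Buchweitz inequality \cite[Thm.~II.(3)]{AB00} delivers $\cx_R(M) + \cx_R(N) - \codim(R) \le \cx_R(M,(N')^\dagger) = \tcx_R(M,N)$, which is the desired lower bound.

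Part (2) then follows immediately: a hypersurface has $\codim R \le 1$, and vanishing of $\Tor^R_{\gg 0}(M,N)$ means $\tcx_R(M,N) = 0$, so part (1) yields $\cx_R(M) + \cx_R(N) \le 1$; since these are non-negative integers, one of them must be zero, giving finite projective dimension for $M$ or $N$. Part (3) proceeds similarly: $\codim R = 2$ together with $\tcx_R(M,N) = 0$ forces $\cx_R(M) + \cx_R(N) \le 2$. A module over a complete intersection has complexity exactly one precisely when its minimal free resolution is eventually periodic (of period dividing $2$), so the hypothesis that $M$ is not eventually periodic (in the standard sense, which also excludes finite projective dimension) rules out $\cx_R(M) \le 1$, forcing $\cx_R(M) = 2$, hence $\cx_R(N) = 0$, i.e., $\pd_R(N) < \infty$.

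The main obstacle lies in orchestrating the reductions in part (1) -- passage to completion, replacement by an MCM syzygy, and invocation of the Ext-Tor duality -- so that the Avramov-Buchweitz lower bound for Ext-complexity can be legitimately transported back to a bound on Tor-complexity without losing track of the $\fm$-power annihilation hypothesis (which is essential for \Cref{cx-duality} to apply). Once these reductions are set up cleanly, parts (2) and (3) reduce to elementary arithmetic on non-negative integer complexities together with the characterization of complexity-one modules over a CI.
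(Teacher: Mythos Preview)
Your proof is correct and follows essentially the same route as the paper: reduce to an MCM syzygy, apply the Ext--Tor duality of \Cref{cx-duality}, and invoke \cite[Thm.~II]{AB00}. The paper streamlines your argument in one place: since a complete intersection is Gorenstein, $R$ is already its own canonical module, so there is no need to pass to the completion and work with a general $\omega$; one can take $(N')^\dagger = (N')^*$ directly. The paper also extracts the upper bound $\tcx_R(M,N) \le \min\{\cx_R(M),\cx_R(N)\}$ from the same duality (via $\cx_R(M,N^*) \le \min\{\cx_R(M),\cx_R(N^*)\}$ from \cite[Thm.~II.(3)]{AB00}) rather than from your separate free-resolution argument, but your direct approach is equally valid and arguably more transparent.
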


\begin{proof}
(1) Without loss of generality, we may pass to higher syzygies of $M, N$, and assume that $M, N$ are MCM $R$-modules. Note that $R$ itself is a canonical module of $R$. So, by virtue of \Cref{cx-duality}.(2), one obtains that $\cx_R(M,N^*)=\tcx_R(M,N)$ and $\cx_R(N)=\injcx_R(N^*)$. Since $R$ is complete intersection, in view of \cite[Thm.~II.(1)]{AB00}, $\injcx_R(N^*)=\cx_R(N^*)$. Thus $\cx_R(N)=\cx_R(N^*)$. Therefore, by \cite[Thm.~II.(3)]{AB00}, $\cx_R(M)+\cx_R(N) = \cx_R(M)+\cx_R(N^*) \le \codim(R)+\cx_R(M,N^*) = \codim(R)+\tcx_R(M,N)$, and $\tcx_R(M,N)=\cx_R(M,N^*)\le \min\{\cx_R(M), \cx_R(N^*)\}=\min\{\cx_R(M),\cx_R(N)\}<\infty$ by \Cref{prop:cx-curv-facts}.\eqref{CI-ring-finite-cx-curv}. 

(2) Since $R$ is a hypersurface and $\Tor^R_{\gg 0}(M,N)=0$, by (1), $\cx_R(M)+\cx_R(N)\le 1$. Therefore, since $\cx_R(M)$ and $\cx_R(N)$ are non-negative integers, it follows that at least one of them has to be zero, i.e., at least one of $M$ and $N$ has finite projective dimension. 

(3) Since $\codim R=2$ and $\Tor^R_{\gg 0}(M,N)=0$, by (1), $\cx_R(M)+\cx_R(N)\le 2$.  Since $M$ is not eventually periodic, by \cite[Thm.~4.4]{Avr89b}, $\cx_R(M)\ge 2$. It follows that $\cx_R(N)=0$, i.e., $\pd_R N<\infty$.
\end{proof}

Like \Cref{thm:cx-M-N-inequa}, we also obtain various criteria for complete intersection rings in terms of Tor complexity and curvature of pair of certain CM modules.

\begin{theorem}\label{thm:tcx-M-N-inequa}
    Let $M$ and $N$ be nonzero CM $R$-modules such that $\fm^h\Tor_{\gg 0}^R(M,N) = 0$ for some $h \ge 0$. Furthermore, assume that at least one of the following conditions holds.
    \begin{enumerate}[\rm (1)]
        \item 
        $ e(M) \le 2 \mu(M) $ and $ e(N) \le 2 \mu(N) $.
       \item 
       $ e(M) \ge 2 \type(M) $, $M$ has minimal multiplicity, and $ e(N) \le 2 \mu(N) $.
       \item
       $e(M) \ge 2 \type(M)$, $e(N)\ge 2\type(N)$, and both $M$ and $N$ have minimal multiplicity.
     \end{enumerate}

   Then, $\cx_R(M) + \cx_R(N) \le 2\big(1+\tcx_R(M,N)\big)$. Moreover, the following statements are equivalent:

   \begin{enumerate}[\rm (a)]
        \item $\tcx_R(M,N)<\infty$.
        \item $\tcurv_R(M,N)\le 1$.
        \item $R$ is complete intersection $($of codimension at most $2+\tcx_R(M,N)$$)$.
    \end{enumerate}
\end{theorem}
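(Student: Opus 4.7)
The plan is to mirror the proof of \Cref{thm:cx-M-N-inequa} essentially verbatim, replacing Ext-based inputs by their Tor analogs. The two trivial directions come for free: (a) $\Rightarrow$ (b) is \Cref{rmk:cx-curv}.\eqref{finite-cx-implies-curv}.(a), and (c) $\Rightarrow$ (a) follows from \Cref{prop:AB-like-result-on-tcx}.(1), which gives $\tcx_R(M,N) \le \min\{\cx_R(M),\cx_R(N)\} < \infty$ whenever $R$ is complete intersection. So the actual work is to establish (b) $\Rightarrow$ (c), and once (c) holds to derive both the codimension bound and the inequality $\cx_R(M)+\cx_R(N) \le 2(1+\tcx_R(M,N))$.

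For (b) $\Rightarrow$ (c), assume $\tcurv_R(M,N)\le 1$. In each of the three cases one of $\curv_R(M)$ or $\curv_R(N)$ is forced to be at most $1$ by combining the hypothesis with the appropriate curvature bound. Concretely, under (1) apply \Cref{thm:main-cx-curv-CM-mod}.(1).(i) with $X=M$ and the chosen module $N$, using $e(N)\le 2\mu(N)$; under (2) apply the same with $X=M$ and module $N$, again exploiting $e(N)\le 2\mu(N)$; under (3) apply \Cref{thm:cx-curv-min-mult}.(3).(i) with $X=M$ and module $N$ (which is not Ulrich since $e(N)\ge 2\type(N)$ and $\type(N)\ge 1$, by \Cref{thm:e-mu-type}), using $e(N)\ge 2\type(N)$. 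In case (1) the resulting $\curv_R(M)\le 1$ combined with $e(M)\le 2\mu(M)$ yields that $R$ is complete intersection via \Cref{cor:char-CI-rings}.(1); in cases (2) and (3) the same conclusion follows from \Cref{cor:char-CI-minmult-rings}.(2), since $M$ has minimal multiplicity with $e(M)\ge 2\type(M)$.

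Once $R$ is complete intersection, $\tcx_R(M,N)$ is automatically finite, and the complexity inequality follows by applying the corresponding upper bounds directly. In case (1), invoke \Cref{thm:main-cx-curv-CM-mod}.(1).(ii) twice (once for each variable, using symmetry $\tcx_R(M,N)=\tcx_R(N,M)$) to get $\cx_R(M),\cx_R(N) \le 1+\tcx_R(M,N)$; in case (2), use \Cref{thm:main-cx-curv-CM-mod}.(1).(ii) for $\cx_R(M)$ and \Cref{thm:cx-curv-min-mult}.(3).(ii) for $\cx_R(N)$; in case (3), use \Cref{thm:cx-curv-min-mult}.(3).(ii) twice. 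Summing gives the claimed bound $\cx_R(M)+\cx_R(N)\le 2(1+\tcx_R(M,N))$. For the codimension bound, pass from $\cx_R(M)$ to $\cx_R(k)=\codim(R)$ (using \Cref{prop:cx-curv-facts}.(2)) via \Cref{cor:cx-curv-k-CM-mod}.(3) when $e(M)\le 2\mu(M)$ (cases (1),(2)) or via \Cref{cor:main-cx-curv-CM-min-mult}.(5) when $M$ has minimal multiplicity with $e(M)\ge 2\type(M)$ (case (3)), obtaining $\codim(R) = \cx_R(k) \le 1+\cx_R(M) \le 2+\tcx_R(M,N)$.

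There is no genuine obstacle beyond bookkeeping; the only thing to be careful about is choosing, in each of the three cases, the correct pairing between the hypothesis on $M$ (respectively $N$) and the theorem from Section~\ref{sec:cx-curv-cm-mod} or Section~\ref{sec:cx-curv-cm-mod-min-mult} that applies. The symmetry $\tcx_R(M,N)=\tcx_R(N,M)$ and $\tcurv_R(M,N)=\tcurv_R(N,M)$ is used freely throughout, which is the main simplification compared to the Ext case where no such symmetry is available.
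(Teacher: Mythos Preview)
Your approach is essentially the same as the paper's, modulo a harmless role-swap between $M$ and $N$ (the paper first bounds $\curv_R(N)$, you first bound $\curv_R(M)$; Tor-symmetry makes either choice work). However, there is a bookkeeping slip in your codimension argument for case~(2): you invoke \Cref{cor:cx-curv-k-CM-mod}.(3) to pass from $\cx_R(M)$ to $\cx_R(k)$, but that requires $e(M) \le 2\mu(M)$, which is not part of hypothesis~(2). In case~(2) you only know $e(M) \ge 2\type(M)$ with $M$ of minimal multiplicity, so you must use \Cref{cor:main-cx-curv-CM-min-mult}.(5) instead---exactly as you correctly do in case~(3). Alternatively, you can do what the paper does in case~(2) and pass through $\cx_R(N)$ rather than $\cx_R(M)$, since $e(N)\le 2\mu(N)$ holds there.

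One genuine difference worth noting: you derive $\cx_R(M)+\cx_R(N) \le 2(1+\tcx_R(M,N))$ by bounding each summand separately as $\le 1+\tcx_R(M,N)$ via \Cref{thm:main-cx-curv-CM-mod}.(1).(ii) and/or \Cref{thm:cx-curv-min-mult}.(3).(ii), whereas the paper uses the Avramov--Buchweitz-type inequality $\cx_R(M)+\cx_R(N) \le \codim(R)+\tcx_R(M,N)$ from \Cref{prop:AB-like-result-on-tcx}.(1) combined with the already-established $\codim(R) \le 2+\tcx_R(M,N)$. Your route is slightly more self-contained for this step, though \Cref{prop:AB-like-result-on-tcx}.(1) is needed anyway for $(\mathrm{c})\Rightarrow(\mathrm{a})$, so neither approach avoids it entirely.
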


\begin{proof}
We first prove the second assertion. Note that  (a) $\Rightarrow$ (b) by \Cref{rmk:cx-curv}.\ref{finite-cx-implies-curv}, and (c) $\Rightarrow$ (a) by \Cref{prop:AB-like-result-on-tcx}.(1).
Thus it is enough to show that (b) $\Rightarrow$ (c) under each of the given conditions. For the first assertion, if $\tcx_R(M,N)=\infty$, then there is nothing to prove. So we may assume that $\tcx_R(M,N)<\infty$, and establish the desired inequality under each of the given conditions.


(1) Let $\tcurv_R(M,N) \le 1$. Since $e(M) \le 2 \mu(M)$, one has that $\frac{ e(M) - \mu(M)}{\mu(M)} \le 1$. So \Cref{thm:main-cx-curv-CM-mod}.(1).(i) yields that $\curv_R(N) \le 1$. Therefore, by \Cref{cor:char-CI-rings}.(1), $R$ is complete intersection. Hence, by \Cref{prop:AB-like-result-on-tcx}.(1), $\tcx_R(M,N)<\infty$. In view of \Cref{thm:main-cx-curv-CM-mod}.(1).(ii), $\cx_R(M) \le 1+\tcx_R(M,N)$. Consequently, \Cref{cor:cx-curv-k-CM-mod}.(3) yields that $\cx_R(k) \le 1+\cx_R(M) \le 2+\tcx_R(M,N) < \infty$. Therefore, by \Cref{prop:cx-curv-facts}.(2), $R$ has codimension at most $2+\tcx_R(M,N)$. This proves (b) $\Rightarrow$ (c). For the desired inequality, assume that $\tcx_R(M,N)<\infty$. Then, by the equivalence of (a) and (c), $R$ is complete intersection of codimension at most $2+\tcx_R(M,N)$. Hence, by \Cref{prop:AB-like-result-on-tcx}.(1), $\cx_R(M)+\cx_R(N) \le \codim(R)+\tcx_R(M,N) \le 2+2\tcx_R(M,N)$. Thus the desired inequality follows.


(2) Let $\tcurv_R(M,N) \le 1$. Since  $e(M) \ge 2 \type(M)$, one has that $\frac{\type(M)}{e(M) - \type(M)} \le 1$, and $M$ is not Ulrich (cf.~\Cref{thm:e-mu-type}). So \Cref{thm:cx-curv-min-mult}.(3).(i) yields that $\curv_R(N) \le 1$. Therefore, by \Cref{cor:char-CI-rings}.(1), $R$ is complete intersection. Hence, by \Cref{prop:AB-like-result-on-tcx}.(1), $\tcx_R(M,N)<\infty$. In view of \Cref{thm:cx-curv-min-mult}.(3).(ii), one has that $\cx_R(N) \le 1+\tcx_R(M,N)$. Hence \Cref{cor:cx-curv-k-CM-mod}.(3) yields that $\cx_R(k) \le 1+\cx_R(N) \le 2+\tcx_R(M,N) < \infty$. So $R$ is complete intersection of co-dimension at most $2+\tcx_R(M,N)$. The rest of the proof is similar as that of (1).

(3) Let $\tcurv_R(M,N) \le 1$. Then, as in the proof of (2), \Cref{thm:cx-curv-min-mult}.(3).(i) yields that $\curv_R(N) \le 1$. Therefore, by \Cref{cor:char-CI-minmult-rings}.(2), $R$ is complete intersection. Hence, by \Cref{prop:AB-like-result-on-tcx}.(1), $\tcx_R(M,N)<\infty$. In view of \Cref{thm:cx-curv-min-mult}.(3).(ii), one has that $\cx_R(N) \le 1+\tcx_R(M,N)$. Hence \Cref{cor:main-cx-curv-CM-min-mult}.(5) yields that $\cx_R(k) \le 1+\cx_R(N) \le 2+\tcx_R(M,N) < \infty$. So $R$ is complete intersection of co-dimension at most $2+\tcx_R(M,N)$. The rest of the proof is similar as that of (1).
\end{proof}

Since $\Tor^R_{\gg 0}(M,N)=0$ \iff $\tcx_R(M,N)=0$, the following is a consequence of \Cref{thm:tcx-M-N-inequa}.

\begin{corollary}\label{cor:tcx-M-N-inequa}
    Let $M$ and $N$ be nonzero CM $R$-modules such that $\Tor^R_{\gg 0}(M,N)=0$ and at least one of the three conditions {\rm (1)-(3)} in {\rm \Cref{thm:tcx-M-N-inequa}} holds. Then, $R$ is complete intersection and $\codim(R)\le 2$.
\end{corollary}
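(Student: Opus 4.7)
The plan is to derive this corollary as an immediate specialization of Theorem~\ref{thm:tcx-M-N-inequa}. The crucial observation, already recorded in Remark~\ref{rmk:cx-curv}.(5).(b), is the equivalence
\[
\Tor^R_{\gg 0}(M,N) = 0 \iff \tcx_R(M,N) = 0,
\]
which holds because $\mu_R(X) = 0$ exactly when $X = 0$. In particular, the hypothesis $\Tor^R_{\gg 0}(M,N) = 0$ trivially implies $\fm^h\,\Tor^R_n(M,N) = 0$ for all $n \gg 0$ (with any $h \ge 0$, since the modules themselves vanish), so the annihilation hypothesis needed to invoke Theorem~\ref{thm:tcx-M-N-inequa} is automatic.

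With this in hand, I would proceed as follows. Given the hypothesis of the corollary, I have $\tcx_R(M,N) = 0 < \infty$, and one of the three multiplicity conditions (1)--(3) listed in Theorem~\ref{thm:tcx-M-N-inequa} holds by assumption. Since $M$ and $N$ are CM and $\fm^h\,\Tor^R_{\gg 0}(M,N) = 0$, I can apply the equivalence (a) $\Longleftrightarrow$ (c) in Theorem~\ref{thm:tcx-M-N-inequa}: statement (a) holds because $\tcx_R(M,N) = 0$ is finite, so (c) gives that $R$ is complete intersection of codimension at most $2 + \tcx_R(M,N) = 2 + 0 = 2$, which is exactly the desired conclusion.

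There is essentially no obstacle here, since this is a direct corollary; the only thing worth double-checking is that the annihilation hypothesis $\fm^h\,\Tor^R_{\gg 0}(M,N) = 0$ required by Theorem~\ref{thm:tcx-M-N-inequa} is indeed weaker than the outright vanishing $\Tor^R_{\gg 0}(M,N) = 0$ assumed in the corollary, which is clear. Thus the proof reduces to a single invocation of Theorem~\ref{thm:tcx-M-N-inequa} together with the observation that $\tcx_R(M,N) = 0$ whenever higher Tors vanish.
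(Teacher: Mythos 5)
Your proposal is correct and matches the paper's (implicit) argument exactly: the paper derives this corollary from Theorem~\ref{thm:tcx-M-N-inequa} precisely by noting that $\Tor^R_{\gg 0}(M,N)=0$ is equivalent to $\tcx_R(M,N)=0$, so the annihilation hypothesis holds trivially and the equivalence (a) $\Leftrightarrow$ (c) yields a complete intersection of codimension at most $2+\tcx_R(M,N)=2$. Nothing is missing.
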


When exactly one of the two inequalities in each condition of \Cref{thm:tcx-M-N-inequa} is a strict inequality, then the conclusion of \Cref{thm:tcx-M-N-inequa} can be improved as follows.

\begin{theorem}\label{thm:tcx-M-N-inequa-2}
     Let $M$ and $N$ be nonzero CM $R$-modules such that $\fm^h\Tor_{\gg 0}^R(M,N) = 0$ for some $h \ge 0$. Furthermore, assume that at least one of the following conditions holds.
    \begin{enumerate}[\rm (1)]
       \item 
        $ e(M) \le 2 \mu(M) $, $ e(N) < 2 \mu(N) $.
       \item
         $e(M) > 2\type(M)$, and $M$ has minimal multiplicity, $e(N)\le 2\mu(N)$.
       \item 
        $e(M) \ge 2\type(M)$, and $M$ has minimal multiplicity, $e(N) < 2\mu(N)$.
       \item
       $e(M) > 2 \type(M)$, $e(N)\ge 2\type(N)$, and both $M$ and $N$ have minimal multiplicity.       
     \end{enumerate}
     
   Then, $\cx_R(M) + \cx_R(N) \le \big(1+2\tcx_R(M,N)\big)$. Moreover, the following statements are equivalent:
   \begin{enumerate}[\rm (a)]
        \item $\tcx_R(M,N)<\infty$.
        \item $\tcurv_R(M,N)\le 1$.
        \item $R$ is complete intersection $($of co-dimension at most $1+\tcx_R(M,N)$$)$.
    \end{enumerate}    
\end{theorem}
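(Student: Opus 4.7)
The plan is to mimic the proof of Theorem~\ref{thm:tcx-M-N-inequa}, exploiting the single strict inequality present in each of the four new hypotheses to sharpen the codimension bound by one and, consequently, to replace the inequality $\cx_R(M)+\cx_R(N) \le 2(1+\tcx_R(M,N))$ by $\cx_R(M)+\cx_R(N) \le 1+2\tcx_R(M,N)$. The directions (a)$\Rightarrow$(b) and (c)$\Rightarrow$(a) are insensitive to any strictness hypothesis: the former is Remark~\ref{rmk:cx-curv}.\eqref{finite-cx-implies-curv} and the latter is Proposition~\ref{prop:AB-like-result-on-tcx}.(1), exactly as in the non-strict version. The work is therefore concentrated in the implication (b)$\Rightarrow$(c) and in the sharpened inequality.

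For (b)$\Rightarrow$(c), I would argue case by case. In each case, I split the hypothesis into a ``non-strict side'' used to certify that $R$ is complete intersection, and a ``strict side'' used to trim the codimension estimate by one. Starting from $\tcurv_R(M,N)\le 1$ in case (1): the non-strict inequality $e(M) \le 2\mu(M)$ fed into Theorem~\ref{thm:main-cx-curv-CM-mod}.(1).(i) (with $X=N$) gives $\curv_R(N) \le 1$, so Corollary~\ref{cor:char-CI-rings}.(1) makes $R$ complete intersection and Proposition~\ref{prop:AB-like-result-on-tcx}.(1) forces $\tcx_R(M,N)<\infty$. The strict inequality $e(N) < 2\mu(N)$ plugged into Theorem~\ref{thm:main-cx-curv-CM-mod}.(1).(iii) (with $X=M$) yields $\cx_R(M)\le \tcx_R(M,N)$; combined with Corollary~\ref{cor:cx-curv-k-CM-mod}.(3) (using $e(M)\le 2\mu(M)$), this gives $\cx_R(k)\le 1+\cx_R(M)\le 1+\tcx_R(M,N)$. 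Cases (2)--(4) run in parallel: the ``CI-step'' uses whichever of Theorem~\ref{thm:main-cx-curv-CM-mod}.(1).(i) or Theorem~\ref{thm:cx-curv-min-mult}.(3).(i) is fed by the non-strict side, and the ``codimension-step'' uses the corresponding part (iii) fed by the strict side, followed by either Corollary~\ref{cor:cx-curv-k-CM-mod}.(3) or Corollary~\ref{cor:main-cx-curv-CM-min-mult}.(5), according to which remaining hypothesis involves minimal multiplicity. In all four cases one arrives at $\cx_R(k)\le 1+\tcx_R(M,N)$, so $R$ is complete intersection of codimension at most $1+\tcx_R(M,N)$ by Proposition~\ref{prop:cx-curv-facts}.(2), and the inequality $\cx_R(M)+\cx_R(N) \le 1+2\tcx_R(M,N)$ then falls out of Proposition~\ref{prop:AB-like-result-on-tcx}.(1).

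The main obstacle is organizational: for each of the four cases one must pair the strict and non-strict sides of the hypothesis with the correct parts of Theorems~\ref{thm:main-cx-curv-CM-mod} and \ref{thm:cx-curv-min-mult}, and verify that the ``not Ulrich'' side-condition needed in Theorem~\ref{thm:cx-curv-min-mult}.(3).(i) is automatic---this follows in cases (2) and (4) from $e(M) > 2\type(M)$ (which forces $e(M) > \type(M)$ since $\type(M) \ge 1$) together with Theorem~\ref{thm:e-mu-type}, and analogously for $N$ in case (4). Once this bookkeeping is carried out, no idea beyond those already appearing in the proofs of Theorems~\ref{thm:tcx-M-N-inequa} and \ref{thm:cx-M-N-inequa-2} is required.
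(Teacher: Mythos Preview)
Your proposal is correct and follows essentially the same template as the paper's own proof: both reduce to the argument of \Cref{thm:tcx-M-N-inequa}, using the single strict inequality in each hypothesis to replace one application of a ``part~(ii)'' estimate by the corresponding ``part~(iii)'' estimate (or, equivalently, to upgrade \Cref{cor:cx-curv-k-CM-mod}.(3) to \Cref{cor:cx-curv-k-CM-mod}.(4)), thereby shaving one off the codimension bound. The only cosmetic difference is in case~(3): you route the strict inequality $e(N)<2\mu(N)$ through \Cref{thm:main-cx-curv-CM-mod}.(1).(iii) to bound $\cx_R(M)$ and then finish with \Cref{cor:main-cx-curv-CM-min-mult}.(5), whereas the paper keeps \Cref{thm:cx-curv-min-mult}.(3).(ii) to bound $\cx_R(N)$ and finishes with \Cref{cor:cx-curv-k-CM-mod}.(4); both routes yield $\cx_R(k)\le 1+\tcx_R(M,N)$. (Your ``not Ulrich'' check is also needed in case~(3), where the same reasoning via $e(M)\ge 2\type(M)$ and \Cref{thm:e-mu-type} applies.)
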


\begin{proof}
The proof goes along the same lines as that of \Cref{thm:tcx-M-N-inequa} except the following changes.

    (1) Use \Cref{thm:main-cx-curv-CM-mod}.(1).(iii) instead of \Cref{thm:main-cx-curv-CM-mod}.(1).(ii) in the proof of \Cref{thm:tcx-M-N-inequa}.(1).

    (2) Use \Cref{thm:cx-curv-min-mult}.(3).(iii) instead of \Cref{thm:cx-curv-min-mult}.(3).(ii) in the proof of \Cref{thm:tcx-M-N-inequa}.(2).

    (3) Use \Cref{cor:cx-curv-k-CM-mod}.(4) instead of \Cref{cor:cx-curv-k-CM-mod}.(3) in the proof of \Cref{thm:tcx-M-N-inequa}.(2).

    (4) Use \Cref{thm:cx-curv-min-mult}.(3).(iii) instead of \Cref{thm:cx-curv-min-mult}.(3).(ii) in the proof of \Cref{thm:tcx-M-N-inequa}.(3).
\end{proof}

As a consequence of \Cref{thm:tcx-M-N-inequa-2}, one obtains certain criteria for hypersurface rings.

\begin{corollary}\label{cor:tcx-M-N-inequa-2}
    Let $M$ and $N$ be nonzero CM $R$-modules such that $\Tor^R_{\gg 0}(M,N)=0$, and at least one of the four conditions {\rm (1)-(4)} in {\rm \Cref{thm:tcx-M-N-inequa-2}} holds. Then, $R$ is a hypersurface ring.
\end{corollary}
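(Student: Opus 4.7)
The proof plan is to directly apply Theorem~\ref{thm:tcx-M-N-inequa-2}, which is the substantive result; Corollary~\ref{cor:tcx-M-N-inequa-2} is merely the specialization at $\tcx_R(M,N)=0$. Specifically, since $\Tor^R_{\gg 0}(M,N)=0$, Remark~\ref{rmk:cx-curv}.(5).(b) (or equivalently Definition~\ref{defn:tcx-tcurv} together with Remark~\ref{rmk:curv-zero}) yields $\tcx_R(M,N)=0$, so in particular $\tcx_R(M,N)<\infty$.

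Under any one of the four hypotheses (1)--(4) listed in Theorem~\ref{thm:tcx-M-N-inequa-2}, the equivalence (a) $\Leftrightarrow$ (c) in that theorem ensures that $R$ is a complete intersection of codimension at most $1+\tcx_R(M,N)=1$. A complete intersection local ring of codimension at most $1$ is, by definition, a hypersurface (it is regular when the codimension is $0$, which is a special case of a hypersurface). This gives the conclusion.

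There is essentially no obstacle here beyond invoking the preceding theorem correctly; the only thing worth flagging is the convention that regular local rings are regarded as (degenerate) hypersurfaces, so the codimension bound $\codim(R)\le 1$ indeed captures the hypersurface case uniformly. All the real work --- in particular the delicate use of Theorems~\ref{thm:main-cx-curv-CM-mod} and \ref{thm:cx-curv-min-mult} to upgrade the strict inequality hypotheses into the sharper codimension bound --- is already carried out in the proof of Theorem~\ref{thm:tcx-M-N-inequa-2}, so the corollary requires no further argument.
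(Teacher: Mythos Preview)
Your proof is correct and matches the paper's approach exactly: the paper states this corollary without proof, simply as a consequence of \Cref{thm:tcx-M-N-inequa-2}, and your argument spells out precisely that deduction (noting $\tcx_R(M,N)=0$ from the Tor vanishing, then applying the codimension bound). The only small point you left implicit is that the annihilation hypothesis $\fm^h\Tor_{\gg 0}^R(M,N)=0$ of \Cref{thm:tcx-M-N-inequa-2} is trivially met since the Tor modules themselves vanish, but this is immediate.
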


When both the inequalities in each of the three conditions of \Cref{thm:tcx-M-N-inequa} are strict inequalities, then the conclusion of \Cref{thm:tcx-M-N-inequa} can be improved further.

\begin{theorem}\label{thm:tcx-M-N-inequa-3}
     Let $M$ and $N$ be nonzero CM $R$-modules such that $\fm^h\Tor_{\gg 0}^R(M,N) = 0$ for some $h \ge 0$. Furthermore, assume that at least one of the following conditions holds.
    \begin{enumerate}[\rm (1)]
       \item 
        $ e(M) < 2 \mu(M) $ and $ e(N) < 2 \mu(N) $.
       \item 
       $ e(M) > 2 \type(M) $, $M$ has minimal multiplicity, and $ e(N) < 2 \mu(N) $.
       \item
       $e(M) > 2 \type(M)$, $e(N) > 2\type(N)$, and both $M$ and $N$ have minimal multiplicity.       
     \end{enumerate}

     Then, $\cx_R(M) + \cx_R(N) \le \big(2\tcx_R(M,N)\big)$. Moreover, the following statements are equivalent:
   \begin{enumerate}[\rm (a)]
        \item $\tcx_R(M,N)<\infty$.
        \item $\tcurv_R(M,N)\le 1$.
        \item $R$ is complete intersection $($of codimension at most $\tcx_R(M,N)$$)$.
    \end{enumerate}    
\end{theorem}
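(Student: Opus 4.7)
The plan is to mimic the proofs of \Cref{thm:tcx-M-N-inequa} and \Cref{thm:tcx-M-N-inequa-2}, but throughout replace the non-strict bounds by their strict counterparts (namely the third items $\cdot$(iii) from \Cref{thm:main-cx-curv-CM-mod}, \Cref{thm:cx-curv-min-mult}, \Cref{cor:cx-curv-k-CM-mod} and \Cref{cor:main-cx-curv-CM-min-mult}), which will save both an additive $1$ in the codimension bound and a factor coming from the $1+\cx_R(k)$ step. For the non-trivial implications (b) $\Rightarrow$ (c), the strategy in each case is: first derive $\curv_R(N)\le 1$ from $\tcurv_R(M,N)\le 1$ using a strict Tor-curvature inequality, then conclude that $R$ is complete intersection from the appropriate characterization in \Cref{cor:char-CI-rings} or \Cref{cor:char-CI-minmult-rings}, and finally read off the sharper codimension bound $\codim(R)\le \tcx_R(M,N)$ via a strict complexity inequality.

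Concretely, in case (1) where $e(M)<2\mu(M)$ and $e(N)<2\mu(N)$, I would apply \Cref{thm:main-cx-curv-CM-mod}.(1).(iii) (with the roles of $X$ and $M$ played by $N$ and $M$ respectively) to get $\cx_R(N)\le \tcx_R(M,N)$ and $\curv_R(N)\le \tcurv_R(M,N)$; then \Cref{cor:char-CI-rings}.(1) applied to $N$ forces $R$ complete intersection, and \Cref{cor:cx-curv-k-CM-mod}.(4) gives $\cx_R(k)=\cx_R(N)\le \tcx_R(M,N)$, so $\codim(R)\le \tcx_R(M,N)$ by \Cref{prop:cx-curv-facts}.(2). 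For case (2), I would argue identically but invoke \Cref{thm:cx-curv-min-mult}.(3).(iii) in place of \Cref{thm:main-cx-curv-CM-mod}.(1).(iii), still using \Cref{cor:char-CI-rings}.(1) and \Cref{cor:cx-curv-k-CM-mod}.(4) on $N$ since $e(N)<2\mu(N)$. For case (3), again use \Cref{thm:cx-curv-min-mult}.(3).(iii), but now conclude $R$ is complete intersection from \Cref{cor:char-CI-minmult-rings}.(2) (applicable because $e(N)>2\type(N)$ and $N$ has minimal multiplicity), and read off the codimension via the strict version \Cref{cor:main-cx-curv-CM-min-mult}.(6).

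For the additive inequality $\cx_R(M)+\cx_R(N)\le 2\tcx_R(M,N)$, once $R$ has been identified as complete intersection, the inequality $\cx_R(M)+\cx_R(N)\le \codim(R)+\tcx_R(M,N)$ from \Cref{prop:AB-like-result-on-tcx}.(1) combined with the refined bound $\codim(R)\le \tcx_R(M,N)$ obtained above immediately yields the desired estimate. The remaining implications (a) $\Rightarrow$ (b) and (c) $\Rightarrow$ (a) hold without any assumption on $e(M),e(N)$ (the first from \Cref{rmk:cx-curv}.(3), the second from \Cref{prop:AB-like-result-on-tcx}.(1)), and require no new input.

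I do not foresee a genuine technical obstacle: every ingredient needed (the strict-inequality variants, the CI characterizations via minimal multiplicity, and the Tor analog of Avramov--Buchweitz) is already in place in the earlier sections. The only care is bookkeeping: for each of the three cases one must correctly pair the correct strict Tor-complexity inequality (coming from either \Cref{thm:main-cx-curv-CM-mod} or \Cref{thm:cx-curv-min-mult}) with the correct CI criterion and the correct strict $\cx_R(k)$-bound, exactly parallel to the replacements made in the proof of \Cref{thm:tcx-M-N-inequa-2}.
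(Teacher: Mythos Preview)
Your proposal is correct and follows essentially the same route as the paper's own proof: replace the items (ii) by (iii) in \Cref{thm:main-cx-curv-CM-mod} and \Cref{thm:cx-curv-min-mult}, and the items (3),(5) by (4),(6) in \Cref{cor:cx-curv-k-CM-mod} and \Cref{cor:main-cx-curv-CM-min-mult}, exactly as in the passage from \Cref{thm:tcx-M-N-inequa} to \Cref{thm:tcx-M-N-inequa-2}. The only cosmetic difference is that in case~(1) you bound $\cx_R(N)$ and use \Cref{cor:cx-curv-k-CM-mod}.(4) on $N$, whereas the paper (following the template of \Cref{thm:tcx-M-N-inequa}.(1)) bounds $\cx_R(M)$ and applies \Cref{cor:cx-curv-k-CM-mod}.(4) to $M$; since both strict inequalities hold in case~(1) and $\tcx_R$ is symmetric, the two are interchangeable.
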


\begin{proof}
The proof goes along the same lines as that of \Cref{thm:tcx-M-N-inequa} except the following changes.

    (1) Use \Cref{thm:main-cx-curv-CM-mod}.(1).(iii) and \Cref{cor:cx-curv-k-CM-mod}.(4) instead of \Cref{thm:main-cx-curv-CM-mod}.(1).(ii) and  \Cref{cor:cx-curv-k-CM-mod}.(3) respectively in the proof of \Cref{thm:tcx-M-N-inequa}.(1).

    (2) Use \Cref{thm:cx-curv-min-mult}.(3).(iii) and \Cref{cor:cx-curv-k-CM-mod}.(4) instead of \Cref{thm:cx-curv-min-mult}.(3).(ii) and  \Cref{cor:cx-curv-k-CM-mod}.(3) respectively in the proof of \Cref{thm:tcx-M-N-inequa}.(2).

    (3) Use \Cref{thm:cx-curv-min-mult}.(3).(iii) and \Cref{cor:main-cx-curv-CM-min-mult}.(6) instead of \Cref{thm:cx-curv-min-mult}.(3).(ii) and \Cref{cor:main-cx-curv-CM-min-mult}.(5) respectively in the proof of \Cref{thm:tcx-M-N-inequa}.(3).  
\end{proof}

\Cref{thm:tcx-M-N-inequa-3} provides certain criteria for a local ring to be regular in terms of Tor vanishing.

\begin{corollary}\label{cor:tcx-M-N-inequa-3}
    Let $M$ and $N$ be nonzero CM $R$-modules such that $\Tor^R_{\gg 0}(M,N)=0$, and at least one of the three conditions {\rm (1)-(3)} in {\rm \Cref{thm:tcx-M-N-inequa-3}} holds.
    Then, $R$ is regular.
\end{corollary}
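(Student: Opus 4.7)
The plan is to obtain this statement as an immediate specialization of Theorem~\ref{thm:tcx-M-N-inequa-3}. First, I would recall from Remark~\ref{rmk:cx-curv}.(5).(b) that the Tor-vanishing condition $\Tor_{\gg 0}^R(M,N)=0$ is equivalent to $\tcx_R(M,N)=0$; indeed, $\mu_R(\Tor_n^R(M,N))$ being zero for all $n\gg 0$ forces this non-negative integer-valued polynomial growth invariant to vanish. In particular, the annihilation hypothesis $\fm^h\Tor_{\gg 0}^R(M,N)=0$ that appears in Theorem~\ref{thm:tcx-M-N-inequa-3} holds trivially (take any $h\ge 0$), so the corollary sits squarely inside the hypotheses of the theorem.

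Next, since at least one of conditions (1), (2), or (3) of Theorem~\ref{thm:tcx-M-N-inequa-3} is assumed, I would invoke the equivalence (a) $\Leftrightarrow$ (c) there. The finiteness (a) clearly holds because $\tcx_R(M,N)=0<\infty$, and therefore (c) yields that $R$ is complete intersection of codimension at most $\tcx_R(M,N)=0$. A complete intersection local ring of codimension zero is by definition a regular local ring (this is also visible through Proposition~\ref{prop:cx-curv-facts}.(2), which characterizes regularity via $\cx_R(k)=0$), so $R$ is regular and we are done.

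There is no serious obstacle in this argument; the entire content is already packaged inside Theorem~\ref{thm:tcx-M-N-inequa-3}, and the corollary is simply the endpoint of the hierarchy where $\tcx_R(M,N)$ collapses all the way to zero, exactly mirroring the regularity criterion obtained on the Ext side in Corollary~\ref{cor:cx-M-N-inequa-3}. The only thing worth double-checking when writing the proof is that the strict inequalities in conditions (1)--(3) of Theorem~\ref{thm:tcx-M-N-inequa-3} are genuinely needed to upgrade the conclusion from ``codimension at most $2+\tcx_R(M,N)$'' (in Theorem~\ref{thm:tcx-M-N-inequa}) to ``codimension at most $\tcx_R(M,N)$'', which is what forces regularity rather than merely complete intersection of small codimension; but this is precisely the refinement already established in the preceding theorem, so no additional work is required here.
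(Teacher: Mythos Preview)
Your proposal is correct and matches the paper's approach: the corollary is stated in the paper without proof precisely because it is an immediate consequence of \Cref{thm:tcx-M-N-inequa-3}, via the observation (from \Cref{rmk:cx-curv}.(5)) that $\Tor^R_{\gg 0}(M,N)=0$ is equivalent to $\tcx_R(M,N)=0$, so that (c) yields complete intersection of codimension at most $0$, i.e., regularity.
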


\section{Some applications to module of differentials}\label{sec:diff} 

In this section, we apply some of our results from the previous sections to give a few characterizations of regular local rings in terms of (injective) curvature of module of differentials under some additional hypotheses. Our results are motivated by a conjecture of Berger and a theorem due to Avramov-Herzog.

If $R$ is an algebra over a ring $A$, and $M$ is an $R$-module, then a map $\delta: R \to M$ is an $A$-derivation if it is an additive group homomorphism satisfying $\delta(xy)=x\delta(y)+y\delta(x)$ for all $x,y\in R$ and vanishes on the image of the structure map $A \to R$. The set of all $A$-derivations $R \to M$ forms an $R$-module $\Der_A(R, M)$.  When $M=R$, the notation $\Der_A(R, R)$ is simplified to $\Der_A(R)$. There exists a finitely generated $R$-module $\Omega_{R/A}$ (called the universally finite module of K\"{a}hler differentials of $R$ over $A$) and a universal derivation $d_{R/A}: R\to \Omega_{R/A}$ with the property that for any finitely generated $R$-module $M$, it induces an isomorphism $\Hom_R(\Omega_{R/A},M)\cong \Der_A(R,M)$ as $R$-modules.
When $R$ is essentially of finite type over a field $k$, the $R$-module $\Omega_{R/k}$ represents the functor $\Der_k(R,-)$ on the category of all $R$-modules. For further details and background, we refer the reader to \cite{kunz, herz, Mat86}.  



The following lemma, which will be needed to establish a fact for the rank of module of differentials, is a variation of \cite[Exer.~4.7.17.(a)]{BH98}. For an ideal $I$ and integer $n\ge 1$, $I^{(n)}$ denotes the $n$-th symbolic power of $I$. Recall that a ring $R$ is called equidimensional if $\dim(R/\fp)=\dim(R)$ for each $\fp\in \Min(R)$.


\begin{lemma}\label{8.3}
Let $A$ be a regular local ring, and $I$ be an ideal of $A$ such that $R:=A/I$ is reduced and equidimensional. Then, $I/I^{(2)}$ has rank as an $R$-module, and $\rank_R\left(I/I^{(2)}\right)=\htt(I)$. 
\end{lemma}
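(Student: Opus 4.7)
The plan is to show that $I/I^{(2)}$ is locally free of constant rank $\htt(I)$ at every minimal prime of $R$; since $R$ is reduced, its associated primes are exactly its minimal primes, so this local freeness at a common rank is precisely what it means for $I/I^{(2)}$ to have rank as an $R$-module, with the rank equal to $\htt(I)$.

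First, I would identify the minimal primes of $R$ with the primes $\fp_1,\ldots,\fp_s$ of $A$ that are minimal over $I$. Since $A$ is regular, hence Cohen-Macaulay, the unmixedness/dimension formula gives $\htt_A(\fp)+\dim(A/\fp)=\dim(A)$ for every such prime. Combined with the equidimensionality of $R$, this forces $\htt_A(\fp_i)=\dim(A)-\dim(R)=\htt_A(I)=:c$ for all $i$. Set also that, since $R$ is reduced, $R_{\fp_i}$ is a reduced Artinian local ring, so a field; this forces $I A_{\fp_i}=\fp_i A_{\fp_i}$, the maximal ideal of the regular local ring $A_{\fp_i}$ of dimension $c$.

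Next, I would invoke the definition of the symbolic power. Because $R=A/I$ is reduced, $\Ass_A(A/I)=\Min(I)=\{\fp_1,\ldots,\fp_s\}$, so
\[
I^{(2)}=\bigcap_{i=1}^s\bigl(I^2 A_{\fp_i}\cap A\bigr).
\]
Localizing at $\fp_j$ kills every term with $i\ne j$ (since $\fp_j\not\supseteq \fp_i$ for $i\ne j$), giving $(I^{(2)})_{\fp_j}=I^2 A_{\fp_j}=(\fp_j A_{\fp_j})^2$. Therefore
\[
(I/I^{(2)})_{\fp_j}\cong \fp_j A_{\fp_j}/(\fp_j A_{\fp_j})^2,
\]
which is a vector space of dimension $c=\htt(I)$ over the residue field $A_{\fp_j}/\fp_j A_{\fp_j}=R_{\fp_j}$, by regularity of $A_{\fp_j}$. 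Since this dimension is independent of $j$, the $R$-module $I/I^{(2)}$ is free of rank $\htt(I)$ at every minimal prime of $R$, and the claim follows.

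The only delicate point (and the main obstacle) is pinning down the correct description of $I^{(2)}$, which requires identifying $\Ass_A(A/I)$ with $\Min(I)$; this is exactly where the reducedness of $R$ is used. The equidimensionality, together with $A$ being Cohen-Macaulay, is used only to ensure that all minimal primes $\fp_i$ have the same height $\htt(I)$, which in turn guarantees that the rank is constant across the minimal primes of $R$.
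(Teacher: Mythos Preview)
Your proof is correct and follows essentially the same route as the paper's: both localize at each minimal prime $\fp$ of $R$, use reducedness to get $IA_\fp=\fp A_\fp$ and $I^{(2)}A_\fp=\fp^2 A_\fp$, and then invoke regularity of $A_\fp$ together with the equidimensionality/CM argument to conclude that $(I/I^{(2)})_\fp$ is free of rank $\htt(I)$ over the field $R_\fp$. The only difference is cosmetic: you spell out the intersection description of $I^{(2)}$ and why localization at $\fp_j$ discards the other primary components, whereas the paper simply asserts $I^{(2)}A_\fq=\fq^2 A_\fq$ directly.
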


\begin{proof}  
First, we note that $\htt(I)=\htt(\fq)$ for each $\fq\in \Min(A/I)$. Indeed, since $A/I$ is equidimensional, it follows that $\dim(A/I)=\dim(A/\fq)$. Since $A$ is in particular CM, $\dim(A/I)=\dim(A)-\htt(I)$ and $\dim(A/\fq)=\dim(A)-\htt(\fq)$. Thus, $\htt(I)=\htt(\fq)$ for each $\fq\in \Min(A/I)$. Since $A/I$ is reduced, $I$ is a radical ideal of $A$. So, $I=\bigcap_{\fp\in \Min(A/I)} \fp$.  Then, $IA_{\fq}=\fq A_{\fq}$ and $I^{(2)}A_{\fq}=\fq^2A_{\fq}$ for all $\fq\in \Min(A/I)$. Since $A_{\fq}$ is regular, the maximal ideal $\fq A_{\fq}$ of $A_{\fq}$ is minimally generated by an $A_{\fq}$-regular sequence of length $\dim(A_{\fq}) = \htt(\fq) = \htt(I)$. Thus, $(I/I^{(2)})_{\fq}\cong I_{\fq}/I^{(2)}_{\fq}\cong \fq A_{\fq}/\fq^2A_{\fq}$ is a free module over $A_{\fq}/\fq A_{\fq}\cong R_{\fq}$ of rank $\dim A_{\fq}=\htt(I)$. This proves the claim.  
\end{proof}

The following fact about the rank of the module of K\"{a}hler differentials is folklore, however we add a proof for lack of a precise reference. 

\begin{lemma}\label{rank}
Suppose $R$ is an equidimensional reduced local ring, the residue field $k$ of $R$ has characteristic $0$, and $R$ is either essentially of finite type over $k$ or it is a homomorphic image of a power series ring $k[[X_1,\dots,X_n]]$ over $k$. Then, $\rank(\Omega_{R/k})=\dim(R)$. In particular, $e(\Omega_{R/k})=e(R)\dim(R)$. 
\end{lemma}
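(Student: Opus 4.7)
The plan is to verify that $\Omega_{R/k}$ has constant rank $\dim R$ at each minimal prime $\fp\in\Min R$. In both of the given cases, write $R = A/I$, where $A$ is a regular local ring of Krull dimension $n$ containing $k$ as coefficient field: in the essentially-of-finite-type case, $A$ is the localization of a polynomial ring $k[X_1,\dots,X_n]$ at the preimage of the maximal ideal of $R$ (which, since the residue field of $R$ is $k$, is a maximal ideal of the polynomial ring, and by Nullstellensatz has height equal to $n$); in the complete case, $A = k[[X_1,\dots,X_n]]$ with $I$ the kernel of the surjection $A \twoheadrightarrow R$. In both settings, the universally finite module $\Omega_{A/k}$ is $A$-free of rank $n$ on $dX_1,\dots,dX_n$, and one has the second fundamental exact sequence
\begin{equation*}
I/I^2 \xrightarrow{\delta} R \otimes_A \Omega_{A/k} \to \Omega_{R/k} \to 0.
\end{equation*}

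Fix $\fp\in\Min R$, let $\fq\in\Min(A/I)$ be its preimage, and set $h := \htt\fq$. The argument of \Cref{8.3} (using that $R$ is reduced) shows $IA_\fq = \fq A_\fq$, so $(I/I^2)_\fp \cong \fq A_\fq/\fq^2 A_\fq$ is a $\kappa(\fp)$-vector space of dimension $h$. Since $A$ is Cohen--Macaulay (being regular) and hence catenary, equidimensionality of $R$ gives $\dim(A/\fq) = \dim R$, and therefore $h = n - \dim R$. Tensoring the fundamental sequence with $\kappa(\fp)$ yields
\begin{equation*}
\kappa(\fp)^{h} \xrightarrow{\delta_\fp} \kappa(\fp)^n \to \Omega_{R/k}\otimes_R\kappa(\fp) \to 0,
\end{equation*}
and it suffices to prove $\delta_\fp$ is injective. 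This follows from the Jacobian criterion for regularity (see, e.g., Matsumura, \emph{Commutative Ring Theory}, Theorem~30.4, applicable in both cases since $k$ is perfect): as $R_\fp = \kappa(\fp)$ is a regular local ring of dimension $0$, the rank of $\delta_\fp$ equals $\dim A_\fq - \dim R_\fp = h$, so $\delta_\fp$ is injective. Consequently, $\dim_{\kappa(\fp)}\Omega_{R/k}\otimes_R\kappa(\fp) = n - h = \dim R$ for every $\fp\in\Min R$. Since $R$ is reduced, this common dimension at the generic points equals $\rank_R \Omega_{R/k}$, proving the first assertion.

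For the ``in particular'' assertion, the associativity formula for multiplicity gives
\begin{equation*}
e(\Omega_{R/k}) = \sum_{\substack{\fp\in\Spec R\\ \dim(R/\fp)=\dim R}} \lambda_{R_\fp}\!\big((\Omega_{R/k})_\fp\big)\cdot e(R/\fp).
\end{equation*}
Since $R$ is catenary and equidimensional, the primes with $\dim(R/\fp) = \dim R$ are precisely the minimal ones; for such $\fp$, $R_\fp = \kappa(\fp)$ is a field, and $(\Omega_{R/k})_\fp$ is a $\kappa(\fp)$-vector space of dimension $\dim R$ by the previous paragraph, so $\lambda_{R_\fp}((\Omega_{R/k})_\fp) = \dim R$. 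Applying the same formula to $R$ itself (with $\lambda_{R_\fp}(R_\fp) = 1$ by reducedness) yields $\sum_{\fp\in\Min R} e(R/\fp) = e(R)$, and combining the two gives $e(\Omega_{R/k}) = \dim R \cdot e(R)$. The main obstacle will be applying the Jacobian criterion uniformly, particularly in the complete-local setting where one must work with the universally finite module of differentials (rather than the ordinary K\"ahler differentials, which fail to be finitely generated over a power series ring) throughout the argument.
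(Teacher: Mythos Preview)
Your proof is correct and takes a genuinely different route from the paper's. The paper handles the two cases separately: for the essentially-of-finite-type case it localizes $\Omega_{R/k}$ at each minimal prime and identifies $(\Omega_{R/k})_\fq\cong \Omega_{R_\fq/k}$ as a vector space whose dimension equals the transcendence degree of $R_\fq$ over $k$, which is $\dim(R)$ by equidimensionality; for the power series case it invokes the short exact sequence $0\to I/I^{(2)}\to R^{\oplus n}\to \Omega_{R/k}\to 0$ (citing \cite{Mir}) and then applies \Cref{8.3} to compute $\rank(I/I^{(2)})=\htt(I)$. For the multiplicity it simply quotes \cite[Cor.~4.7.9]{BH98}. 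Your unified treatment via the conormal sequence and the Jacobian criterion is more geometric and avoids the case split; the paper's argument, on the other hand, keeps the power series case entirely self-contained by using the symbolic square and its own \Cref{8.3}, so that no appeal to a Jacobian-type statement for formal power series is needed.

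One caution: your citation of Matsumura's Theorem~30.4 for the injectivity of $\delta_\fp$ is fine in the essentially-of-finite-type case, but that theorem as stated is for localizations of polynomial rings. In the power series case, what you are really using is the differential characterization of symbolic powers (Zariski--Nagata) for $k[[X_1,\dots,X_n]]$ in characteristic $0$: if $g\in\fq$ and all $\partial g/\partial X_i\in\fq$, then $g\in\fq^{(2)}$, whence $\delta_\fp$ is injective. This is true and well known, but you should either cite it precisely or, as the paper does, bypass it by invoking the exactness of $0\to I/I^{(2)}\to R^{\oplus n}\to \Omega_{R/k}\to 0$ directly. Your final paragraph already flags exactly this point, so you are aware of the issue; it just needs a sharper reference. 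The multiplicity computation via the associativity formula is fine and is essentially what underlies \cite[Cor.~4.7.9]{BH98}; you might note explicitly that $\dim(\Omega_{R/k})=\dim(R)$ (which follows from positive rank when $\dim R>0$) so that the primes appearing in the associativity formula for $\Omega_{R/k}$ agree with those for $R$.
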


\begin{proof}
The case when $R$ is essentially of finite type over $k$ is standard, and follows from \cite[Cor.~4.22.(a), Prop.~5.7.(b) and Thm.~5.10.(c)]{kunz}. Indeed, for any $\fq\in \Min(R)$, $(\Omega_{R/k})_{\fq}\cong \Omega_{R_\fq/k}$ by \cite[Cor.~4.22.(a)]{kunz}. Note that $R_{\fq}$ is Artinian local and reduced, hence it is a field, and $R_{\fq}$ is the field of fractions of the domain $R/\fq$. Thus, by \cite[Thm~A.16]{BH98}, the transcendence degree of $R_{\fq}$ over $k$ equals $\dim(R/\fq)$, which is same as $\dim(R)$ since $R$ is equidimensional. Now, by \cite[Prop.~5.7.(b) and Thm.~5.10.(c)]{kunz}, $\Omega_{R_\fq/k}$ is a free $R_{\fq}$-module whose rank equals the transcendence degree of $R_{\fq}$ over $k$. This finally shows that for any $\fq\in \Min(R)$, $(\Omega_{R/k})_{\fq}$ is a free $R_{\fq}$-module of rank $\dim(R)$.

Next, for the case when $R \cong k[[X_1,\dots,X_n]]/I$ for some ideal $I$ of $k[[X_1,\cdots, X_n]]$ (where $n=\embdim(R)$), following the proof of \cite[Lem.~2.(i)]{Mir}, there is a short exact sequence $0\to I/I^{(2)}\to R^{\oplus n}\to \Omega_{R/k}\to 0$. Hence \Cref{8.3} yields that $\rank_R(\Omega_{R/k})=n-\htt(I)=\dim(R)$. 

We now prove the last part. If $R$ is Artinian, then $\Omega_{R/k}$ having rank $\dim(R)$ implies $\Omega_{R/k}=0$, and there is nothing to prove. So we may assume that $\dim(R)>0$.  Hence, by \cite[Cor.~4.7.9]{BH98}, it follows that $e(\Omega_{R/k})=e(R)\rank(\Omega_{R/k})=e(R)\dim(R)$.  
\end{proof}

In \cite[Thm.~2]{AH}, Avramov-Herzog proved that a graded algebra $R$ over a field $k$ of characteristic $0$ is complete intersection under the condition that $\curv_R(\Omega_{R/k})\le 1$. The following theorem can be regarded as a variation of this result. Since finite projective dimension of a module implies that module has curvature $0$, \Cref{thm:diff}.(1).(a) also provides partial progress towards \cite[Conj.~($\operatorname{C}_2$), p.~1807]{Va78}.

\begin{theorem}\label{thm:diff}
    Suppose $R$ is an equidimensional reduced local ring, the residue field $k$ of $R$ has characteristic $0$, and $R$ is either essentially of finite type over $k$ or it is a homomorphic image of a power series ring $k[[X_1,\dots,X_n]]$ over $k$. Let $\Omega_{R/k}$ be CM. Then, the following hold.
    \begin{enumerate}[\rm(1)]
        \item $R$ is regular under each of the following conditions.
    \begin{enumerate}[\rm (a)]
        \item 
        $e(R) \dim(R) \le 2 \embdim(R)$ and $\curv_R(\Omega_{R/k})\le 1$.
        \item 
        $R$ is Gorenstein, $e(R) \dim(R) \le 2 \embdim(R)$ and $\injcurv_R(\Der_k(R))\le 1$.
    \end{enumerate}
        \item 
        Suppose $R$ is not a field, $\Omega_{R/k}$ has minimal multiplicity, and $e(R)\dim(R)\ge 2\embdim(R)$. Then $\injcurv_R(\Omega_{R/k})> 1$. Moreover, if $R$ is Gorenstein, then $\curv_R(\Der_{k}(R))> 1$.
    \end{enumerate}
\end{theorem}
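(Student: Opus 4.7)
The plan is to group the four conclusions into two pairs: parts (1)(a) and the main assertion of (2) are the substantive ones and will be handled by applying the paper's corollaries to $M=\Omega_{R/k}$, while (1)(b) and the moreover clause of (2) follow at once from the duality in \Cref{cx-duality}. We first set up the common calculations. If $\dim(R)=0$, then $R$ is a reduced Artinian local ring, hence a field and already regular, so (1) is trivial and the hypothesis of (2) cannot hold; hence assume $\dim(R)\ge 1$. Presenting $R$ as a quotient of a regular local ring (respectively a power series ring) on $n=\embdim(R)$ variables with kernel $I\subseteq\fn^2$, the conormal right-exact sequence $I/I^2\to R^n\to\Omega_{R/k}\to 0$ forces $\mu_R(\Omega_{R/k})=\embdim(R)$. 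Combined with \Cref{rank}, $e(\Omega_{R/k})=e(R)\dim(R)$ and $\rank_R(\Omega_{R/k})=\dim(R)\ge 1$, so $\Omega_{R/k}$ is a nonzero MCM $R$-module.

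For part (1)(a), set $M:=\Omega_{R/k}$. Then the hypothesis $e(R)\dim(R)\le 2\embdim(R)$ reads $e(M)\le 2\mu(M)$, and combined with $\curv_R(M)\le 1$, \Cref{cor:char-CI-rings}(1) yields that $R$ is complete intersection. To pass from CI to regular, I would invoke the classical Jacobian criterion available in our setting ($R$ is regular iff $\pd_R(\Omega_{R/k})<\infty$); then Auslander--Buchsbaum applied to the MCM module $\Omega_{R/k}$ would force it to be free, giving $\embdim(R)=\dim(R)$ and hence $R$ regular. The hard part will be justifying that $\pd_R(\Omega_{R/k})<\infty$ on a CI ring under the multiplicity constraint $e(R)\dim(R)\le 2\embdim(R)$; in dimension one this is essentially Berger's conjecture, and I expect this CI-to-regular step to be the main obstacle.

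For (1)(b) and the moreover clause of (2), since $R$ is Gorenstein it is its own canonical module, so in the notation of \Cref{cx-duality} we have $\Omega_{R/k}^{\dagger}=\Hom_R(\Omega_{R/k},R)=\Der_k(R)$. By \Cref{cx-duality}(2)(c), $\injcurv_R(\Der_k(R))=\curv_R(\Omega_{R/k})$ and $\curv_R(\Der_k(R))=\injcurv_R(\Omega_{R/k})$. Hence the hypothesis of (1)(b) is equivalent to that of (1)(a), reducing (1)(b) to (1)(a); and the moreover clause of (2) is immediate from its main assertion.

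Finally, for the main assertion of (2), the inequalities $\mu(\Omega_{R/k})=\embdim(R)\ge 1$ and $e(\Omega_{R/k})\ge 2\mu(\Omega_{R/k})>\mu(\Omega_{R/k})$ imply, via \Cref{thm:e-mu-type}, that $\Omega_{R/k}$ is not Ulrich. Arguing by contradiction, suppose $\injcurv_R(\Omega_{R/k})\le 1$. Since $\Omega_{R/k}$ has minimal multiplicity and is non-Ulrich, \Cref{cor:main-cx-curv-CM-min-mult}(1) gives
\[
\curv_R(k)\le \sup\!\left\{\frac{\mu(\Omega_{R/k})}{e(\Omega_{R/k})-\mu(\Omega_{R/k})},\ \injcurv_R(\Omega_{R/k})\right\}\le 1,
\]
the first term being at most one because $e(\Omega_{R/k})\ge 2\mu(\Omega_{R/k})$. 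By \Cref{prop:cx-curv-facts}(2), $R$ is complete intersection, and invoking the same CI-to-regular input as in (1)(a) then forces $R$ to be regular; but for a regular local ring of positive dimension one has $e(R)=1$ and $\embdim(R)=\dim(R)$, so $e(R)\dim(R)=\dim(R)<2\dim(R)=2\embdim(R)$, contradicting $e(R)\dim(R)\ge 2\embdim(R)$. The moreover clause then follows by the duality argument already described.
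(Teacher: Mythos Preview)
Your strategy matches the paper's: reduce (1)(b) and the moreover clause of (2) to (1)(a) and the main assertion of (2) via $\Der_k(R)\cong\Omega_{R/k}^{\dagger}$ and \Cref{cx-duality}(2)(c); compute $\mu(\Omega_{R/k})=\embdim(R)$ and $e(\Omega_{R/k})=e(R)\dim(R)$; then apply \Cref{cor:char-CI-rings}(1) in part~(1) and \Cref{cor:char-CI-minmult-rings}(1) (which is exactly your route via \Cref{cor:main-cx-curv-CM-min-mult}(1) and \Cref{prop:cx-curv-facts}(2)) in part~(2) to conclude that $R$ is complete intersection, and derive the contradiction in (2) from $e(R)=1$, $\embdim(R)=\dim(R)$ for a regular local ring of positive dimension.

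The step you flag as the main obstacle is in fact routine, and it requires no multiplicity hypothesis. The paper simply cites the discussion preceding \cite[Thm.~4.1]{herz}: in this setup, $R$ complete intersection forces $\pd_R(\Omega_{R/k})<\infty$. Concretely, write $R=A/I$ with $A$ regular containing $k$ and $I$ generated by a regular sequence, so that $I/I^2$ and $\Omega_{A/k}\otimes_A R$ are both $R$-free. The kernel of the conormal map $I/I^2\to \Omega_{A/k}\otimes_A R$ vanishes at every minimal prime of $R$ (since in characteristic zero the reduced ring $R$ is generically smooth over $k$), and being a submodule of a free module over a reduced ring it must therefore be zero; hence $\pd_R(\Omega_{R/k})\le 1$, and your Auslander--Buchsbaum argument finishes. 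Your connection to Berger's conjecture is inverted: Berger's hypothesis is merely that $\Omega_{R/k}$ is torsion-free, far weaker than $R$ being complete intersection, and the present theorem \emph{yields} a special case of Berger (\Cref{cor:Berger}) rather than depending on it.
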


\begin{proof}
Note that $\mu(\Omega_{R/k})=\embdim(R)$, see, e.g., \cite[Cor.~6.5.(b)]{kunz}.
Moreover, $e(\Omega_{R/k})=e(R)\dim(R)$ by \Cref{rank}. Since $R$ is reduced local, if $R$ is Artinian, then $R$ is a field, and there is nothing to prove. So we may assume that $\dim(R)>0$. In view of \Cref{rank}, $\Omega_{R/k}$ has rank same as $\dim(R)>0$. In particular, it follows that $\dim(\Omega_{R/k})=\dim(R)$. Now $\Omega_{R/k}$ being CM implies that $\Omega_{R/k}$ is MCM. Note that $\Der_k(R)\cong \Hom_R(\Omega_{R/k}, R)$. So, if $R$ is Gorenstein, by \Cref{cx-duality}.(2), $\curv(\Omega_{R/k})=\injcurv(\Der_k(R))$ and $\injcurv(\Omega_{R/k})=\curv(\Der_k(R))$.

(1) Under each of the conditions, by the discussions made above, $e(\Omega_{R/k}) = e(R) \dim(R) \le 2 \embdim(R) = 2 \mu(\Omega_{R/k})$ and $\curv_R(\Omega_{R/k})\le 1$. Therefore, by \Cref{cor:char-CI-rings}.(1), one obtains that $R$ is complete intersection. Thus, in view of the discussion preceding \cite[Thm.~4.1]{herz}, $\Omega_{R/k}$ has finite projective dimension. Therefore, since $\Omega_{R/k}$ is MCM, it must be free. Hence $R$ is regular.

(2) The given hypotheses ensure that
$e(\Omega_{R/k})=e(R)\dim(R)\ge 2\embdim(R) = 2\mu(\Omega_{R/k})$. If possible, let $\injcurv_R(\Omega_{R/k})\le 1$ (equivalently, $\curv_R(\Der_{k}(R))\le 1$ under the condition that $R$ is Gorenstein). Then, by \Cref{cor:char-CI-minmult-rings}.(1), $R$ is complete intersection. Thus, following the proof of (1), one obtains that $R$ is regular, i.e., $e(R)=1$. Hence, since $R$ is not a field, $e(R)\dim(R) = \dim(R) < 2\dim(R) = 2\embdim(R)$, which is a contradiction. Therefore $\injcurv_R(\Omega_{R/k})> 1$ (equivalently, $\curv_R(\Der_{k}(R)) > 1$ when $R$ is Gorenstein).
\end{proof}


\Cref{thm:diff} in particular addresses the following celebrated conjecture due to Berger.

\begin{conjecture}[Berger]\cite{berg}\label{Conj-B63}
     Suppose $R$ is one-dimensional reduced local ring, the residue field $k$ of $R$ has characteristic $0$, and $R$ is either essentially of finite type over $k$ or it is a homomorphic image of a power series ring $k[[X_1,\dots,X_n]]$ over $k$. If $\Omega_{R/k}$ is torsion-free, then $R$ is regular.
\end{conjecture}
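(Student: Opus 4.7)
The plan is to leverage \Cref{thm:diff}(1)(a) of this paper. Since $R$ is one-dimensional and reduced, it is automatically equidimensional (every minimal prime has coheight $1$); the hypotheses on $k$ having characteristic zero and on $R$ being essentially of finite type over $k$ (or a homomorphic image of a power series ring) transfer verbatim to those of that theorem. So the structural hypotheses pose no obstruction.

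Next I would verify that $\Omega_{R/k}$ is Cohen--Macaulay. Since $R$ is one-dimensional reduced local, $\Ass(R)=\Min(R)$ consists of primes of height $0$. Any torsion-free $R$-module $M$ satisfies $\Ass_R(M)\subseteq \Ass(R)$, so $\fm\notin \Ass_R(M)$ and $\depth_R(M)\ge 1$. Combined with $\dim_R(\Omega_{R/k})=1$ (which holds whenever $\Omega_{R/k}$ is nonzero, since $\fm$ lies in its support but its associated primes are all minimal), this makes $\Omega_{R/k}$ maximal Cohen--Macaulay.

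It now remains to establish the two quantitative hypotheses of \Cref{thm:diff}(1)(a), namely $e(R)\dim(R)=e(R)\le 2\embdim(R)$ and $\curv_R(\Omega_{R/k})\le 1$. Granting these, \Cref{thm:diff}(1)(a) immediately yields that $R$ is regular, completing the proof.

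The main obstacle is establishing the two bounds above from torsion-freeness of $\Omega_{R/k}$ alone. The multiplicity inequality $e(R)\le 2\embdim(R)$ is essentially the numerical heart of Berger's conjecture and is known only in special cases; the curvature bound $\curv_R(\Omega_{R/k})\le 1$ asks that the Betti numbers of $\Omega_{R/k}$ grow at most polynomially, which appears equally deep. This is precisely why the partial form of Berger's conjecture obtained in this paper takes both as additional hypotheses. A complete proof along these lines would require genuinely new structural input---for example, a careful analysis of the conormal sequence $0\to I/I^{(2)}\to R^{\oplus n}\to \Omega_{R/k}\to 0$ arising from a presentation $R=A/I$ with $A$ regular---in order to bound the multiplicity or to control the asymptotic behavior of Betti numbers from torsion-freeness alone.
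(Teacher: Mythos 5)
There is a genuine gap here, and in fact it is unavoidable: the statement you were asked to prove is \Cref{Conj-B63}, Berger's conjecture, which is stated in the paper as an open \emph{conjecture} and is not proved there. What the paper actually establishes is \Cref{cor:Berger}, namely that the conjecture holds under the \emph{additional} hypotheses $e(R)\le 2\embdim(R)$ and $\curv_R(\Omega_{R/k})\le 1$ (or the Gorenstein/injective-curvature variant), by exactly the reduction you describe: one-dimensional reduced local rings are CM and equidimensional, torsion-freeness of $\Omega_{R/k}$ forces $\depth\ge 1$ so that $\Omega_{R/k}$ is MCM, and then \Cref{thm:diff}.(1).(a) applies. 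Your verification of these structural points (equidimensionality, $\Ass$ of a torsion-free module avoiding $\fm$, $\dim\Omega_{R/k}=\dim R$ via the rank computation) is correct and matches the paper's route to \Cref{cor:Berger}.

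The gap is precisely where you locate it: the two quantitative hypotheses of \Cref{thm:diff}.(1).(a) --- the multiplicity bound $e(R)\le 2\embdim(R)$ and the curvature bound $\curv_R(\Omega_{R/k})\le 1$ --- are not consequences of torsion-freeness and are not supplied by anything in the paper; they are genuine extra assumptions, which is why the paper only claims a partial result. So your proposal, as written, proves \Cref{cor:Berger} rather than \Cref{Conj-B63}, and the concluding paragraph in which you grant the two bounds is an admission that the conjecture itself remains out of reach by these methods. To be clear, this is not a flaw in your reasoning relative to the paper --- the paper has no proof of the conjecture to compare against --- but the proposal should not be read as a proof of the stated conjecture, only as a correct reconstruction of the paper's conditional result together with an accurate assessment of what is missing.
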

\begin{corollary}\label{cor:Berger}
      {\rm \Cref{Conj-B63}} holds true under each of the following conditions
      \begin{enumerate}[\rm(1)]
          \item 
          $e(R) \le 2 \embdim(R)$ and $\curv_R(\Omega_{R/k})\le 1$.
          \item 
          $R$ is Gorenstein, $e(R) \le 2 \embdim(R)$ and $\injcurv_R(\Der_k(R))\le 1$.
      \end{enumerate}    
\end{corollary}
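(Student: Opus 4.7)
The plan is to reduce the statement directly to Theorem~\ref{thm:diff}.(1), by verifying that the hypotheses of that theorem are automatically satisfied in the setup of Conjecture~\ref{Conj-B63} combined with either of the numerical/curvature conditions (1) or (2). The key observation is that Berger's conjecture concerns the one-dimensional case, where all the auxiliary regularity hypotheses of Theorem~\ref{thm:diff} become easy to check.

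First, I would verify that the ring $R$ in Conjecture~\ref{Conj-B63} is equidimensional. Suppose some minimal prime $\fp$ of $R$ had $\dim(R/\fp)=0$; then $\fp=\fm$ would be simultaneously minimal and maximal, and since $R$ is reduced the localization $R_\fp = R$ would be a reduced Artinian local ring, hence a field, contradicting $\dim(R)=1$. Thus every minimal prime $\fp$ satisfies $\dim(R/\fp)=1$, so $R$ is equidimensional. Next, I would argue that $\Omega_{R/k}$ is CM: since $R$ is reduced of positive dimension, $\depth(R) \ge 1$, and for a torsion-free $R$-module over a reduced local ring of dimension one this forces $\depth(\Omega_{R/k}) \ge 1$. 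By Lemma~\ref{rank}, $\dim(\Omega_{R/k})=\dim(R)=1$, so $\Omega_{R/k}$ is maximal Cohen-Macaulay, in particular CM.

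Finally, since $\dim(R)=1$, the conditions $e(R)\le 2\embdim(R)$ in (1) and (2) are identical to the condition $e(R)\dim(R)\le 2\embdim(R)$ appearing in Theorem~\ref{thm:diff}.(1). Together with the curvature assumption $\curv_R(\Omega_{R/k})\le 1$ in (1), or $\injcurv_R(\Der_k(R))\le 1$ together with the Gorenstein hypothesis in (2), all the hypotheses of Theorem~\ref{thm:diff}.(1).(a) respectively (1).(b) are met. Applying that theorem yields that $R$ is regular, as required by Berger's conjecture. The main obstacle (a minor one) is confirming the equidimensionality and the MCM property of $\Omega_{R/k}$ from the one-dimensional reduced, torsion-free setup; once these preliminary reductions are in place, the result is a direct application of Theorem~\ref{thm:diff}.
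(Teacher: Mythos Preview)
Your proof is correct and follows essentially the same approach as the paper: verify that the hypotheses of Theorem~\ref{thm:diff}.(1) are satisfied in the one-dimensional setting of Berger's conjecture, then apply it. The paper's verification is slightly slicker---it observes that a reduced one-dimensional local ring is CM (hence equidimensional), and that a torsion-free module over such a ring is automatically MCM---whereas you argue equidimensionality and the CM property of $\Omega_{R/k}$ directly, but the strategy is the same.
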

\begin{proof}
 
Since $R$ is reduced, and $\dim(R)=1$, it follows that $R$ is CM. Therefore, $\Omega_{R/k}$ being torsion-free implies that $\Omega_{R/k}$ is MCM. Thus, we are done by \Cref{thm:diff}.(1).
\end{proof}

\begin{remark}
It is to be noted that when $R$ is a one-dimensional domain which is a homomorphic image of a power series ring $k[[X_1,\dots,X_n]]$ over a field $k$ of characteristic $0$, then $e(R)\le 2\embdim(R)$ and $\Omega_{R/k}$ being torsion-free imply that $\embdim(R)\le 5$, see \cite[Thm.~1]{Is}. 
\end{remark}

\section*{Acknowledgments}
We thank Sarasij Maitra for helpful discussions regarding literature on module of differentials.
Souvik Dey was partly supported by the Charles University Research Center program No.UNCE/24/SCI/022 and a grant GA \v{C}R 23-05148S from the Czech Science Foundation.
Aniruddha Saha was supported by Senior Research Fellowship (SRF) from UGC, MHRD, Govt.\,of India.

\end{document}